\documentclass[11pt]{article}
\usepackage[top=2cm,bottom=2.5cm,right=2.5cm,left=2.5cm]{geometry}
\usepackage[english]{babel}
\usepackage[T1]{fontenc}
\usepackage{times,soul}
\usepackage{mathtools, bm}
\usepackage{amssymb, bm}
\usepackage{graphicx}
\usepackage{mathrsfs}
\usepackage{stmaryrd}
\usepackage{amsthm}
\usepackage{listings}
\usepackage{pict2e}
\usepackage{pgf, tikz}
\usepackage{dsfont}
\usepackage{algorithm2e}
\usepackage{algorithmic}
\usepackage{multicol}
\usepackage{hyperref}
\usepackage{subcaption}
\usepackage{float} 
\usepackage{enumitem}

\usepackage[export]{adjustbox}

\newcommand{\Af}{\mathbb{A}^{\text{fast}}}
\newcommand{\Afodd}{\mathbb{A}^{\text{fast,1}}}
\newcommand{\Afeven}{\mathbb{A}^{\text{fast,2}}}

\newcommand{\As}{\mathbb{A}^{\text{slow}}}

\newcommand{\good}{\mathfrak{G}_N}
\newcommand{\cE}{\mathcal{E}}
\newcommand{\bbE}{\mathbb{E}}
\newcommand{\bbP}{\mathbb{P}}
\newcommand{\bbR}{\mathbb{R}}
\newcommand{\bbT}{\mathbb{T}}

\allowdisplaybreaks

\thicklines

{
      \theoremstyle{plain}
      
  }

\newtheorem{theorem}{Theorem}
\newtheorem{proposition}{Proposition}
\newtheorem{corollary}{Corollary}
\newtheorem{lemma}{Lemma}
\newtheorem{remark}{Remark}
\newtheorem{definition}{Definition}

\numberwithin{equation}{section} 
\numberwithin{lemma}{section} 
\numberwithin{remark}{section} 
\numberwithin{example}{section}
\numberwithin{corollary}{section}
\numberwithin{proposition}{section}

\date{}
\author{Thierry Bodineau\footnote{CNRS, I.H.E.S., 35 Route de Chartres, 91440 Bures-sur-Yvette, France. Email: bodineau[AT]ihes.fr},\quad Pierre Le Bris\footnote{SAMOVAR, Télécom SudParis, Institut Polytechnique de Paris, 91120 Palaiseau, France. Email: pierre.le$\_$bris[AT]telecom-sudparis.eu}}
\title{Linear Landau equation as a limit of a tagged particle in mean field interaction with a free gas}

\begin{document}

\maketitle

\begin{abstract}

We consider a tagged particle in mean field interaction with a free gas of density $N$ at equilibrium. In dimensions $d\geq4$, we prove the convergence of its trajectory, as $N$ goes to infinity, to the one of a diffusion process associated  with the linear Landau equation. The proof of the convergence of the martingale problem relies on two key ingredients:  long time stability results  of the microscopic dynamics, and controls on the probability of particle recollisions. 
\end{abstract}

\tableofcontents

%
%
%
%

\section{Introduction}

%
%
%
%

\subsection{Motivation}

Since the botanist Brown described the irregular movement of particles suspended on a liquid, many works have been dedicated to understanding the underlying physical phenomenon. This eventually lead to the construction of the Brownian motion as a mathematical object (we refer to \cite{Dup06} for a historical review on the subject), raising in particular the question of the emergence of a  stochastic process from the motion of atoms evolving at a microscopic level according to Newton's laws. We refer to \cite[Chapter 8]{Spohn} for a detailed review of different deterministic dynamics from which the behavior of a tagged particle has been studied. 
Several types of microscopic models, based on specific scalings, have been considered, and each case requires dedicated mathematical tools for the analysis of the limiting stochastic behavior. Mathematical results were mainly obtained for two distinct perturbative regimes: the low density limit and the weak interactions.

In the low density limit, the dynamics are tuned such that the collisions between two particles are rare, but the scattering is strong. The evolution of a tagged particle with a very small mass with respect to the background particles can be represented by a Lorentz gas where the background particles are fixed.
If the scatterers are chosen randomly with a low density $\varepsilon$,
the convergence of the tagged particle to a Brownian motion has been established in  \cite{basile2015derivation, LT20} on time intervals diverging as a power of $\varepsilon$.
For a fully interacting hard sphere gas, the convergence to a Brownian motion has been analysed in 
\cite{BGS16,fougeres2024derivation,Ayi} in the Boltzmann-Grad limit, see also \cite{MS24} for a fractional diffusion. For a large and massive tagged particle in interaction with a free gas, the limiting dynamics is given by an Ornstein-Uhlenbeck process \cite{DGL81,DLLS13,KL10,BGS18}.

A different perspective is to consider a dense gas in a weak coupling limit, in which case the interactions are frequent but the scattering is very small. 
There are many ways to tune the density with respect to the interactions, depending on the underlying physical model, and
we refer to \cite{NVW21} for a very complete account of the different limiting behaviors expected depending on the nature of the interaction. Note that there is no mathematical result in this regime for a genuine interacting gas.
For a Lorentz gas (i.e. with fixed scatterers), the convergence to a stochastic process characterized by the linear Landau equation has been derived in \cite{KP79, KP80,DGL87,DR01,PV03}.
A Lorentz gas with long range interactions may interpolate between the linear Boltzmann or Landau equations depending on the decay  of the interaction potential, we refer to  \cite{NSV18} for an indepth discussion.
The Landau equation arises naturally also for systems of interacting particles, e.g. it is obtained in \cite{VW18}  from the truncated BBGKY hierarchy and in \cite{LeBihan24}  the linear Landau equation is derived from a particle system, in the Boltzmann-Grad limit, interacting through a potential approximating  an inverse power law $|x|^{-s}$.

\medskip


\begin{figure}
    	\centering
\includegraphics[width=0.43\linewidth, trim={2cm 2cm 2cm 0},clip]{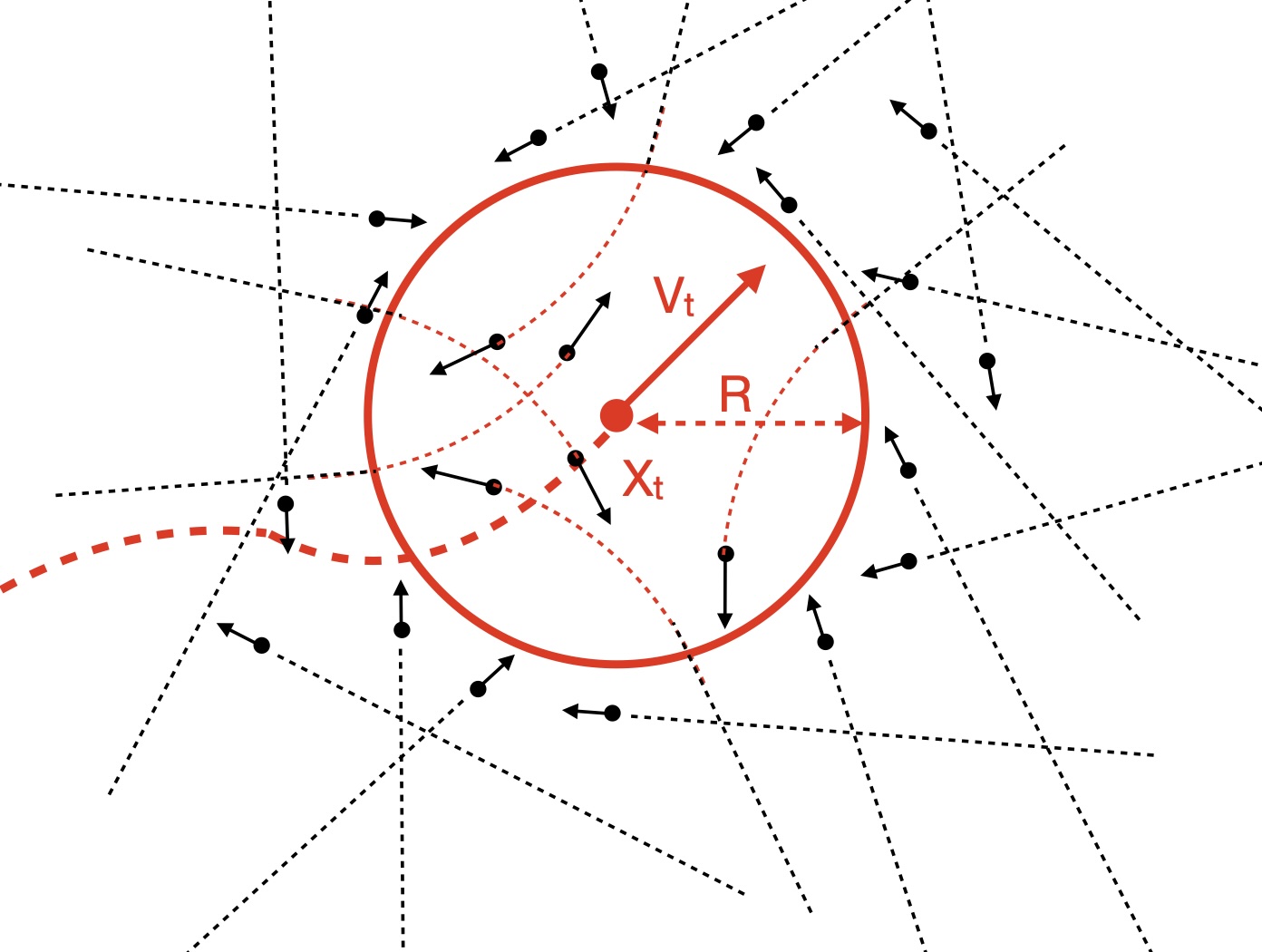}
	\caption{{\small Illustration of the process $(Z_t)_t=(X_t,V_t)_t$ defined in \eqref{eq:def_tagged}. The background particles move in a straight line, until they meet the tagged particle. The latter interacts with the background inside an interaction ball of radius $R$.}}
	\label{fig:illustration_process}
\end{figure}

In this paper, we study a different weak regime with a mean field interaction between 
 the tagged particle and a free gas at equilibrium. We prove that in the mean field limit, the tagged particle follows a stochastic process associated with the linear Landau equation (see Theorem \ref{thm:main}). 
More precisely, we consider a free gas of particles evolving in straight line in a periodic domain $\bbT = [-N^\eta, N^\eta]^d$ for some $\eta \geq 2$. The initial positions $(x_i)_{i\in\mathbb{N}}$ are distributed according to a homogeneous Poisson point process in $\bbT$ of intensity $N\in\mathbb{N}$. Roughly speaking, we thus have $N$ points per unit volume. The initial velocities $(v_i)_{i\in\mathbb{N}}$ are given by  independent (mutually and of the $(x_i)_{i\in\mathbb{N}}$) random variables distributed according to a standard $d$-dimensional normal distribution. This defines an environment of \textit{background particles} and inside this environment, we introduce a \textit{tagged particle}, whose position and velocity at time $t$ are respectively denoted $X_t$ and $V_t$, interacting with the background particles according to a force derived from a potential $\Phi$. More specifically, let $\Phi$ be a compactly supported function in $\mathcal{C}^3_c(\mathbb{R}^d,\mathbb{R})$ such that for some $R>0$
\begin{equation}
\label{assumption Phi}
\text{Support}(\Phi) \subset \mathcal{B}(0,R) 
\  \text{and
$\Phi$ is even, i.e. for all $x\in \mathbb{R}^d$, we have $\Phi(x)=\Phi(-x)$.}
\end{equation}
We consider the process defined for $t\in[0,T]$ by
\begin{equation}
\label{eq:def_tagged}
\frac{d}{dt}X_t=V_t,\qquad \frac{d}{dt}V_t=-\frac{1}{N}\sum_{i\in\mathbb{N}} \nabla\Phi(X_t-x^i_t), \qquad\text{with initial conditions }X_0\text{ and } V_0,
\end{equation}
where for all the background particles, i.e. all $i\in\mathbb{N}$, we introduce also a feedback force (see Figure \ref{fig:illustration_process})
\begin{equation}
\label{eq:def_process_back}
x^i_0=x_i,\quad v^i_0=v_i,\quad \frac{d}{dt}x^i_t=v^i_t,\quad \text{ and }\quad \frac{d}{dt}v^i_t=\frac{1}{N}\nabla\Phi(X_t-x^i_t).
\end{equation}
Thus the background particles behave almost as a free gas up to this feedback force with the tagged particle. 
This force ensures that the dynamics is Hamiltonian, so that there exists an invariant distribution as the energy is conserved. For the sake of conciseness, we may denote $Z_t=(X_t,V_t)$ and $z^i_t=(x^i_t,v^i_t)$.

\medskip

The dynamics \eqref{eq:def_tagged}--\eqref{eq:def_process_back} is deterministic, and the only randomness comes from the initial conditions. 
Our goal is to prove (see Theorem \ref{thm:main} and Corollary \ref{lem:on_a_Landau}) that the tagged particle evolution can be described by a stochastic process
 on a timescale of order $N$ when the density $N$ of background particles increases to infinity.
A priori the mean field force \eqref{eq:def_tagged} acting on the tagged particle could be of order 1. However the background gas is at equilibrium and the force \eqref{eq:def_tagged} is the average over roughly $N$ particles which can be interpreted as almost independent random variables,  uniformly distributed in the support of $\Phi$.
As $\nabla \Phi$ is of 0 mean, we can argue from the central limit theorem (or more precisely from a Hoeffding argument) that
\begin{equation}
\label{eq: TCL force}
\frac{d}{dt}V_t=-\frac{1}{N}\sum_{i\in\mathbb{N}} \nabla\Phi(X_t-x^i_t) = O \left( \frac{1}{\sqrt{N}} \right).
\end{equation}
The tagged particle is therefore driven by the fluctuations of the gas  creating small random forces varying with time. 
These fluctuations are transported by the free gas and the short time correlations are responsible for the specific form of the diffusion coefficient associated with the tagged particle. 
After rescaling time by a factor $N$, the forces coming from the deterministic dynamics will act as  
a random process.
As often in the study of deterministic models, the main difficulty is to show that the force \eqref{eq:def_tagged} decorrelates in time so that it can be approximated by a white noise on some appropriate scale.
Compared with the Lorentz gas model \cite{ KP80,DGL87}, some background particles may  remain close to the tagged particle for a very long time when their relative velocities are small.
Thus the decoupling in time of the force \eqref{eq: TCL force} could be much slower than expected and this requires a specific analysis. 
This is analogous to a resonance phenomenon and it is one of the main difficulty to control the limiting behavior.
Similar issues would arise in  the much more challenging case of a fully interacting mean field gas, which we are going to describe next, and \eqref{eq:def_tagged} can be seen as a toy model for this more general problem.

\medskip

Consider a full mean field system of $N$ interacting particles whose dynamics are given by Newton's law
\begin{equation}
\label{eq:ps_total}
\frac{d}{dt}x^i_t=v^i_t,\qquad \frac{d}{dt}v^i_t=-\frac{1}{N}\sum_{j\neq i}\nabla \Phi(x^i_t-x^j_t),\qquad i\in\{1,...,N\},
\end{equation} 
where $(x^i_t,v^i_t)$ denotes the position and velocity of the $i$-th particle at time $t$. 
For a gas close to equilibrium, the macroscopic density is almost constant and therefore at the main order the force on each particle scales as $1/\sqrt{N}$ similarly to \eqref{eq: TCL force}.
This small random force varies in time and has a contribution on timescales of order $N$, leading to the \textit{Lenard-Balescu equation} 
which is a correction to the mean field equation
(see \cite{DS21, DW23} and discussion therein).
The rigorous derivation of the Lenard-Balescu equation from a particle system is a notoriously difficult problem for many reasons, among them the fact that local well-posedness is only known for smooth interactions (and global well-posedness only near equilibrium, see \cite{DW23}) or that one has to prove estimates on  correlations that are valid up to time $t\simeq N$. Rigorous results of such a derivation are obtained in \cite{Due21, DS21}, though for shorter timescales not reaching $t\simeq N$. See also the sequence of works \cite{NSV18, NVW22, NVW21, NWL19} which provide some first steps towards the justification of the Lenard-Balescu equation, as well as \cite{LeBihan25} for a study of the quantum Lenard-Balescu equation.

It is conjectured that, at equilibrium, a tagged particle in the mean field system \eqref{eq:ps_total} will follow a stochastic process associated with the linear Lenard-Balescu equation \cite{NVW22} after a time rescaling by $N$. 
We refer to \cite{DS21} for a short time analysis of this problem and to \cite{DLB25} for a derivation of the linear Lenard-Balescu equation on the timescale $t\simeq N$
under some assumptions. The authors work in \cite{DLB25} at the level of the BBGKY hierarchy, under two main simplifications: the hierarchy is truncated at an arbitrary level $m_0$ (i.e. assuming that $m$-particle correlations, for 
$m>m_0$, are zero) and the acceleration is driven by a Brownian motion of the correct order (i.e $dv^1_t=-\frac{1}{N}\sum \nabla\Phi(x^1_t-x^i_t)dt+\sqrt{2\kappa/N}dB_t$, which provides hypoelliptic estimates). Their analysis then relies on a careful Dyson expansion of this simplified BBGKY hierarchy to control particle interactions.
Note that the simplified dynamics \eqref{eq:def_tagged}-\eqref{eq:def_process_back} follows the linear Landau equation and not the linear Lenard-Balescu equation as the feedback of the background particles is simpler in our case.

%
%
%
%

\subsection{Formalism and main result}

%
%
%
%

\subsubsection{Initial measure: the grand canonical ensemble}

The initial data will be sampled according to the measure defined below. 
Denote $\gamma$ the density of the $d$ dimensional standard normal distribution, i.e
\begin{align}
\label{eq: gaussian definition}
\gamma(v):=(2\pi)^{-\frac{d}{2}}e^{-\frac{|v|^2}{2}}.
\end{align}
Let $\mathcal{C}^1_{bb}(\mathbb{R}^d,\mathbb{R})$ be the set of bounded continuous functions with bounded continuous derivatives. 
Let $g_0$ be a function in  $\mathcal{C}^1_{bb}(\mathbb{R}^d,\mathbb{R})$ such that 
$\int dV g_0(V) \gamma(V) =1$.
Define, for a given $M\in\mathbb{N}$, 
\begin{align*}
\rho_{M,g_0}\left(Z,z_{1:M}\right):=e^{-\frac{1}{N}\sum_{i=1}^M\Phi(X-x_i)} \; g_0(V) \gamma(V) \, \prod_{i=1}^M\gamma(v_i),
\end{align*} 
where we write $z_{1:M}=(z_1,..., z_M)$ (resp. $x_{1:M}=(x_1,..., x_M)$ and $v_{1:M}=(v_1,..., v_M)$), with $z_i=(x_i,v_i)$, and $Z=(X,V)$.
By convention, if $M=0$, the sum (resp. product) is empty, thus zero (resp. one), and we set $\rho_{0,g_0} \left(Z\right)=  g_0(V) \gamma(V).$ For $F$ a measurable function of $Z$ and of a point process $z_{1:\mathcal{N}}$, symmetric in  $z_{1:\mathcal{N}}$, we define the  \textit{grand canonical measure} as
\begin{align}
\mathbb{E}\left[F(Z,z_{1:\mathcal{N}})\right]=\frac{1}{\mathcal{Z}}\sum_{M=0}^\infty \frac{N^M}{M!} \int_{\mathbb{T}\times\mathbb{R}^d } dZ\int_{\mathbb{T}^M\times \mathbb{R}^{Md}} dz_{1:M}\rho_{M, g_0}\left(Z,z_{1:M}\right)F\left(Z,z_{1:M}\right),
\label{eq:def_gce}
\end{align}
where the normalization constant $\mathcal{Z}$ can in fact be explicitly computed: denoting $|\mathbb{T}|$ the volume of the torus, we have $\mathcal{Z}
=|\mathbb{T}|\exp\left(N\left(\int_{\mathbb{T}}dxe^{-\frac{\Phi(x)}{N}}\right)\right).
$
Let us gather some useful properties of random variables distributed according to this probability measure.


\begin{lemma}\label{lem:N_Poisson}
We have
\begin{itemize}
\item Denoting $\mathcal{N}$ the number of background particles, $\mathcal{N}$ is a random variable distributed according to a Poisson distribution of mean $N\int_{\mathbb{T}}dxe^{-\frac{\Phi(x)}{N}}$.
\item The mean number of particles in $\mathbb{T}$ satisfies
$\frac{\mathbb{E}[\mathcal{N}]}{N|\mathbb{T}|}\xrightarrow[]{N\rightarrow\infty}1$
and for all $k\in\mathbb{N}$ there exists $C_k$ such that 
\begin{equation}\label{eq:borne_moment_N}
\mathbb{E}\left[\mathcal{N}^k\right]\leq C_k N^k |\mathbb{T}|^k.
\end{equation}
\end{itemize}
\end{lemma}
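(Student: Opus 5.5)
The plan is to compute the law of $\mathcal{N}$ directly from the definition \eqref{eq:def_gce}. Take $F(Z,z_{1:\mathcal{N}})=\mathds{1}_{\mathcal{N}=M}$, or more generally any function $h(\mathcal{N})$. Then we only need to integrate $\rho_{M,g_0}$ over all variables with $M$ fixed. The density factorizes, which is the key structural fact. First, each $v_i$ integrates against $\gamma$ to give $1$. Second, $V$ integrates against $g_0\gamma$ to give $1$, by the normalization $\int g_0\gamma=1$. Third, each $x_i$ gives $\int_{\mathbb{T}}e^{-\Phi(X-x_i)/N}dx_i$. By translation invariance on the torus, this equals $I_N:=\int_{\mathbb{T}}e^{-\Phi(x)/N}dx$, independently of $X$. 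Finally, the integral over $X$ gives the factor $|\mathbb{T}|$. Hence
\[
\mathbb{P}(\mathcal{N}=M)=\frac{1}{\mathcal{Z}}\frac{N^M}{M!}|\mathbb{T}|\,I_N^M .
\]
Summing over $M$ confirms $\mathcal{Z}=|\mathbb{T}|e^{NI_N}$, and the display above is exactly the Poisson law with mean $NI_N$. This proves the first item.

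For the second item, $\mathbb{E}[\mathcal{N}]=NI_N$. Since $\Phi$ is supported in $\mathcal{B}(0,R)$ and bounded,
\[
I_N=|\mathbb{T}|+\int_{\mathcal{B}(0,R)}\big(e^{-\Phi(x)/N}-1\big)dx .
\]
The correction term is bounded by $C\|\Phi\|_\infty|\mathcal{B}(0,R)|/N$ for $N$ large. Therefore $I_N/|\mathbb{T}|=1+O\big(1/(N|\mathbb{T}|)\big)\to1$, because $|\mathbb{T}|=(2N^\eta)^d\geq1$.

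For the moment bound, recall that the moments of a Poisson variable of mean $\lambda$ are given by the Touchard polynomials:
\[
\mathbb{E}[\mathcal{N}^k]=\sum_{j=0}^k S(k,j)\lambda^j,
\]
where $S(k,j)$ are Stirling numbers of the second kind. This is the only mildly technical step. It can alternatively be obtained by expanding $\mathcal{N}^k$ in falling factorials, using $\mathbb{E}[\mathcal{N}(\mathcal{N}-1)\cdots(\mathcal{N}-j+1)]=\lambda^j$. Here $\lambda=NI_N\geq c N|\mathbb{T}|\geq c$ and $\lambda\leq C N|\mathbb{T}|$, and in particular $\lambda$ is bounded below. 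Consequently $\lambda^j\leq C'\lambda^k$ for all $j\leq k$, so $\mathbb{E}[\mathcal{N}^k]\leq C_k\lambda^k\leq C_k N^k|\mathbb{T}|^k$.

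No real obstacle is expected. The only care needed is to note that translation invariance on $\mathbb{T}$ removes the dependence on $X$, and that $\lambda\gtrsim1$ is used to absorb the lower-order Touchard terms.
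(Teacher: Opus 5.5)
Your proof is correct and is the standard computation the paper has in mind; the paper gives no details and only remarks that the measure \eqref{eq:def_gce} is essentially a Poisson measure. Factorizing $\rho_{M,g_0}$ and using translation invariance to obtain $\mathbb{P}(\mathcal{N}=M)=\frac{|\mathbb{T}|}{\mathcal{Z}}\frac{(NI_N)^M}{M!}$ (which also recovers the stated $\mathcal{Z}$), then bounding moments via Touchard polynomials with $cN|\mathbb{T}|\le\lambda\le CN|\mathbb{T}|$, is exactly the right fleshing-out.
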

The proof of this lemma is standard as the measure \eqref{eq:def_gce} is essentially a Poisson measure.
Since $\mathcal{N}$ is almost surely finite and the interaction $\Phi$ is smooth compactly supported, Cauchy–Lipschitz theory for standard ordinary differential equations yields as a direct consequence existence and uniqueness for the process.

A key property is the stationarity of the grand canonical measure   \eqref{eq:def_gce}.


\begin{lemma}
For $g_0 =1$,
the grand canonical ensemble defined in \eqref{eq:def_gce} is stationary for the process \eqref{eq:def_tagged}-\eqref{eq:def_process_back} in the sense that for any measurable function $F$
\begin{align*}
\forall t\in\mathbb{R},\quad \mathbb{E}\left[F\left(Z_t, z^{1:\mathcal{N}}_t\right)\right]=\mathbb{E}\left[F\left(Z_0, z^{1:\mathcal{N}}_0\right)\right],
\end{align*}
or even, denoting $Z_{[s:t]}$ (resp. $z^{1:\mathcal{N}}_{[s:t]}$) the trajectory of the tagged (resp. background) particles between times $s$ and $t$, we have 
\begin{align*}
\forall s,t\in\mathbb{R},\quad \mathbb{E}\left[F\left(Z_{[s:s+t]}, z^{1:\mathcal{N}}_{[s:s+t]}\right)\right]=\mathbb{E}\left[F\left(Z_{[0,t]}, z^{1:\mathcal{N}}_{[0,t]}\right)\right].
\end{align*}
\end{lemma}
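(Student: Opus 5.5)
The plan is to prove stationarity sector by sector in the particle number $M$, using that the dynamics is Hamiltonian with a Gibbs-type invariant density. The key observation is that the flow \eqref{eq:def_tagged}--\eqref{eq:def_process_back} preserves the number of background particles. Indeed, particles are never created or destroyed, and on the torus $\mathbb{T}$ the free transport is periodic. So the grand canonical measure \eqref{eq:def_gce} decomposes as a mixture over $M$ of measures on the fixed phase space $\Omega_M=(\mathbb{T}\times\mathbb{R}^d)^{1+M}$, each with density proportional to $\rho_{M,1}$. It therefore suffices to show that, for each fixed $M$, the measure $\rho_{M,1}(Z,z_{1:M})\,dZ\,dz_{1:M}$ is invariant under the flow $S_t^M$ on $\Omega_M$. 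The symmetry of $F$ plays no role beyond making \eqref{eq:def_gce} well defined, and the flow commutes with relabelling of the background particles.

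For fixed $M$, I will write the dynamics as Hamilton's equations. Take the Hamiltonian
\begin{equation*}
H_M(Z,z_{1:M})=\frac{|V|^2}{2}+\sum_{i=1}^M\frac{|v_i|^2}{2}+\frac1N\sum_{i=1}^M\Phi(X-x_i).
\end{equation*}
Then $\dot X=V$ and $\dot V=-\frac1N\sum_i\nabla\Phi(X-x_i)$. Moreover $\dot v_i=-\partial_{x_i}H_M=\frac1N\nabla\Phi(X-x_i)$, which uses the chain rule together with the evenness of $\Phi$ (so that $\nabla\Phi$ is odd); this matches \eqref{eq:def_process_back}. Since $\Phi\in\mathcal{C}^3_c$, the vector field is globally Lipschitz on $\Omega_M$, so the flow is globally defined, as already noted after Lemma \ref{lem:N_Poisson}. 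Two classical facts then apply. By Liouville's theorem the vector field is divergence free, so $S^M_t$ preserves Lebesgue measure. The energy $H_M$ is conserved along trajectories. Up to constants, $\rho_{M,1}=(2\pi)^{-d(M+1)/2}e^{-H_M}$, a function of $H_M$ only, hence $\rho_{M,1}\circ S^M_t=\rho_{M,1}$. The change of variables $y=S^M_t(\cdot)$ then gives
\begin{equation*}
\int_{\Omega_M}\rho_{M,1}\,F\circ S^M_t=\int_{\Omega_M}\rho_{M,1}\,F .
\end{equation*}
Summing over $M$ with the weights $N^M/M!$ yields the first identity of the statement.

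For the trajectory version, I note that $(Z_{[s,s+t]},z^{1:\mathcal{N}}_{[s,s+t]})$ is the image of the time-$s$ state under the deterministic map $\Psi_t$, which sends an initial condition to its trajectory on $[0,t]$. This holds by the flow property $S^M_{s+u}=S^M_u\circ S^M_s$. Applying the first identity to the measurable function $F\circ\Psi_t$ concludes, and negative times are handled the same way since the flow is a group.

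The main point requiring care is not the computation but the bookkeeping on the torus. I need to check that $\Phi(X-x_i)$ is understood periodically, and that $\mathbb{T}$ is large compared to $R$ so that the potential is well defined and smooth on $\mathbb{T}$; this holds since $N^\eta\gg R$. I also need measurability of $\Psi_t$ into path space, which follows from continuity of the flow in the initial data. Integrability is not an issue: if $F$ is nonnegative, the identities hold in $[0,\infty]$, and the general case follows whenever either side is finite.
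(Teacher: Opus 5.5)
Your proof is correct and is essentially the paper's argument: condition on $\mathcal{N}$, recognize $\rho_{M,1}\propto e^{-H_M}$ as the Gibbs density of the Hamiltonian flow, and conclude by Liouville plus energy conservation, with the trajectory version following from the flow property; you simply supply the details the paper leaves implicit. One minor aside: the evenness of $\Phi$ is not needed to identify $\dot v_i=-\partial_{x_i}H_M$ with \eqref{eq:def_process_back}, since the chain rule alone already gives $-\partial_{x_i}\frac{1}{N}\Phi(X-x_i)=\frac{1}{N}\nabla\Phi(X-x_i)$.
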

This lemma can be easily derived,  by conditioning on the total number of particles $\mathcal{N}$ and observing that the law $\rho_{\mathcal{N}}$ (up to normalization) is the Gibbs measure associated with the Hamiltonian dynamics \eqref{eq:def_tagged}-\eqref{eq:def_process_back}.

Note that the grand canonical ensemble \eqref{eq:def_gce} acts on functions that are symmetric in $z_{1:\mathcal{N}}$. To single out the effect of a given particle, we introduce below a non-symmetric version of the expectation. Assume that $F(Z,z_{1:\mathcal{N}})=\frac{1}{N}\sum_{i=1}^\mathcal{N}f(Z,z_{1:\mathcal{N}}, z_i)$ for some function $f$ which is symmetric in its second variable $z_{1:\mathcal{N}}$, in the sense that for any permutation $\sigma$ and any $Z,z, z_{1:\mathcal{N}}$ we have $f(Z,z_{1:\mathcal{N}}, z)=f(Z,z_{\sigma(1):\sigma(\mathcal{N})}, z)$. We define
\begin{align}
\mathbb{E}\left[F(Z,z_{1:\mathcal{N}})\right]=&\mathbb{E}\left[\frac{1}{N}\sum_{i=1}^\mathcal{N}f(Z,z_{1:\mathcal{N}}, z_i)\right]
\nonumber \\
=&\frac{1}{\mathcal{Z}}\sum_{M=0}^\infty \frac{N^M}{M!} \frac{1}{N}\sum_{i=1}^M\int_{\mathbb{T}\times\mathbb{R}^d } dZ\int_{\mathbb{T}^M\times \mathbb{R}^{Md}} dz_{1:M}\rho_{M, g_0} \left(Z,z_{1:M}\right)f(Z,z_{1:M}, z_i)
\nonumber \\
=&\frac{1}{\mathcal{Z}}\sum_{M=0}^\infty \frac{N^{M-1}}{(M-1)!} \int_{\mathbb{T}\times\mathbb{R}^d } dZ\int_{\mathbb{T}^M\times \mathbb{R}^{Md}} dz_{1:M}\rho_{M, g_0}\left(Z,z_{1:M}\right)f(Z,z_{1:M}, z_1) \nonumber \\
=&\mathbb{E}\left[\frac{\mathcal{N}}{N}f(Z,z_{1:\mathcal{N}}, z_1)\right]
=: \hat{\mathbb{E}}\left[f(Z,z_{1:\mathcal{N}}, z_1)\right].
\label{eq: definition E chapeau}
\end{align}
Finally, we may also denote for a set $\mathcal{X}$ both $\mathbb{E}_{\mathcal{X}}[f]=\mathbb{E}[f\mathds{1}_{\mathcal{X}}]$ and $\hat{\mathbb{E}}_{\mathcal{X}}[f]=\hat{\mathbb{E}}[f\mathds{1}_{\mathcal{X}}]$.

%
%
%
%
%

\subsubsection{Main result}


\begin{theorem}
\label{thm:main}
Let $d\geq 4$. Under the grand canonical ensemble  \eqref{eq:def_gce}, the family of processes $((V_{\tau N})_{\tau\in[0,1]})_N$ defined by \eqref{eq:def_tagged}-\eqref{eq:def_process_back} converges in law as $N\rightarrow\infty$ to a diffusion process $(\mathcal{V}_\tau )_{\tau \in[0,1]}$ initially distributed with the probability density $g_0 \gamma$ and satisfying
\begin{equation}
\label{eq:eds_limit}
d\mathcal{V}_\tau=2\Lambda(\mathcal{V}_\tau)d\tau+\sqrt{2}\Sigma(\mathcal{V}_\tau)dB_\tau,
\end{equation}
where $B$ is a Brownian motion in $\mathbb{R}^d$,  $g_0$ is in $\mathcal{C}^1_{bb}(\mathbb{R}^d,\mathbb{R})$ and $\gamma$ is the Gaussian distribution
\eqref{eq: gaussian definition}. Here $\Lambda$ is defined by
\begin{equation}\label{def:drift}
\Lambda(V)=-\int_{\mathcal{B}(0,R)}dx\int_{\mathbb{R}^d}dv\gamma(v+V)\text{Hess}\Phi(x)\int_{-\infty}^0\int_{-\infty}^s\nabla\Phi(x+uv)duds,
\end{equation}
and the matrix $\Sigma$ is the square root of $D$, a $\mathbb{R}^{d\times d}$ positive definite symmetric matrix given by
\begin{equation}\label{def:diffusion} 
D(V)=\int_{\mathcal{B}(0,R)}dx\int_{\mathbb{R}^d}dv\gamma(v+V)\nabla\Phi(x)\otimes\int_{-\infty}^0\nabla\Phi(x+sv)ds.
\end{equation}
\end{theorem}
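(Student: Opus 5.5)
The plan is to prove convergence through the martingale problem: establish tightness of $(V_{\tau N})_{\tau\in[0,1]}$ in $C([0,1],\mathbb{R}^d)$, and show that every limit point solves the martingale problem associated with the generator
\[
\mathcal{L}\varphi(V)=2\Lambda(V)\cdot\nabla\varphi(V)+D(V):\nabla^2\varphi(V).
\]
This martingale problem is well posed, because $D$ is positive definite and smooth and $\Lambda$ is smooth, so the limit is unique. Stationarity reduces the case of general $g_0$ to $g_0=1$. Indeed, the law with density $g_0$ is absolutely continuous with respect to the stationary one with a bounded density, so events of vanishing probability under $g_0=1$ also vanish under $g_0$. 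Moreover, every expectation we need has the form $\mathbb{E}[g_0(V_0)\,\Psi]$, and we can control it with the same estimates, carrying the extra factor $g_0$ at time $0$.

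For a test function $\varphi\in\mathcal{C}^3_b$, rescaled time gives
\[
\varphi(V_{\tau N})-\varphi(V_0)=-\int_0^{\tau}ds\,\sum_i\nabla\Phi(X_{sN}-x^i_{sN})\cdot\nabla\varphi(V_{sN}).
\]
Each summand carries a factor $1/N$, which cancels against the factor $N$ from the time change; the integrand is therefore of order $\sqrt N$ by \eqref{eq: TCL force}. To extract its average effect, I split $[0,\tau N]$ into mesoscopic blocks of length $\delta N$, with a memory window $\theta$ satisfying $1\ll\theta\ll\sqrt N$. On each block I perform a second-order Duhamel expansion. First I write the force at time $t$ by freezing the tagged particle's velocity at time $t-\theta$. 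I then replace every background particle by its free backward trajectory, so that a particle currently at relative position $x$ with relative velocity $v$ was at $x+uv$ a time $u$ earlier. Expanding $\nabla\varphi(V_t)$ to first order around $V_{t-\theta}$ produces the product $\nabla\Phi(x)\otimes\int_{-\theta}^0\nabla\Phi(x+sv)\,ds$. Summed over $i$ with the Poisson/Gaussian law via $\hat{\mathbb{E}}$ from \eqref{eq: definition E chapeau}, this yields $D(V)$ in the limit $\theta\to\infty$. The drift $2\Lambda$ comes from two sources. One is expanding the force to first order in the tagged particle's displacement, which brings in $\mathrm{Hess}\,\Phi$ and the double time integral. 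The other is the feedback force \eqref{eq:def_process_back} on the background particles together with the factor $e^{-\Phi/N}$ in the measure. The leading term, in which the force is evaluated against independent free particles, has zero conditional mean because $\Phi$ is even and the gas is homogeneous. It therefore becomes a martingale increment up to small errors. Tightness follows from the same expansion: bounding moments of the increments over blocks gives an estimate $\mathbb{E}|V_{tN}-V_{sN}|^4\le C|t-s|^2$, which suffices for the Kolmogorov criterion.

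The errors to control are of three kinds: cross terms between different background particles, contributions from particles that interacted with the tagged particle before the memory window, and particles whose true trajectory departs from the free one. Cross terms between distinct particles vanish to leading order after averaging, since distinct particles are independent, and the higher-order cumulants are estimated with the moment bound \eqref{eq:borne_moment_N}. Using stationarity, every time-$t$ estimate reduces to an estimate at time $0$. The key analytic facts are the long-time stability of the dynamics, meaning that $V_t$ moves by only $O(\sqrt{t/N})$, and a bound on \emph{recollisions}: background particles that re-enter $\mathcal{B}(X_t,R)$ after leaving it, or that stay inside it for a time longer than $\theta$.

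I expect the main obstacle to be slow particles, those with small relative velocity $|v-V|$, which remain in the interaction ball for a time of order $R/|v-V|$ and spoil the decorrelation. Their contribution is weighted by $\int_{|w|\le\epsilon}dw\,|w|^{-k}$, which is integrable in high enough dimension. This is where $d\ge4$ enters: the two time integrals in the second-order terms produce singularities of order $|w|^{-2}$, and the curvature of the tagged trajectory adds further powers. Recollisions are controlled by the same idea. After a collision, a background particle's relative velocity changes by $O(1/N)$ while the tagged particle's velocity diffuses. The particle can come back only if the relative displacement re-enters a ball of radius $R$, an event whose probability I would bound geometrically by $\theta^{-(d-1)}$ times a velocity factor, again summable for $d\ge4$. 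I would attempt these bounds first by conditioning on the tagged trajectory and using the explicit free-particle law of the background.
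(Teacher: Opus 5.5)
Your global architecture matches the paper's: tightness via Kolmogorov, identification of limit points through the martingale problem, and uniqueness of the limit. The gap is exactly where $d=4$ is decided. You take the long-time stability of the tagged trajectory as a given ``key analytic fact'', and your memory window $1\ll\theta\ll\sqrt N$ is set inconsistently. Freezing the tagged velocity and Taylor-expanding $\nabla\Phi(X_t-x^i_t)$ requires the tagged \emph{position} to stay $o(1)$-close to a straight line over the whole window. The pointwise bound $|\dot V_t|=O(N^{-1/2+\delta/2})$ only gives $\theta\ll N^{1/4}$. Even your claimed $|V_t-V_s|=O(\sqrt{|t-s|/N})$ only gives $\theta\ll N^{1/3}$, since the positional deviation is then of order $\theta^{3/2}N^{-1/2}$.

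Conversely, the window must contain the entire interaction of every particle you cannot discard. Discarding particles with relative velocity below $N^{-\gamma}$ costs $N^{1-d\gamma}$ over the macroscopic time, so you need $\theta\gg N^{1/d}$. Thus $d\ge4$ enters through the compatibility of $N^{1/d}$ with the stability timescale, not through integrability of $|w|^{-k}$. With the cutoff $|v^1-V|\ge N^{-\gamma_r}$ those moments are harmless, and $\Lambda,D$ are already finite for $d\ge3$. For $d=4$ you must therefore work strictly beyond $N^{1/4}$.

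Going beyond $N^{1/4}$ requires two things your proposal has no mechanism for. First, you must actually prove a time-averaged bound
$\bigl|\frac1N\int_s^t\sum_i\partial_{\mathcal I}\Phi(X_u-x^i_u)\,du\bigr|\le\sqrt{|t-s|/N}\,N^{\alpha^*}$ for $|t-s|\le N^{\beta^*}$. Second, you must propagate this bound to the comparison between $Z$ and the dynamics without the singled-out particle. The linearized equation for $X_t-\overline X_t$ has coefficient $\frac1N\sum_i\mathrm{Hess}\,\Phi(X_t-x^i_t)$ of size $N^{-1/2}$. A pointwise Gr\"onwall argument therefore blows up at $N^{1/4}$, and one needs the Peano--Baker argument of Lemma~\ref{lem:gronw_opt_racine}, which uses only bounds on the time integral of that coefficient. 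This is the bulk of the paper: fast/slow splitting, a martingale structure obtained by averaging over not-yet-interacted particles in $A_{\delta_v,\delta_t}(t_{k-1})$, a Hoeffding argument, and a bootstrap in $\beta$. Without it your scheme closes at best for $d\ge5$.

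The same conflation breaks your recollision estimate. Recollisions must have $\hat{\mathbb E}$-weight $o(N^{-1})$. The free-motion geometric bound gives $t_0^{-(d-1)}$, where $t_0$ is a lower bound on the re-entry time. With $t_0=\theta\ll N^{1/3}$ this is $\gg N^{-1}$ in $d=4$, and a velocity factor does not help, because typical particles with $|v^1-V|\sim1$ already saturate it. The paper instead needs a velocity-dependent re-entry time $t_{rec}\approx N^{1-2\alpha^*}|v^1-V|^2$, which is about $N^{13/36}$ in the worst case for $d=4$. This is \emph{longer} than the timescale $N^{11/36}$ on which straight-line expansions hold. It comes from comparing the linear separation $t|v^1-V|$ with the tagged wandering $t^{3/2}N^{\alpha^*-1/2}$, chained beyond $N^{\beta^*}$. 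So the expansion window and the no-recollision window must be decoupled, and the latter again rests on the better-set estimate.

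Two smaller inaccuracies:
\begin{enumerate}
\item The weight $e^{-\Phi/N}$ contributes only $O(1/N)$. The factor $2$ in $2\Lambda$ comes from the tagged particle's displacement and the background particle's feedback displacement contributing equally.
\item The bound $\mathbb E|V_{tN}-V_{sN}|^4\le C|t-s|^2$ is more than the error terms allow, since they are linear in $|t-s|$. The exponent $1+\epsilon$ that the paper obtains is enough for Kolmogorov.
\end{enumerate}
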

As proved in Lemma~\ref{lem:prop_coeff} below, the coefficients $\Lambda$ and $D$ can be rewritten using Fourier variables, and satisfy both $\Lambda(V)=-D(V)V$ and $\Lambda(V)=\nabla\cdot D(V)$. 
As a direct consequence of Lemma~\ref{lem:prop_coeff}, we obtain the following lemma, which states that the process \eqref{eq:eds_limit} is indeed the one associated with the classical (linear) Landau equation.


\begin{corollary}
\label{lem:on_a_Landau}
The Fokker-Planck equation associated with the SDE \eqref{eq:eds_limit} is  the linear Landau equation, i.e. $f_t=\text{Law}(\mathcal{V}_t)$ is a weak solution of 
\begin{equation}
\label{eq:lin_landau}
\partial_t f_t=\nabla\cdot(D(V)(\nabla f_t+ V \, f_t)),
 \qquad \text{with} \quad f_0 = g_0 \gamma,
\end{equation}
where 
\begin{align*}
D(V)=\frac{\pi}{(2\pi)^{d}}\int_{\mathbb{R}^d} (k\otimes k)|\hat{\Phi}(k)|^2\int_{\mathbb{R}^d} \gamma(v+V)\delta_{k\cdot v}dvdk.
\end{align*}
\end{corollary}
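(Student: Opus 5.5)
The plan is to derive the Fokker--Planck equation of \eqref{eq:eds_limit} by Itô's formula, and then rewrite it in Landau form using the two identities of Lemma~\ref{lem:prop_coeff}, namely $\Lambda(V)=-D(V)V$ and $\Lambda(V)=\nabla\cdot D(V)$, together with its Fourier representation of $D$. For a test function $\varphi\in\mathcal{C}^2_c(\mathbb{R}^d)$, Itô's formula applied to \eqref{eq:eds_limit} gives
\begin{align*}
d\varphi(\mathcal{V}_\tau)=\Big(2\Lambda(\mathcal{V}_\tau)\cdot\nabla\varphi(\mathcal{V}_\tau)+\Sigma\Sigma^T(\mathcal{V}_\tau):\nabla^2\varphi(\mathcal{V}_\tau)\Big)d\tau+\sqrt{2}\,\nabla\varphi(\mathcal{V}_\tau)\cdot\Sigma(\mathcal{V}_\tau)dB_\tau .
\end{align*}
Since $\Sigma$ is the symmetric square root of $D$, we have $\Sigma\Sigma^T=D$; this uses the symmetry of $D$ asserted in Theorem~\ref{thm:main}.

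We then take expectations. The stochastic integral has zero mean, because $\nabla\varphi$ is compactly supported and $\Sigma$ is bounded on compacts, which follows from the explicit formula \eqref{def:diffusion}. This yields the weak formulation
\begin{align*}
\frac{d}{dt}\int\varphi f_t=\int\big(2\Lambda\cdot\nabla\varphi+D:\nabla^2\varphi\big)f_t .
\end{align*}

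Next we rewrite the right-hand side. Using $\Lambda=\nabla\cdot D$, integration by parts gives
\begin{align*}
\int (D:\nabla^2\varphi)f=-\int\nabla\varphi\cdot(\nabla\cdot D)f-\int\nabla\varphi\cdot D\nabla f .
\end{align*}
Interpreted against smooth test functions, this holds in the distributional sense, so the regularity of $f$ is not needed. The right-hand side therefore becomes $\int\Lambda\cdot\nabla\varphi\, f-\int\nabla\varphi\cdot D\nabla f$. Substituting $\Lambda=-DV$ in the remaining first-order term gives
\begin{align*}
-\int\nabla\varphi\cdot D(\nabla f+Vf),
\end{align*}
which is exactly the weak form of \eqref{eq:lin_landau}. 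The initial condition $f_0=g_0\gamma$ is immediate from Theorem~\ref{thm:main}.

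The remaining point, and the only nontrivial one, is the Fourier expression for $D$; I would take it from Lemma~\ref{lem:prop_coeff}. If it had to be checked directly, I would proceed as follows. First write $\nabla\Phi(x)=\int ik\hat\Phi(k)e^{ik\cdot x}dk/(2\pi)^d$ in \eqref{def:diffusion}. The $x$-integration produces $\hat\Phi(k)\overline{\hat\Phi(k)}=|\hat\Phi(k)|^2$, since $\Phi$ is even and real, and it forces the two wave vectors to coincide up to sign. This leaves $\int_{-\infty}^0 e^{isk\cdot v}ds$. Finally, symmetrize in $k\to -k$, which is allowed because $|\hat\Phi|^2$ is even. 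The real part of $\int_{-\infty}^0e^{isk\cdot v}ds$ is $\pi\delta_{k\cdot v}$, and the principal-value imaginary part cancels by oddness. The main obstacle is the careful justification of these oscillatory integrals. I would handle it with a damping factor $e^{\epsilon s}$ and let $\epsilon\to0$, using the Gaussian weight $\gamma(v+V)$ and the decay of $\hat\Phi$ for $\Phi\in\mathcal{C}^3_c$ to ensure integrability when $d\ge 2$.
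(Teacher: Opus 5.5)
Your proposal is correct and follows the paper's route. The paper obtains the corollary from the generator $2\Lambda\cdot\nabla+D:\text{Hess}$ of \eqref{eq:eds_limit} together with the identities $\Lambda=\nabla\cdot D$, $\Lambda=-D(V)V$ and the Fourier formula of Lemma~\ref{lem:prop_coeff}, exactly as you do. Your optional direct check of the Fourier formula differs only cosmetically from the paper's proof of Lemma~\ref{lem:prop_coeff}(v): the paper first symmetrizes in time, writing $D$ with $\int_{-\infty}^{\infty}$ so the $s$-integral gives $2\pi\delta_{k\cdot v}$, whereas you symmetrize in $k$ to cancel the principal-value part.
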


%
%
%
%

\subsection{Sketch of the proof}

To help the reader navigate through the various technical lemmas, let us start with an informal discussion on the method of proof.
The overall strategy is to show that the sequence of continuous processes $((V_{\tau N})_{\tau\in[0,1]})_N$ is tight and that any limit point satisfies a martingale problem  which determines the limiting process. This guarantees the convergence in law of the processes. 
We stress that our analysis is based on a pathwise study and doesn't use the BBGKY hierarchy. 


\paragraph{The martingale problem.} Itô's formula applied to the SDE \eqref{eq:eds_limit} states that for any test function $f$ the quantity
\begin{align*}
\mathcal{M}^f_\tau:=f(\mathcal{V}_\tau)-f(\mathcal{V}_0)-\int_0^\tau\left(2\Lambda(\mathcal{V}_\sigma)\cdot \nabla f(\mathcal{V}_\sigma)+D(\mathcal{V}_\sigma):\text{Hess}f(\mathcal{V}_\sigma)\right)d\sigma,
\end{align*}
is a martingale. In particular, for any $f$ and $g$ test functions, we  have
\begin{align*}
\bbE\left[g(\mathcal{V}_0)(f(\mathcal{V}_\tau)-f(\mathcal{V}_0))\right]=\bbE\left[g(\mathcal{V}_0)\int_0^\tau\left(2\Lambda(\mathcal{V}_\sigma)\cdot \nabla f(\mathcal{V}_\sigma)+D(\mathcal{V}_\sigma):\text{Hess}f(\mathcal{V}_\sigma)\right)d\sigma\right].
\end{align*}
This property (almost) characterizes the solutions of the SDE \eqref{eq:eds_limit}. Our goal is therefore to show that the rescaled process $(V_{\cdot N})_N$  satisfies also this property  up to a small error:
\begin{equation}
\label{eq:informal_mart}
\mathbb{E}\left[g(V_0)\left(f(V_{\tau N})-f(V_0)-\int_0^\tau\left(2\nabla f(V_{tN})\cdot \Lambda(V_{tN})+\text{Hess}f(V_{tN}):D(V_{tN})\right)dt\right)\right]=O\left(\frac{\tau}{N}\right),
\end{equation}
 The rigorous proof can be found in Proposition~\ref{prop:martingale_formulation} in Section~\ref{sec:martingale_formulation}. We obtain that any limit of $((V_{\tau N})_\tau)_N$ is a solution to the martingale problem associated with $(\mathcal{V}_\tau)_\tau$.
Combined with some standard tightness estimates (see Section~\ref{sec:tight}), \eqref{eq:informal_mart} is enough to identify the limiting process and to conclude on the convergence.

Below, we sketch a formal derivation of \eqref{eq:informal_mart}.
The starting point is the deterministic evolution  \eqref{eq:def_tagged}
\begin{align}
\mathbb{E}\left[g(V_0)(f(V_{\tau N})-f(V_0))\right]=&-\mathbb{E}\left[g(V_0)\int_0^{\tau N}\nabla f(V_t)\frac{1}{N}\sum_{i}\nabla\Phi(X_t-x^i_t)dt\right]\nonumber\\
=&-\hat{\mathbb{E}}\left[g(V_0)\int_0^{\tau N}\nabla f(V_t)\nabla\Phi(X_t-x^1_t)dt\right],
\label{eq:sketch_init}
\end{align}
using notation \eqref{eq: definition E chapeau} defining the non-symmetric expectation $\hat{\mathbb{E}}$.
A priori, the right hand side of \eqref{eq:sketch_init} is of order $O(\tau N)$, as $g$,$\nabla f$ and $\nabla \Phi$ are bounded. 
However, the mean of $\nabla \Phi$ is equal to $0$, thus, by integrating over $x^1$, we can hope to get rid of this leading order term. 
For this we need to study the dependency between the tagged particle and a typical background particle. 
We therefore construct $(\overline{X}_t, \overline{V}_t)_t$, which is the process $(X_t,V_t)_t$ in which we delete the influence of particle $1$ by removing the force term $\frac{1}{N}\nabla\Phi(X_t-x^1_t)$ from the dynamics (see \eqref{eq: dynamique barres}). 
At the main order, we expect that the trajectories are well approximated by 
\begin{equation}
\label{eq:sketch_approx_back}
x^1_t=x^1_0+tv^1_0+O\left(\frac{t^2}{N}\right)
\quad \text{and} \quad  
X_t=\overline{X}_t+O\left(\frac{t^2}{N}\right),\qquad V_t=\overline{V}_t+O\left(\frac{t}{N}\right).
\end{equation}
Indeed, to obtain \eqref{eq:sketch_approx_back}, it is sufficient to notice that
\begin{align}
\label{eq:1/N_approx_tagged}
\frac{d}{dt}v^1_t=\frac{1}{N}\nabla\Phi(X_t-x^1_t)=O\left(\frac{1}{N}\right),
\qquad
\frac{d}{dt}\left(V_t-\overline{V}_t\right)\simeq-\frac{1}{N}\nabla\Phi(X_t-x^1_t)=O\left(\frac{1}{N}\right).
\end{align}
The rigorous proof of these estimates is the subject of Lemma~\ref{lem:influence_une_particule}, and requires also controlling the influence of particle $1$ on the rest of the background particles due to their mutual interaction with the tagged particle.
Assuming that the background particle interacts for a time of order $1$, the leading order in \eqref{eq:sketch_init} is determined by the approximation \eqref{eq:sketch_approx_back} which are 
two independent trajectories
\begin{align}
\hat{\mathbb{E}}\left[g(\overline{V}_0)\int_0^{\tau N}\nabla f(\overline{V}_t)\nabla\Phi(\overline{X}_t-x^1_0 - t v_0^1)dt\right]
= 0,
\label{eq: main order =0}
\end{align}
where we used that $x^1_0$ is uniformly distributed and $\int \nabla \Phi(x) dx = 0$.

To complete the derivation  \eqref{eq:informal_mart}, one has also to take into account the correction of order $1/N$ from \eqref{eq:1/N_approx_tagged} to identify the diffusion coefficient $D$ and the friction coefficient $\Lambda$. 
We expect from  \eqref{eq:1/N_approx_tagged} that 
\begin{align*}
\nabla f(V_t)=&\nabla f(\overline{V}_t)+\text{Hess}f(\overline{V}_t)(V_t-\overline{V}_t)+O\left(\frac{1}{N^2}\right),\\
=&\nabla f(\overline{V}_t)-\text{Hess}f(\overline{V}_t)\int_0^t\frac{1}{N}\nabla\Phi(\overline{X}_s-x^1_0-sv^1_0)ds+O\left(\frac{1}{N^2}\right)
\end{align*}
where we again used $\frac{d}{dt}\left(V_t-\overline{V}_t\right)\simeq-\frac{1}{N}\nabla\Phi(X_t-x^1_t)\simeq-\frac{1}{N}\nabla\Phi(\overline{X}_t-x^1_0-tv^1_0)$. Likewise 
\begin{align*}
& \nabla \Phi(X_t-x^1_t)=\nabla\Phi\left(\overline{X}_t-x^1_0-tv^1_0\right)+\text{Hess}\Phi\left(\overline{X}_t-x^1_0-tv^1_0\right)\left((X_t-\overline{X}_t)-(x^1_t-x^1_0-tv^1_0)\right)\\
&\qquad \qquad \qquad +O\left(\frac{1}{N^2}\right)\\
& \qquad = \nabla\Phi\left(\overline{X}_t-x^1_0-tv^1_0\right)-2\text{Hess}\Phi\left(\overline{X}_t-x^1_0-tv^1_0\right)\int_0^t\int_0^s\frac{1}{N}\nabla\Phi(\overline{X}_u-x^1_0-uv^1_0)duds\\
&\qquad \qquad \qquad
+O\left(\frac{1}{N^2}\right).
\end{align*}
Plugging these estimates back into \eqref{eq:sketch_init}, we obtain
after neglecting the main order \eqref{eq: main order =0}
\begin{align}
\label{eq: rough Taylor estimate}
\mathbb{E}&\left[g(V_0)(f(V_{\tau N})-f(V_0))\right]\\
&\quad= \frac{2}{N}\hat{\mathbb{E}}\left[g(V_0)\int_0^{\tau N}\nabla f(\overline{V}_t)\text{Hess}\Phi\left(\overline{X}_t-x^1_0-tv^1_0\right)\int_0^t\int_0^s\nabla\Phi(\overline{X}_u-x^1_0-uv^1_0)dudsdt\right] \nonumber\\
&\quad+\frac{1}{N}\hat{\mathbb{E}}\left[g(V_0)\int_0^{\tau N} \text{Hess}f(\overline{V}_t)\int_0^t\nabla\Phi(\overline{X}_s-x^1_0-sv^1_0)ds\nabla\Phi\left(\overline{X}_t-x^1_0-tv^1_0\right)dt\right]
+O\left(\frac{\tau}{N}\right). \nonumber
\end{align}
The second term, involving $\text{Hess}f$, will be identified as the diffusion coefficient $D$ introduced in \eqref{def:diffusion}.
It corresponds to the variance of the random force  alluded in \eqref{eq: TCL force} and takes into account the time correlations.
The term involving $\nabla f$ describes the friction $\Lambda$ arising from the collisions with the gas (see \eqref{def:drift}).
To understand intuitively the friction term, note that 
a gas particle will typically be deflected by an order $1/N$ while interacting with the tagged particle. This small deviation from a straight line induces a drag force as explained in Figure \ref{fig:illustration_friction}.

This justifies the identity  \eqref{eq:informal_mart} provided that 
a background particle interacts for a short time and only once. 
As explained below, the core of the proof is to control the correlations between the tagged particle and a typical background particle. 


\begin{figure}[H]
    	\centering
	\includegraphics[width=0.6\linewidth]{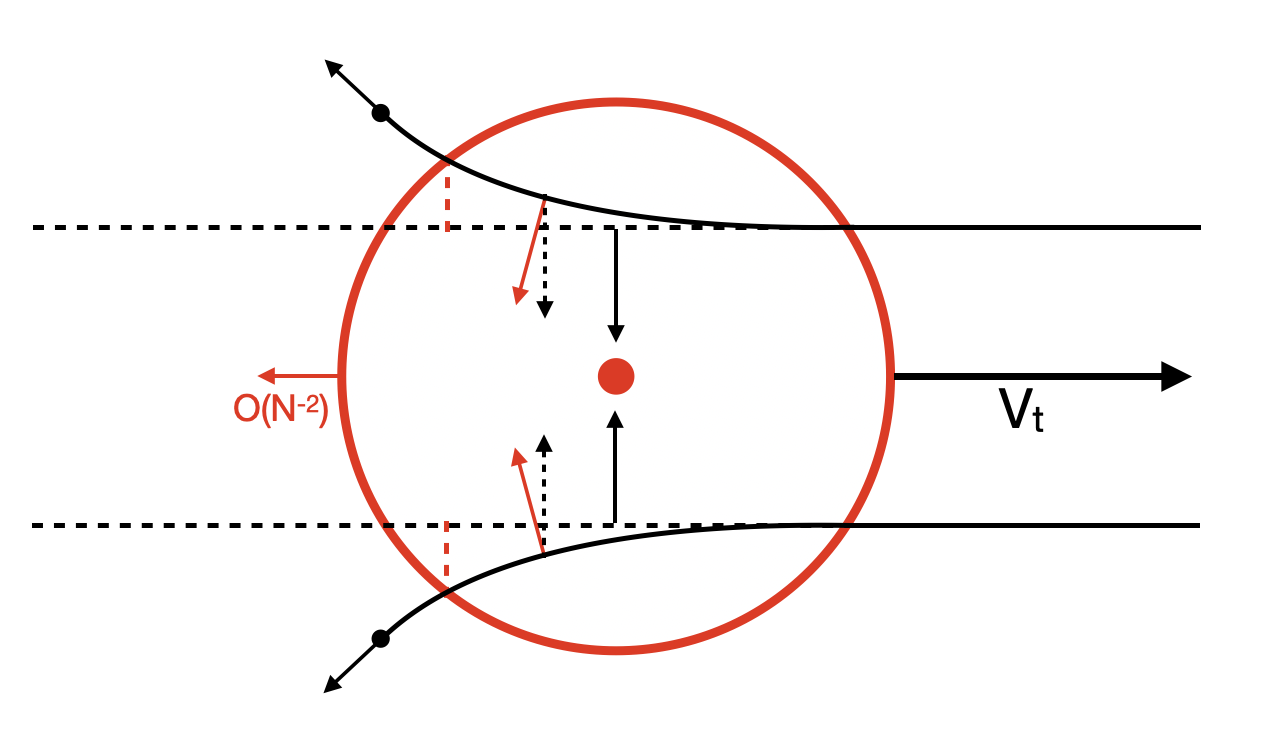}
	\caption{
    { \small \textbf{Appearance of a friction.}
    This figure depicts two symmetric trajectories  of background particles (in black) deflected by a repulsive interaction with the tagged particle (in red). 
    The main contribution of the force (solid black arrow, of order $N^{-1}$) created by a background particle cancels out on average. This leads to a random force of mean 0 and variance $1/N$ acting on the tagged particle. However, the deviation of order $N^{-1}$ of the background particles shifts slightly  this force (solid red arrow) and ultimately induces a friction of order $N^{-2}$. Since during a time interval $[0,t]$ the tagged particle meets $\sim Nt$ background particles (each yielding this tiny force of strength $N^{-2}$), for $t\simeq N$, a friction force of order $1$ appears. 
    }}
	\label{fig:illustration_friction} 
\end{figure}


\paragraph{Sources of correlation.} 

If the tagged particle encounters a background particle with a similar velocity, then correlations will build up between these two particles as they may interact for a very long time, leading to some resonance effect. 
We stress that this feature doesn't arise in  a Lorentz gas where the scatterers are fixed (see e.g. \cite{DGL87}).
The second source of correlations is due to particles recolliding after having left the interaction radius (see Figure~\ref{Fig:time_correl}).
A major difficulty in this paper is to show that these two types of correlations will not contribute to the evolution of the tagged particle in the large $N$ limit.


\begin{figure}[H]
    	\centering
    \includegraphics[width=0.9\linewidth]{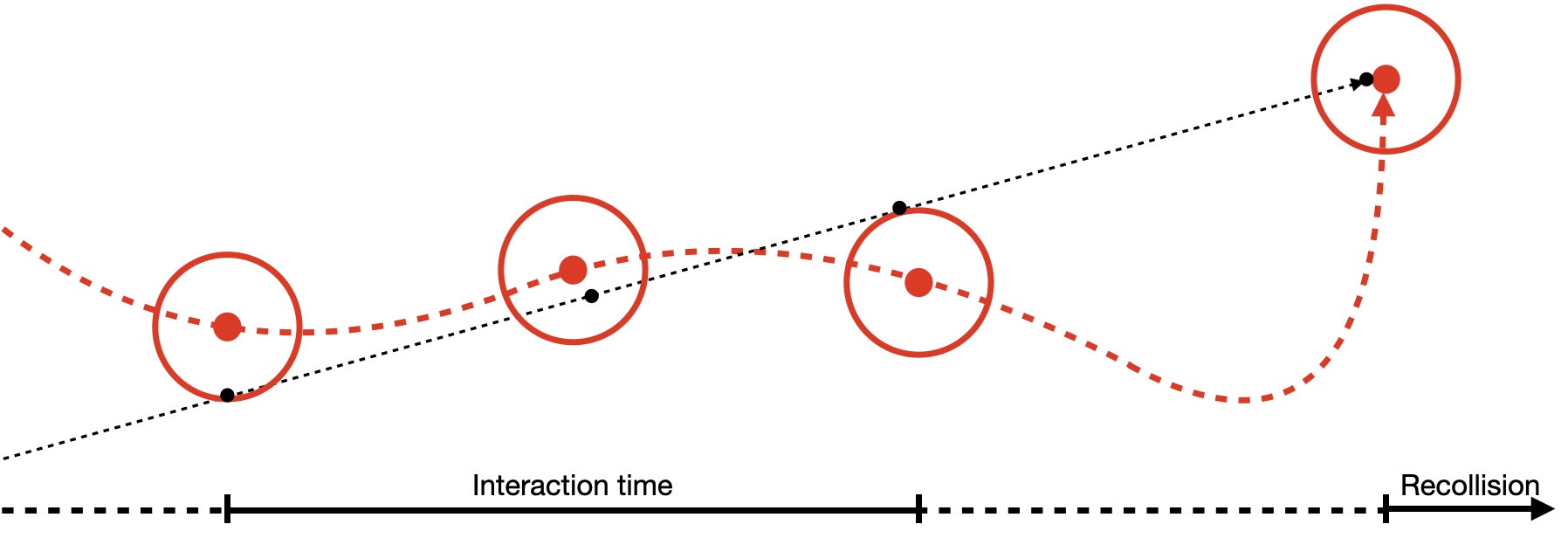}
    \caption{\small{\textbf{Sources of time correlation.} A background particle interacting for a long time, and a background particle recolliding. Both the interaction time and the first possible recollision time are controlled by the relative velocity.}}
    \label{Fig:time_correl}
\end{figure}

\paragraph{The timescale $N^{1/4}$.} 

To estimate the influence of the background particle $1$ on the tagged particle, we consider the modified process $(\overline{X}_t, \overline{V}_t)_t$ which evolves according to the same rule as the tagged particle without interacting with particle $1$.
As already mentioned in \eqref{eq:sketch_approx_back}, we will compare the original evolution and the one of the modified process. 
This difference obeys the following approximate equation
\begin{align}
\frac{d^2}{dt^2}(X_t-\overline{X}_t)=&-\frac{1}{N}\sum_{i=1}^\mathcal{N}\nabla\Phi(X_t-x^i_t)+\frac{1}{N}\sum_{i=2}^\mathcal{N}\nabla\Phi(\overline{X}_t-x^i_t)\nonumber\\
\simeq&\left(-\frac{1}{N}\sum_{i=1}^\mathcal{N}\text{Hess}\Phi(X_t-x^i_t)\right)(X_t-\overline{X}_t)-\frac{1}{N}\nabla\Phi(\overline{X}_t-x^1_t).
\label{eq:sketch_gronwall}
\end{align}
Note that the (indirect) influence of $x^1$ on the evolution of the other background particles has been neglected for simplicity. 
As for the evaluation of the force on the tagged particle \eqref{eq: TCL force}, one expects that at any time the background particles are uniformly distributed so that with high probability (see Lemma~\ref{lem:max_value_drift}):
\begin{equation}
\label{eq:sketch_control}
 \left\|\frac{1}{N}\sum_{i}\text{Hess}\Phi(X_t-x^i_t)\right\|=O\left( \frac{1}{\sqrt{N}}\right).
\end{equation}
Thus \eqref{eq:sketch_gronwall} is a second order equation of the form $\ddot{x}(t)=a(t)x(t)+b(t)$ where $\|a(t)\|\leq 1/\sqrt{N}$ and $|b(t)|\leq1/N$. Through a Grönwall-like result, we obtain the  bound $|X_t-\overline{X}_t|=O\left(t^2/N\right)$ provided 
$t\leq N^{1/4}$ (see Lemma~\ref{lem:Grönwall_precis_alpha}).
We deduce that 
\begin{center}
\textit{$(X_t)$ can be approximated  by $(\overline{X}_t)$  on timescales much smaller than $N^{1/4}$.}
\end{center}
This provides a first estimate on the timescale at which the influence of a given background particle can be neglected and the Taylor expansion \eqref{eq: rough Taylor estimate} can be justified.
Beyond the timescale $N^{1/4}$, the naive Gronwall estimate \eqref{eq:sketch_gronwall} needs to be improved.

\smallskip

 If the tagged particle and particle $1$ start interacting at time $s$, then the interaction time  can be roughly estimated by the inverse of the relative velocity $1/|v^1_s-V_s|$. The smaller the relative velocity is, the longer the background particle may interact.
Typically, under the invariant measure, the relative velocity has a Gaussian distribution in dimension $d$, so that 
\begin{align}
\label{eq: initial contrainte vitesse}
\mathbb{P}\left[|v^1_0-V_0| < N^{-\gamma}\right]\lesssim N^{-d\gamma}.
\end{align}
For $\gamma >1/d$, this upper bound is much smaller than $1/N$ and this will be  sufficient to neglect the influence of the {\it slow} background particles, i.e. those with relative velocities much less than $N^{ - 1/d}$.
Indeed for $\gamma >1/d$, the estimate \eqref{eq: initial contrainte vitesse} implies that the number of slow background particles interacting during the time interval $[0,T]$ is less than $T \times N \times N^{-d\gamma} \ll N$ for $T = O( N)$. As a background particle induces a force of order $1/N$, the total force from the slow particles will be negligible on the macroscopic timescale $T = O( N)$.

Thus, it is enough to consider only the background particles with relative velocity $|v^1_0-V_0|$ of order at least $N^{-1/d}$. In dimensions $d \geq 5$, the corresponding interaction time is at most $\frac{1}{|v^1_0-V_0|} \leq N^{1/5}$ which is much shorter than the timescale $N^{1/4}$ on which we know how to control the correlations between these two particles. 
In this way, the Taylor expansion \eqref{eq: rough Taylor estimate} can be justified provided the two particles do not recollide after this first interaction (see Figure \ref{Fig:time_correl}).

\smallskip

The last step is to show that the probability of a recollision between the two particles is much smaller than $1/N$ and can therefore be neglected. 
For this, we use that, for a relative velocity large enough, particle $1$ will not only cross the trajectory of the tagged particle, but it will also move away far enough before the tagged particle can catch it up. Indeed, as already mentioned in \eqref{eq: TCL force}, 
it is easy to deduce a priori estimates (with high probability) on the tagged particle velocity (see Definition \ref{def:hyp_good_set})
\begin{equation}
\label{eq: TCL force bis}
\frac{d}{dt}V_t=-\frac{1}{N}\sum_{i\in\mathbb{N}} \nabla\Phi(X_t-x^i_t) = O \left( \frac{1}{\sqrt{N}} \right)
\quad \Rightarrow \quad 
X_t= X_0+tV_0+O\left(\frac{t^2}{\sqrt{N}}\right).
\end{equation}
Thus for times $t \ll N^{1/4}$ the tagged particle evolves almost in a straight line. 
If the relative velocity is large enough during the first interaction, the two particles will drift far apart before the fluctuations of the tagged particle velocity can lead to a recollision.
Once this initial dispersion effect has occurred, we  show in Section \ref{sec:recollision} that a recollision is very unlikely.

The previous heuristic arguments are enough to control the small relative velocities and the recollisions in dimensions $d \geq 5$, but fall short in smaller dimensions as the correlations have been estimated only for times much smaller than $N^{1/4}$. The next step is to improve on the timescale $N^{1/4}$.


\paragraph{The timescale $N^{1/3}$.}

We expect that the tagged particle is well behaved on much larger  timescales than $N^{1/4}$ and in particular it should evolve in straight-line motion  up to times of order $N^{1/3}$.
Indeed the random force of order $1/\sqrt{N}$ acting on the tagged particle \eqref{eq: TCL force bis} is constantly renewed  in time by the flux of new background particles so that a perfect averaging of the fluctuations in space and time should lead to 
\begin{align}
\label{eq: ideal averaging}
V_T - V_0 =
-\frac{1}{N}\int_0^T\sum_{i}\nabla \Phi(X_t-x^i_t)dt \simeq \sqrt{\frac{T}{N}}.
\end{align}  
Integrating again in time would then imply that
\begin{align}
\label{eq: ideal averaging}
X_T = X_0 + T V_0 + O \left( \frac{T^{3/2}}{N^{1/2}} \right).
\end{align}
At times of order $N^{1/3}$, the fluctuations start to kick in and the tagged particle trajectory can no longer be approximated by a straight-line motion.
We cannot obtain such a precise control, but we can still gain from the time averaging.
More precisely, we will show in Propositions~\ref{prop:fin_bootstrap} and ~\ref{prop:init_bootstrap} that 
for $\alpha=\frac{1}{4(d+2)}$ and some $\beta > 1/2$, 
we have for $T\leq N^{\beta}$
\begin{equation}
\label{eq:sketch_boot_init}
|V_T-V_0|=\left|\frac{1}{N}\int_0^T\sum_{i}\nabla \Phi(X_t-x^i_t)dt\right|\leq \sqrt{\frac{T}{N}}N^{\alpha}.
\end{equation}
The same calculations hold for $\left\|\frac{1}{N}\int_0^T\sum_{i}\text{Hess} \Phi(X_t-x^i_t)dt\right\|$ and lead to an improved Gronwall estimate in \eqref{eq:sketch_gronwall}.
Similarly as \eqref{eq: ideal averaging}, the maximal timescale provided by \eqref{eq:sketch_boot_init} is now $N^{(1-2\alpha)/3}$.
This threshold appears in several instances in this paper, see e.g.
Lemma~\ref{Lem: Better set estimates}.
This is enough to control the correlations on  timescales much larger than $N^{1/4}$ and therefore to conclude also in dimension $d = 4$.

To derive \eqref{eq:sketch_boot_init}, we decompose the background particles involved in \eqref{eq: ideal averaging} into two classes: the fast particles which are interacting for short times and the slow ones. 
Restricting the sum \eqref{eq:sketch_boot_init} to fast particles allow us to recover enough independence in time and an approximated martingale structure for which concentration type estimates can be applied.
The loss factor  $N^\alpha$ in \eqref{eq:sketch_boot_init} comes from the slow particles whose contribution is hard to average out but which are much more rare.


\paragraph{The issues in dimension $d=3$.} Throughout the proof, the main point is that slow background particles create time correlations that are difficult to handle in dimension $d=3$ for the following reasons:

\begin{enumerate}
\item Only the relative velocities much smaller than $N^{-1/3}$ can be discarded by the argument sketched after \eqref{eq: initial contrainte vitesse}. This means that in $d=3$, one has to consider background particles with interaction times of order at least $N^{1/3}$. This is out of reach with our current estimates \eqref{eq:sketch_boot_init}. In fact for such small relative velocities, the decoupling mechanism will be different than the one we described where the background and the tagged particle are crossing essentially in straight-line motions. For times of order $N^{1/3}$ the fluctuations of the tagged particle become relevant and they will be the main force leading to the separation of the two particles. It is not the background particle which exits the tagged one, as we claim previously, but
rather the opposite.
\item Once a background particle has interacted with the tagged particle, we have also to show that  the probability of a recollision is very small (less than $1/N$). 
For this we use heavily, in Proposition \ref{prop:proba_recol},  the dispersion after the first interaction to make sure that the two particles are far apart so that a recollision will be extremely unlikely.
For the estimates of Proposition \ref{prop:proba_recol} to remain valid in dimension $d=3$, it would be necessary to restrict attention to interactions with background particles having relative velocities much larger than
 than $N^{-1/4}$.
\end{enumerate}
In summary, the present estimates are insufficient to handle, in dimension  $3$,  the background particles interacting with relative velocities in $[N^{-1/3}, N^{-1/4}]$. 
Nonetheless, we write the results assuming $d\geq3$ whenever possible.
Handling dimension $2$ would be even harder as the correlations are stronger.

%
%
%
%

\subsection{Notations}

The most peculiar notations the reader should get used to are the following:
\begin{center}
\fbox{
\begin{minipage}{0.9\textwidth}
\textbf{$\bullet$} For two functions $f$ and $g$ of the various parameters appearing in this article (denoted here and only here $\mathcal{Z}$), we write $f(\mathcal{Z})=O\left(g(\mathcal{Z})\right)$ if there exists a constant $C$, \textbf{depending only on $R$, $d$ and $\Phi$}, such that (possibly for $N$ large enough)
\begin{align*}
\forall \mathcal{Z},\qquad |f(\mathcal{Z})|\leq C|g(\mathcal{Z})|.
\end{align*}
This differs from the usual  notations in the fact that the control holds for any parameter $Z$, with an asymptotic that may only be taken in $N$. Similarly, for $h$ a parameter or a function, we write $f(\mathcal{Z})=O_h\left(g(\mathcal{Z})\right)$ if the constant $C$ given above depends on $h$. We also write $f(\mathcal{Z})=o\left(g(\mathcal{Z})\right)$ if both $f(\mathcal{Z})=O\left(g(\mathcal{Z})\right)$ and $\left|\frac{f(\mathcal{Z})}{g(\mathcal{Z})}\right|\xrightarrow[N\rightarrow\infty]{}0$.
\end{minipage}
}
\end{center}

\begin{itemize}
\item $\mathcal{N}$ is the (random) number of background particles in $\mathbb{T}=[-N^\eta,N^\eta]^d$, with $\eta>2$.
\item $X_t\in \mathbb{T}$ and $V_t\in\mathbb{R}^d$  are respectively the position and velocity of the tagged particle at time $t$, and we write $Z_t=(X_t,V_t)$.
\item $(x^i_t)_{i\in\{1,...,\mathcal{N}\}}\in \mathbb{T}^\mathcal{N}$ and  $(v^i_t)_{i\in\{1,...,\mathcal{N}\}}\in (\mathbb{R}^d)^\mathcal{N}$ are respectively the positions and velocities of the background particles at time $t$ when given the total number of particles $\mathcal{N}$. We may write $(x^i_t)_{i}$ and $(v^i_t)_{i}$ when $\mathcal{N}$ is not relevant. We also denote $x^{1:\mathcal{N}}_t=(x^i_t)_{i}$ and $v^{1:\mathcal{N}}_t=(v^i_t)_{i}$, as well as $x_{1:\mathcal{N}}=(x_1,..., x_\mathcal{N})$ and $v_{1:\mathcal{N}}=(v_1,..., v_\mathcal{N})$, usually for initial conditions. We finally denote $z_i=(x_i,v_i)$, and adapt all previous notations to $z$.
\item $(\overline{X}, \overline{V}, \overline{x}^{1:\mathcal{N}},  \overline{v}^{1:\mathcal{N}})$ are defined in \eqref{eq: dynamique barres} and studied Section~\ref{sec:influence_part}. They correspond to the processes for which we removed the influence of particle 1.
\item $\mathcal{B}(x,R)$ is the ball in $\mathbb{R}^d$ of center $x$ and radius $R$ (and thus also the ball in $\mathbb{T}$ since $R$ is significantly smaller than the size of $\mathbb{T}$).
\item $T_{m}:\mathbb{R}^d\times\mathbb{R}^d\mapsto\mathbb{R}$ is a function which gives a bound on the interaction time as a function of the velocities of the background particle and the tagged particle, defined in Definition~\ref{def:inter_time_0}.
\item $\mathcal{G}_N(\delta)$ (resp. $\mathcal{G}_N(\delta,\alpha, \beta)$) is the \textit{good} (resp. \textit{better}) set defined in Definition~\ref{def:hyp_good_set} (resp. Definition~\ref{def:hyp_boot}) and represents the set of initial conditions for which the various controls we require hold.
\item $\alpha$ and $\beta$ are given in Definition~\ref{def:hyp_boot} and characterize the control
\begin{align*}
|V_t-V_0|\leq \sqrt{\frac{t}{N}}N^\alpha\qquad \text{ for }\qquad t\leq N^{\beta}.
\end{align*}
 $\alpha^*, \beta^*$ will then be the best values of $\alpha, \beta$ we can obtain, given in Proposition~\ref{prop:fin_bootstrap}.
 \item $C^1_t$, defined in Section~\ref{sec:recollision}, is the recollision event.
\item For two d-dimensional vectors $X$ and $Y$, we denote $X\cdot Y:=\sum_{i=1}^d X_iY_i$ the standard Euclidean scalar product. We therefore also consider the usual Euclidean distance $|X|=\sqrt{X\cdot X}$. For a $d\times d$ real matrices $A,B$, we denote $\|A\|=\sup_{X\neq 0}\frac{|AX|}{|X|}$ and $A:B=\sum_{i,j=1}^dA_{i,j}B_{i,j}$.
\item For two d-dimensional vectors $X$ and $Y$, we denote $X\otimes Y$ the matrix defined by $(X\otimes Y)_{i,j}=X_iY_j$. We may also denote $X^{\otimes 2}=X\otimes X$. 
\item We denote, for $a,b\in\bbR$, $a\wedge b=\min(a,b)$ and $a\vee b=\max(a,b)$.
\item We define the first (positive) entry time of the first background particle as
\begin{align*}
\sigma^+_1:=\sigma^+_1(\mathcal{N}, Z_0, z_{1:\mathcal{N}}):=\inf \left\{t\geq0\text{ s.t. }x^1_t\in\mathcal{B}(X_t,R)\right\},
\end{align*}
and the first entry time in $[-N,N]$ as 
\begin{align*}
\sigma_1:=\sigma_1(\mathcal{N}, Z_0, z_{1:\mathcal{N}}):=\inf \left\{t\in[-N,N]\text{ s.t. }x^1_t\in\mathcal{B}(X_t,R)\right\}.
\end{align*}
\end{itemize}

%
%
%
%

\section{Convergence of the tagged particle}

In this section we prove both the tightness of the trajectories  $(V_{\tau N})_{\tau\in[0,1]}$ and the fact that the limit satisfies a martingale problem.

%
%
%
%

\subsection{Martingale problem}\label{sec:martingale_formulation}

Recall that Itô's formula applied to the limiting process \eqref{eq:eds_limit} reads for a sufficiently regular test function $f$ and times $\tau>\tau'$
\begin{align*}
f(\mathcal{V}_{\tau})-f(\mathcal{V}_{\tau'})=\int_{\tau'}^\tau \mathcal{L}f(V_s)ds+\mathcal{M}_{\tau',\tau},
\end{align*}
where
\begin{equation}\label{eq:def_gen_nl}
\mathcal{L}f(V)=2\nabla f(V)\cdot \Lambda(V)+\text{Hess}f(V):D(V),
\end{equation}
with $\Lambda(V)$ and $D(V)$ defined in \eqref{def:drift} and \eqref{def:diffusion} respectively, and $\mathcal{M}$ is a martingale independent of the trajectory before time $\tau'$. Our goal is to show that, in the limit $N\rightarrow\infty$, the process $(V_{\tau N})_{\tau\in[0,1]}$ satisfies the same property.

Below, we denote $\mathcal{C}^1_{bb}(\mathbb{R}^d,\mathbb{R})$ the set of bounded continuous functions with bounded continuous derivative.


\begin{proposition}\label{prop:martingale_formulation}
Assume $d\geq 4$. There exists $\omega\in]0,1]$ and $\gamma_r<\frac{1}{3}$ such that, for any $n\in\mathbb{N}$, any functions  $g_1,....,g_n,f:\mathbb{R}^d\mapsto \mathbb{R}$ satisfying
\begin{align*}
\forall i, g_i\in \mathcal{C}^1_{bb}(\mathbb{R}^d,\mathbb{R}),\qquad  f\in\mathcal{C}^3(\mathbb{R}^d,\mathbb{R}), \qquad \nabla f\in W^{2,\infty}(\mathbb{R}^d),
\end{align*}
 we have, for all $0\leq \tau_1<...<\tau_n\leq1$, for all $N\in\mathbb{N}$, and all $\tau_n\leq \tau_{n+1}\leq1$ (we assume, for the sake of simplicity, that only $\tau_{n+1}$ may depend on $N$)\footnote{Because we use this estimate later to prove equicontinuity results in Lemma~\ref{lem:equi}, we wish to be able to consider $(\tau_{n+1}-\tau_n)N$ to be as small as possible: when trying to use this proposition to prove that $\mathbb{E}[|V_{\tau N}-V_{\sigma N}|^4]=O\left(|\tau-\sigma|^{1+\epsilon}\right)$, errors of the form $(\tau-\sigma)/N^\omega$ cannot be bounded by $C|\tau-\sigma|^{1+\epsilon}$ with a constant $C$ that is independent of $N$ and $|\tau-\sigma|$. This therefore requires a more careful analysis, hence why we allow for $\tau_{n+1}$ to depend on $N$.}
\begin{align*}
\mathbb{E}&\left[\prod_{i=1}^ng_i(V_{\tau_iN})\left(f(V_{\tau_{n+1}N})-f(V_{\tau_nN})-\frac{1}{N}\int_{\tau_nN}^{\tau_{n+1}N}\mathcal{L}f(V_t)dt\right)\right]\\
&\qquad\qquad=O\left(\|\nabla f\|_{W^{2,\infty}}\prod_{i=1}^{n}( \|g_i\|_\infty+ \|\nabla g_i\|_\infty)\left(\frac{\tau_{n+1}-\tau_n}{N^\omega}+\left(\frac{N^{\gamma_r}}{N}\wedge (\tau_{n+1}-\tau_n)\right)\right)\right).
\end{align*}
\end{proposition}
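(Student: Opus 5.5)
We follow the strategy of the sketch: localise onto one background particle, Taylor-expand to second order in $1/N$, and bound the slow, long-interacting and recolliding particles separately. Write $G:=\prod_{i=1}^n g_i(V_{\tau_iN})$. Using \eqref{eq:def_tagged} and the non-symmetric expectation \eqref{eq: definition E chapeau}, we obtain
\begin{align*}
\mathbb{E}\left[G\big(f(V_{\tau_{n+1}N})-f(V_{\tau_nN})\big)\right]=-\int_{\tau_nN}^{\tau_{n+1}N}\hat{\mathbb{E}}\left[G\,\nabla f(V_t)\cdot\nabla\Phi(X_t-x^1_t)\right]dt .
\end{align*}
By stationarity of the grand canonical measure, we may shift time so that $t$ becomes the origin; for $g_0\neq1$ we absorb the density into $G$, since $\bbE$ with $g_0$ is $\bbE$ with $g_0=1$ weighted by $g_0(V_0)$. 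It then suffices to evaluate, for a fixed time $t$, the single-particle quantity $\hat{\mathbb{E}}[G\,\nabla f(V_t)\cdot\nabla\Phi(X_t-x^1_t)]$. We want to show that it equals $-\frac1N\hat{\mathbb{E}}[G\,\mathcal{L}f(V_t)]$ up to an error $O(N^{-1-\omega})$, which is uniform in $t$ on the good set.

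\textbf{Restricting to a good event.} We restrict to the good/better set $\mathcal{G}_N(\delta,\alpha,\beta)$, on which the bootstrap bound \eqref{eq:sketch_boot_init} and its Hessian analogue hold. Its complement has probability $O(N^{-k})$, which is absorbed by the bounded integrand and the moment bound \eqref{eq:borne_moment_N}. We then split particle $1$ according to its relative velocity $|v^1-V|$ at the entry time $\sigma_1$.
\begin{itemize}
\item \emph{Slow particles}, $|v^1-V|\le N^{-\gamma}$ with $\gamma>1/d$: by \eqref{eq: initial contrainte vitesse} they contribute $O(N^{-d\gamma})=O(N^{-1-\omega})$ per unit time.
\item \emph{Fast particles}: their interaction time is $T_m\le N^{\gamma}$. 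For $d\ge4$ we can choose $\gamma$ so that $N^\gamma\ll N^{(1-2\alpha)/3}$, the time scale on which Lemma~\ref{lem:influence_une_particule} applies.
\end{itemize}
For fast particles we also exclude the recollision event $C^1_t$, whose probability is $o(N^{-1-\omega})$ by Proposition~\ref{prop:proba_recol}. On the remaining event the particle interacts exactly once, during a window of length $T_m$ before $t$.

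\textbf{Expansion around the modified process.} On this event we replace $(X,V)$ by the process $(\overline X,\overline V)$ without particle $1$, and $x^1_s$ by the straight line $x^1_\sigma+(s-\sigma)v^1_\sigma$, using the second-order expansions of Lemma~\ref{lem:influence_une_particule} as in \eqref{eq: rough Taylor estimate}.
\begin{itemize}
\item The zeroth-order term vanishes after integrating the uniform position of particle $1$, since $\int\nabla\Phi=0$. Here we need that $G$ and $\nabla f(\overline V_t)$ do not depend on particle $1$; this holds only for the barred process. For the factors $g_i(V_{\tau_iN})$ with $\tau_iN$ in the past, we replace $V$ by $\overline V$ at cost $\|\nabla g_i\|_\infty\,O(T_m/N)$, which enters at the next order. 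This is why $\|\nabla g_i\|_\infty$ appears in the bound.
\item The first-order terms give $\frac1N(2\nabla f\cdot\Lambda+\text{Hess}f:D)(\overline V_t)$. To see this, change variables to $x=\overline X_t-x^1_t$ and relative velocity $v$, with $\gamma(v+V)$ arising from the Gaussian law of $v^1$. Then extend the time integrals $\int_{t-T_m}^t$ to $\int_{-\infty}^t$; there is no error in doing so, since $\nabla\Phi$ vanishes outside the interaction window. Finally replace the curved $\overline X$ by a straight line over the window, at cost $O(T_m^2\sqrt{N^{-1}}N^{\alpha}T_m^{-1/2})$, which is small.
\end{itemize}

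\textbf{Remainders and the short-interval regime.} The remainders are $O(T_m^4/N^2)$, integrated against the velocity density; since $T_m\le|v^1-V|^{-1}$ is integrable in $d\ge4$ after the cutoff, this gives $O(N^{-1-\omega})$ per unit time. Integrating over $[\tau_nN,\tau_{n+1}N]$ yields the $(\tau_{n+1}-\tau_n)N^{-\omega}$ term. The term $\frac{N^{\gamma_r}}{N}\wedge(\tau_{n+1}-\tau_n)$ comes from times $t$ within $N^{\gamma_r}$ of $\tau_nN$ or $\tau_iN$. There a particle's interaction straddles a conditioning time, so the factorisation fails, and we bound the contribution crudely by $O(1/N)$ per unit time. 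Since $\gamma_r<1/3$ ensures $N^{\gamma_r}$ stays below the bootstrap time scale, this gives $O(\min(N^{\gamma_r},(\tau_{n+1}-\tau_n)N)/N)$.

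\textbf{Main obstacle.} We expect the hardest step to be justifying the second-order expansion with $N^{-1-\omega}$ precision uniformly over fast particles. This needs Lemma~\ref{lem:influence_une_particule} on times up to $N^{\gamma}$, which is close to $N^{1/4}$. It also needs the indirect influence of particle $1$ through the other background particles to be controlled, via the improved Grönwall estimate from the bootstrap (Lemma~\ref{lem:Grönwall_precis_alpha}). The plan is to fix $\gamma$ just above $1/d$ and choose $\omega$ small enough that all exponent conditions hold simultaneously for $d=4$.
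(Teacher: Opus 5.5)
Your overall architecture matches the paper: reduce to particle $1$ via $\hat{\mathbb E}$, and restrict to configurations with $|v^1_t-V_t|\ge N^{-\gamma_r}$, no recollision and better-set controls. Then compare with the process $\overline Z$ without particle $1$, cancel the zeroth order using $\int\nabla\Phi=0$, read off $\Lambda$ and $D$ from the first-order terms, and bound the remainders by moments of $T_m$.

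\textbf{Decoupling the past.} This step is not handled correctly. If you shift time so that $t$ is the origin, the comparison process \eqref{eq: dynamique barres} coincides with $Z$ at time $t$. Then $\nabla f(V_t)=\nabla f(\overline V_t)$ and $\nabla\Phi(X_t-x^1_t)=\nabla\Phi(\overline X_t-\overline x^1_t)$ exactly, so the corrections that should produce $\text{Hess}f:D$ and $2\nabla f\cdot\Lambda$ vanish. The entire first-order term then sits in $G-\overline G$, which is $O(T_m/N)$ for \emph{every} conditioning time preceding the interaction. Integrated over $t\in[\tau_nN,\tau_{n+1}N]$ this is $O(\tau_{n+1}-\tau_n)$, the same order as $\frac1N\int\mathcal Lf$, so it is not ``next order''. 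Moreover it comes out as $\nabla g_i$-terms, which are not of the form $G\,\mathcal Lf$.

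What you need is to anchor $\overline Z$ at a deterministic time preceding the interaction, so that $\overline Z$ is independent of particle $1$ and equals $Z$ up to $\sigma^+_1$. The paper puts the origin between $\tau_{n-1}N$ and $\tau_nN$. It then uses property (iii) of Lemma~\ref{lem:abstract_tout_va_bien} on all of $[-N,N]$ to get $g_i(V_{-t_i})=g_i(\overline V_{-t_i})$ exactly for $i\le n-1$. Only $g_n(V_{t_n})$ costs $O(T_m/N)$, and only when $|t-t_n|\le 9RT_m\le 9RN^{\gamma_r}$; this is exactly the source of the $\frac{N^{\gamma_r}}{N}\wedge(\tau_{n+1}-\tau_n)$ term. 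The times $\tau_iN$ with $i<n$ are at distance of order $N$ and play no role. Your ``straddling'' paragraph describes this mechanism, but it contradicts your first paragraph. Also, even inside that window the raw integrand is $O(1)$, not $O(1/N)$: you only get $O(1/N)$ after replacing $g_n(V_{\tau_nN})$ by $g_n(\overline V_{\tau_nN})$ and using the cancellation.

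\textbf{Removing the good-event indicator.} You never remove this indicator before integrating over $(x_1,v_1)$. The event (velocity cutoff, no recollision, better-set controls) depends on particle $1$. While it is present, neither $\int\nabla\Phi=0$ nor the change of variables giving $\tilde\Lambda$ and $\tilde D$ applies.

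The paper needs Lemma~\ref{lem:mart_toutvabien_avec_bar}, which gives $O(N^{-1-\omega'})$ bounds in two cases:
\begin{itemize}
\item the bad set restricted to $|\overline x^1_t-\overline X_t|\le R$;
\item the analogous set $\widehat{\mathfrak{G}_N(t)}$, built from $\overline Z$ and the straight line $\overline x^1$.
\end{itemize}
The first-order integrands are only bounded by $|\overline V_t-v_1|^{-2}$, so dropping the indicator there also needs a H\"older step (the paper takes $p=5/4$). Only after this can you extend the time integrals to $-\infty$. The resulting error $O(t^{-(d-2)}\wedge1)$ from Lemma~\ref{lem:reecrire_drift} is harmless because the origin is at distance of order $N$.

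\textbf{Smaller points.}
\begin{itemize}
\item In $d=4$ with $\gamma_r>1/4$, Lemma~\ref{lem:influence_une_particule} is not enough: it is first order only and valid up to $N^{(1-\delta)/4}$. You need Lemma~\ref{Lem: Better set estimates}, which rests on Lemma~\ref{lem:gronw_opt_racine}. Its indirect-influence errors $\mathcal E_1,\mathcal E_2$ bring moments $T_m^k$ with $k$ up to $6$ into the remainder, not just $T_m^4/N^2$.
\item The target is $-\frac1N\mathbb E[G\,\mathcal Lf(V_t)]$, not $\hat{\mathbb E}[\cdot]$. The conversion happens only after integrating out $(x_1,v_1)$.
\end{itemize}
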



\begin{remark}\label{rk:dep_norme}
Note that we could be more precise in the dependence of the error on the norms of the various test functions.
Indeed, the parameter $\omega$ comes, as explained below, from a control on recollisions, which is independent of the second and third derivatives of the test function $f$ (and only multiplies $\|\nabla f\|_\infty$ and $\|g_i\|_\infty$). 
On the contrary, the bound $\|\nabla f\|_{W^{2,\infty}}$ appears in a Taylor expansion, and multiplies an error term coming from the control on the interaction time (see Lemma~\ref{lem:abstract_approx}). It is  in fact divided by $N^{\Omega}$ for a constant $\Omega>\omega$ coming from \eqref{eq:erreur_T_m_k_negli}.
\end{remark}


\begin{proof}[Proof of Proposition~\ref{prop:martingale_formulation}]
 Up to adding a test function $g_0$ at time $\tau_0=0$, the initial velocity distribution $g_0$ can be replaced by $g_0=1$.
Fix $n,N\in\mathbb{N}$, $0\leq \tau_1<...<\tau_n<\tau_{n+1}\leq1$ and some functions $g_1,....,g_n,f$ satisfying the assumptions of the proposition. Assume without loss of generality that $\|\nabla f\|_{W^{2,\infty}}=1$, and that for all $i\in\{1,...,n\}$ we have $\|g_i\|_\infty+\|\nabla g_i\|_\infty =1$.

The first thing we do, for the sake of clarity, is shifting the time so as to set the $0$ in between the times $\tau_{n-1}N$ and $\tau_nN$. This way we clearly separate the negative times, i.e. the past of the trajectory (involved in the computations of $g_i(V_{\tau_iN})$ for $i<n$), from the martingale increment $g_n(V_{\tau_nN})(f(V_{\tau_{n+1}N})-f(V_{\tau_nN}))$ for positive times. As shown later, we are able to ensure that a background particle involved in the evolution of the function $f$ (i.e. the martingale increment) actually does not interact with the trajectory for negative times.

Denote for all $i\in\{1,...,n-1\}$ the microscopic time $t_i=-(\tau_i-\tau_{n}+\frac{\tau_{n}-\tau_{n-1}}{2}) N$ and, for $i\in\{n,n+1\}$, similarly $t_i=(\tau_i-\tau_{n}+\frac{\tau_{n}-\tau_{n-1}}{2}) N$ (we just change signs so that all $t_i$ are nonnegative, and thus we may more easily distinguish between positive and negative times below).  Using the time invariance of the process, we have
\begin{align*}
\mathbb{E}\left[\prod_{i=1}^ng_i(V_{\tau_iN})\left(f(V_{\tau_{n+1}N})-f(V_{\tau_nN})\right)\right]=\mathbb{E}\left[\left(\prod_{i=1}^{n-1}g_i(V_{-t_i})\right)g_n(V_{t_n})\left(f(V_{t_{n+1}})-f(V_{t_n})\right)\right].
\end{align*}
From the evolution equations \eqref{eq:def_tagged} and \eqref{eq:def_process_back} we get
\begin{align}
\mathbb{E}&\left[\left(\prod_{i=1}^{n-1}g_i(V_{-t_i})\right)g(V_{t_n})\left(f(V_{t_{n+1}})-f(V_{t_{n}})\right)\right]\nonumber\\
&\qquad=-\mathbb{E}\left[\left(\prod_{i=1}^{n-1}g_i(V_{-t_i})\right)g_n(V_{t_n})\int_{t_n}^{t_{n+1}}\nabla f(V_t)\left(\frac{1}{N}\sum_{i}\nabla\Phi(X_t-x^i_t)\right)dt\right]\nonumber\\
&\qquad=-\hat{\mathbb{E}}\left[\left(\prod_{i=1}^{n-1}g_i(V_{-t_i})\right)g_n(V_{t_n})\int_{t_n}^{t_{n+1}}\nabla f(V_t)\nabla\Phi(X_t-x^1_t)dt\right],
\label{eq:depart_calcul}
\end{align}
where the notation $\hat{\mathbb{E}}$ was introduced in \eqref{eq: definition E chapeau}.

In this expectation, the relevant contributions come from configurations such that the tagged particle and particle $1$ are interacting for a limited amount of time. More precisely, denoting by $\sigma^+_1$ the first (positive) interaction time 
\begin{align}\label{eq:def_tps_entree}
\sigma^+_1=\inf\{t\geq 0,\quad |X_t-x^1_t|\leq R\},
\end{align}
we will show that it is enough to consider microscopic configurations in a good set $\mathfrak{G}_N$ such that the interaction between these two particles up to time $t_{n+1}$ is limited to a time interval of the form $[\sigma^+_1, \sigma^+_1+ N^{a}]$ for some $a>0$.
The interaction time depends on the relative velocities, thus we introduce the following notations.


\begin{definition}[Interaction time]\label{def:inter_time_0}
Define for $v,V\in\mathbb{R}^d$
\begin{equation}
\label{eq:def_inter_time 0}
T_m(v,V)=T_m(|v-V|)=\frac{1}{|v-V|}.
\end{equation}
\end{definition}

The following lemma defines a set of good configurations for which the interaction is well controlled.


\begin{lemma}
\label{lem:abstract_tout_va_bien}
 Recall that $d \geq 4$. There exist $\omega'>0$ and sets $\mathfrak{G}_N(t)$ of initial conditions indexed by $t$ in $[0,N]$ such that
\begin{equation}
\label{eq:proba_abstract_good}
\hat{\mathbb{E}}\left[\left(1-\mathds{1}_{\mathfrak{G}_N(t)}\right)\mathds{1}_{|x^1_t-X_t|\leq \frac{3}{2}R}\right]=O\left(N^{-(1+\omega')}\right),
\end{equation}
and such that for $(\mathcal{N},Z,z_{1:\mathcal{N}})\in\mathfrak{G}_N(t)$, we have 
\begin{enumerate}[label=(\roman*)]
\item  particle $1$ is in the (slightly extended) interaction radius at time $t$, i.e. $|x^1_t-X_t|\leq \frac{3}{2}R$, 
\item  its relative speed is such that $|v^1_t-V_t|\geq N^{-\gamma_r}$, for $\gamma_r=\frac{1}{4}+\frac{1}{36}<\frac{1}{3}$ (thus satisfying $d\gamma_r>1$),
\item  particle $1$ interacts for a limited amount of time, in the sense that for $s\in[-N,N]$ such that 
\begin{equation}
\label{eq: point iii}
|s-t|\geq 9 R \, T_m(v^1_t,V_t) = 9R|v^1_t-V_t|^{-1} \quad \text{then} \quad  |x^1_s-X_s|>\frac{3}{2}R,
\end{equation}
\item the controls from Lemma~\ref{lem:abstract_approx} below hold true.
\end{enumerate}
\end{lemma}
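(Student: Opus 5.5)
The plan is to build $\mathfrak{G}_N(t)$ as an intersection of explicit events and to bound the $\hat{\mathbb{E}}$-weight of each complement, restricted to $\{|x^1_t-X_t|\leq \frac32 R\}$, by $N^{-(1+\omega')}$. By stationarity of the grand canonical measure (for $g_0=1$; a general $g_0$ only changes constants since $g_0$ is bounded), we may shift time so that $t$ becomes $0$. The window $[-N,N]$ then becomes $[-N-t,N-t]\subset[-2N,2N]$.

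Concretely we set
\[
\mathfrak{G}_N(t)=\{|x^1_t-X_t|\leq \tfrac32 R\}\cap\{|v^1_t-V_t|\geq N^{-\gamma_r}\}\cap \mathcal{G}_N(\delta,\alpha,\beta)\cap (C^1)^c\cap \mathcal{A},
\]
where $\mathcal{G}_N(\delta,\alpha,\beta)$ is the better set of Definition~\ref{def:hyp_boot}, $C^1$ is the recollision event of Section~\ref{sec:recollision}, and $\mathcal{A}$ is the set on which the controls of Lemma~\ref{lem:abstract_approx} hold. Item (i) then holds by construction.

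For item (ii), under $\hat{\mathbb{E}}$ the relative velocity $v^1_t-V_t$ has a bounded Gaussian-type density, up to the factor $e^{-\Phi/N}=1+O(1/N)$. Particle $1$ is confined to a ball of volume $O(1)$, and $\mathcal{N}/N$ has bounded moments by Lemma~\ref{lem:N_Poisson}. Hence
\[
\hat{\mathbb{E}}\big[\mathds{1}_{|x^1_t-X_t|\leq \frac32R}\mathds{1}_{|v^1_t-V_t|<N^{-\gamma_r}}\big]=O(N^{-d\gamma_r}),
\]
and $d\gamma_r\geq 4(\tfrac14+\tfrac1{36})>1$, which gives a first admissible $\omega'$. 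The complement of the better set has probability $O(N^{-k})$ for large $k$ by Propositions~\ref{prop:fin_bootstrap} and~\ref{prop:init_bootstrap}, and Cauchy--Schwarz with the moment bound \eqref{eq:borne_moment_N} handles the extra factor $\mathcal{N}/N$ in $\hat{\mathbb{E}}$.

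For item (iii), split into a short-time and a long-time part.
\begin{itemize}
\item \emph{Short times}, $|s-t|\leq N^{(1-2\alpha)/3-\epsilon}$. On the better set, Lemma~\ref{Lem: Better set estimates} gives $X_s=X_t+(s-t)V_t+o(1)$, and Lemma~\ref{lem:influence_une_particule} gives $x^1_s=x^1_t+(s-t)v^1_t+O((s-t)^2/N)$. So the relative position is $(s-t)(v^1_t-V_t)$ up to small errors. This has modulus at least $9R-\frac32R-o(1)>\frac32R$ once $|s-t|\geq 9R|v^1_t-V_t|^{-1}$. Such $s$ lie in the short-time regime because $9R\,N^{\gamma_r}\ll N^{(1-2\alpha)/3}$, using $\gamma_r<\frac13$ and $\alpha$ small for $d\geq4$.
\item \emph{Long times}, up to $|s-t|\leq 2N$. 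A return into $\mathcal{B}(X_s,\frac32R)$ is exactly the recollision event $C^1$. Proposition~\ref{prop:proba_recol} bounds its weight by $O(N^{-(1+\omega'')})$, given the initial dispersion established in the short-time step.
\end{itemize}

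Item (iv) is obtained by intersecting with the event where Lemma~\ref{lem:abstract_approx} holds, which that lemma asserts has complement $O(N^{-(1+\Omega)})$. Finally take $\omega'=\min(d\gamma_r-1,\omega'',\Omega)$.

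The main obstacle is the recollision estimate, i.e. applying Proposition~\ref{prop:proba_recol} uniformly over $[-N,N]$ in both time directions. Backward times can be handled by time reversibility: the dynamics is Hamiltonian, and the measure is invariant under $v\mapsto -v$. The delicate point is to check that the short-time dispersion window ($\gamma_r<1/3$ and $d\geq4$) matches the hypotheses of that proposition.
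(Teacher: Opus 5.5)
Your overall construction is the paper's. $\mathfrak{G}_N(t)$ is (i) $\cap$ (ii) $\cap$ $\mathcal{G}_N(\delta,\alpha^*,\beta^*)$ $\cap$ (no recollision). The complement is controlled by the small-ball bound $O(N^{-d\gamma_r})$, by Proposition~\ref{prop:fin_bootstrap} (with Cauchy--Schwarz for the factor $\mathcal{N}/N$ in $\hat{\mathbb{E}}$), and by Proposition~\ref{prop:proba_recol}. Item (iii) then comes from Lemma~\ref{lem:max_inter_time} and Proposition~\ref{prop:proba_recol}.

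The step that does not work as written is (iv). Lemma~\ref{lem:abstract_approx} contains no probability estimate. The exponent $\Omega$ of Remark~\ref{rk:dep_norme} measures the size of the $T_m^k$ error terms after integration (cf.\ \eqref{eq:erreur_T_m_k_negli}), not the weight of a bad set. Moreover, that lemma is itself stated for data in $\mathfrak{G}_N(t)$, so intersecting with ``the event where it holds'' is circular.

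What you need is that (iv) holds \emph{deterministically} on $\mathcal{G}_N(\delta,\alpha^*,\beta^*)\cap\{\text{(i)--(iii)}\}$. By (ii)--(iii), every interaction of particle $1$ with the tagged particle in $[-N,N]$ takes place in a window around $t$ of length at most $18R\,T_m(v^1_t,V_t)=O(N^{\gamma_r})$. Since $\gamma_r<\frac{1-2\alpha^*}{3}-\delta$ by \eqref{eq:gamma_r_suff_petit}, Lemma~\ref{Lem: Better set estimates} applies with $(s-\sigma^+_1)_+=O(T_m)$ and gives \eqref{eq:abstract_approx}--\eqref{eq:abstract_approx_X_prec}. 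The remaining bound \eqref{eq:abstract_variation_vitesse} is just \eqref{eq:control_diff_velocite_alpha}. Hence no extra event $\mathcal{A}$ is needed, and $\omega'=\min(d\gamma_r-1,\omega'')$.

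A minor point on your short-time step. The straight-line approximations there come from \eqref{eq:control_diff_velocite_alpha} and from $|\dot v^1|\le\|\nabla\Phi\|_\infty/N$, exactly as in the proof of Lemma~\ref{lem:max_inter_time}. They do not come from Lemmas~\ref{Lem: Better set estimates} and~\ref{lem:influence_une_particule}, which compare $Z$ with $\overline{Z}$.

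That step is also redundant. $C^1_t$ already contains every $s$ with $|s-t|\ge 6R/|v^1_t-V_t|$. Proposition~\ref{prop:proba_recol} is self-contained: it obtains the no-return window up to $t_{rec}(N^{-\gamma_r})$ from Lemma~\ref{lem:max_inter_time} internally, and needs no dispersion input from you. After replacing $R$ by $\frac32R$ and using time reversibility for $s<t$, it gives (iii) directly.
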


 Note that Lemma~\ref{lem:abstract_tout_va_bien} (iii) ensures two things. First, as stated, it gives a bound on the interaction time. Second, and more importantly, it implies that particle $1$ does not recollide with the tagged particle before or after its interaction time frame. Proving this point is actually the main technical aspect of this work and most of this paper is devoted to the construction of the sets $\mathfrak{G}_N(t)$.
 The proof below of Lemma \ref{lem:abstract_tout_va_bien} relies on the results in Sections  \ref{sec: The Good and the Better set} and \ref{sec: Some useful controls}. It can be skipped in a first reading as the properties (i)-(iv) of the sets are enough to derive the martingale property.


\begin{proof}[Proof of Lemma~\ref{lem:abstract_tout_va_bien}]
Point (i) is an assumption on the configurations.
Note that if Point (ii) fails then \eqref{eq:proba_abstract_good} holds as $d\gamma_r>1$ and
\begin{align*}
\mathbb{P}\left[|v^1_t-V_t| \leq N^{-\gamma_r}\right]=\mathbb{P}\left[|v^1_0-V_0|\leq N^{-\gamma_r}\right]=O\left(N^{-d\gamma_r}\right).
\end{align*}
Assuming the condition on the relative velocity in Point (ii), then Point (iii) is the result of Lemma~\ref{lem:max_inter_time} and Proposition~\ref{prop:fin_bootstrap}~and~\ref{prop:proba_recol}  below (resp. proving the maximum interaction time, the probability of configurations belonging to a set satisfying enough controls, and the probability of recollision in said set)\footnote{In these results, we formally change $R$ into $\frac{3}{2}R$. The conclusions obviously remain the same.}. 
Point (iv) is the result of Lemma~\ref{Lem: Better set estimates}.
\end{proof}

For configurations in $\mathfrak{G}_N(t)$, the interaction time is short enough so that the weak forces between the tagged particle and particle 1 hardly modify their trajectories during the time interval ${[\sigma^+_1, \sigma^+_1+9R|v^1_t-V_t|^{-1}]}$.  In particular, the trajectory of particle 1 can be approximated by a straight line 
\begin{equation*}
\bar x^1_t=x_1+t v_1,
\end{equation*} 
and up to time $\sigma^+_1+ 9R|v^1_t-V_t|^{-1}$, the tagged particle remains close to the auxiliary process 
$(\bar Z_t)_t =(\overline{X}_t, \overline{V}_t)_t$ which is defined as  $(Z_t)_t $ but without particle 1, i.e.
\begin{align}
\label{eq: dynamique barres}
\begin{cases}
\overline{X}_0=X_0,\quad \overline{V}_0=V_0,\quad \frac{d}{dt}\overline{X}_t=\overline{V}_t\quad \text{ and }\quad \frac{d}{dt}\overline{V}_t=-\frac{1}{N}\sum_{i=2}^\mathcal{N} \nabla\Phi(\overline{X}_t-\overline{x}^i_t),\\
i \geq 2,\quad \overline{x}^i_0= x_i,\quad \overline{v}^i_0=v_i,\quad \frac{d}{dt}\overline{x}^i_t=\overline{v}^i_t\quad \text{ and }\quad \frac{d}{dt}\overline{v}^i_t=\frac{1}{N}\nabla\Phi(\overline{X}_t-\overline{x}^i_t).
\end{cases}
\end{align}
The trajectory $(\bar Z_t)_t$ depends only the initial data $(\mathcal{N}, Z_0,z_{2:\mathcal{N}})$ and coincides with $(Z_t)_t$ up to time $\sigma^+_1$. 
After $\sigma^+_1$, the tagged particle starts interacting with  particle 1 with a small force $\pm \frac{1}{N}\nabla\Phi(X_t-x^1_t)$ which will modify their trajectories and contribute to the expectation  \eqref{eq:depart_calcul}. The trajectory of particle 1 deviates from the  straight line as follows
\begin{align}
\label{eq:approx_trajectory_1}
x^1_t- \bar x^1_t =\int_0^t (v^1_s-v^1_0)ds=\frac{1}{N}\int_0^t ds \int_0^s du \, \nabla\Phi(X_u-x^1_u) .
\end{align}
The difference between $(Z_t)_t$ and $(\bar Z_t)_t$ is harder to control as the tagged particle induces also a retroaction on the other background particles. One can show that during the interaction time the leading contribution remains the one of the force $- \frac{1}{N}\nabla\Phi(X_t-x^1_t)$.

We can now state that the processes $(Z_t)_t$ and $(\bar Z_t)_t$ remain close in the following sense:


\begin{lemma}
\label{lem:abstract_approx}
Recall the notation \eqref{eq:def_inter_time 0} of the interaction time 
$T_m(v^1_t,V_t) =|v^1_t-V_t|^{-1}$ and 
fix $\delta>0$ arbitrarily small. 
Then for any  $t\in[-N,N]$ and an initial data in $\mathfrak{G}_N(t)$, for all $s\in[-N,N]$ such that $|s-t|\leq 9 R \, T_m(v^1_t,V_t)$,
the following holds 
\begin{align}
&\left|V_s-\overline{V}_s\right|=O\left(\frac{T_m(v^1_t,V_t)}{N}\right),\qquad \left|X_s-\overline{X}_s\right|=O\left(\frac{T_m(v^1_t,V_t)^2}{N}\right),\label{eq:abstract_approx}\\
&\left|V_s-\overline{V}_s+\frac{1}{N}\int_{0}^s\nabla\Phi(\overline{X}_u-\overline{x}^1_u)du  \right|
\leq \mathcal{E}_1 (N,v^1_t,V_t):= O\left(\frac{T_m(v^1_t,V_t)^3}{N^{\frac{3-\delta}{2}}}+\frac{T_m(v^1_t,V_t)^5}{N^{2}}\right),\label{eq:abstract_approx_V_prec}\\
&\left|X_s-\overline{X}_s+\frac{1}{N}\int_{0}^s\int_0^u\nabla\Phi(\overline{X}_w-\overline{x}^1_w)dwdu\right|
\leq \mathcal{E}_2 (N,v^1_t,V_t):= O\left(T_m(v^1_t,V_t)  \mathcal{E}_1 (N,v^1_t,V_t) \right).\label{eq:abstract_approx_X_prec}
\end{align}
Furthermore, for $\alpha=\frac{1}{4(d+2)}+\delta$, we have
\begin{align}
|V_t-V_s|=O\left(\sqrt{\frac{|t-s|}{N}}N^{\alpha}\right).
\label{eq:abstract_variation_vitesse}
\end{align}
\end{lemma}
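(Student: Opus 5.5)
We compare the full dynamics with the barred one through a linearised second-order equation, and use the controls built into $\mathfrak{G}_N(t)$ to make the Grönwall argument last over the window $|s-t|\le 9R\,T_m$. Write $T:=T_m(v^1_t,V_t)\le N^{\gamma_r}$, which holds by point (ii) of Lemma~\ref{lem:abstract_tout_va_bien}.

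\textbf{Step 1: the difference equation.} Set $\Delta X_s=X_s-\overline{X}_s$, $\Delta V_s=V_s-\overline{V}_s$, and $\Delta x^i_s=x^i_s-\overline{x}^i_s$ for $i\ge2$. Subtracting \eqref{eq: dynamique barres} from \eqref{eq:def_tagged}--\eqref{eq:def_process_back} and Taylor expanding gives
\begin{align*}
\frac{d}{ds}\Delta V_s=-\frac1N\nabla\Phi(X_s-x^1_s)-\Big(\frac1N\sum_{i\ge2}\mathrm{Hess}\Phi(\overline{X}_s-\overline{x}^i_s)\Big)\Delta X_s+\frac1N\sum_{i\ge2}\mathrm{Hess}\Phi(\overline{X}_s-\overline{x}^i_s)\Delta x^i_s+O\Big(\frac1N\sum_{i\ge 2}\mathds{1}_{|\overline{X}_s-\overline{x}^i_s|\le R}\big(|\Delta X_s|^2+|\Delta x^i_s|^2\big)\Big).
\end{align*}
The background differences obey $\frac{d^2}{ds^2}\Delta x^i=\frac1N\mathrm{Hess}\Phi(\cdot)(\Delta X-\Delta x^i)+\dots$. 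These differences only start after $\sigma_1^+$, and a background particle near the tagged one feels a force of order $1/N$ for a bounded time. So the third term is controlled by $\frac1N\sum_i\int|\Delta X|$ over its interaction window, and can be absorbed into the Hess-term bound in the good set.

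\textbf{Step 2: Grönwall on the window.} The source term $b(s)=-\frac1N\nabla\Phi(X_s-x^1_s)$ vanishes outside $[\sigma_1^+,\sigma_1^++9RT]$ by point (iii), so it integrates to $O(T/N)$. The coefficient $a(s)$ is controlled by the better-set estimates of Lemma~\ref{Lem: Better set estimates}, which hold on $\mathfrak{G}_N(t)$ by point (iv). These give $\big\|\frac1N\int_u^{u'}\sum_i\mathrm{Hess}\Phi\big\|\le\sqrt{|u'-u|/N}\,N^{\alpha}$, improving on the pointwise $N^{-1/2}$ bound. I would integrate by parts in the Duhamel formula, using the time-integrated Hess rather than its pointwise value. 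The Grönwall factor is then $\exp\big(C\,T^{3/2}N^{\alpha-1/2}\big)$, which is bounded because $T\le N^{\gamma_r}$ and $\gamma_r<(1-2\alpha)/3$. This gives \eqref{eq:abstract_approx}, namely $|\Delta V|=O(T/N)$ and $|\Delta X|=O(T^2/N)$. One small point: the window is centred at $t$, while $\Delta$ vanishes for $s\le\sigma_1^+$ (and symmetrically for negative times via the past entry). By point (iii), $\sigma_1$ lies within $9RT$ of $t$.

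\textbf{Step 3: second-order expansion.} Next, substitute $X_s-x^1_s=(\overline{X}_s-\overline{x}^1_s)+(\Delta X_s-(x^1_s-\overline{x}^1_s))$. By \eqref{eq:approx_trajectory_1}, $|x^1_s-\overline{x}^1_s|=O(T^2/N)$, so the force from particle 1 equals $\frac1N\nabla\Phi(\overline{X}_s-\overline{x}^1_s)$ up to an error $O(T^2/N^2)$. Integrated over time $T$, this error contributes $T^3/N^2$, which is dominated by $T^5/N^2$. The indirect terms in $\Delta V+\frac1N\int\nabla\Phi$ are the Hess term times $\Delta X=O(T^2/N)$. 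Bounding it again with the time-averaged control gives $\sqrt{T/N}\,N^{\alpha}\cdot T^2/N\lesssim T^{3}N^{-(3-\delta)/2}$ after the integration by parts; the $\delta$ absorbs $N^\alpha$ slack, or alternatively one uses the pointwise $N^{-1/2+\delta/2}$ bound from the good set. The quadratic Taylor remainders and the background back-reaction produce terms like $T\cdot(T^2/N)^2=T^5/N^2$. This yields \eqref{eq:abstract_approx_V_prec}. Integrating once more over a window of length $O(T)$ gives \eqref{eq:abstract_approx_X_prec} with the extra factor $T$.

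\textbf{Step 4: velocity variation.} Finally, \eqref{eq:abstract_variation_vitesse} is exactly the bootstrap control $|V_t-V_s|\le\sqrt{|t-s|/N}\,N^\alpha$ defining the better set, which holds on $\mathfrak{G}_N(t)$ by Proposition~\ref{prop:fin_bootstrap}. Since $|t-s|\le 9RT\le N^{\beta}$, it applies directly.

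The main obstacle is Step 2. The naive Grönwall argument only reaches times $N^{1/4}$, while $T$ may be as large as $N^{\gamma_r}>N^{1/4}$. The argument must therefore exploit the time-averaged Hessian bound, via an integration by parts in the Duhamel formula, and must also keep the indirect influence carried by the other background particles under control.
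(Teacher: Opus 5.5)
Your Step 2 has a genuine gap. The time-averaged bound \eqref{eq: borne sqrt Phi} only neutralises the linear term $a(s)\Delta X_s$. With the bounds you use, the other two terms of your Step 1 equation are still of size $T^4/N^2$. That is $o(1/N)$ only for $T\ll N^{1/4}$, the same barrier you flagged, whereas $T$ ranges up to $N^{\gamma_r}=N^{1/4+1/36}$, where $T^4/N^2=N^{-1+1/9}\gg 1/N$.

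\begin{itemize}
\item The quadratic Taylor remainder: with $O(N)$ interacting particles, $\frac1N\sum_i\mathds{1}\,|\Delta X_s|^2=O(|\Delta X_s|^2)=O(T^4/N^2)$. Integrated twice this gives $T^6/N^2$, which is not $\lesssim T^2/N$, so the bootstrap does not close. Step 2 never addresses this term.
\item The back-reaction through the other background particles: it is false that each one interacts for a bounded time, since slow particles stay in the ball for times up to $T_m(v^i_s,V_s)\wedge N^{\gamma}$. Because $|\Delta x^i_s|\lesssim (s-\sigma^+_i)^2\sup|\Delta X|/N$, the trivial bound $s-\sigma^+_i\lesssim T$ only gives $\frac1N\sum_i\text{Hess}\Phi(X_s-x^i_s)\Delta x^i_s=O(T^4/N^2)$ again.
\end{itemize}
As written, your argument yields \eqref{eq:abstract_approx} only in the regime of Lemma~\ref{lem:influence_une_particule}.

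The paper proves this lemma by applying Lemma~\ref{Lem: Better set estimates}, using $s-\sigma^+_1\le 18RT$ from point (iii) and $T\le N^{\gamma_r}<N^{\gamma}$ with $\gamma=\frac{1-2\alpha}{3}-\delta$, together with \eqref{eq:control_diff_velocite_alpha} and Proposition~\ref{prop:fin_bootstrap}. That lemma contains exactly the two ingredients you are missing.
\begin{itemize}
\item It runs a stopping-time argument with the a priori bound $\sup|\Delta X|\le N^{-(1-2\gamma-\delta)}$ and expands to third order. The quadratic piece becomes $\frac12\big(\frac1N\sum_i\nabla^3\Phi(X_s-x^i_s)\big)[\Delta X_s,\Delta X_s]=O(N^{-\frac{1-\delta}{2}}|\Delta X_s|^2)$, thanks to the cancellation \eqref{eq:borne_der_Phi} with $|\mathcal{I}|=3$. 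The cubic remainder is $O(|\Delta X_s|^3)$, and both are $o(1/N)$.
\item For the back-reaction it combines the per-particle interaction time \eqref{eq: interaction time better} with the empirical moment bound \eqref{eq:G_N_4} for $k=2$. This gives $\frac1N\sum_i\text{Hess}\Phi(X_s-x^i_s)\Delta x^i_s=O(N^{\delta-1}\sup|\Delta X|)=O(N^{-\frac43-\frac{4\alpha}{3}})$.
\end{itemize}
Note that point (iv) of Lemma~\ref{lem:abstract_tout_va_bien} is the statement you are proving. The averaged Hessian control comes from the definition \eqref{eq: borne sqrt Phi} of the better set.

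Once the first-order bounds are available, the rest of your plan is sound:
\begin{itemize}
\item Integrating by parts in Duhamel's formula is a legitimate substitute for the Peano--Baker argument of Lemma~\ref{lem:gronw_opt_racine} for the linear term.
\item Step 3, which plugs the first-order bounds directly into the integrated equation instead of running a second Gr\"onwall argument, works with the pointwise bound $N^{-\frac{1-\delta}{2}}$ on the Hessian sum. Your alternative claim that ``$\delta$ absorbs the $N^{\alpha}$ slack'' is wrong, since $\alpha\ge\frac{1}{4(d+2)}$ is a fixed exponent.
\item Step 4 is exactly the paper's argument.
\end{itemize}
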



\begin{proof}
Again, \eqref{eq:abstract_approx}-\eqref{eq:abstract_approx_V_prec}-\eqref{eq:abstract_approx_X_prec} are the results of Lemma~\ref{Lem: Better set estimates} below. Equation \eqref{eq:abstract_variation_vitesse} is the result of \eqref{eq:control_diff_velocite_alpha} and Proposition~\ref{prop:fin_bootstrap}.
\end{proof}

Finally, the following statement is the counterpart of Lemma \ref{lem:abstract_tout_va_bien} for the modified dynamics \eqref{eq: dynamique barres}. The proof can be found in Appendix~\ref{sec:preuves_lemmes_mart} and is to be skipped on first reading. 


\begin{lemma}\label{lem:mart_toutvabien_avec_bar}
We have,
\begin{equation}\label{eq:mart_toutvabien_avec_bar}
\hat{\mathbb{E}}\left[\left(1-\mathds{1}_{\good(t)}\right)\mathds{1}_{|t-\sigma_1^+|\leq 9RN^{\gamma_r}}\mathds{1}_{|\overline{x}^1_t-\overline{X}_t|\leq R}\right]=O\left(N^{-(1+\omega')}\right).
\end{equation}
Furthermore, defining $\widehat{\good(t)}$ similarly as $\good(t)$ as a set of initial configurations such that points (i)-(ii)-(iii) from Lemma~\ref{lem:abstract_tout_va_bien} hold for the processes $(\overline{Z}_t, \overline{z}^{1:\mathcal{N}}_t)_t$, we have
\begin{equation}\label{eq:mart_toutvabien_avec_bar_chez_bar}
\hat{\mathbb{E}}\left[\left(1-\mathds{1}_{\widehat{\good(t)}}\right)\mathds{1}_{|\overline{x}^1_t-\overline{X}_t|\leq \frac{3R}{2}}\right]=O\left(N^{-(1+\omega')}\right).
\end{equation}
\end{lemma}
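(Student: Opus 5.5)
The plan is to reduce both estimates to \eqref{eq:proba_abstract_good} of Lemma~\ref{lem:abstract_tout_va_bien}. The only tools needed are the comparison between $(Z,z^1)$ and $(\overline{Z},\overline{x}^1)$ of Lemma~\ref{lem:abstract_approx}, and the fact that $\overline{Z}$ does not depend on $z_1$. Under $\hat{\mathbb{E}}$, the coordinate $z_1$ is independent of the rest: $x_1$ is uniform up to the factor $e^{-\Phi(X-x_1)/N}=1+O(1/N)$, and $v_1$ is Gaussian.

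\textbf{First estimate \eqref{eq:mart_toutvabien_avec_bar}.} I split according to whether the true particle is in the extended ball at time $t$.

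\emph{Case $|x^1_t-X_t|\le\frac32R$.} Drop the other indicators and bound this part directly by \eqref{eq:proba_abstract_good}.

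\emph{Case $|x^1_t-X_t|>\frac32R$.} Here the conditions $|\overline{x}^1_t-\overline{X}_t|\le R$ and $|t-\sigma^+_1|\le 9RN^{\gamma_r}$ must force a large discrepancy: $|(x^1_t-X_t)-(\overline{x}^1_t-\overline{X}_t)|>R/2$. I will show this event has probability $O(N^{-(1+\omega')})$.

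On the event that the relative velocity at time $\sigma^+_1$ is at least $N^{-\gamma_r}$, I apply Lemma~\ref{lem:abstract_tout_va_bien} at the time $\sigma^+_1$ (in a discretized form, see below) and then Lemma~\ref{lem:abstract_approx}. With $|t-\sigma^+_1|\le 9RN^{\gamma_r}$ this gives
\[
|X_t-\overline{X}_t|+|x^1_t-\overline{x}^1_t|=O\bigl(N^{2\gamma_r-1}\bigr)\ll R .
\]
The remaining cases are:
\begin{itemize}
\item a small relative velocity, which costs $O(N^{-d\gamma_r})$ with $d\gamma_r>1$;
\item configurations outside the good set at the entry time.
\end{itemize}

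To handle the random time $\sigma^+_1$, I discretize it on a grid of mesh $N^{-\kappa}$ inside the window $[t-9RN^{\gamma_r},t]$. At each grid time I use \eqref{eq:proba_abstract_good}, which is legitimate because particle $1$ is in the $\frac32R$-ball at the grid point nearest to $\sigma^+_1$. A naive union bound over the grid would multiply the error by the number of grid points, which is too costly. Instead I use the stationarity of $\hat{\mathbb{E}}$ together with the entry-time density. The expected number of entries in a time window of length $L$ is $O(L)$ times the single-time ball probability $O(1/N)$, weighted by $|v^1-V|$. Combining this with \eqref{eq:proba_abstract_good} gives $O(N^{\gamma_r}\cdot N^{-(1+\omega')})$. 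Absorbing the factor $N^{\gamma_r}$ will require shrinking $\omega'$, or, better, using that the bad set in Lemma~\ref{lem:abstract_tout_va_bien} is really a property of the whole trajectory on $[-N,N]$ (point (iii)), so that the union over times is not needed.

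\textbf{Second estimate \eqref{eq:mart_toutvabien_avec_bar_chez_bar}.} Here I use the explicit independence. $\overline{Z}$ and $\overline{z}^{2:\mathcal{N}}$ do not depend on $z_1$, and $\overline{x}^1_s=x_1+sv_1$ is a straight line. So:
\begin{itemize}
\item Point (ii) is a Gaussian small-ball estimate in $v_1-\overline{V}_t$, of order $O(N^{-d\gamma_r})$.
\item Point (iii) is the statement that a straight line with relative speed at least $N^{-\gamma_r}$ leaves the $\frac32R$-ball around $\overline{X}$ within time $9R/|v_1-\overline{V}_t|$ and never returns on $[-N,N]$. This follows from the controls of the better set on the single trajectory $\overline{X}$, as in Lemma~\ref{lem:max_inter_time} and Proposition~\ref{prop:proba_recol}, applied verbatim: those arguments only use bounds on $\overline{V}$ variations of the form \eqref{eq:abstract_variation_vitesse}, which hold for the bar process since it is itself a tagged-particle process with one fewer particle.
\end{itemize}
Integrating over $x_1$ in the ball gives the extra factor $1/N$ hidden in $\hat{\mathbb{E}}$.

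\textbf{Main obstacle.} The step I expect to be hardest is the random-entry-time step in the first estimate: transferring the fixed-time bound \eqref{eq:proba_abstract_good} to the stopping time $\sigma^+_1$ without losing the power $N^{\gamma_r}$ from the time window.
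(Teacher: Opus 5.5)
Your treatment of \eqref{eq:mart_toutvabien_avec_bar_chez_bar} matches the paper. The paper likewise uses the independence of $\overline{Z}$ from $z_1$, a Gaussian small-ball bound, and the recollision estimate rewritten as a constraint on $v_1$. Your handling of the case $|x^1_t-X_t|\le\frac32R$ in \eqref{eq:mart_toutvabien_avec_bar} also matches: you apply \eqref{eq:proba_abstract_good} directly at the deterministic time $t$.

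The gap is in the other case. You compare $(Z,x^1)$ with $(\overline{Z},\overline{x}^1)$ through Lemma~\ref{lem:abstract_approx}. That lemma requires the configuration to lie in $\good(s)$ at a random entry time $s\approx\sigma^+_1$. As you note yourself, transferring \eqref{eq:proba_abstract_good} to that random time costs a factor of order $N^{\gamma_r}$, and you leave this unresolved. Neither remedy you propose works:
\begin{enumerate}
\item Shrinking $\omega'$ cannot absorb the loss. The $\omega'$ available from Lemma~\ref{lem:abstract_tout_va_bien} is below $1/18$, since it comes from the recollision bound $N^{-19/18+2(d-1)\delta}$, while $\gamma_r=5/18$. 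Hence $N^{\gamma_r-1-\omega'}\gg N^{-1}$, and shrinking $\omega'$ only makes it worse.
\item $\good(\cdot)$ is not a property of the whole trajectory. It contains the time-dependent conditions $|x^1_s-X_s|\le\frac32R$ and $|v^1_s-V_s|\ge N^{-\gamma_r}$.
\end{enumerate}
In addition, Lemma~\ref{lem:abstract_approx} applied at $\sigma^+_1$ only covers $|s-\sigma^+_1|\le 9R/|v^1_{\sigma^+_1}-V_{\sigma^+_1}|$. This can be much shorter than your window of length $9RN^{\gamma_r}$. (A side point: under $\hat{\mathbb{E}}$ the event that particle $1$ is in the ball has measure of order one, so there is no hidden factor $1/N$.)

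The missing observation is that no information at the entry time is needed. Set $\gamma=\frac{1-2\alpha^*}{3}-\delta$. Lemma~\ref{Lem: Better set estimates} gives $|X_s-\overline{X}_s|=O\bigl((s-\sigma^+_1)^2/N\bigr)$ for all $s\in[\sigma^+_1,\sigma^+_1+N^{\gamma}]$. This holds for \emph{every} configuration in the global better set $\mathcal{G}_N(\delta,\alpha^*,\beta^*)$, with no assumption on the relative velocity of particle $1$ or on recollisions. Likewise, \eqref{eq:approx_trajectory_1} gives $|x^1_s-\overline{x}^1_s|\le\frac{\|\nabla\Phi\|_\infty}{2N}(s-\sigma^+_1)_+^2$, and both pairs of trajectories coincide on $[0,\sigma^+_1]$. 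Since $9RN^{\gamma_r}\le N^{\gamma}$ by \eqref{eq:gamma_r_suff_petit}, on the better set the conditions $|t-\sigma^+_1|\le 9RN^{\gamma_r}$ and $|\overline{x}^1_t-\overline{X}_t|\le R$ imply $|x^1_t-X_t|\le R+O(N^{2\gamma_r-1})\le\frac32R$. So your second case is contained in the complement of $\mathcal{G}_N(\delta,\alpha^*,\beta^*)$. That complement has exponentially small $\hat{\mathbb{E}}$-measure, by Cauchy--Schwarz against $\mathbb{E}[\mathcal{N}^2/N^2]$ and Proposition~\ref{prop:fin_bootstrap}. This is the paper's argument, and it avoids any discretization of $\sigma^+_1$.
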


Using the previous results, we turn now to the analysis of \eqref{eq:depart_calcul} and proceed in several steps.\\


\noindent\textbf{Decoupling the past.}  Let us first deal with the functions $g_i$ and show that 
\begin{align}
\left(\prod_{i=1}^{n-1}g_i(V_{-t_i})\right)&g_n(V_{t_n})\int_{t_n}^{t_{n+1}}\nabla f(V_t)\nabla\Phi(X_t-x^1_t)\mathds{1}_{\mathfrak{G}_N(t)}dt \nonumber\\
=&\left(\prod_{i=1}^{n-1}g_i(\overline{V}_{-t_i})\right)g_n(\overline{V}_{t_n})\int_{t_n}^{t_{n+1}}\nabla f(V_t)\nabla\Phi(X_t-x^1_t)\mathds{1}_{\mathfrak{G}_N(t)} \, dt
\label{eq: remplacement bar V gi} \\
&+O\left(\int_{t_n}^{t_{n+1}}  \frac{T_m(v^1_t,V_t)}{N}\mathds{1}_{|t-t_n|\leq 9R \, T_m(v^1_t,V_t)}\mathds{1}_{\mathfrak{G}_N(t)}dt\right), \nonumber
\end{align}
where the modified dynamics was introduced in \eqref{eq: dynamique barres} and remains well defined at negative times.

To prove \eqref{eq: remplacement bar V gi}, recall that for any configuration in $\mathfrak{G}_N(t)$ condition \eqref{eq: point iii} implies that the background particle $x^1$ and the tagged particle interact only at times $s$ such that 
$|t-s|\leq 9R \, T_m(v^1_t,V_t) \ll N$. In particular, since $t\geq t_n=O(N)$, they interact only at positive times.
By construction the functions $g_i(V_{-t_i})$ for $i \leq n-1$ are supported on negative times  so that $V_{-t_i}=\overline{V}_{-t_i}$ as 
 particle $1$ doesn't interact.
For $t$ close to $t_n$ the interaction can be controlled by \eqref{eq:abstract_approx} and since $g_n\in\mathcal{C}^1_{bb}$, we get for configurations in  $\mathfrak{G}_N(t)$
\begin{align*}
|t-t_n|\leq 9R \, T_m(v^1_t,V_t) \qquad \implies \qquad
g_n(V_{t_n})=g_n(\overline{V}_{t_n})+O\left( \frac{T_m(v^1_t,V_t)}{N}\right).
\end{align*}
If $|t-t_n|>9R \, T_m(v^1_t,V_t)$ we have $g_n(V_{t_n})=g_n(\overline{V}_{t_n})$ (as this implies that $t_n\leq \sigma^+_1$). Thus \eqref{eq: remplacement bar V gi} holds.

Using Lemma~\ref{lem:abstract_tout_va_bien} we deduce from \eqref{eq: remplacement bar V gi} that 
\begin{align}
\hat{\mathbb{E}}&\left[\left(\prod_{i=1}^{n-1}g_i(V_{-t_i})\right)g_n(V_{t_n})\int_{t_n}^{t_{n+1}}\nabla f(V_t)\nabla\Phi(X_t-x^1_t)dt\right]\nonumber\\
&\qquad=\hat{\mathbb{E}}\left[\left(\prod_{i=1}^{n-1}g_i(V_{-t_i})\right)g_n(V_{t_n})\int_{t_n}^{t_{n+1}}\nabla f(V_t)\nabla\Phi(X_t-x^1_t)\mathds{1}_{\mathfrak{G}_N(t)}dt\right]+O\left(\frac{t_{n+1}-t_n}{N^{1+\omega'}}\right)\nonumber\\
&\qquad=\hat{\mathbb{E}}\left[\left(\prod_{i=1}^{n-1}g_i(\overline{V}_{-t_i})\right)g_n(\overline{V}_{t_n})\int_{t_n}^{t_{n+1}}\nabla f(V_t)\nabla\Phi(X_t-x^1_t)\mathds{1}_{\mathfrak{G}_N(t)}dt\right]\label{eq:martingale_symmetry}\\
&\qquad\qquad+O\left(\frac{t_{n+1}-t_n}{N^{1+\omega'}}\right)+O\left(\hat{\mathbb{E}}\left[\int_{t_n}^{t_{n+1}}  \frac{T_m(v^1_t,V_t)}{N}\mathds{1}_{|t-t_n|\leq 9R \, T_m(v^1_t,V_t)}\mathds{1}_{\mathfrak{G}_N(t)}dt\right]\right) , \nonumber
\nonumber
\end{align}
Note that the error term in the  first equality is obtained by using  Fubini to exchange the expectation and the time  integral, and then \eqref{eq:proba_abstract_good}. 

\medskip


\noindent\textbf{Evaluating the interaction between $X_t$ and $x_t^1$.}  Using \eqref{eq:approx_trajectory_1} and Lemma \ref{lem:abstract_approx}, we will now take into account the mutual influence between the two particles in order to evaluate \eqref{eq:martingale_symmetry}. 
For this we write a Taylor expansion of the different terms in \eqref{eq:martingale_symmetry}. In all the calculations below, we assume that the configurations belong to $\mathfrak{G}_N(t)$.
Furthermore, all calculations are done while $|X_t-x^1_t|\leq R$ or, because of the Taylor expansion, $|\overline{X}_t-\overline{x}^1_t|\leq R$. Thanks to \eqref{eq:approx_trajectory_1} and  \eqref{eq:abstract_approx}.
Note that for configurations in $\mathfrak{G}_N(t)$ we have \begin{equation}\label{eq:borne_tps_inter_dans_G}
T_m(v^1_t,V_t)^2\leq N^{2\gamma_r}\ll N.
\end{equation}
Ensuring that $\overline{X}_t$ is close to $X_t$ and $\overline{x}^1_t$ to $x^1_t$, it is sufficient to do the calculations while $|X_t-x^1_t|\leq 3R/2$\footnote{Hence why we consider the extended interaction radius in \ref{lem:abstract_tout_va_bien}}. As a consequence, any time integral actually occurs on a time frame of size $O\left(T_m(v^1_t,V_t)\right)$ by Lemma~\ref{lem:abstract_tout_va_bien}.
Starting with $f$ and using both \eqref{eq:abstract_approx} and \eqref{eq:abstract_approx_V_prec}, we get 
\begin{align}
\nabla f(V_t)=&\nabla f(\overline{V}_t)+\text{Hess}f(\overline{V}_t)\left(V_t-\overline{V}_t\right)+O\left(\frac{T_m(v^1_t,V_t)^2}{N^2}\right)  \nonumber \\
=&\nabla f(\overline{V}_t)
- \frac{1}{N} \text{Hess}f(\overline{V}_t)
\int_{0}^t\nabla\Phi(\overline{X}_s-\overline{x}^1_s)ds
+O\left(\frac{T_m(v^1_t,V_t)^2}{N^2} + \cE_1 (N,v_t,V_t) \right).
\label{eq:DL_1_f}
\end{align}
Concerning the term $\nabla \Phi$, we have for configurations in $\mathfrak{G}_N(t)$ using  \eqref{eq:approx_trajectory_1}-\eqref{eq:abstract_approx}-\eqref{eq:abstract_approx_X_prec}
\begin{align}
\nabla\Phi(X_t-x^1_t)=&\nabla\Phi(\overline{X}_t - \overline x^1_t )
+\text{Hess}\Phi(\overline{X}_t -  \overline x^1_t )\left(\left(X_t-\overline{X}_t\right)-\left(x^1_t - \overline x^1_t \right)\right)+O\left(\frac{T_m(v^1_t,V_t)^4}{N^2}  \right) \nonumber\\
=&\nabla\Phi(\overline{X}_t - \bar x^1_t )
 - \frac{2}{N} \text{Hess}\Phi(\overline{X}_t -  \overline x^1_t ) \int_0^t ds \int_0^s du \, \nabla\Phi(\overline{X}_u-\overline{x}^1_u) \nonumber\\
&+O\left(\frac{T_m(v^1_t,V_t)^4}{N^2} +\mathcal{E}_2 (N,v_t,V_t) \right)\label{eq:DL_1_Phi}.
\end{align}
Our goal is now to approximate $\overline{X}_u$ by the straight line $\overline{X}_t-(t-u)\overline{V}_t$ in both \eqref{eq:DL_1_f} and \eqref{eq:DL_1_Phi}. From \eqref{eq:abstract_variation_vitesse}, the velocity of the tagged particle changes at most of $O\left(\sqrt{\frac{T_m(v^1_t,V_t)}{N}}N^{\alpha}\right)$ which, for configurations in  $\mathfrak{G}_N(t)$, is of order at most $N^{\frac{\gamma_r-1}{2}+\alpha}\ll 1$. 
Recalling $\sigma^+_1$ defined in \eqref{eq:def_tps_entree}, from \eqref{eq:abstract_approx}, we also have for $s$ such that $|s-t| \leq 9R  T_m(v^1_t,V_t)$
\begin{align*}
|\overline{V}_t-\overline{V_s}|=O\left(\sqrt{\frac{T_m(v^1_t,V_t)}{N}}N^{\alpha}+\frac{T_m(v^1_t,V_t)}{N}\right).
\end{align*}
As a consequence, for  $ |s - t | \leq   9R \, T_m(v^1_t,V_t)$ and in  $\mathfrak{G}_N(t)$, we may write 
\begin{align}
\overline{X}_s
&=  \overline{X}_t + \int_t^s du \overline V_u  du 
=  \overline{X}_t + (s-t)   \overline V_t 
+ \int_{t}^s (\overline V_u-\overline V_t) \nonumber\\
&=  \overline{X}_t - (t-s) \overline V_t 
+ O \left(\frac{T_m(v^1_t,V_t)^{3/2}}{N^{\frac{1}{2}-\alpha}}+\frac{ T_m(v^1_t,V_t)^2}{N}  \right),\label{eq:mart_compa_ligne_droite}
\end{align}
and thus  
\begin{align}
\nabla \Phi \Big( 
\overline{X}_s-\overline x^1_s \Big)
= \nabla \Phi \Big( \overline{X}_t-(t-s)\overline{V}_t  - \overline x^1_s \Big) 
+ \mathds{1}_{ t -  9R \, T_m(v^1_t,V_t) \leq s \leq t}
O \left( \frac{T_m(v^1_t,V_t)^{3/2}}{N^{\frac{1}{2}-\alpha}} +\frac{T_m(v^1_t,V_t)^2}{N}  \right).
\label{eq:particules_ecart_2}
\end{align}
Note that by Lemma~\ref{lem:abstract_tout_va_bien}, for configurations in $\good(t)$ and $s \leq t -  9R \, T_m(v^1_t,V_t)$, all the terms above are equal to 0. Indeed, the particles $\overline{X}_s,\overline x^1_s$ have not yet interacted and the relative velocity at time $t$ is large enough. As a consequence all the calculations can be done on a time frame of size $O\left(T_m(v^1_t,V_t)\right)$. 

Using \eqref{eq:particules_ecart_2} along with \eqref{eq:DL_1_f} and \eqref{eq:DL_1_Phi},  we finally get that for any configurations in $\good(t)$
\begin{align}
\nabla f(V_t)=&\nabla f(\overline{V}_t)
- \frac{1}{N} \text{Hess}f(\overline{V}_t)
\int_{0}^t\nabla\Phi(\overline{X}_t-(t-s)\overline{V}_t  - \overline x^1_s )ds\nonumber
\\
&+O\left(\frac{T_m(v^1_t,V_t)^2}{N^2}+\frac{T_m(v^1_t,V_t)^{5/2}}{N^{3/2-\alpha}} + \cE_1 (N,v_t,V_t)\right),
\label{eq:DL_2_f}
\end{align}
and 
\begin{align}
\nabla\Phi(X_t-x^1_t)=&\nabla\Phi(\overline{X}_t - \overline x^1_t )
 - \frac{2}{N} \text{Hess}\Phi(\overline{X}_t -   \overline x^1_t )\int_0^t\int_0^s \nabla\Phi(\overline{X}_t-(t-u)\overline{V}_t - \overline x^1_u )duds \nonumber\\
&+O\left(\frac{T_m(v^1_t,V_t)^4}{N^2}+\frac{T_m(v^1_t,V_t)^{7/2}}{N^{3/2-\alpha}}  +\mathcal{E}_2 (N,v_t,V_t) \right)\label{eq:DL_2_Phi}.
\end{align}
Plugging  \eqref{eq:DL_2_f} and \eqref{eq:DL_2_Phi} back into \eqref{eq:depart_calcul} and using \eqref{eq:martingale_symmetry}, we obtain 
\begin{align}
\mathbb{E}&\left[\left(\prod_{i=1}^{n-1}g_i(V_{-t_i})\right)g_n(V_{t_n})\left(f(V_{t_{n+1}})-f(V_{t_n})\right) \right]\nonumber\\
&\quad
= -\hat{\mathbb{E}} \left[ \left(\prod_{i=1}^{n-1}g_i( \overline V_{-t_i})\right)g_n( \overline V_{t_n})\int_{t_n}^{t_{n+1}}\nabla f(\overline{V}_t)\nabla\Phi(\overline{X}_t - \overline x^1_t )\mathds{1}_{\good(t)}dt\right]\label{eq:calcul_mart_int_1}\\
&\qquad
+\frac{2}{N}\hat{\mathbb{E}} \Bigg[\left( \prod_{i=1}^{n-1}g_i( \overline V_{-t_i})\right)g_n( \overline V_{t_n})\int_{t_n}^{t_{n+1}}\text{Hess}\Phi(\overline{X}_t -   \overline x^1_t )\left(\int_0^t\int_0^s \nabla\Phi(\overline{X}_t-(t-u)\overline{V}_t - \overline x^1_u )duds\right)\nonumber\\
&\hspace{7cm}\cdot\nabla f(\overline{V}_t) \mathds{1}_{\good(t)}dt\Bigg]\label{eq:calcul_mart_int_2}\\   
&\qquad
+\frac{1}{N}\hat{\mathbb{E}} \Bigg[ \left(\prod_{i=1}^{n-1}g_i( \overline V_{-t_i})\right)g_n( \overline V_{t_n})\int_{t_n}^{t_{n+1}}
\text{Hess}f(\overline{V}_t)\left(\int_0^t\nabla
 \Phi(\overline{X}_t-(t-s)\overline{V}_t  - \overline x^1_s )     ds\right)  \nonumber\\
&\hspace{7cm}\cdot\nabla
\Phi(\overline{X}_t - \overline x^1_t )\mathds{1}_{\good(t)}dt  
\Bigg] + error, \label{eq:calcul_mart_int_3}
\end{align}
with an error term given by 
\begin{align}
error = & O\left(\hat{\mathbb{E}}\left[ \int_{t_n}^{t_{n+1}}\left(\frac{T_m(v^1_t,V_t)}{N}\mathds{1}_{|t-t_n|\leq 9R \, T_m(v^1_t,V_t)}+\frac{T_m(v^1_t,V_t)^4}{N^2}+\frac{T_m(v^1_t,V_t)^{7/2}}{N^{3/2-\alpha}} \right.\right.\right.\nonumber\\ 
&\hspace{3cm}+\cE_1 (N,v^1_t,V_t)+\cE_2 (N,v^1_t,V_t)\Bigg)\mathds{1}_{\good(t)}dt\Bigg]\Bigg)+O\left(\frac{t_{n+1}-t_n}{N^{1+\omega'}}\right).\label{eq:error}
 \end{align}
We now deal with each term individually. We start by showing that the first order term \eqref{eq:calcul_mart_int_1} is in fact (almost) $0$ by integration over $x^1_0$ and the fact that $\nabla\Phi$ is of mean $0$. Then we deal with the  error term \eqref{eq:error} before turning our attention to the two most important contributions. The drift term will arise from \eqref{eq:calcul_mart_int_2} (see the term $\nabla f$), and the diffusion from \eqref{eq:calcul_mart_int_3} (see $\text{Hess}f$). \\
\vspace{0.3cm}


\noindent\textbf{First order term from \eqref{eq:calcul_mart_int_1}.} 
Our goal is to use an integration over $x^1_0$ and the fact that $\nabla \Phi$ is of mean $0$ to obtain that the first order term is negligible. To do so, we first need to get rid of the constraint $\mathds{1}_{\good(t)}$, as it also depends on $(x^1_0,v^1_0)$. We prove below, in the bullet point that can be skipped on first reading, that we can do so. Let us now show how we may conclude. We have
\begin{align}
\hat{\mathbb{E}} &\left[ \left(\prod_{i=1}^{n-1}g_i( \overline V_{-t_i})\right)g_n( \overline V_{t_n})\int_{t_n}^{t_{n+1}}\nabla f(\overline{V}_t)\nabla\Phi(\overline{X}_t - \overline x^1_t )\mathds{1}_{\good(t)}dt\right]\nonumber\\
&=\int_{t_n}^{t_{n+1}}\hat{\mathbb{E}} \left[ \left(\prod_{i=1}^{n-1}g_i( \overline V_{-t_i})\right)g_n( \overline V_{t_n})\nabla f(\overline{V}_t)\nabla\Phi(\overline{X}_t -\overline x^1_t )\right]dt+O\left(\frac{t_{n+1}-t_n}{N^{1+\omega'}}\right).\label{eq:on_a_retiré_la_contrainte}
\end{align}
In the expectation above, the trajectories $(\overline{Z}_t)_t$ and $(\overline{x}^1_t=x^1_0+tv^1_0)_t$ are decoupled. We therefore compute, using the grand canonical measure \eqref{eq:def_gce}
\begin{align*}
\hat{\mathbb{E}}& \left[ \left(\prod_{i=1}^{n-1}g_i( \overline V_{-t_i})\right)g_n( \overline V_{t_n})\nabla f(\overline{V}_t)\nabla\Phi(\overline{X}_t - x^1_0-tv^1_0)\right]\\
=&\frac{1}{\mathcal{Z}}\sum_{M=1}^\infty \frac{N^{M-1}}{(M-1)!}\int_{\mathbb{T}\times\mathbb{R}^d}dZ\int_{\mathbb{T}^{M-1}\times \mathbb{R}^{(M-1)d}}dz_{2:M} \rho_{M-1}(Z,z_{2:M}) \left(\prod_{i=1}^{n-1}g_i( \overline V_{-t_i})\right)g_n( \overline V_{t_n})\\
&\quad\quad\times\nabla f(\overline{V}_t)\cdot \left(\int_{\mathbb{R}^d}dv_1\gamma(v_1)\int_{\mathbb{T}}dx_1 e^{-\frac{\Phi(X-x_1)}{N}}\nabla\Phi(\overline{X}_t - x_1-tv_1)\right).
\end{align*}
The main obstacle to use the mean $0$ of $\nabla \Phi$ is the term $e^{-\frac{\Phi(X-x_1)}{N}}$ which implies that $x_1$ is not exactly uniformly distributed. We thus expand $e^{-\frac{\Phi(X-x_1)}{N}}=1+O\left(\frac{\mathds{1}_{x_1\in\mathcal{B}(X,R)}}{N}\right)$, and this last line yields
\begin{align*}
\int_{\mathbb{R}^d}dv_1\gamma(v_1)\int_{\mathbb{T}}&dx_1 e^{-\frac{\Phi(X-x_1)}{N}}\nabla\Phi(\overline{X}_t - x_1-tv_1)\\
=&O\left(\frac{1}{N}\int_{\mathbb{R}^d}dv_1\gamma(v_1)\int_{\mathbb{T}}dx_1 \mathds{1}_{x_1\in\mathcal{B}(X,R)}\mathds{1}_{x_1+tv_1\in\mathcal{B}(\overline{X}_t ,R)}\right)\\
=&O\left(\frac{1}{N}\int_{\mathbb{R}^d}dv_1\gamma(v_1)\int_{\mathbb{T}}dx_1 \mathds{1}_{x_1\in\mathcal{B}(X,R)}\mathds{1}_{v_1\in\mathcal{B}\left(\frac{\overline{X}_t-X}{t} ,\frac{2R}{t}\right)}\right)\\
=&O\left(\frac{1}{N}\left(\frac{1}{t^d}\wedge 1\right)\right).
\end{align*}
As a consequence
\begin{align}
\label{eq: negliger moyenne terme dominant}
\hat{\mathbb{E}} &\left[ \left(\prod_{i=1}^{n-1}g_i( \overline V_{-t_i})\right)g_n( \overline V_{t_n})\int_{t_n}^{t_{n+1}}\nabla f(\overline{V}_t)\nabla\Phi(\overline{X}_t - \overline x^1_t )\mathds{1}_{\good(t)}dt\right]\\
&=O\left(\frac{1}{N}\int_{t_n}^{t_{n+1}}\left(\frac{1}{t^d}\wedge 1\right)dt\right)+O\left(\frac{t_{n+1}-t_n}{N^{1+\omega}}\right)=O\left(\frac{1\wedge (t_{n+1}-t_n)}{N}\right)+O\left(\frac{t_{n+1}-t_n}{N^{1+\omega'}}\right). \nonumber
\end{align}
Since $t_{n+1}-t_n=O(N)$, it is therefore negligible.\\


$\bullet$ \underline{Getting rid of $\good(t)$} Let us now prove \eqref{eq:on_a_retiré_la_contrainte}. 
Note that configurations in the complement of $\good(t)$ do not prevent $\overline{X}_t$ and $\overline x^1_t$ to encounter. We need to estimate this possibility and 
the set $\widehat{\good(t)}$ has been introduced in Lemma~\ref{lem:mart_toutvabien_avec_bar} for this reason. 

In $\good(t)$, one necessarily has $|t-\sigma_1^+|\leq 9RN^{\gamma_r}$: the time $t$ must be in the first interaction time frame, as there are no recollision. Therefore \eqref{eq:on_a_retiré_la_contrainte} reads
\begin{align}
\label{eq: getting rid of good}
\hat{\mathbb{E}} &\left[ \left(\prod_{i=1}^{n-1}g_i( \overline V_{-t_i})\right)g_n( \overline V_{t_n})\int_{t_n}^{t_{n+1}}\nabla f(\overline{V}_t)\nabla\Phi(\overline{X}_t - \overline x^1_t )\mathds{1}_{\good(t)}dt\right]    \\
&=\hat{\mathbb{E}} \left[ \left(\prod_{i=1}^{n-1}g_i( \overline V_{-t_i})\right)g_n( \overline V_{t_n})\int_{t_n}^{t_{n+1}}\nabla f(\overline{V}_t)\nabla\Phi(\overline{X}_t - \overline x^1_t )\mathds{1}_{\good(t)}\mathds{1}_{|t-\sigma_1^+|\leq 9RN^{\gamma_r}}dt\right]
\nonumber \\
&=\hat{\mathbb{E}} \left[ \left(\prod_{i=1}^{n-1}g_i( \overline V_{-t_i})\right)g_n( \overline V_{t_n})\int_{t_n}^{t_{n+1}}\nabla f(\overline{V}_t)\nabla\Phi(\overline{X}_t - \overline x^1_t )\mathds{1}_{|t-\sigma_1^+|\leq 9RN^{\gamma_r}}dt\right]  \nonumber \\
&\quad-\hat{\mathbb{E}} \left[ \left(\prod_{i=1}^{n-1}g_i( \overline V_{-t_i})\right)g_n( \overline V_{t_n})\int_{t_n}^{t_{n+1}}\nabla f(\overline{V}_t)\nabla\Phi(\overline{X}_t - \overline x^1_t )\mathds{1}_{|t-\sigma_1^+|\leq 9RN^{\gamma_r}}\left(1-\mathds{1}_{\good(t)}\right)dt\right].
\nonumber
\end{align}
By \eqref{eq:mart_toutvabien_avec_bar}, the second term is  $O\left(\frac{t_{n+1}-t_n}{N^{1+\omega'}}\right)$. We now need to get rid of $\mathds{1}_{|t-\sigma_1^+|\leq 9RN^{\gamma_r}}$, again because $\sigma_1^+$ depends on $(x_1,v_1)$.  Inserting the set $\widehat{\good(t)}$, we get
\begin{align*}
\hat{\mathbb{E}}& \left[ \left(\prod_{i=1}^{n-1}g_i( \overline V_{-t_i})\right)g_n( \overline V_{t_n})\int_{t_n}^{t_{n+1}}\nabla f(\overline{V}_t)\nabla\Phi(\overline{X}_t - \overline x^1_t )\mathds{1}_{|t-\sigma_1^+|\leq 9RN^{\gamma_r}}dt\right]\\
&=\hat{\mathbb{E}} \left[ \left(\prod_{i=1}^{n-1}g_i( \overline V_{-t_i})\right)g_n( \overline V_{t_n})\int_{t_n}^{t_{n+1}}\nabla f(\overline{V}_t)\nabla\Phi(\overline{X}_t - \overline x^1_t )\mathds{1}_{|t-\sigma_1^+|\leq 9RN^{\gamma_r}}\left(\mathds{1}_{\widehat{\good(t)}}+\left(1-\mathds{1}_{\widehat{\good(t)}}\right)\right)dt\right].
\end{align*}
For the same reason, in $\widehat{\good(t)}$ and if $|\overline{x}^1_t-\overline{X}_t|\leq R$, one necessarily has $|t-\sigma_1^+|\leq 9RN^{\gamma_r}$: since there are no recollision with $\overline{X}$ for $\overline{x}^1$, the time $t$ must be in the first interaction time frame. We have
\begin{align}
\mathds{1}_{|t-\sigma_1^+|\leq 9RN^{\gamma_r}}\mathds{1}_{\widehat{\good(t)}}=&\mathds{1}_{\widehat{\good(t)}}\label{eq:mart_retirer_set_ordre_0_int_1}\\
\mathds{1}_{|\overline{x}^1_t-\overline{X}_t|\leq R}\mathds{1}_{|t-\sigma_1^+|\leq 9RN^{\gamma_r}}\left(1-\mathds{1}_{\widehat{\good(t)}}\right)\leq& \mathds{1}_{\overline{x}^1_t\in\mathcal{B}\left(\overline{X}_t,\frac{3}{2}R\right)}\left(1-\mathds{1}_{\widehat{\good(t)}}\right).\label{eq:mart_retirer_set_ordre_0_int_2}
\end{align}
Thus, this part is also $O\left(\frac{t_{n+1}-t_n}{N^{1+\omega'}}\right)$ by \eqref{eq:mart_toutvabien_avec_bar_chez_bar}. This gets rid of $\mathds{1}_{|t-\sigma_1^+|\leq 9RN^{\gamma_r}}$. 
Adding back ${1-\mathds{1}_{\widehat{\good(t)}}}$ to complete the remaining $\mathds{1}_{\widehat{\good(t)}}$ (again using \eqref{eq:mart_toutvabien_avec_bar_chez_bar}), we finally obtain \eqref{eq:on_a_retiré_la_contrainte}.\\
\vspace{0.3cm}


\noindent\textbf{The error \eqref{eq:error}.} 
Our goal here is to show that the error term \eqref{eq:error}, which involves terms of the form $T_m^k$ for $k \leq 6$, is of a smaller order than the main contributions. Recall the definition of $T_m$ given in Definition~\ref{def:inter_time_0}. 
The terms involving powers with $k\in\{1,\dots, 6\}$ are of the form
\begin{align*}
&\hat{\mathbb{E}}\left[\int_{t_n}^{t_{n+1}} \mathds{1}_{|v^1_t-V_t|\geq N^{-\gamma_r}}\mathds{1}_{x^1_t\in\mathcal{B}(X_t,3R/2)}|v^1_t-V_t|^{-k}\right]\\
&\quad=(t_{n+1}-t_n)\hat{\mathbb{E}}\left[\mathds{1}_{|v^1_0-V_0|\geq N^{-\gamma_r}}\mathds{1}_{x^1_0\in\mathcal{B}(X_0,3R/2)}|v^1_0-V_0|^{-k}\right],
\end{align*}
where we used the time invariance of the measure. 
Under the grand canonical measure  \eqref{eq:def_gce}, this reduces to a computation involving only two particles
\begin{align*}
&\hat{\mathbb{E}}\left[\int_{t_n}^{t_{n+1}} \mathds{1}_{|v^1_t-V_t|\geq N^{-\gamma_r}}\mathds{1}_{x^1_t\in\mathcal{B}(X_t,3R/2)}|v^1_t-V_t|^{-k}\right]\\
&\quad=(t_{n+1}-t_n)\int_{\mathcal{B}\left(0,3R/2\right)}dx e^{-\frac{\Phi(x)}{N}}\int_{\mathbb{R}^d}dV\gamma(V)\int_{\mathbb{R}^d}dv\gamma(v)\frac{\mathds{1}_{|v-V|\geq N^{-\gamma_r}}}{|v-V|^k}\\
&\quad=O\left((t_{n+1}-t_n)\int_{N^{-\gamma_r}/\sqrt{2}}^\infty\frac{e^{-\frac{w^2}{2}}w^{d-1}}{w^{k}}dw\right),
\end{align*}
where for this last line we used the fact that $V-v$ followed a Gaussian distribution.  In particular, we get (using $\delta>0$ to take into account the possible term $\ln(N)$)
\begin{equation}\label{eq:erreur_T_m_k_negli}
\hat{\mathbb{E}}\left[\int_{t_n}^{t_{n+1}} \mathds{1}_{|v^1_t-V_t|\geq N^{-\gamma_r}}\mathds{1}_{x^1_t\in\mathcal{B}(X_t,R)}|v^1_t-V_t|^{-k}\right]=O((t_{n+1}-t_n)N^{(k-d)_+\gamma_r+\delta \mathds{1}_{k=d}}).
\end{equation}
All the terms in \eqref{eq:error} with $k\in\{2,\dots, 6\}$ are divided by factors $N^a$ with $a\geq 3/2$.
This way, since $\gamma_r<\frac{1}{3}$ and $\mathds{1}_{\good(t)}\leq \mathds{1}_{|v^1_t-V_t|\geq N^{-\gamma_r}}\mathds{1}_{x^1_t\in\mathcal{B}(X_t,3R/2)}$, 
the contribution of these terms in  \eqref{eq:error} is $O\left( \frac{t_{n+1}-t_n}{N^{1+\tilde{\omega}}}\right)$, for some $\tilde{\omega}>0$, hence negligible for $t_{n+1}-t_n=O(N)$.
There only remains to bound the first term in \eqref{eq:error}
\begin{align*}
\hat{\mathbb{E}}\left[ \int_{t_n}^{t_{n+1}}\frac{T_m(v^1_t,V_t)}{N}\mathds{1}_{|t-t_n|\leq 9R \, T_m(v^1_t,V_t)}
\mathds{1}_{|v^1_t-V_t|\geq N^{-\gamma_r}}\mathds{1}_{x^1_t\in\mathcal{B}(X_t,3R/2)} dt\right]
=O\left(\frac{N^{ \gamma_r}\wedge (t_{n+1}-t_n)}{N}\right),
\end{align*}
where we simply used that $T_m(v^1_t,V_t) = O(N^{\gamma_r})$ and \eqref{eq:erreur_T_m_k_negli}. 
This completes the control of the error term \eqref{eq:error}.

\vspace{0.3cm}


\noindent\textbf{The drift from \eqref{eq:calcul_mart_int_2}.} First, let us get rid of $\mathds{1}_{\good(t)}$ in order to integrate without constraints on any variable. We justify in the bullet point below, in \eqref{eq:pk_on_retire_indicatrice}, why we may do so. Let us expand the expectation to separate the integration on $(x^1_0,v^1_0)$ from the rest in \eqref{eq:calcul_mart_int_2} and get
\begin{align}
\hat{\mathbb{E}}&\left[\left( \prod_{i=1}^{n-1}g_i( \overline V_{-t_i})\right)g_n( \overline V_{t_n})\int_{t_n}^{t_{n+1}}\nabla f(\overline{V}_t)\cdot \text{Hess}\Phi(\overline{X}_t-\overline{x}^1_t)\int_0^t\int_0^s \nabla\Phi(\overline{X}_t-(t-u)\overline{V}_t-\overline{x}^1_u)duds dt\right]\nonumber\\
&\quad=\frac{1}{\mathcal{Z}}\sum_{M=1}^\infty \frac{N^{M-1}}{(M-1)!}\int_{\mathbb{T}\times\mathbb{R}^d}dZ\int_{\mathbb{T}^M\times \mathbb{R}^{Md}}dz_{1:M} \, \rho_M(Z,z_{1:M}) \left(\prod_{i=1}^{n-1}g_i( \overline V_{-t_i})\right)g_n( \overline V_{t_n})\nonumber\\
&\quad\quad\times\int_{t_n}^{t_{n+1}}\nabla f(\overline{V}_t)\cdot \text{Hess}\Phi(\overline{X}_t-\overline{x}^1_t)\int_0^t\int_0^s \nabla\Phi(\overline{X}_t-(t-u)\overline{V}_t-\overline{x}^1_u)duds dt\nonumber\\
&\quad=\frac{1}{\mathcal{Z}}\sum_{M=1}^\infty \frac{N^{M-1}}{(M-1)!}\int_{\mathbb{T}\times\mathbb{R}^d}dZ\int_{\mathbb{T}^{M-1}\times \mathbb{R}^{(M-1)d}}dz_{2:M} \rho_{M-1}(Z,z_{2:M})\left(\prod_{i=1}^{n-1}g_i( \overline V_{-t_i})\right)g_n( \overline V_{t_n})\nonumber \\
&\hspace{7cm}\times\int_{t_n}^{t_{n+1}}\nabla f(\overline{V}_t)\cdot \tilde{\Lambda}(t,X,\overline{X}_t,\overline{V}_t)dt,\label{eq:calcul_int_drift_mart}
\end{align}
where we denote, insisting on the possible dependencies,
\begin{align*}
\tilde{\Lambda}(t,X,\overline{X}_t,\overline{V}_t)=&\int_{\mathbb{T}}e^{-\frac{\Phi(X-x_1)}{N}}\int_{\mathbb{R}^d}\gamma(v_1)\\
&\quad\text{Hess}\Phi(\overline{X}_t-x_1-tv_1)\int_0^t\int_0^s \nabla\Phi(\overline{X}_t-(t-u)\overline{V}_t-x_1-uv_1)dudsdv_1dx_1.
\end{align*}
We may in fact, using changes of variables and the symmetry of $\Phi$, rewrite $\tilde{\Lambda}$ as only a function of $\overline{V}_t$ and obtain the drift term.


\begin{lemma}\label{lem:reecrire_drift}
We have for all $t$
\begin{equation*}
\tilde{\Lambda}(t,X,\overline{X}_t,\overline{V}_t)=\left(\Lambda(\overline{V}_t)+O\left(\frac{1}{t^{d-2}}\wedge 1\right)\right)\left(1+O\left(\frac{1}{N}\right)\right),
\end{equation*}
where the drift coefficient $\Lambda$ was introduced in \eqref{def:drift}. 
\end{lemma}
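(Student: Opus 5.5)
First I will remove the Gibbs weight. Write $e^{-\Phi(X-x_1)/N}=1+O(N^{-1}\mathds{1}_{x_1\in\mathcal{B}(X,R)})$. The unweighted integral is expected to give $\Lambda(\overline{V}_t)$ up to the stated error. The correction term carries the extra constraint $x_1\in\mathcal{B}(X,R)$ and is bounded by $N^{-1}$ times the unweighted integrand in absolute value. So it only contributes a relative factor $(1+O(1/N))$, in the same way as the estimate used for \eqref{eq: negliger moyenne terme dominant}. A cruder bound $O(N^{-1})$ times something integrable suffices, since the $O(1/N)$ must multiply a quantity that is bounded uniformly in $t$.

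Next I will perform changes of variables in the unweighted integral. Set $x=\overline{X}_t-x_1-tv_1$, which the torus makes a translation with unit Jacobian, and $v=v_1-\overline{V}_t$. The inner argument then becomes
\begin{align*}
\overline{X}_t-(t-u)\overline{V}_t-x_1-uv_1 = x+(t-u)(v_1-\overline{V}_t)=x+(t-u)v .
\end{align*}
With $\gamma(v_1)=\gamma(v+\overline{V}_t)$, this gives
\begin{align*}
\int_{\mathcal{B}(0,R)}dx\int_{\mathbb{R}^d}dv\,\gamma(v+\overline{V}_t)\,\text{Hess}\Phi(x)\int_0^t\int_0^s\nabla\Phi(x+(t-u)v)\,du\,ds .
\end{align*}
All dependence on $X,\overline{X}_t$ disappears and only $\overline{V}_t$ remains. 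The substitutions $u'=u-t$, $s'=s-t$ turn the domain into $-t\le u'\le s'\le 0$, with integrand $\nabla\Phi(x-u'v)$. Using that $\Phi$ is even (so $\text{Hess}\Phi$ is even and $\nabla\Phi$ is odd), I will then change $x\to -x$. Together with $u'\to$ the appropriate sign, this matches the form $-\text{Hess}\Phi(x)\int_{-t}^0\int_{-t}^{s}\nabla\Phi(x+uv)\,du\,ds$ of \eqref{def:drift}. The sign bookkeeping here must be checked carefully against the minus sign in \eqref{def:drift}; it may also require $v\to -v$, which is not symmetric for $\gamma(v+V)$, so the choice of which variable to flip matters.

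The main obstacle is the truncation error: comparing $\int_{-t}^0\int_{-t}^s$ with $\int_{-\infty}^0\int_{-\infty}^s$. The difference is supported on $\{u\le -t\}$ or $\{s\le -t\}$, and contains $\text{Hess}\Phi(x)\nabla\Phi(x+uv)$ with $|x|\le R$. Nonvanishing therefore requires $|u||v|\le 2R$, i.e.\ $|v|\le 2R/|u|$, and for fixed $u$ this $v$-ball has Gaussian mass $O(|u|^{-d}\wedge 1)$. Since $s$ ranges over $[u,0]$, the double integral restricted to $u\le -t$ is bounded by
\begin{align*}
\int_{-\infty}^{-t}|u|\,(|u|^{-d}\wedge 1)\,du=O\big(t^{2-d}\wedge 1\big) .
\end{align*}
This is finite for $d\ge3$. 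The region $s\le -t$ is contained in the previous one because $u\le s$.

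The same computation with $t=0$ replaced by $t=\infty$ shows that $\Lambda$ is finite, via $\int_0^\infty |u|(|u|^{-d}\wedge1)\,du<\infty$ for $d\ge3$. This justifies applying Fubini throughout, and it also shows that the $(1+O(1/N))$ correction multiplies a uniformly bounded quantity, which yields the claimed form.
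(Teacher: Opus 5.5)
Your proposal is correct and follows essentially the paper's route: you handle the Gibbs weight via $e^{-\Phi(X-x_1)/N}=1+O(N^{-1}\mathds{1}_{x_1\in\mathcal{B}(X,R)})$, you use the affine change of variables in $(x_1,v_1)$ together with the evenness of $\Phi$ to reach the truncated form of \eqref{def:drift}, and you control the truncation error through the constraint $|v|\le 2R/|u|$; your single region $\{u<-t,\ u\le s\le 0\}$ merges the paper's two terms $I_1$ and $I_2$ into the one integral $\int_{-\infty}^{-t}|u|\,(|u|^{-d}\wedge 1)\,du$. The sign question you left open is settled by the flip $x\to -x$ alone, with no change of $u'$ or $v$ needed; and, exactly as in the paper, the weight correction is really an additive $O(1/N)$ times a uniformly bounded integral rather than a genuine relative factor, which is all that is used downstream.
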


The proof of Lemma~\ref{lem:reecrire_drift} is postponed to Appendix~\ref{sec:preuves_lemmes_mart}.
As we prove below in Lemma~\ref{lem:coeff_bornes}, the function $\Lambda$ is bounded. 
This way, the computation of  \eqref{eq:calcul_mart_int_2} yields 
\begin{align}
\hat{\mathbb{E}}&\left[\left( \prod_{i=1}^{n-1}g_i( \overline V_{-t_i})\right)g_n( \overline V_{t_n})\int_{t_n}^{t_{n+1}}\nabla f(\overline{V}_t)\cdot \text{Hess}\Phi(\overline{X}_t-\overline{x}^1_t)\int_0^t\int_0^s \nabla\Phi(\overline{X}_t-(t-u)\overline{V}_t-\overline{x}^1_u)duds dt\right]\nonumber\\
&\quad=\frac{1}{\mathcal{Z}}\sum_{M=1}^\infty \frac{N^{M-1}}{(M-1)!}\int_{\mathbb{T}\times \mathbb{R}^d}dZ\int_{\mathbb{T}^{M-1}\times \mathbb{R}^{(M-1)d}}dz_{2:M}\rho_{M-1}(Z,z_{2:M})\left( \prod_{i=1}^{n-1}g_i( \overline V_{-t_i})\right)g_n( \overline V_{t_n})\nonumber\\
&\hspace{4cm}\times\int_{t_n}^{t_{n+1}}\nabla f(\overline{V}_t)\cdot \Lambda(\overline{V}_t)dt+O\left(\frac{ t_{n+1}-t_n}{N}+(t_{n+1}-t_n)\wedge1\right)\nonumber\\
&\quad=\mathbb{E}\left[ \left(\prod_{i=1}^{n-1}g_i(V_{-t_i})\right)g_n(V_{t_n})\int_{t_n}^{t_{n+1}}\nabla f(V_t)\cdot \Lambda(V_t)dt\right]+O\left(\frac{ t_{n+1}-t_n}{N}+(t_{n+1}-t_n)\wedge1\right).\label{eq:final_drift}
\end{align}
This is the contribution yielding the drift term. Recall that it is multiplied by a factor $\frac{1}{N}$ in the computation of the martingale  \eqref{eq:calcul_mart_int_2}, which in particular implies that the error term is negligible.\\


$\bullet$ \underline{Getting rid of $\good(t)$}:  
We proceed as in \eqref{eq: getting rid of good}, however the test functions are more complicated and additional controls are needed.
Like previously, in $\good(t)$, we know that $|t-\sigma^+_1|\leq 9RN^{\gamma_r}$. Thus \eqref{eq:calcul_mart_int_2} is of the form
\begin{align*}
\eqref{eq:calcul_mart_int_2}
=  \frac{2}{N}\hat{\mathbb{E}} \Bigg[\tilde{g}(\overline{V}_{[-N;t_n]})\int_{t_n}^{t_{n+1}}\text{Hess}\Phi(\overline{X}_t -   \overline x^1_t )\tilde{f}(\overline{Z}_t,\overline{z}^1_t)\mathds{1}_{|t-\sigma^+_1|\leq 9RN^{\gamma_r}}\mathds{1}_{\good(t)}dt\Bigg],
\end{align*}
where we define 
\begin{align*}
\tilde{g}(V)=&g_n(V_{t_n})\prod_{i=1}^{n-1}g_i(V_{-t_i}),\quad\text{which is such that}\quad\|\tilde{g}\|_\infty\leq1,\\
\tilde{f}(\overline{Z}_t,\overline{z}^1_t)=&\nabla f(\overline{V}_t)\cdot \int_{0}^t\int_{0}^s\nabla\Phi\left(\overline{X}_t-\overline{x}^1_t-(t-u)(\overline{V}_t-\overline{v}^1_t)\right)duds.
\end{align*}
Notice that, if $\overline x^1_t=x^1_0+tv^1_0\in\mathcal{B}(\overline{X}_t,R)$, we have $\overline x^1_u=\overline x^1_t-(t-u)v^1_0\in\mathcal{B}(\overline{X}_t-(t-u)\overline{V}_t,R)$ only if $|t-u|\leq \frac{2R}{|\overline{V}_t-v^1_0|}$ (recall that the functions $u\mapsto \overline x^1_t-(t-u)v^1_0$ and $u\mapsto \overline{X}_t-(t-u)\overline{V}_t$ define two straight lines)\footnote{The periodic aspect of the torus can be neglected: the size of the torus is polynomial in $N$, we consider $t\leq N$ and, with high probability, $|\overline{V}_t-v^1_0|$ is smaller than any power of $N$.}. We therefore have
\begin{align*}
\left|\mathds{1}_{\overline x^1_t\in\mathcal{B}(\overline{X}_t,R)}\tilde{f}(\overline{Z}_t,\overline{z}^1_t)\right|=O\left(\frac{1}{|\overline{V}_t-v^1_0|^2}\right).
\end{align*}
This leads to 
\begin{align}
\eqref{eq:calcul_mart_int_2}
=&   \frac{2}{N}\hat{\mathbb{E}} \left[\tilde{g}(\overline{V}_{[-N;t_n]})\int_{t_n}^{t_{n+1}}\text{Hess}\Phi(\overline{X}_t -   \overline x^1_t )\tilde{f}(\overline{Z}_t,\overline{z}^1_t)\mathds{1}_{|t-\sigma^+_1|\leq 9RN^{\gamma_r}}dt\right]\label{eq:mart_int_retire_set_drift_1}\\
&+O\left(   \frac{1}{N} \int_{t_n}^{t_{n+1}}\hat{\mathbb{E}}\left[ \mathds{1}_{\overline x^1_t\in\mathcal{B}(\overline{X}_t,R)}\left(\frac{1}{|\overline{V}_t-v^1_0|}\right)^2\mathds{1}_{|t-\sigma^+_1|\leq 9RN^{\gamma_r}}(1- \mathds{1}_{\good(t)}) \right]dt\right).\label{eq:mart_int_retire_set_drift_2}
\end{align}
We have
\begin{align}
\eqref{eq:mart_int_retire_set_drift_2}
=O\left(  \frac{1}{N} \int_{t_n}^{t_{n+1}}\hat{\mathbb{E}}\left[ \mathds{1}_{\overline x^1_t\in\mathcal{B}(\overline{X}_t,R)}\mathds{1}_{|t-\sigma^+_1|\leq 9RN^{\gamma_r}}(1- \mathds{1}_{\good(t)})\right]^{\frac{p-1}{p}}\hat{\mathbb{E}}\left[\mathds{1}_{\overline x^1_t\in\mathcal{B}(\overline{X}_t,R)}\frac{1}{|\overline{V}_t-v^1_0|^{2p}}\right]^{\frac{1}{p}}dt\right),\label{eq:mart_int_retire_set_drift_3}
\end{align}
for any $p>1$. Since by \eqref{eq:mart_toutvabien_avec_bar} we have $\hat{\mathbb{E}}\left[ \mathds{1}_{\overline x^1_t\in\mathcal{B}(\overline{X}_t,R)}\mathds{1}_{|t-\sigma^+_1|\leq 9RN^{\gamma_r}}(1- \mathds{1}_{\good(t)})\right]^{\frac{p-1}{p}}=O\left(N^{-\frac{p-1}{p}(1+\omega')}\right)$, we only need to show that $\hat{\mathbb{E}}\left[\mathds{1}_{\overline x^1_t\in\mathcal{B}(\overline{X}_t,R)}\frac{1}{|\overline{V}_t-v^1_0|^{2p}}\right]^{\frac{1}{p}}=O(1)$. This can be done for $p=5/4$ for instance by direct computations, similarly to what has been done above to deal with \eqref{eq:error}, as $\overline{V}_t$ and $v^1_0$ are independent Gaussian variables\footnote{One can also use that $\int \gamma(v+V)\frac{1}{|v|^{5/2}}dv=O\left(\int \gamma(v)\frac{1}{|v|^{5/2}}dv\right)=O\left(1\right)$ uniformly in $V\in\bbR^d$.}. This takes care of \eqref{eq:mart_int_retire_set_drift_2}, and we now need to remove $\mathds{1}_{|t-\sigma^+_1|\leq 9RN^{\gamma_r}}$ from \eqref{eq:mart_int_retire_set_drift_1}. This can be done using $\widehat{\good(t)}$ defined in Lemma~\ref{lem:mart_toutvabien_avec_bar}. We use again \eqref{eq:mart_retirer_set_ordre_0_int_1}-\eqref{eq:mart_retirer_set_ordre_0_int_2} and get
\begin{align*}
\eqref{eq:mart_int_retire_set_drift_1}
=& \frac{2}{N} \hat{\mathbb{E}} \left[\tilde{g}(\overline{V}_{[-N;t_n]})\int_{t_n}^{t_{n+1}}\text{Hess}\Phi(\overline{X}_t -   \overline x^1_t )\tilde{f}(\overline{Z}_t,\overline{z}^1_t)\mathds{1}_{|t-\sigma^+_1|\leq 9RN^{\gamma_r}}\mathds{1}_{\widehat{\good(t)}}dt\right]\\
&+  \frac{2}{N}\hat{\mathbb{E}} \left[\tilde{g}(\overline{V}_{[-N;t_n]})\int_{t_n}^{t_{n+1}}\text{Hess}\Phi(\overline{X}_t -   \overline x^1_t )\tilde{f}(\overline{Z}_t,\overline{z}^1_t)\mathds{1}_{|t-\sigma^+_1|\leq 9RN^{\gamma_r}}\left(1-\mathds{1}_{\widehat{\good(t)}}\right)dt\right]\\
= &  \frac{2}{N}\hat{\mathbb{E}} \left[\tilde{g}(\overline{V}_{[-N;t_n]})\int_{t_n}^{t_{n+1}}\text{Hess}\Phi(\overline{X}_t -   \overline x^1_t )\tilde{f}(\overline{Z}_t,\overline{z}^1_t)\mathds{1}_{\widehat{\good(t)}}dt\right]\\
&+O\left(  \frac{1}{N} \int_{t_n}^{t_{n+1}}\hat{\mathbb{E}} \left[\mathds{1}_{\overline x^1_t\in\mathcal{B}(\overline{X}_t,3R/2)}\left(\frac{1}{|\overline{V}_t-v^1_0|}\right)^2\left(1-\mathds{1}_{\widehat{\good(t)}}\right)dt\right]\right)\\
=&  \frac{2}{N} \hat{\mathbb{E}} \left[\tilde{g}(\overline{V}_{[-N;t_n]})\int_{t_n}^{t_{n+1}}\text{Hess}\Phi(\overline{X}_t -   \overline x^1_t )\tilde{f}(\overline{Z}_t,\overline{z}^1_t)dt\right]\\
&+O\left(   \frac{1}{N} \int_{t_n}^{t_{n+1}}\hat{\mathbb{E}} \left[\mathds{1}_{\overline x^1_t\in\mathcal{B}(\overline{X}_t,3R/2)}\left(\frac{1}{|\overline{V}_t-v^1_0|}\right)^2\left(1-\mathds{1}_{\widehat{\good(t)}}\right)dt\right]\right)
\end{align*}
We again bound this last error by Hölder's inequality, in the same way \eqref{eq:mart_int_retire_set_drift_2} was bounded by \eqref{eq:mart_int_retire_set_drift_3}, and now use \eqref{eq:mart_toutvabien_avec_bar_chez_bar} to obtain $\hat{\mathbb{E}}\left[ \mathds{1}_{\overline x^1_t\in\mathcal{B}(\overline{X}_t,3R/2)}\left(1- \mathds{1}_{\widehat{\good(t)}}\right)\right]^{\frac{p-1}{p}}=O\left(N^{-\frac{p-1}{p}(1+\omega')}\right)$ for any $p$. 
We thus conclude
\begin{align}
\label{eq:pk_on_retire_indicatrice}
\eqref{eq:calcul_mart_int_2}
=&  \frac{2}{N} \hat{\mathbb{E}} \left[\left( \prod_{i=1}^{n-1}g_i( \overline V_{-t_i})\right)g_n( \overline V_{t_n})\int_{t_n}^{t_{n+1}}\nabla f(\overline{V}_t)\cdot \text{Hess}\Phi(\overline{X}_t -   \overline x^1_t )\right.\\
&\hspace{3cm}\left(\int_0^t\int_0^s \nabla\Phi(\overline{X}_t-(t-u)\overline{V}_t - \overline x^1_u )duds\right)\Bigg]
+ O\left(\frac{t_{n+1}-t_n}{N^{1 + \frac{p-1}{p}(1+\omega')}}\right). \nonumber
\end{align}

\vspace{0.3cm}


\noindent\textbf{The diffusion from \eqref{eq:calcul_mart_int_3}.} Proceeding as in \eqref{eq:pk_on_retire_indicatrice}, we can get rid of  the indicator function $\mathds{1}_{\good(t)}$. We now expand the expectation in \eqref{eq:calcul_mart_int_3} and get as in \eqref{eq:calcul_int_drift_mart}
\begin{align*}
\hat{\mathbb{E}}&\left[\left( \prod_{i=1}^{n-1}g_i( \overline V_{-t_i})\right)g_n( \overline V_{t_n})\int_{t_n}^{t_{n+1}}\text{Hess}f(\overline{V}_t)\left(\int_0^t\nabla \Phi(\overline{X}_t-(t-s)\overline{V}_t-\overline{x}^1_s)ds\right)\cdot\nabla\Phi(\overline{X}_t-\overline{x}^1_t)  dt\right]\\
&\quad=\frac{1}{\mathcal{Z}}\sum_{M=1}^\infty \frac{N^{M-1}}{(M-1)!}\int_{\mathbb{T}\times\mathbb{R}^d}dZ\int_{\mathbb{T}^{M-1}\times \mathbb{R}^{(M-1)d}}dz_{2:M} \rho_M(Z,z_{2:M}) \left(\prod_{i=1}^{n-1}g_i( \overline V_{-t_i})\right)g_n( \overline V_{t_n})\\
&\qquad\qquad\times\int_{t_n}^{t_{n+1}}\text{Hess}f(\overline{V}_t):\tilde{D}(t,X,\overline{X}_t,\overline{V}_t)dt,
\end{align*}
where for two matrices $A,B$ we denote $A:B=\sum_{i,j}A_{i,j}B_{i,j}$ and $\tilde{D}$ is given by the coefficients
\begin{align*}
\tilde{D}_{i,j}(t,X,\overline{X}_t,\overline{V}_t)=&\int_{\mathbb{T}}dx_1 e^{-\frac{\Phi(X-x_1)}{N}}\int_{\mathbb{R}^d}dv_1\gamma(v_1)\left(\int_0^t\partial_j \Phi(\overline{X}_t-(t-s)\overline{V}_t-x_1-sv_1)ds\right)\\
&\qquad\cdot\partial_i\Phi(\overline{X}_t-x_1-tv_1)
\end{align*}
Again, with similar computations as in Lemma~\ref{lem:reecrire_drift}, we may rewrite:


\begin{lemma}\label{lem:reecrire_diffusion}
We have for all $t$
\begin{equation}\label{eq:on_a_la_diffusion}
\tilde{D}_{i,j}(t,X,\overline{X}_t,\overline{V}_t)=\left(D_{i,j}(\overline{V}_t)+O\left(\frac{1}{t^{d-1}}\wedge 1\right)\right)\left(1+O\left(\frac{1}{N}\right)\right),
\end{equation}
where the diffusion coefficient $D$ was introduced in \eqref{def:diffusion}.
\end{lemma}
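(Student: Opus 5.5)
The plan is to treat $\tilde D$ exactly as one would treat $\tilde\Lambda$ in Lemma~\ref{lem:reecrire_drift}: remove the Gibbs weight, change variables to the relative position, and compare the truncated time integral with the one over $(-\infty,0]$.

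\textbf{Step 1: remove the Gibbs weight.} Write $e^{-\Phi(X-x_1)/N}=1+O(N^{-1}\mathds{1}_{x_1\in\mathcal{B}(X,R)})$. The correction term is bounded by the same integral as the main term, with the extra restriction $x_1\in\mathcal{B}(X,R)$ and a factor $1/N$. Once the remaining bounds below are established, it is therefore absorbed into the multiplicative factor $1+O(1/N)$.

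\textbf{Step 2: change variables.} With the weight equal to $1$, substitute
\begin{align*}
x=\overline{X}_t-x_1-tv_1,\qquad v=v_1-\overline{V}_t,\qquad s'=s-t\in[-t,0].
\end{align*}
Then
\begin{align*}
\overline{X}_t-(t-s)\overline{V}_t-x_1-sv_1 = x+(t-s)v = x-s'v .
\end{align*}
On the torus the map $x_1\mapsto x$ is a translation, so $dx_1=dx$. Since $\gamma(v_1)=\gamma(v+\overline{V}_t)$, we obtain
\begin{align*}
\tilde D_{i,j}=\int_{\mathcal{B}(0,R)}dx\int_{\mathbb{R}^d}dv\,\gamma(v+\overline{V}_t)\,\partial_i\Phi(x)\int_{-t}^0\partial_j\Phi(x-s'v)\,ds'.
\end{align*}
The torus periodicity can be ignored, as in the earlier footnote, because $|\mathbb{T}|$ is polynomially large. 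This is $D_{i,j}(\overline{V}_t)$ up to two discrepancies: the sign convention $x-s'v$ versus $x+sv$, and the truncation of the time integral to $[-t,0]$.

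\textbf{Step 3: fix the sign convention.} Either substitute $v\to-v$ and use that $\Phi$ is even (so $\nabla\Phi$ is odd and $\gamma$ has the needed symmetry), or directly check the definition \eqref{def:diffusion}. If the orientation genuinely differs, I expect the resulting expression to still equal $D$ after using $\nabla\Phi(-y)=-\nabla\Phi(y)$ together with $x\to-x$; this bookkeeping should be checked carefully.

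\textbf{Step 4: control the tail (the main step).} The remainder is
\begin{align*}
\int dx\int dv\,\gamma(v+\overline{V}_t)\,\partial_i\Phi(x)\int_{-\infty}^{-t}\partial_j\Phi(x-s'v)\,ds' .
\end{align*}
The integrand is nonzero only if $|x|\le R$ and $|x-s'v|\le R$, which forces $|v|\le 2R/|s'|$. Bounding $\gamma$ by a constant uniformly in $\overline{V}_t$, the $v$-integral at fixed $s'$ contributes $O(|s'|^{-d})$. Integrating over $s'\ge t$ gives $O(t^{1-d})$. The a priori bound $O(1)$ comes from the fact that the full integral defining $D$ converges absolutely in $d\ge 3$, by the same computation with $\int_{0}^{\infty}(1\wedge s^{-d})\,ds<\infty$; this also shows $D$ is well defined and bounded, consistent with Lemma~\ref{lem:coeff_bornes}.

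Combining these gives the remainder $O(t^{1-d}\wedge1)$, and together with the $1+O(1/N)$ factor from Step 1 this is \eqref{eq:on_a_la_diffusion}.

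I expect no real obstacle; the only delicate part is the sign and convention matching between the truncated expression and \eqref{def:diffusion}, together with the uniformity of the bounds in $\overline{V}_t$.
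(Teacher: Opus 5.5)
Your proposal is correct and is essentially the paper's proof. The paper removes the Gibbs weight via $|e^{-\Phi/N}-1|\le 2\|\Phi\|_\infty N^{-1}\mathds{1}_{\mathcal{B}(0,R)}$, uses the evenness of $\Phi$ to flip signs (equivalent to your $x\to -x$) before the change of variables $(x,v)=(x_1+tv_1-\overline{X}_t,\,v_1-\overline{V}_t)$, and bounds the tail over $(-\infty,-t]$ exactly as in your Step 4, by $O\big(\int_{-\infty}^{-t}(|s|^{-d}\wedge 1)\,ds\big)$.

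In Step 3, only the $x\to -x$ option is valid. Substituting $v\to -v$ would replace $\gamma(v+\overline{V}_t)$ by $\gamma(v-\overline{V}_t)$, which differs unless $\overline{V}_t=0$.
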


The proof of this lemma is postponed to Appendix~\ref{sec:preuves_lemmes_mart}. As we prove below in Lemma~\ref{lem:coeff_bornes}, the function $D_{i,j}$ is bounded. This way, the computation of \eqref{eq:calcul_mart_int_3} yields
\begin{align}
\eqref{eq:calcul_mart_int_3}&
=\frac{1}{\mathcal{Z}}\sum_{M=1}^\infty \frac{N^{M-1}}{(M-1)!}\int_{\mathbb{T}\times\mathbb{R}^d}dZ\int_{\mathbb{T}^{M-1}\times \mathbb{R}^{(M-1)d}}dz_{2:M} \rho_M(Z,z_{2:M}) \left(\prod_{i=1}^{n-1}g_i( \overline V_{-t_i})\right)g_n( \overline V_{t_n})\nonumber\\
&\qquad\qquad\times
 \frac{1}{N}\int_{t_n}^{t_{n+1}}\sum_{i,j}\partial_{i,j}f(\overline{V}_t)D_{i,j}(\overline{V}_t)dt+O\left(\frac{t_{n+1}-t_n}{N^2}+\frac{(t_{n+1}-t_n)\wedge1}{N} \right)\nonumber\\
&\quad=
 \frac{1}{N} \mathbb{E}\left[\left( \prod_{i=1}^{n-1}g_i( V_{-t_i})\right)g_n(V_{t_n})\int_{t_n}^{t_{n+1}}\text{Hess}f(V_t):D(V_t)dt\right]+O\left(\frac{t_{n+1}-t_n}{N^2}+\frac{(t_{n+1}-t_n)\wedge1}{N} \right) .
\label{eq:final_diffusion}
\end{align}
\vspace{0.3cm}

\noindent
\textbf{Conclusion of the proof of Proposition~\ref{prop:martingale_formulation}.}
Plugging \eqref{eq:final_drift} and \eqref{eq:final_diffusion}, as well as the results concerning the other terms, back into \eqref{eq:calcul_mart_int_1}-\eqref{eq:calcul_mart_int_3} yields that there exists $\omega>0$ such that
\begin{align*}
\mathbb{E}\left[\prod_{i=1}^ng_i(V_{\tau_iN})\left(f(V_{\tau_{n+1}N})-f(V_{\tau_nN})\right)\right]=&\frac{2}{N}\mathbb{E}\left[\prod_{i=1}^ng_i(V_{\tau_iN})\int_{\tau_nN}^{\tau_{n+1}N}\nabla f(V_t)\cdot \Lambda(V_t)dt\right]\\
&+\frac{1}{N}\mathbb{E}\left[\prod_{i=1}^ng_i(V_{\tau_iN})\int_{\tau_nN}^{\tau_{n+1}N}\text{Hess}f(V_t):D(V_t)dt\right]\\
&+O\left( \frac{t_{n+1}-t_n}{N^{1+\omega}}\right)+O\left(\frac{N^{\gamma_r}\wedge (t_{n+1}-t_n)}{N}\right).
\end{align*}
\end{proof}

%
%
%
%

\subsection{Study of the macroscopic coefficients}

Let us give some results concerning the limiting drift and diffusion coefficients.


\begin{lemma}\label{lem:prop_coeff}
Let $\Lambda:\mathbb{R}^d\mapsto \mathbb{R}^d$ and $D:\mathbb{R}^d\mapsto \mathbb{R}^{d\times d}$ be defined respectively in \eqref{def:drift} and \eqref{def:diffusion}. For all $V\in\mathbb{R}^d$, we have that
\begin{enumerate}[label=(\roman*)]
\item $\Lambda(V)$ and $D(V)$ are well-defined in dimensions $d\geq3$,
\item
$D(V)$ is a symmetric positive definite matrix. There therefore exists a unique positive definite and symmetric matrix $\Sigma:\mathbb{R}^d\mapsto \mathbb{R}^{d\times d}$ such that for all $V\in\mathbb{R}^d$ we have $D(V)=\Sigma(V)\Sigma(V)$,
\item  $\Lambda(V)=\nabla\cdot D(V)$, where the divergence is taken line by line, i.e. for all $i\in\{1,...,d\}$ we have $\Lambda_i (V)=\sum_{j=1}^d\partial_jD_{i,j}(V)$,
\item we can write $\Lambda (V)= -D(V)V$,
\item and finally, we  have
\begin{align}
D(V)=&\frac{\pi}{(2\pi)^{d}}\int_{\mathbb{R}^d}dv\int_{\mathbb{R}^d}dk \gamma(v+V)(k\otimes k)|\hat{\Phi}(k)|^2\delta_{k\cdot v},\\
\Lambda(V)=&\frac{\pi}{(2\pi)^{d}}\int_{\mathbb{R}^d}dv\int_{\mathbb{R}^d}dk \nabla\gamma(v+V)(k\otimes k)|\hat{\Phi}(k)|^2\delta_{k\cdot v}.
\end{align}
\end{enumerate}
\end{lemma}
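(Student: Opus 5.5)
The natural route is to prove (v) first, by Fourier analysis, and then read (i)--(iv) off the resulting formulas. For $D$, substitute $x\mapsto x$ and note that $\int_{-\infty}^0\nabla\Phi(x+sv)ds$ is integrable in $x$ only after pairing with $\nabla\Phi(x)$. We therefore write
\[
D(V)=\int dv\,\gamma(v+V)\int_{-\infty}^0 ds\int dx\,\nabla\Phi(x)\otimes\nabla\Phi(x+sv).
\]
By Plancherel,
\[
\int dx\,\nabla\Phi(x)\otimes\nabla\Phi(x+sv)=(2\pi)^{-d}\int dk\,(k\otimes k)|\hat\Phi(k)|^2e^{-isk\cdot v}.
\]
Here $\Phi$ is real and even, so $\hat\Phi$ is real and even. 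Symmetrizing $s\mapsto -s$ (by evenness in $k$), the half-line integral becomes half of the full-line integral. Using $\int_{\mathbb{R}}e^{-isk\cdot v}ds=2\pi\delta_{k\cdot v}$, this gives the stated formula with prefactor $\pi/(2\pi)^d$.

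The Fubini exchange needs justification. I would insert a damping factor $e^{\varepsilon s}$ and let $\varepsilon\to0$, or alternatively integrate first in $v$ against the Gaussian, which makes everything absolutely convergent. This is where $d\geq3$ enters, and it proves (i) for $D$. The point is that the $s$-integrand is supported where $|sv|\le 2R$, so $\int ds\int dv\,\gamma(v+V)\mathds 1_{|s||v|\le 2R}$ is bounded by $\int |v|^{-1}\gamma(v+V)dv$, which is finite for $d\ge2$. For $\Lambda$ the double time integral gives a weight $|v|^{-2}$, which is integrable exactly for $d\ge3$.

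For $\Lambda$ I would proceed analogously. Writing $\int_{-\infty}^0\int_{-\infty}^s\nabla\Phi(x+uv)\,du\,ds=\int_{-\infty}^0 |u|\nabla\Phi(x+uv)\,du$ and using the Fourier transform of $\mathrm{Hess}\Phi$, one gets $(2\pi)^{-d}\int dk\,(k\otimes k)\,k\,|\hat\Phi|^2\int_{-\infty}^0 |u|e^{-iuk\cdot v}du$ up to a factor $\pm i$. The odd part in $k$ survives, giving a distribution of the form $\partial(\delta_{k\cdot v})$, namely $\mathrm{p.v.}$ and derivative-of-delta terms. These are then moved onto $\gamma(v+V)$ by integrating by parts in $v$: $k\cdot\nabla_v$ acting on $\delta_{k\cdot v}$. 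This should yield the stated formula with $\nabla\gamma(v+V)$.

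I expect this to be the main obstacle: handling the non-absolutely-convergent oscillatory integral $\int_{-\infty}^0|u|e^{-iuw}du$ and correctly identifying which parts cancel by the $k\mapsto -k$ symmetry. I would regularize with $e^{\varepsilon u}$, compute $\int_{-\infty}^0|u|e^{(\varepsilon-iw)u}du=(\varepsilon-iw)^{-2}$, and pair it against the test function in $v$ before sending $\varepsilon\to0$.

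Once (v) is established, the remaining items follow quickly.

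\textbf{Proof of (ii).} Symmetry is evident from $k\otimes k$. For positivity, observe that
\[
\xi^TD\xi=\frac{\pi}{(2\pi)^d}\int dk\,(k\cdot\xi)^2|\hat\Phi(k)|^2\int dv\,\gamma(v+V)\delta_{k\cdot v}.
\]
The inner integral is a strictly positive one-dimensional Gaussian marginal. Moreover $\hat\Phi\not\equiv0$ is analytic, since $\Phi$ is compactly supported and nonzero. Hence the integrand is positive on an open set, and so $\xi^TD\xi>0$ for $\xi\neq0$. The square root $\Sigma$ then exists and is unique by the spectral theorem.

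\textbf{Proof of (iii).} Differentiate the formula for $D$ under the integral sign in $V$. The derivative $\partial_{V_j}$ falls on $\gamma(v+V)$, and contracting with $k\otimes k$ gives $\sum_j\partial_jD_{ij}=\frac{\pi}{(2\pi)^d}\int\!\!\int k_i\,(k\cdot\nabla\gamma)(v+V)|\hat\Phi|^2\delta_{k\cdot v}$. This matches the $\Lambda$ formula, interpreted as $(k\otimes k)\nabla\gamma$.

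\textbf{Proof of (iv).} Use $\nabla\gamma(v+V)=-(v+V)\gamma(v+V)$. On the support of $\delta_{k\cdot v}$ we have $k\cdot v=0$, so $(k\otimes k)(v+V)=k(k\cdot V)$, and therefore $\Lambda(V)=-D(V)V$.
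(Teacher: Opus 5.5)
Your proposal is correct, but it runs in the opposite direction from the paper. The paper stays in physical space for (ii)--(iv). It symmetrizes $D$ (via $x\mapsto -x$, $s\mapsto -s$) into $\frac12\int\!\!\int\gamma(v+V)\nabla\Phi(x)\otimes\int_{\mathbb{R}}\nabla\Phi(x+sv)\,ds$. It then splits $x=uv+x'$ with $x'\in v^\perp$, which writes $D$ as an average of $|v|\,(\int\nabla\Phi)\otimes(\int\nabla\Phi)$. It proves $\Lambda=\nabla\cdot D$ by integrating by parts in $v_k$ and then in $s$, and obtains $\Lambda=-DV$ from $\int_{\mathbb{R}}v\cdot\nabla\Phi(x+sv)\,ds=0$. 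Plancherel is used only for $D$, and the Fourier formula for $\Lambda$ is then just the divergence of the one for $D$.

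You instead derive both Fourier formulas first, after which (iii) and (iv) are one-line algebra. Your use of $k\cdot v=0$ on the support of $\delta_{k\cdot v}$ is the Fourier counterpart of the paper's $\int_{\mathbb{R}}v\cdot\nabla\Phi(x+sv)\,ds=0$. Your argument for strict definiteness, via analyticity of $\hat\Phi$ and positivity of the Gaussian marginal, is actually more complete than the paper's: the $x'$-representation on its own only exhibits $D$ as positive semi-definite.

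The price is the oscillatory computation for $\Lambda$, which the paper sidesteps entirely. Your plan for it does go through. Against the odd weight $k|k|^2|\hat\Phi(k)|^2$, only the sine part of $e^{-iuk\cdot v}$ survives. In the Abel sense, $\int_{-\infty}^0|u|\sin(uw)\,du=\pi\delta'(w)$. The finite-part $w^{-2}$ piece is even in $w$ and cancels under $k\mapsto-k$, so no principal value remains. Then $|k|^2\delta'(k\cdot v)=k\cdot\nabla_v\delta_{k\cdot v}$, followed by an integration by parts in $v$, gives the stated formula with the correct sign.

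For the limit $\varepsilon\to0$, integrate once by parts along $\hat k=k/|k|$. With $\eta=\varepsilon/|k|$ and $g_k$ the Gaussian marginal of $v$ along $\hat k$, the $v$-integral becomes $|k|^{-2}\,i\int g_k'(w)(\eta-iw)^{-1}dw$. This is bounded uniformly in $\eta>0$ and $k$, so dominated convergence applies with majorant $C|k|\,|\hat\Phi(k)|^2\in L^1$.
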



\begin{remark}
Let us give a few remarks concerning the implications of Lemma~\ref{lem:prop_coeff}.

(iv) ensures that the standard Gaussian distribution is the invariant measure for the macroscopic SDE \eqref{eq:eds_limit}, which is coherent with the invariant measure for the microscopic process $(V_{t})_t$.

(v) states that the coefficients $\Lambda$ and $D$ we obtain are the same as those identified in  \cite[Theorem 4.2]{NVW22} (up to multiplicative universal constants). 
\end{remark}

The proof is postponed to Appendix~\ref{sec:tech_coeff}. We also have the following regularity results on the coefficients.


\begin{lemma}\label{lem:coeff_bornes}
Let $\Lambda:\mathbb{R}^d\mapsto \mathbb{R}^d$ and $D:\mathbb{R}^d\mapsto \mathbb{R}^{d\times d}$ be defined respectively in \eqref{def:drift} and \eqref{def:diffusion}. We have 
\begin{itemize}
\item $\Lambda \in L^\infty(\mathbb{R}^d,\mathbb{R}^d)$ and $D \in L^\infty(\mathbb{R}^d,\mathbb{R}^{d\times d})$,
\item there exists $C>0$ such that for all $V,V'\in\mathbb{R}^d$, we have
\begin{align*}
\left|\Lambda(V)-\Lambda(V')\right|\leq C|V-V'|,\qquad \left\|D(V)-D(V')\right\|\leq C|V-V'|.
\end{align*}
\end{itemize}
\end{lemma}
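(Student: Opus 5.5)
The plan is to work from the Fourier representation of Lemma~\ref{lem:prop_coeff}~(v), which makes both the dependence on $V$ and the singular behaviour near $v=0$ explicit. With the change of variables $w=v+V$,
\begin{align*}
D(V)=\frac{\pi}{(2\pi)^{d}}\int_{\mathbb{R}^d}dk\,(k\otimes k)|\hat{\Phi}(k)|^2\int_{\mathbb{R}^d}dw\,\gamma(w)\,\delta_{k\cdot(w-V)} .
\end{align*}
Fix $k\neq0$ and write $w=s\hat k+w_\perp$ with $\hat k=k/|k|$. The inner integral then becomes $\frac{1}{|k|}\gamma_1(\hat k\cdot V)$, where $\gamma_1$ denotes the one-dimensional standard Gaussian density. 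The Gaussian factorizes, and the $w_\perp$ integral is equal to one. This gives the closed formula
\begin{align*}
D(V)=\frac{\pi}{(2\pi)^{d}}\int_{\mathbb{R}^d}dk\,\frac{k\otimes k}{|k|}|\hat{\Phi}(k)|^2\,\gamma_1\!\left(\hat k\cdot V\right).
\end{align*}
The same computation applied to $\nabla\gamma(w)=-w\gamma(w)$ evaluates the $w$ integral as a one-dimensional Gaussian moment along $\hat k$, which yields an analogous formula for $\Lambda$. This is consistent with $\Lambda=-DV$ from (iv), so alternatively I would simply invoke (iv).

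\textbf{Boundedness.} Since $\|\gamma_1\|_\infty\le 1$, we have
\begin{align*}
\|D(V)\|\le C\int |k|\,|\hat\Phi(k)|^2\,dk .
\end{align*}
This integral is finite because $\Phi\in\mathcal{C}^3_c$, so $|k|^{3}|\hat\Phi(k)|$ is bounded and $\hat\Phi\in L^2$. For $\Lambda$, it is cleaner to use (iii), $\Lambda=\nabla\cdot D$, rather than $-DV$, since the latter looks unbounded in $V$. Differentiating the closed formula in $V$ gives
\begin{align*}
\partial_{V_j}D(V)=\frac{\pi}{(2\pi)^{d}}\int dk\,\frac{k\otimes k}{|k|}\,\hat k_j\,|\hat\Phi(k)|^2\,\gamma_1'\!\left(\hat k\cdot V\right).
\end{align*}
Because $\gamma_1'$ is bounded, this is bounded uniformly in $V$ by $C\int|k||\hat\Phi|^2$. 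Hence $\Lambda$ is bounded, and $D$ is globally Lipschitz by the mean value theorem.

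\textbf{Lipschitz property of $\Lambda$.} This requires second derivatives of $D$. They involve $\gamma_1''(\hat k\cdot V)\,\hat k_j\hat k_l$, which is again bounded, so the same integrability of $|k||\hat\Phi(k)|^2$ closes the argument. Differentiation under the integral is justified by dominated convergence, using the $V$-uniform bounds on $\gamma_1^{(m)}$ for $m\le 2$.

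The main point to check is the reduction of the delta-function integral to $\gamma_1(\hat k\cdot V)/|k|$; in particular, the computation should be carried out in the $w$ variable so that $V$ enters only through a bounded smooth function. It also needs to be confirmed that the formula (v) from Lemma~\ref{lem:prop_coeff} is an honest identity, meaning absolutely convergent after the $\delta$ integration, which holds since $|k|^{-1}|k|^2|\hat\Phi|^2$ is integrable for $d\ge3$. Should one prefer to avoid Fourier variables, the fallback is to work directly with \eqref{def:diffusion}: shift $v\mapsto v-V$ so that $V$ appears only in $\gamma(v+V)$, and use the bound $\int_{-\infty}^0|\nabla\Phi(x+sv)|ds\le C/|v|$. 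The resulting $v$-integrals are then bounded uniformly in $V$ exactly as in the footnote bounding $\int\gamma(v+V)|v|^{-5/2}dv$, for derivatives up to order two falling on $\gamma$.
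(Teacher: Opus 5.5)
Your argument is correct, but it takes a genuinely different route from the paper.

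\textbf{The paper's route.} The paper never leaves physical space. Boundedness is read off from the estimates used for Lemma~\ref{lem:prop_coeff}~(i): the support constraint $|x|\le R$, $|x+uv|\le R$ forces $|u|\le 2R/|v|$, and $\int\gamma(v+V)\mathds{1}_{|v|\le 2R/|u|}\,dv\lesssim 1\wedge|u|^{-d}$ uniformly in $V$. The Lipschitz bounds come from differentiating $\gamma(v+V)$ directly inside \eqref{def:drift}--\eqref{def:diffusion}. This reduces everything to the uniform-in-$V$ finiteness of $\int\gamma(v+V)\,|v+V|\,|v|^{-2}\,dv$, which is where $d\ge3$ enters.

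\textbf{Your route.} You instead integrate out the $\delta$ in Lemma~\ref{lem:prop_coeff}~(v) and obtain
\[
D(V)=\frac{\pi}{(2\pi)^d}\int_{\mathbb{R}^d}\frac{k\otimes k}{|k|}\,|\hat\Phi(k)|^2\,\gamma_1(\hat k\cdot V)\,dk .
\]
All $V$-dependence now sits in a one-dimensional Gaussian, and the small-relative-velocity singularity disappears. Combined with (iii), this shows at once that $D$ and $\Lambda$ are smooth with all derivatives bounded, which is more than the lemma asks.

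\textbf{What your route depends on.} Everything rests on (v), whose derivation in the paper is formal (it manipulates products of $\delta$'s). Absolute convergence of the final expression does not by itself justify the identity. To make your route self-contained, start from the symmetric representation obtained in the proof of Lemma~\ref{lem:prop_coeff}~(ii).
\begin{enumerate}
\item For fixed $v$, the projection-slice theorem says that $x'\mapsto\int_{\mathbb{R}}\nabla\Phi(x'+sv)\,ds$ on $v^\perp$ has $(d-1)$-dimensional Fourier transform $|v|^{-1}ik\hat\Phi(k)$, for $k\in v^\perp$.
\item Apply Plancherel on $v^\perp$.
\item Change the order of integration using $\int dv\,h(v)|v|^{-1}\int_{v^\perp}g=\int dk\,g(k)|k|^{-1}\int_{k^\perp}h$, which holds by Tonelli for nonnegative $h,g$.
\end{enumerate}
This gives exactly your closed formula.

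\textbf{A small correction.} Finiteness of $\int|k|\,|\hat\Phi(k)|^2\,dk$ does not follow in all dimensions from $|k|^3|\hat\Phi|\in L^\infty$ and $\hat\Phi\in L^2$. The decay only gives an integrand of order $|k|^{-5}$, which is integrable only for $d\le4$. Use instead $\int|k|^2|\hat\Phi(k)|^2\,dk=(2\pi)^d\|\nabla\Phi\|_{L^2}^2<\infty$, together with the boundedness of $\hat\Phi$ near $k=0$.
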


Likewise, the proof is postponed to Appendix~\ref{sec:tech_coeff}. From this last lemma, let us now conclude on the well-posedness of the SDE \eqref{eq:eds_limit} and the associated martingale problem.


\begin{lemma}\label{lem:existence_unicite_sde}
There is weak and strong existence, as well as weak and pathwise uniqueness, for the macroscopic SDE \eqref{eq:eds_limit}. In particular, there exists a unique solution to the martingale problem associated with \eqref{eq:eds_limit} with initial condition given by the standard Gaussian distribution, and this solution corresponds to the law of the process defined by \eqref{eq:eds_limit}.
\end{lemma}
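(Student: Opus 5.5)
The plan is to reduce everything to classical results for SDEs with Lipschitz coefficients, using Lemma~\ref{lem:coeff_bornes}. The drift $2\Lambda$ is bounded and globally Lipschitz by that lemma, so only the diffusion matrix $\sqrt{2}\Sigma$ needs work. The main step is therefore to prove that $\Sigma=D^{1/2}$ is globally Lipschitz and bounded. Boundedness is immediate from $\|\Sigma(V)\|=\|D(V)\|^{1/2}$ and $D\in L^\infty$.

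\textbf{Lipschitz continuity of $\Sigma$.} This requires a uniform ellipticity bound $D(V)\geq \lambda(V)\,\mathrm{Id}$. I would use the standard inequality for symmetric positive definite matrices,
\begin{align*}
\|A^{1/2}-B^{1/2}\|\leq \frac{\|A-B\|}{\lambda_{\min}(A)^{1/2}+\lambda_{\min}(B)^{1/2}}.
\end{align*}
This gives local Lipschitz continuity of $\Sigma$ on compacts, since $D$ is continuous and positive definite by Lemma~\ref{lem:prop_coeff}(ii), hence uniformly elliptic on compact sets. To get global Lipschitz continuity, one would use the Fourier representation of Lemma~\ref{lem:prop_coeff}(v). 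This shows that for large $|V|$ the matrix $D(V)$ degenerates only like $|V|^{-1}$ in the direction $V$ and $|V|^{-3}$ transversally, or some similar decay. Combined with a derivative bound on $D$ of matching order, coming from differentiating $\gamma(v+V)$, this should keep $\nabla\Sigma$ bounded. This is the step I expect to be the main obstacle.

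\textbf{Fallback.} If global Lipschitz continuity of $\Sigma$ is too delicate, I would settle for local Lipschitz continuity of $\Sigma$ on compacts, together with the linear growth bound given by boundedness of the coefficients. Strong existence and pathwise uniqueness then hold up to explosion, and boundedness excludes explosion. So there is a unique global strong solution by the standard localization (Itô) argument, for instance as in Ikeda--Watanabe or Karatzas--Shreve.

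\textbf{Conclusion.} By Yamada--Watanabe, strong existence and pathwise uniqueness give weak existence and uniqueness in law. By the Stroock--Varadhan equivalence, solutions of the martingale problem for $\mathcal{L}$ defined in \eqref{eq:def_gen_nl}, with test functions in $\mathcal{C}^2_c$, correspond exactly to weak solutions of \eqref{eq:eds_limit}. Uniqueness in law therefore gives uniqueness of the martingale problem with Gaussian initial law, identified with the law of the SDE solution. One point needs care here: the generator uses $D=\Sigma\Sigma^T$ written with $\Sigma$ symmetric, together with the factor conventions $2\Lambda$ and $\sqrt{2}\Sigma$. This matches since $\frac12(\sqrt2\Sigma)(\sqrt2\Sigma)^T=D$.
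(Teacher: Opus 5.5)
Your argument is correct once you drop the global-Lipschitz ``main step'' and go straight to your fallback. Local Lipschitz continuity of $\Sigma$ plus boundedness of the coefficients already gives a unique, non-exploding strong solution, so global Lipschitz continuity is never needed. (Your decay heuristic for that step is also reversed: by Lemma~\ref{lem:prop_coeff}(v), $D(V)$ decays like $|V|^{-1}$ on $V^\perp$ and like $|V|^{-3}$ along $V$.)

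The load-bearing input of your route is strict positive definiteness of $D$, which you take from Lemma~\ref{lem:prop_coeff}(ii). Note that the formula displayed in the proof of (ii) only exhibits $D\geq 0$. Definiteness is easily recovered from (v): for a unit vector $e$, $e\cdot D(V)e$ is a positive multiple of
\[
\int_{\mathbb{R}^d}dk\,\frac{(k\cdot e)^2|\hat{\Phi}(k)|^2}{|k|}\,e^{-\frac{(k\cdot V)^2}{2|k|^2}}.
\]
This is positive as soon as $\Phi\not\equiv 0$, because $\hat{\Phi}$ is real-analytic and hence nonzero almost everywhere. Continuity of $V\mapsto\lambda_{\min}(D(V))$ then gives the lower bound on compacts required by your estimate $\|A^{1/2}-B^{1/2}\|\leq\|A-B\|/(\lambda_{\min}(A)^{1/2}+\lambda_{\min}(B)^{1/2})$.

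This is a genuinely different route from the paper's, which never uses ellipticity. The paper only uses $\|\Sigma(V)-\Sigma(V')\|\leq\|D(V)-D(V')\|^{1/2}$, so its $\Sigma$ is merely H\"older-$\frac12$. It gets weak existence from Skorokhod's theorem for bounded continuous coefficients. It gets pathwise uniqueness from a Watanabe--Yamada criterion with $\rho(x)=C^{1/2}\sqrt{x}$ and $\bar\rho(x)=Cx$. That approach would have the advantage of tolerating a degenerate $D$.

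However, in dimension $d\geq 2$ the Osgood-type condition in such a criterion concerns the behaviour at $0^+$. There $\rho(x)^2/x=C$ is constant, so $\int_{0^+}\frac{dx}{\rho(x)^2/x+\bar\rho(x)}<\infty$. The divergence of $\int_0^\infty$ checked in the paper comes entirely from the tail at infinity, so the paper's pathwise-uniqueness step does not go through as written.

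Your upgrade of $\Sigma$ from H\"older-$\frac12$ to locally Lipschitz, via positive definiteness, places the problem in the classical It\^o setting. After that, Yamada--Watanabe and the Stroock--Varadhan equivalence (with $\frac12(\sqrt2\Sigma)(\sqrt2\Sigma)^T=D$) give the remaining claims exactly as you describe.
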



\begin{proof}
Recall SDE \eqref{eq:eds_limit}
\begin{equation*}
d\mathcal{V}_\tau=2\Lambda(\mathcal{V}_\tau)d\tau+\sqrt{2}\Sigma(\mathcal{V}_\tau)dB_\tau,
\end{equation*}
where $B$ is a Brownian motion in $\mathbb{R}^d$ and $\Lambda, \Sigma$ are defined in Theorem~\ref{thm:main}.
\begin{description}
\item[$\bullet$ Weak existence:] By Lemma~\ref{lem:coeff_bornes}, $\Lambda$ and $D$ are bounded continuous functions. Let us show that this implies a similar property for $\Sigma$. First we have, for all $V,Z\in\mathbb{R}^d$ 
\begin{align*}
|\Sigma(V)Z|^2=D(V)Z\cdot Z\leq |D(V)Z\|Z|\leq \|D\|_\infty |Z|^2,
\end{align*}
which implies that $\|\Sigma\|_\infty\leq \|D\|_\infty^{1/2}$. Furthermore, we have for all $V,V'\in\mathbb{R}^d$
\begin{align*}
\|\Sigma(V)-\Sigma(V')\|\leq \sqrt{\|D(V)-D(V')\|}\leq C^{1/2}|V-V'|^{1/2}.
\end{align*}
Note that the first inequality above is a simple technical argument \cite{Phi87}.
The coefficients of the SDE are therefore bounded and continous, which implies existence of a solution to the martingale problem by Skorokhod (see \cite[Theorem 21.9]{Kal02}), and thus weak existence of a solution to the SDE by Stroock and Varadhan (see \cite[Theorem 21.7]{Kal02}).
\item[$\bullet$  Pathwise uniqueness:] Let us use \cite[Theorem 4]{WY71}. We have $\|\Sigma(V)-\Sigma(V')\|\leq \rho(|V-V'|)$ and $|\Lambda(V)-\Lambda(V')|\leq \overline{\rho}(|V-V'|)$ where $\rho:x\mapsto C^{1/2}\sqrt{x}$ and $\overline{\rho}:x\mapsto Cx$ satisfy
\begin{align*}
\int_0^\infty\frac{dx}{\frac{\rho(x)^2}{x}+\overline{\rho}(x)}=\infty,\qquad\text{ and }\qquad\frac{\rho(x)^2}{x}+\overline{\rho}(x)=C(1+x)\quad\text{ is concave.}
\end{align*}
By \cite[Theorem 4]{WY71}, there is therefore pathwise uniqueness for the SDE.
\item[$\bullet$ Uniqueness in law and strong existence:] By Watanabe and Yamada (see \cite[Lemma 21.17]{Kal02}), weak existence and pathwise uniqueness imply uniqueness in law and strong existence for the solutions of the SDE.
\item[$\bullet$ Martingale problem:] Again, by equivalence of the solutions to the martingale problem and weak solutions to the SDE (\cite[Theorem 21.7]{Kal02}), we obtain that there exists a unique solution to the martingale problem which coincides with the law of the SDE.
\end{description}
\end{proof}

%
%
%
%

\subsection{Tightness and conclusion}\label{sec:tight}

The goal of this section is conclude the proof of the main result. To do so, we rely on Kolmogorov's criterion, which can for instance be found in  \cite[Corollary 16.9]{Kal02}, and we prove, using Proposition~\ref{prop:martingale_formulation}, the following equicontinuity estimate


\begin{lemma}\label{lem:equi}
There exists $\epsilon>0$ such that for any $t,s\in[0,T]$, with $T\leq N$, we have
\begin{equation}\label{eq:equi_pow_4}
\mathbb{E}\left[\left|V_t-V_s\right|^4\right]=O\left(\left(\frac{|t-s|}{N}\right)^{1+\epsilon}\right).
\end{equation}
\end{lemma}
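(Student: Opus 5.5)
By stationarity it suffices to take $s=0$ and $t=h\in[0,N]$, and to bound $\mathbb{E}[|V_h-V_0|^4]$. The plan is to treat two regimes separately.

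\textbf{Regime of large increments.} Suppose $h\geq N^{\gamma_r}$, or more generally $h\geq N^{\kappa}$ for a suitable $\kappa<1$. Expand $|V_h-V_0|^4=\sum$ of products of coordinates. Choose $f(V)=(e\cdot(V-V_0))^4$, or polynomial combinations of this type, with $V_0$ frozen through the test functions $g_i$. These $f$ are not bounded, so truncate by a smooth cutoff; the Gaussian tails of $V_t$ under stationarity control the truncation error.

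Then apply Proposition~\ref{prop:martingale_formulation} with $n=1$, $\tau_1=0$ and $\tau_2=h/N$. The $V_0$-dependence is handled by writing $(V-V_0)^{\otimes 4}$ as a sum of products $g(V_0)f(V)$ and approximating each factor.

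This gives
\begin{align*}
\mathbb{E}\left[|V_h-V_0|^4\right]=\frac{1}{N}\int_0^h\mathbb{E}\left[\mathcal{L}f(V_t)\right]dt+O\left(\frac{h}{N^{1+\omega}}+\frac{N^{\gamma_r}\wedge h}{N}\right).
\end{align*}
Since $\Lambda$ and $D$ are bounded (Lemma~\ref{lem:coeff_bornes}), we have $|\mathcal{L}f(V)|\lesssim |V-V_0|^3+|V-V_0|^2$. A Gronwall-type bootstrap, applied first with $f=|V-V_0|^2$ to get $\mathbb{E}|V_t-V_0|^2\lesssim t/N+\text{errors}$ and then with the fourth power, yields a main term of order $(h/N)^2$. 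The errors are $h/N^{1+\omega}$ and $N^{\gamma_r}/N$. In the regime $h\geq N^{1-\omega/2}$, each of these is at most $(h/N)^{1+\epsilon}$ for a small $\epsilon$ depending on $\omega$ and $1-\gamma_r$.

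\textbf{Regime of small increments.} For $h\leq N^{1-\omega/2}$, bypass the martingale problem and use the pathwise controls. On the better set of Proposition~\ref{prop:fin_bootstrap}, estimate \eqref{eq:abstract_variation_vitesse} gives $|V_h-V_0|\lesssim \sqrt{h/N}\,N^{\alpha}$ for $h\leq N^\beta$. This bound is not sharp enough, because of the factor $N^{4\alpha}$.

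So for small $h$ use instead the trivial bound $|V_h-V_0|\leq h\,\sup_t|\frac1N\sum_i\nabla\Phi(X_t-x^i_t)|$. Combined with the high-probability bound $O(N^{-1/2+\delta})$ on the force (Lemma~\ref{lem:max_value_drift}), this gives $\mathbb{E}|V_h-V_0|^4\lesssim h^4N^{-2+4\delta}$. For $h\leq N^{1/3}$ this is at most $(h/N)^{1+\epsilon}$. The complement of the good set is handled with the moment bound \eqref{eq:borne_moment_N} and Cauchy--Schwarz, using that its probability decays faster than any power of $N$.

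The intermediate window $N^{1/3}\leq h\leq N^{1-\omega/2}$ is the delicate part. Here I would reuse Proposition~\ref{prop:martingale_formulation} with its sharper error term $\frac{N^{\gamma_r}\wedge h}{N}+\frac{h}{N^{1+\omega}}$. The term $N^{\gamma_r}/N$ is at most $(h/N)^{1+\epsilon}$ as soon as $h\geq N^{(\gamma_r+\epsilon)/(1+\epsilon)}$, and $(\gamma_r+\epsilon)/(1+\epsilon)<1/3$ for small $\epsilon$ since $\gamma_r<1/3$. The term $h/N^{1+\omega}$ is the problem: it is not of the form $(h/N)^{1+\epsilon}$ for small $h$.

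\textbf{Main obstacle.} Controlling the $h/N^{1+\omega}$ error for mesoscopic $h$ is the main difficulty. I would try to show that the recollision error in Lemma~\ref{lem:abstract_tout_va_bien} in fact scales like $h\cdot h^{\epsilon}/N^{1+\epsilon}$, i.e.\ that the probability of a bad event during $[0,h]$ is small relative to $h/N$. This uses that recollisions within time $h$ require relative velocities at most $h^{-1}$-related thresholds. Failing that, I would interpolate between the pathwise bound valid up to $N^{\beta}$ with $\beta>1/2$ and the martingale bound, tuning the exponents so that the two regimes overlap.
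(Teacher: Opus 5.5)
Your two-regime structure matches the paper's proof. For large $h$ you apply Proposition~\ref{prop:martingale_formulation} to truncated polynomial test functions in $(V_0,V_h)$. For small $h$ you use the crude pathwise bound $\mathbb{E}|V_h-V_0|^4\lesssim h^4N^{-2+O(\delta)}$, with Cauchy--Schwarz on the bad set. However, as written the proof is not closed. You restrict the martingale regime to $h\ge N^{1-\omega/2}$ and leave the window $N^{1/3}\le h\le N^{1-\omega/2}$ open, offering only speculative repairs.

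The obstacle you identify in that window does not exist; it comes from a miscalculation. Indeed
\[
\frac{h}{N^{1+\omega}}\le\Big(\frac{h}{N}\Big)^{1+\epsilon}
\iff \Big(\frac{h}{N}\Big)^{\epsilon}\ge N^{-\omega}
\iff h\ge N^{1-\omega/\epsilon}.
\]
So for $\epsilon\le\omega$ the inequality holds for every $h\ge 1$, and for $h\ge N^{1/3}$ it already holds once $\epsilon\le 3\omega/2$. The error $h/N^{1+\omega}$ is therefore harmless at every mesoscopic scale.

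The only genuine lower threshold for the martingale regime comes from two sources. One is the term $(N^{\gamma_r}\wedge h)/N$. The other is the loss, by powers of $N^{\omega}$ or of $\log N$, in $\|g\|_\infty$ and $\|\nabla f\|_{W^{2,\infty}}$ when you truncate the polynomials; your displayed error omits this loss. Together these give a threshold $h\ge N^{\gamma_r+c\omega}$, essentially what you computed yourself. The crude regime is valid for $h\le N^{(1-\epsilon-4\delta)/(3-\epsilon)}$, which is slightly below $N^{1/3}$, not $N^{1/3}$ itself. Since $\gamma_r<1/3$, the two regimes overlap, and the proof closes with no new recollision estimate.

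This is precisely the paper's argument:
\begin{itemize}
\item It truncates at $|V|\le N^{\omega/12}$, which turns the errors into $\frac{t}{N^{1+2\omega/3}}+\frac{t\wedge N^{\gamma_r}}{N^{1-\omega/3}}$.
\item It first proves $\mathbb{E}|V_t-V_0|^2=O(t/N)$ for $t\ge N^{\gamma_r+\omega/3}$, and $O(t^2/N^{1-\delta})$ for all $t$.
\item For $t\ge N^{\gamma_r+2\omega/3}$, it applies the martingale identity to $|V-V_0|^4$. It uses $|V-V_0|^3\le\frac12(|V-V_0|^4+|V-V_0|^2)$, the a priori bound $\mathbb{E}|V_s-V_0|^4=O(1)$, and one re-injection of the resulting estimate.
\item Below that threshold it uses the crude bound.
\item It takes $\epsilon$ of order $\omega$.
\end{itemize}

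Your dismissal of the better-set bound is also too quick. On $\mathcal{G}_N(\delta,\alpha^*,\beta^*)$ one has $(h/N)^2N^{4\alpha^*}\le (h/N)^{1+\epsilon}$ for all $h\le N^{\beta^*}$, because $(1-\beta^*)(1-\epsilon)>4\alpha^*$ when $d\ge 4$. That bound is simply not needed here.
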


The proof of this lemma is deferred to Appendix~\ref{sec:proof_tight} 
and relies on Proposition~\ref{prop:martingale_formulation}.
We now have all the ingredients to prove the main theorem.


\begin{proof}[Proof of Theorem~\ref{thm:main}]
Using Lemma~\ref{lem:equi}, the sequence $(V_{\cdot N})_{N\geq 1}$ satisfies the assumptions  of Kolmogorov's criterion: 
\begin{itemize}
\item the sequence $(V_0)_N$ is trivially tight in $\mathbb{R}^d$ (since for all $N$ we have $V_0\sim g_0\gamma$),
\item there exists $c>0$ such that for all $\tau,\sigma\in[0,1]$
\begin{align*}
\sup_{N\geq 1}\mathbb{E}\left[|V_{\tau N}-V_{\sigma N}|^4\right]\leq c |\tau-\sigma|^{1+\epsilon}.
\end{align*}
\end{itemize}
The sequence of processes is therefore tight in $\mathcal{C}([0,1],\mathbb{R}^d)$. Any limit point of a converging subsequence has support in $\mathcal{C}([0,1],\mathbb{R}^d)$. Furthermore, for any test function $f\in\mathcal{C}^\infty_c(\mathbb{R}^d, \mathbb{R})$, any $n\geq1$, any $0\leq \tau_1<...<\tau_n<\tau_{n+1}\leq 1$ and any functions $g_i\in\mathcal{C}^1_{bb}(\mathbb{R}^d,\mathbb{R})$, the function
\begin{align*}
F:\ &\mathcal{C}([0,1],\mathbb{R}^d)\mapsto \mathbb{R},\\
&v\longrightarrow \prod_{i=1}^n g_i\left(v_{\tau_i }\right)\left(f(v_{\tau_{n+1}})-f(v_{\tau_{n}})-\int_{\tau_{n}}^{\tau_{n+1}} \mathcal{L}f(v_u)du\right),
\end{align*}
is bounded and continuous. By definition of convergence in law and Proposition~\ref{prop:martingale_formulation}, all limit points $\mathcal{V}$ must satisfy
\begin{align*}
\mathbb{E}&\left[\prod_{i=1}^n g_i(\mathcal{V}_{\tau_i})\left(f(\mathcal{V}_{\tau_{n+1}})-f(\mathcal{V}_{\tau_n})-\int_{\tau_n}^{\tau_{n+1}} \mathcal{L}f(\mathcal{V}_t)dt\right)\right]=0.
\end{align*}
By a straightforward approximation argument, the same equality holds for $g_i\in\mathcal{C}^0_b$ (instead of $g_i\in\mathcal{C}^1_{bb}$) and $f\in\mathcal{C}^2_c$. By \cite[Section 4.3]{EK86}, this characterizes the fact that any limit point is a solution to the martingale problem associated with \eqref{eq:eds_limit}, for which we have uniqueness by Lemma~\ref{lem:existence_unicite_sde}. Hence the result.
\end{proof}

%
%
%
%

\section{The Good and the Better set}
\label{sec: The Good and the Better set}

From now on, we give an upper bound on the probability of pathological sets. As a consequence, it is enough consider an initial velocity distribution such that $g_0=1$. The goal of this section is to formalize and prove a few natural results. Namely, we show 
\begin{itemize}
\item in Lemma~\ref{lem:max_part}  that the number of background particles interacting with the tagged particle is always of order at most $N$,
\item  in Lemma~\ref{lem:max_value_drift} that the drift on the tagged particle stays at all time of order $N^{-\frac{1}{2}}$ (up to a small correction $\delta$),
\item in Section~\ref{sec:max_inter} that we can control how long a background particle interacts with the tagged particle,
\item in Lemma~\ref{lem:max_sum_inter_time} that there are bounds on the empirical moments of the maximum interaction time,
\item finally in Proposition~\ref{prop:good_set_good} that the set of initial configurations which display pathological behaviors (i.e. contrary to the ones described above) has exponentially small probability  in $N$.
\end{itemize}

We gather the initial conditions satisfying these various results into a single set, called the \textit{good set} and denoted $\mathcal{G}_N$.


\begin{definition}[The good set $\mathcal{G}_N(\delta)$]
\label{def:hyp_good_set}
An initial configuration $(\mathcal{N}, X_0, V_0, x_{1:\mathcal{N}}, v_{1:\mathcal{N}})$ is said to belong to the set $\mathcal{G}_N(\delta)$ if $\mathcal{N}>1$ and if the resulting processes defined in \eqref{eq:def_tagged} and \eqref{eq:def_process_back} satisfy for all $t\in[-2N,2N]$
\begin{align}
&\mathds{1}_{|V_t|\geq N^{1/2}}+\sum_{i}\mathds{1}_{|v^i_{t}|\geq N^{1/2}}=0,\label{eq:G_N_0}\\
&\sum_{i}\mathds{1}_{x^i_{t}\in\mathcal{B}(X_{t},R)}\leq C_{int}N.\label{eq:G_N_1}
\end{align}
For all multi-indices $\mathcal{I}\in\{1,...,d\}^{|\mathcal{I}|}$ with $|\mathcal{I}|\in\{1,2,3,4\}$  and for $N$ large enough
\begin{equation}
\label{eq:borne_der_Phi} 
\left|\frac{1}{\sqrt{N}}\sum_{i}\partial_{\mathcal{I}}\Phi(X_t-x^i_t)\right|^2 \leq N^\delta .
\end{equation}
 In particular \eqref{eq:borne_der_Phi} implies
\begin{align}
\left\|\frac{1}{\sqrt{N}}\sum_{i}\text{Hess}\Phi(X_t-x^i_t)\right\|^2  \leq N^\delta,
\qquad 
\left|\frac{1}{\sqrt{N}}\sum_{i}\nabla\Phi(X_t-x^i_t)\right|^2\leq  N^{\delta}.
\label{eq:G_N_3}
\end{align}
The constants $C_{int}, C_{vel}$ depend only on $d$, $R$ and $\Phi$. Note that $C_{int}$ will allow us to control the number of interacting particles.

Furthermore, recalling $T_m(v,V)=\frac{1}{|v-V|}$ from Definition~\ref{def:inter_time_0}, we get 
\begin{align}
\forall k\in\{1,...,6\}&,\ \sum_{i}\left(T_m(v^i_t,V_t)\wedge N^{1/3}\right)^k \mathds{1}_{x^i_t\in\mathcal{B}(X_t,R)}=\left\{\begin{array}{ll}O\left( N^{1+\delta}\right)&\text{ if }k\leq 3\\O\left( N^{\frac{k}{3}+\delta}\right)&\text{ if }k> 3.\end{array}\right.,\label{eq:G_N_4}
\end{align}
Finally, for a given sequence $(M_N)_N\geq0$, we have 
\begin{align}
\left|\frac{1}{N}\sum_{i}\partial_{\mathcal{I}}\Phi(X_t-x^i_t)\mathds{1}_{|v^i_t-V_t|\geq M_N}du\right|=&O\left(\frac{1}{N^{\frac{1-\delta}{2}}}\right)\label{eq:G_N_5}.
\end{align}
\end{definition}


\begin{remark}
The cutoff \eqref{eq:G_N_0} at $N^{1/2}$ could be improved into a more classical cutoff at a power of $\log(N)$. 
\end{remark}

Throughout this document, for the sake of simplicity, we will often write 
\begin{align*}
(\mathcal{N}, Z_0,z_{1:\mathcal{N}})\in\mathcal{G}_N(\delta)\quad \implies\quad \forall s\in[-N,N], \ (\mathcal{N}, X_s,V_s,x^{1:\mathcal{N}}_s,v^{1:\mathcal{N}}_s)\in\mathcal{G}_N(\delta).
\end{align*}
In other words, we will consider, for simplicity, that the set $\mathcal{G}_N(\delta)$ is invariant under time shift. Note that $\mathcal{G}_N(\delta)$ a priori requires \eqref{eq:G_N_1}-\eqref{eq:G_N_4} to hold for all $t\in[-2N,2N]$.
However, we are only interested in $t\in[-N,N]$ and we will only ever shift time by $s\in[-N,N]$, hence why we will us this abuse of notations.

The goal of this section is  to prove the following proposition.


\begin{proposition}
\label{prop:good_set_good}
Let $\delta\in]0,1/4[$. We have $\mathbb{P}\left[\overline{\mathcal{G}_N(\delta)}\right]=O\left(e^{-N^{\delta/4}}\right).$
\end{proposition}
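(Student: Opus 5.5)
The plan is a union bound over the finitely many conditions in Definition~\ref{def:hyp_good_set}. The conditions hold at every $t\in[-2N,2N]$, which is uncountable, so we first reduce to finitely many times. Let $\mathcal{T}_N=\{kN^{-3}\,:\,|k|\leq 2N^4\}$ be a grid, with a polynomial number of points.

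\textbf{Reduction to the grid.} We first establish the velocity cutoff \eqref{eq:G_N_0} along the grid. Then on each interval of length $N^{-3}$ every particle moves by at most $N^{1/2}N^{-3}$. The quantities in \eqref{eq:G_N_1}, \eqref{eq:borne_der_Phi}, \eqref{eq:G_N_4} and \eqref{eq:G_N_5} are Lipschitz sums, since $\Phi\in\mathcal{C}^3_c$; for the fourth derivatives in \eqref{eq:borne_der_Phi} we use a ball indicator, or a slight enlargement of $R$. They can therefore be transferred from grid points to all times, losing only a constant factor, provided we also bound the number of particles in $\mathcal{B}(X_t,2R)$. For the cutoffs $\mathds{1}_{|v^i_t-V_t|\geq M_N}$ and $T_m\wedge N^{1/3}$ we compare with slightly shifted thresholds. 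By stationarity of the grand canonical measure (the stationarity lemma above), it is then enough to bound each bad event at time $0$ by $O(e^{-N^{\delta/3}})$; multiplying by $|\mathcal{T}_N|=O(N^4)$ still gives $O(e^{-N^{\delta/4}})$.

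\textbf{Single-time estimates.} Under \eqref{eq:def_gce}, conditionally on $Z_0$, the background configuration is a Poisson process with intensity $Ne^{-\Phi(X-x)/N}\gamma(v)\,dx\,dv$, and this intensity is a bounded perturbation of the product measure. Exponential moment (Laplace functional) formulas for Poisson processes then give the following, after conditioning on $V_0$:
\begin{itemize}
\item For \eqref{eq:G_N_0}, the probability is at most $\mathbb{E}[\mathcal{N}]\,e^{-N/2}$ times a polynomial factor.
\item For \eqref{eq:G_N_1}, the count in the ball is Poisson with mean $O(N)$, and a Chernoff bound gives probability $e^{-cN}$.
\item For \eqref{eq:borne_der_Phi} and \eqref{eq:G_N_5}, the sum $\sum_i\partial_{\mathcal I}\Phi(X-x^i)$ is a compound Poisson variable with mean $O(1)$, since the mean of $\partial_{\mathcal I}\Phi$ under the uniform measure vanishes, up to the $e^{-\Phi/N}$ correction, which is of order $1$. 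It has variance $O(N)$ and bounded jumps. A Bernstein inequality for Poisson functionals, $\log\mathbb{E}e^{\lambda S}=N\int(e^{\lambda f}-1-\lambda f)+O(\lambda)$, gives $\mathbb{P}[|S|\geq N^{(1+\delta)/2}]\leq e^{-cN^{\delta}}$. The indicator on relative velocity in \eqref{eq:G_N_5} does not change this, since the jumps stay bounded. The centering there follows by integrating over $x$ first, with $v$ free.
\item For \eqref{eq:G_N_4}, the summands are bounded by $N^{k/3}$, with mean $N\,\mathbb{E}[(T_m\wedge N^{1/3})^k\mathds{1}_{\mathcal{B}}]=O(N^{1+(k-d)_+/3}\log N)$, which is within the stated bounds for $d\geq3$. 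Using Bernstein with bounded jumps $b=N^{k/3}$ and deviation $N^{\max(1,k/3)+\delta}$, the exponent is at least of order $N^{\delta}$.
\end{itemize}

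\textbf{Main obstacle.} The delicate point is the time-continuity step for \eqref{eq:G_N_4} and the $M_N$ cutoff in \eqref{eq:G_N_5}: the function $T_m$ is singular and the indicators are discontinuous. Our first attempt will be to sandwich $T_m\wedge N^{1/3}$ between its values at velocities perturbed by $N^{-2}$. Velocities change by $O(N^{-3}\cdot N^{1/2})$ on each grid cell, by the force bound \eqref{eq:G_N_3} at grid points combined with the crude bound $C_{int}$. This gives $T_m(v,V)\wedge N^{1/3}$ up to a factor $1+o(1)$, and the $M_N$ threshold moves by at most $N^{-2}$. We then apply the single-time estimate with thresholds $M_N\pm N^{-2}$. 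The resulting boundary-layer sum has variance small enough to be absorbed, again by Bernstein.
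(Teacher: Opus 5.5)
Your proposal is correct and follows essentially the paper's route. The paper likewise uses single-time exponential-moment bounds from the Poisson structure of the equilibrium measure (Lemma~\ref{lem:emp_exp}), stationarity, a polynomial time grid (mesh $N^{-10}$ in Lemma~\ref{lem:lip_emp}) with a continuity argument between grid points based on the velocity cutoff and the count of particles in $\mathcal{B}(X_t,2R)$, and a union bound; your threshold-sandwiching for the velocity indicators and for $T_m\wedge N^{1/3}$ plays the role of the paper's Lipschitz mollifications $h,\tilde h$ together with its boundary-layer estimate. The only step to add is the trivial reduction to $g_0=1$ before invoking stationarity, which the paper also makes; it holds because probabilities under $g_0$ are at most $\|g_0\|_\infty$ times those under $g_0=1$.
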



\begin{proof}
Let $\delta\in]0,\frac{1}{4}[$. The fact that \eqref{eq:G_N_0}, \eqref{eq:G_N_1}, \eqref{eq:borne_der_Phi},  \eqref{eq:G_N_4} and \eqref{eq:G_N_5} hold with probability greater than $1-e^{-N^{\delta/4}}$ are direct consequences of respectively Lemma~\ref{lem:lip_emp}, Lemma~\ref{lem:max_part}, Lemma~\ref{lem:max_value_drift}, Lemma~\ref{lem:max_sum_inter_time} and \eqref{eq:control_drift_avec_contrainte_sup} below. 
\end{proof}

In parallel, because we require better controls on the trajectories than those provided in $\mathcal{G}_N(\delta)$, let us define for $\alpha, \beta\in[0,1]$ the \textit{better set} $\mathcal{G}_N(\delta,\alpha, \beta)$:


\begin{definition}[The better set  $\mathcal{G}_N(\delta,\alpha, \beta)$]
\label{def:hyp_boot}
An initial configuration $(\mathcal{N}, X_0, V_0, x_{1:\mathcal{N}}, v_{1:\mathcal{N}})$ is said to belong to the set $\mathcal{G}_N(\delta,\alpha, \beta)$ if it belongs to the set $\mathcal{G}_N(\delta)$ and if the resulting processes defined in \eqref{eq:def_tagged} and \eqref{eq:def_process_back} satisfy the following: there exist $C_{vel}\geq 1$ such that, for all $s,t\in[-2N,2N]$ satisfying $|t-s|\leq N^\beta$, we have for all multi-indices $\mathcal{I}\in\{1,...,d\}^{|\mathcal{I}|}$ with $|\mathcal{I}|\in\{1,2,3\}$
\begin{equation}\label{eq: borne sqrt Phi} 
\left|\frac{1}{N}\int_s^t\sum_{i}\partial_{\mathcal{I}}\Phi(X_u-x^i_u)du\right|=O\left( \sqrt{\frac{|t-s|}{N}}N^{\alpha}\right).
\end{equation}
with in particular  
\begin{align}
\left\|\frac{1}{N}\int_s^t\sum_{i}\text{Hess}\Phi(X_u-x^i_u)du\right\|\leq \sqrt{\frac{|t-s|}{N}}N^{\alpha}
,& \quad 
\left|\frac{1}{N}\int_s^t\sum_{i}\nabla\Phi(X_u-x^i_u)du\right|\leq \sqrt{\frac{|t-s|}{N}}N^{\alpha}\label{eq:control_drift_hyp_alpha},\\
\text{and }\quad |V_t-V_s|=O\left(\sqrt{\frac{|t-s|}{N}}N^\alpha\right).\label{eq:control_diff_velocite_alpha}
\end{align}
Furthermore, for a given sequence $(M_N)_N\geq0$,  we have 
\begin{align}
\left|\frac{1}{N}\int_s^t\sum_{i}\partial_{\mathcal{I}}\Phi(X_u-x^i_u)\mathds{1}_{|v^i_u-V_u|\geq M_N}du\right|=&O\left(\sqrt{\frac{|t-s|}{N}}N^{\alpha}\right)\label{eq:control_boot_avec_contrainte_sup}.
\end{align}
\end{definition}

Proving an analog of Proposition~\ref{prop:good_set_good} for $\mathcal{G}_N(\delta,\alpha, \beta)$ actually requires more work, as well as a bootstrap argument. This is the topic of Proposition~\ref{prop:fin_bootstrap}. We carry out the main calculations latter, but since some of technical points are similar to those necessary for $\mathcal{G}_N(\delta)$, we prove in the lemmas of this section some estimates both in the case the initial configuration belongs to $\mathcal{G}_N(\delta)$ and in the case it belongs to $\mathcal{G}_N(\delta,\alpha, \beta)$ for some $\alpha, \beta\in[0,1]$.

%
%
%
%

\subsection{Exponential controls on the empirical measure}\label{sec:emp_measure}

For any bounded test function $(x, v,V) \mapsto F(x,v,V)$, we
define the empirical measure at time $t$ as 
\begin{equation}\label{eq:empirical_measure}
\pi_t (F) :=  \sum_i F \left( X_t-x^i_t,v^i_t, V_t \right) .
\end{equation} 
Up to a small correction, exponential moments under the equilibrium measure are the same as those of a Poisson point process with parameter $N$. 

Our goal is to show pointwise in time controls in Definition~\ref{def:hyp_good_set} that hold for all $t\in[-2N,2N]$, i.e. of the form $\mathbb{P}\left[\exists t\in[-2N,2N],\ \pi_t(F)\geq \lambda \right]=O(e^{-N^{\delta}})$. To do so, we rely on the fact that the system is at equilibrium and we prove two lemmas:
\begin{itemize}
\item To bound $\mathbb{P}\left[\pi_t(F)\geq \lambda \right]$ for any given $t\in\mathbb{R}$, we bound $\mathbb{E}\left[\exp(\pi_t(F))\right]$, which by time-invariance is equal to $\mathbb{E}\left[\exp(\pi_0(F))\right]$. This is Lemma~\ref{lem:emp_exp}.
\item We can then control, for any subdivision $(t_j)_j$ of $[-2N,2N]$, the probability $$\mathbb{P}\left[\exists t_j,\ \pi_{t_j}(F)\geq \lambda \right]\leq \sum_j\mathbb{P}\left[\pi_{t_j}(F)\geq \lambda \right].$$
\item It now remains to show that, provided two consecutive times $t_j<t_{j+1}$ are close enough to each other, that if $\pi_{t_j}(F)$ and $\pi_{t_{j+1}}(F)$ are bounded, then so is $\pi_{t}(F)$ for all $t\in[t_j,t_{j+1}]$. This comes from some continuity estimate on $t\mapsto\pi_t(F)$ proved in Lemma~\ref{lem:lip_emp}, which holds because we can bound the velocities of the particles.
\end{itemize}
We thus start with the control of the exponential moment of the empirical measure at any time $t\in\mathbb{R}$.


\begin{lemma}\label{lem:emp_exp}
For any bounded function $(x, v,V) \mapsto F(x,v,V)$ and all $t\in\mathbb{R}$, we get 
\begin{align}
\frac{1}{N}\ln \mathbb{E} \left[ \exp \left(  \pi_t (F) \right) \Big| V_t\right]=&\int_{\mathbb{T}\times\mathbb{R}^{d}}dxdv\gamma(v)e^{-\frac{\Phi(x)}{N}}\left(e^{F \left( x,v, V_t \right)}-1\right)
\label{eq:exp_moment_0} \\
=&\int_{\mathbb{T}\times\mathbb{R}^{d}}dxdv\gamma(v)F \left( x,v, V_t \right)\nonumber\\
&+O\left(\int_{\mathbb{T}\times\mathbb{R}^{d}}dxdv\gamma(v)e^{|F \left( x,v, V_t \right)|}\left[\frac{|\Phi(x)|}{N}|F \left( x,v, V_t \right)|+|F \left( x,v, V_t \right)|^2\right]\right)
\label{eq:exp_moment}.
\end{align} 
\end{lemma}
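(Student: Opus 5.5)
The plan is to reduce everything to time $0$ and then evaluate the grand canonical measure \eqref{eq:def_gce} explicitly, in the spirit of a Poisson Laplace functional. By the stationarity lemma (with $g_0=1$), the joint law of $(V_t,\pi_t(F))$ equals that of $(V_0,\pi_0(F))$. So it suffices to compute $\mathbb{E}[\exp(\pi_0(F))h(V_0)]$ for an arbitrary bounded test function $h$, and then identify the conditional expectation given $V_0$.

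\textbf{Step 1: the Laplace functional at time $0$.} By \eqref{eq:def_gce},
\begin{align*}
\mathbb{E}\left[h(V_0)e^{\pi_0(F)}\right]=\frac{1}{\mathcal{Z}}\sum_{M\geq0}\frac{N^M}{M!}\int dZ\, h(V)\gamma(V)\prod_{i=1}^M\left(\int dx_idv_i\,\gamma(v_i)e^{-\frac{\Phi(X-x_i)}{N}}e^{F(X-x_i,v_i,V)}\right).
\end{align*}
For fixed $(X,V)$, translation invariance on $\mathbb{T}$ (change of variables $x=X-x_i$) shows that the inner integral equals
\begin{align*}
I(V):=\int_{\mathbb{T}\times\mathbb{R}^d}dxdv\,\gamma(v)e^{-\frac{\Phi(x)}{N}}e^{F(x,v,V)},
\end{align*}
which does not depend on $X$. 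Summing the exponential series gives
\begin{align*}
\mathbb{E}\left[h(V_0)e^{\pi_0(F)}\right]=\frac{|\mathbb{T}|}{\mathcal{Z}}\int dV\, h(V)\gamma(V)e^{NI(V)}.
\end{align*}
Taking $F=0$ recovers $\mathcal{Z}=|\mathbb{T}|\exp(N\int_{\mathbb{T}} e^{-\Phi/N})$, which also shows that $V_0\sim\gamma$. Since $h$ is arbitrary, it follows that
\begin{align*}
\mathbb{E}\left[e^{\pi_0(F)}\,\middle|\,V_0\right]=\exp\left(N\int_{\mathbb{T}\times\mathbb{R}^d}dxdv\,\gamma(v)e^{-\frac{\Phi(x)}{N}}\left(e^{F(x,v,V_0)}-1\right)\right).
\end{align*}
Taking the logarithm and dividing by $N$ gives \eqref{eq:exp_moment_0}. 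Boundedness of $F$ ensures that all these integrals are finite; integrability in $x$ over the large torus is a separate issue, discussed below.

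\textbf{Step 2: the expansion \eqref{eq:exp_moment}.} I will write $e^{-\Phi/N}(e^F-1)=F+(e^{-\Phi/N}-1)(e^F-1)+(e^F-1-F)$ and bound the two correction terms pointwise. Since $|e^{-\Phi/N}-1|\leq C|\Phi|/N$ and $|e^F-1|\leq|F|e^{|F|}$, the first is at most $C\frac{|\Phi|}{N}|F|e^{|F|}$. By Taylor's theorem, the second is at most $\frac12|F|^2e^{|F|}$. Integrating against $\gamma(v)\,dxdv$ gives exactly the stated error term, with a constant depending only on $\Phi$.

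\textbf{Main obstacle.} The difficulty is not analytic but lies in justifying that conditioning and time shift commute with the Poissonian computation. I will apply the stationarity lemma to the functional $h(V_t)e^{\pi_t(F)}$, which is measurable and symmetric in the background particles, so the identity transfers directly from time $0$ to time $t$. For the sums to converge, the integral of $|F|$ over the torus must be finite; this holds because $\mathbb{T}$ has finite volume. In the applications $F$ is compactly supported in $x$, so the error bound is uniform in the size of the torus.
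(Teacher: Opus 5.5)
Your proof is correct and follows the paper's argument: use stationarity to reduce to time $0$, factorize the grand canonical measure via the translation $x = X - x_i$ and sum the exponential series, then apply the same first-order expansions of $e^{-\Phi/N}$ and $e^{F}$. The only refinement is that you test against $h(V_0)$ to identify the conditional expectation rigorously, whereas the paper computes the unconditional expectation and reads the conditional one off the $V$-integrand.
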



\begin{proof}
Because we consider the stationary distribution, we have for all $t\in\mathbb{R}$
\begin{align*}
\mathbb{E} \left[ \exp \left(  \pi_t (F) \right) \right]=\mathbb{E} \left[ \exp \left(  \pi_0 (F) \right) \right].
\end{align*}
Therefore
\begin{align*}
\mathbb{E} \left[ \exp \left(  \pi_t (F) \right) \right]=&\frac{1}{\mathcal{Z}}\sum_{M=0}^\infty\frac{N^M}{M!}\int_{\mathbb{T}\times\mathbb{R}^d}dZ\int_{\mathbb{T}^M\times\mathbb{R}^{Md}}dz_{1:M}\rho_M(Z,z_{1:M}) \exp \left(  \sum_{i=1}^M F \left( X-x_i,v_i, V \right) \right)\\
=&\frac{1}{\mathcal{Z}}\sum_{M=0}^\infty\frac{N^M}{M!}\int_{\mathbb{T}\times\mathbb{R}^d}dXdV\gamma(V)\prod_{i=1}^M\left(\int_{\mathbb{T}\times\mathbb{R}^{d}}dx_idv_i\gamma(v_i)e^{-\frac{\Phi(X-x_i)}{N}}e^{F \left( X-x_i,v_i, V \right)}\right)\\
=&\frac{1}{\mathcal{Z}}\sum_{M=0}^\infty\frac{N^M}{M!}\int_{\mathbb{T}\times\mathbb{R}^d}dXdV\gamma(V)\left(\int_{\mathbb{T}\times\mathbb{R}^{d}}dxdv\gamma(v)e^{-\frac{\Phi(x)}{N}}e^{F \left( x,v, V \right)}\right)^M\\
=&\frac{|\mathbb{T}|}{\mathcal{Z}}\int_{\mathbb{R}^d}dV\gamma(V)\exp\left(N\int_{\mathbb{T}\times\mathbb{R}^{d}}dxdv\gamma(v)e^{-\frac{\Phi(x)}{N}}e^{F \left( x,v, V \right)}\right).
\end{align*}
Recall $\mathcal{Z}=|\mathbb{T}|\exp\left(N\left(\int_{\mathbb{T}}dxe^{-\frac{\Phi(x)}{N}}\right)\right)$, which then directly implies \eqref{eq:exp_moment_0}. Since we have for all $X\in\mathbb{R}$
\begin{align*}
 \left|e^{X}-1\right|=O\left(|X|e^{|X|}\right), \qquad \left|e^{X}-1-X\right|=O\left(|X|^2e^{|X|}\right),
\end{align*}
we obtain
\begin{align*}
\int_{\mathbb{T}\times\mathbb{R}^{d}}dxdv\gamma(v)&e^{-\frac{\Phi(x)}{N}}\left(e^{F \left( x,v, V \right)}-1\right)\\
=&\int_{\mathbb{T}\times\mathbb{R}^{d}}dxdv\gamma(v)\left(e^{F \left( x,v, V \right)}-1\right)+O\left(\int_{\mathbb{T}\times\mathbb{R}^{d}}dxdv\gamma(v)\frac{|\Phi(x)|}{N}|F \left( x,v, V \right)|e^{|F \left( x,v, V \right)|}\right)\\
=&\int_{\mathbb{T}\times\mathbb{R}^{d}}dxdv\gamma(v)F \left( x,v, V \right)\\
&+O\left(\int_{\mathbb{T}\times\mathbb{R}^{d}}dxdv\gamma(v)e^{|F \left( x,v, V \right)|}\left[\frac{|\Phi(x)|}{N}|F \left( x,v, V \right)|+|F \left( x,v, V \right)|^2\right]\right).
\end{align*}
Hence the result.
\end{proof}

The following result shows that on very short timescales the process is continuous.


\begin{lemma}\label{lem:lip_emp}
For any $\ell$-Lipschitz function $(x,v,V) \mapsto F(x,v,V) $, with $\ell\leq N$, such that
\begin{align*}
|x|\geq 2R\implies F(x,v,V) =0.
\end{align*}
We get 
\begin{equation}\label{eq:time_increment}
\mathbb{P} \left[ \exists t \in [-2N,2N], \qquad 
 \sup_{s \in [t, t + N^{-10}]}  | \pi_t (F) - \pi_s (F) | \geq \frac{1}{N} \right] = O\left(e^{-N^{1/4}}\right).
\end{equation} 
Furthermore, 
\begin{equation}\label{eq:control_vitesse}
\mathbb{P} \left[ \exists t \in [-2N,2N], \qquad 
\mathds{1}_{|V_t|\geq N^{1/2}}+\sum_{i}\mathds{1}_{|v^i_{t}|\geq N^{1/2}}\geq 1 \right] = O\left(e^{-N^{1/4}}\right).
 \end{equation} 
\end{lemma}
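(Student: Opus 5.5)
I will prove \eqref{eq:control_vitesse} first, since the velocity bound is what makes \eqref{eq:time_increment} work.

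\textbf{Velocity bound.} Energy conservation alone is not enough, so I will discretise time and use stationarity. Fix the grid $t_j = jN^{-10}$ in $[-2N,2N]$, which has $O(N^{11})$ points. At a fixed time, stationarity shows that $V_{t_j}$ is Gaussian. Hence $\mathbb{P}[|V_{t_j}|\geq N^{1/2}/2]=O(e^{-N/8})$.

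For the background particles, Lemma~\ref{lem:emp_exp} with $F(x,v,V)=\lambda\mathds{1}_{|v|\geq N^{1/2}/2}$ and $\lambda=1$ gives
\[
\mathbb{E}\left[e^{\pi_{t_j}(F)}\right]\leq \exp\left(CN|\mathbb{T}|e^{-N/8}\right)=1+o(1),
\]
because $|\mathbb{T}|$ is only polynomial in $N$. The probability that $\pi_{t_j}(F)\geq 1$ is therefore at most $CN|\mathbb{T}|e^{-N/8}$. This is the same as a first-moment bound on the Poisson count $\mathbb{E}[\pi_{t_j}(\mathds{1}_{|v|\geq N^{1/2}/2})]$.

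Between grid points, every acceleration is bounded by $\frac{1}{N}\|\nabla\Phi\|_\infty \mathcal{N}$ for the tagged particle and by $\|\nabla\Phi\|_\infty/N$ for the others. By Lemma~\ref{lem:N_Poisson} and a Poisson tail bound, $\mathcal{N}\leq 2CN|\mathbb{T}|$ except on an event of probability $e^{-cN}$. On that event, over a time $N^{-10}$ the tagged velocity changes by at most $C|\mathbb{T}|N^{-10}\ll N^{1/2}/2$, provided $|\mathbb{T}|=N^{d\eta}$ is not too large. If $\eta$ makes this term large, I will instead refine the grid to spacing $N^{-10}|\mathbb{T}|^{-1}$. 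The grid is still polynomial in size, so the union bound still gives $O(e^{-N^{1/4}})$.

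A union bound over the grid then yields \eqref{eq:control_vitesse}.

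\textbf{Time increment.} On the complement of the bad event above, all velocities are at most $N^{1/2}$. Over a time interval of length $N^{-10}$, each relative position $X_s-x^i_s$ therefore moves by at most $2N^{1/2}N^{-10}$, and velocities move by at most $C|\mathbb{T}|N^{-10}$ for the tagged particle and $CN^{-11}$ for the others. Only particles with $|X_s-x^i_s|\leq 2R$ for some $s$ in the interval contribute, so
\[
|\pi_t(F)-\pi_s(F)|\leq \ell\,\big(N^{-19/2}+|\mathbb{T}|N^{-10}\big)\,\#\{i:|X_t-x^i_t|\leq 3R\}.
\]
Since $\ell\leq N$, it suffices to show that the count $\#\{i:|X_t-x^i_t|\leq 3R\}$ is at most $N^{2}$ (or a suitable polynomial) for all $t$, and to take the grid fine enough to absorb the $|\mathbb{T}|$ factor.

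For that count, I apply Lemma~\ref{lem:emp_exp} with $F=\mathds{1}_{|x|\leq 4R}$. This gives $\mathbb{E}[e^{\pi_{t_j}(F)}]\leq e^{CN}$, hence $\mathbb{P}[\pi_{t_j}(F)\geq N^{3/2}]\leq e^{CN-N^{3/2}}$. The union bound over the grid extends this to all grid times. Continuity between grid points follows from the velocity bound: any particle within $3R$ at time $t$ was within $4R$ at the nearest grid point.

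\textbf{Main obstacle.} The delicate point is the tagged particle's acceleration, which involves the global count $\mathcal{N}$ rather than a local one. I will fix this by bounding it via the local count $\frac{1}{N}\|\nabla\Phi\|_\infty\#\{i:|X-x^i|\leq R\}$, controlled by the estimate just above, so the acceleration is $O(N^{1/2})$. With that, the grid spacing $N^{-10}$ suffices without needing the $|\mathbb{T}|$ refinement.
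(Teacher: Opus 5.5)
Your argument is correct, but it resolves the circularity between the velocity cutoff and the local particle count differently from the paper.

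\textbf{The paper's route.} The paper works window by window. Lemma~\ref{lem:emp_exp} bounds the number of particles in $\mathcal{B}(X_{t_j},2R)$ at each grid time by $CN$. A stopping-time argument on $[t_j,t_j+N^{-10}]$ then shows the count cannot reach $2CN$ before the window ends: as long as it stays below $2CN$, the tagged acceleration is $O(1)$, so velocities and relative positions barely move. This yields the velocity cutoff and an $O(1)$ acceleration simultaneously, with a grid of $O(N^{11})$ points.

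\textbf{Your route.} You obtain the velocity cutoff first, with no bootstrap. You use the crude Poisson bound $\mathcal{N}=O(N|\mathbb{T}|)$ together with a grid refined to spacing $N^{-10}|\mathbb{T}|^{-1}$, which is harmless because $|\mathbb{T}|$ is polynomial in $N$. Only afterwards do you derive the all-time local count bound $N^{3/2}$. You then use it to bound the tagged acceleration by $O(N^{1/2})$, which is ample since $\ell\le N$ leaves a margin of several powers of $N$. A side benefit is that the increment bound holds deterministically for every $t$ on a single good event, so no second subdivision is needed.

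\textbf{Two sentences to fix.} First, in the increment step, refining the grid cannot ``absorb the $|\mathbb{T}|$ factor''. The window length $N^{-10}$ is fixed by \eqref{eq:time_increment}, and the $|\mathbb{T}|N^{-10}$ change of $V$ over that window comes from the global bound on $\mathcal{N}$, not from the grid spacing. It is removed only by the local-count bound on the acceleration, as in your last paragraph.

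Second, you cannot drop the refinement from the velocity step, as your final sentence suggests. Your all-time local count bound uses the all-time velocity bound to pass from grid points to intermediate times. Invoking it to prove the velocity bound is therefore circular, unless you add a stopping-time argument like the paper's. Keep the refined grid for \eqref{eq:control_vitesse}, and use the local count only for \eqref{eq:time_increment}.
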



\begin{proof}
As the domain $\mathbb{T} = [-N^\eta, N^\eta]^d$ is polynomially bounded, one can impose a cut-off on all the velocities of the particles. Once this cutoff is proved, we can show the continuity of the process on short timescales as we can control the change in velocities.\\

\noindent\textbf{Cutoff: fixed $t$ control.} Fix $t\in\mathbb{R}$ and let  us show
\begin{equation}\label{eq:borne_vitesse_fixed_time}
\mathbb{P} \left[ |V_t| \geq \frac{\sqrt{N}}{2} \right]  + \mathbb{P} \left[ \exists i, \quad |v^i_t| \geq \frac{\sqrt{N}}{2}\right]=O\left( e^{-\sqrt{N}}\right).
\end{equation} 
Recall that, for $Z$ distributed according to a Gaussian law, we have $\mathbb{P}\left[|Z|\geq \sqrt{N}/2\right]=O\left(\exp(-N/8)\right)$. By applying \eqref{eq:exp_moment_0} with $F(x,v,V) = \lambda \mathds{1}_{|v| \geq \sqrt{N}/2}$, we get
\begin{align*}
\frac{1}{N}\ln \mathbb{E} \left[ \exp \left( \sum_i \mathds{1}_{|v^i_t| \geq \sqrt{N}/2} \right) \right]=&\left(\int_{\mathds{T}}dxe^{-\frac{\Phi(x)}{N}}\right)\left(\int_{\mathbb{R}^d}dv\gamma(v)\left(e^{ \mathds{1}_{|v| \geq \sqrt{N}/2}}-1\right)\right)\\
=&O\left(|\mathds{T}|\int_{\mathbb{R}^d}dv\gamma(v)(e-1)\mathds{1}_{|v| \geq \sqrt{N}/2}\right)
=O\left(N^{d \eta}\exp\left(-\frac{N}{8}\right)\right).
\end{align*} 
Applying this and Markov's inequality with $\lambda = N^{1/2}$, we deduce
\begin{align*}
\mathbb{P} \left[ \exists i, \quad |v^i_t| \geq \frac{\sqrt{N}}{2} \right]
\leq \mathbb{P} \left[  \sum_i 1_{|v^i_t| \geq \frac{\sqrt{N}}{2}}  \geq 1 \right]\leq e^{-1}\mathbb{E} \left[ \exp \left( \sum_i 1_{|v_i| \geq \frac{\sqrt{N}}{2}} \right) \right]
=O\left(e^{-\sqrt{N}}\right).
\end{align*} 
This completes \eqref{eq:borne_vitesse_fixed_time}. Note that the suboptimal bound $O\left(e^{-\sqrt{N}}\right)$ is enough for our purposes.
\\

\noindent\textbf{Cutoff: on a small time interval.} The next step is to extend this result
to a small time interval. Let us now show that there exists $c>0$ such that 
\begin{align}
& \mathbb{P} \Big[ \exists s\in [0,N^{-10}], \quad   \exists i, \quad |v^i_s |\geq \sqrt{N} \Big] =O\left(e^{-\sqrt{N}}\right)
\label{eq:borne_vitesse_small} \\
& \mathbb{P} \Big[ \exists s\in [0,N^{-10}], \quad |\dot V_s | \geq c,\quad  |V_s | \geq \sqrt{N} \Big]  =O\left(e^{-\sqrt{N}}\right). 
\label{eq:borne_vitesse_tagged} 
\end{align} 
First of all, notice $\dot v_i$ is at most of order $1/N$ and thus if $v_i$ is initially less than $\sqrt{N}/2$, then it remains less than $\sqrt{N}$ on a small time interval (for $N$ large enough). Therefore, \eqref{eq:borne_vitesse_fixed_time} implies  \eqref{eq:borne_vitesse_small}.

Then, the change of velocity $\dot V_s$ depends on the number of interacting background particles. By \eqref{eq:exp_moment_0} applied to $F(x,v,V)=\mathds{1}_{|x|\leq 2R}$, we know that for all $C>0$
\begin{align*}
\mathbb{P}\left[\sum_{i=1}^\mathcal{N}\mathds{1}_{x^i_0\in\mathcal{B}(X_0,2R)}\geq CN\right]\leq e^{-CN}\exp\left(N(e-1)C_d(2R)^de^{\|\Phi\|_\infty}\right),
\end{align*}
and thus, for $C=2(e-1)C_d(2R)^de^{\|\Phi\|_\infty}$, 
\begin{equation}\label{eq:nb_part_temps_0}
\mathbb{P}\left[\sum_{i=1}^\mathcal{N}\mathds{1}_{x^i_0\in\mathcal{B}(X_0,2R)}\geq CN\right]=O\left(e^{-CN/2}\right).
\end{equation}
 In other words, the number of background particles in a neighborhood $\mathcal{B}(X_0, 2R)$ of the tagged particle is bounded by (a constant times) $N$ with high probability. While this ensures $\sum_{i=1}^\mathcal{N}\mathds{1}_{x^i_s\in\mathcal{B}(X_0,R)}$ is bounded, we in fact wish to show $\sum_{i=1}^\mathcal{N}\mathds{1}_{x^i_s\in\mathcal{B}(X_s,R)}$ remains bounded on a small time frame. Because all particles move, we use the controls on the velocities. Assume $|V_0|\leq \sqrt{N}/2$ (which, thanks to \eqref{eq:borne_vitesse_fixed_time}, holds with high probability) and consider a stopping time
\begin{align*}
s=\inf\left\{u\in[0,N^{-10}]\quad \text{ s.t. }\quad \sum_{i=1}^\mathcal{N}\mathds{1}_{x^i_u\in\mathcal{B}(X_u,R)}\geq 2CN \right\}.
\end{align*} 
For $u\in[0,s]$, $|\dot V_u | \leq 2C\|\nabla\Phi\|_\infty$ and thus (for $N$ large enough) $|V_u| < N^{1/2}$ and $|X_u-X_0|\leq N^{1/2-10}\ll 1$. Furthermore, by \eqref{eq:borne_vitesse_small}, for all $i$ we have $\sup_{[0,N^{-10}]}|v^i_s|\leq N^{1/2}$ and thus $|x^i_u-x^i_0|\ll 1$ with high probability. Both ensure that for all $u\leq s$, we have $x^i_u\in\mathcal{B}(X_u,R)$ only if $x^i_0\in\mathcal{B}(X_0,2R)$. Therefore, if $s<N^{-10}$, then 
\begin{align*}
\sum_{i=1}^\mathcal{N}\mathds{1}_{x^i_0\in\mathcal{B}(X_0,2R)}\geq \sum_{i=1}^\mathcal{N}\mathds{1}_{x^i_s\in\mathcal{B}(X_s,R)}\geq 2CN,
\end{align*}
and thus by \eqref{eq:borne_vitesse_small} and \eqref{eq:nb_part_temps_0} we obtain $\mathbb{P}\left[s<N^{-10}\right]=O\left(e^{-\sqrt{N}}\right)$, i.e.
\begin{equation}
\label{eq:nb_part_interval}
\mathbb{P}\left[\exists s\in [0,N^{-10}], \quad\sum_{i=1}^\mathcal{N} \mathds{1}_{x^i_s\in\mathcal{B}(X_s,R)}\geq 2CN \right]=O\left(e^{-\sqrt{N}}\right).
\end{equation}
Thus, with high probability, the number of interacting particles (i.e. background particles in the ball $\mathcal{B}(X_s, R)$) during the time interval $[0,N^{-10}]$ remains of order $N$. As a consequence,  $\dot V_s$ remains of order 1, and this implies
\eqref{eq:borne_vitesse_tagged}.\\

\noindent\textbf{Lipschitz continuity of the empirical measure.} Let $s,t\in[0, N^{-10}]$. By the controls \eqref{eq:borne_vitesse_small}, \eqref{eq:borne_vitesse_tagged} and \eqref{eq:nb_part_interval} on the particles' velocities and the number of interacting particles, we deduce that with high probability 
\begin{align*}
\left| \pi_s (F) - \pi_t (F) \right|=&\left|\sum_{i}\left(F \left( X_s-x^i_s,v^i_s, V_s \right)-F \left( X_t-x^i_t,v^i_t, V_t \right)\right)\right|\\
\leq &\ell \sum_{i}\left(\mathds{1}_{|x^i_s-X_s|\leq 2R}+\mathds{1}_{|x^i_t-X_t|\leq 2R}\right)\left(\left|X_s-X_t\right|+\left|V_s-V_t\right|+\left|x^i_s-x^i_t\right|+\left|v^i_s-v^i_t\right|\right)\\
=&O\left(\ell |t-s| N^{1+1/2}\right).
\end{align*}
 Since by assumption $\ell \leq N$ then
\begin{align*}
\mathbb{P} \left[ \sup_{s \in [0, N^{-10}]}  | \pi_s (F) - \pi_0 (F) | \geq \frac{1}{N^2} \right] = O\left(e^{-\sqrt{N}}\right).\\
\end{align*} 

\noindent\textbf{Subdividing the time interval.}
Finally \eqref{eq:time_increment} is derived by using the time invariance and subdividing $[-2N,2N]$ into small time intervals of size $N^{-10}$. Let us write at least once this argument.  Consider, for $j \leq 4N^{11}$, the times $t_j=-2N+j N^{-10}$. 
By time invariance,  all the events $\{\sup_{s\in[t_j,t_{j+1}]}|\pi_s(F)-\pi_{t_j}(F)|\geq 1/N^2\}$ have the same (small) probability.  Therefore
\begin{align}
\label{eq: adding time intervals}
\mathbb{P} &\left[ \exists t \in [-2N,2N], \quad 
 \sup_{s \in [t, t + N^{-10}]}  | \pi_t (F) - \pi_s (F) | \geq \frac{1}{N} \right] \\
 &\leq \sum_{j =0}^{4N^{11}-1} \mathbb{P} \left[ 
 \sup_{s \in [t_j, t_{j+1}]}  | \pi_{t_j} (F) - \pi_s (F) | \geq \frac{1}{N^2} \right] =O\left(N^{11}e^{-N^{1/2}}\right)=O\left(e^{-N^{1/4}}\right). \nonumber
\end{align}
Hence \eqref{eq:time_increment}. The control \eqref{eq:control_vitesse} is likewise a direct consequence of \eqref{eq:borne_vitesse_small} and \eqref{eq:borne_vitesse_tagged}.
\end{proof}

%
%
%
%

\subsection{Proving the various controls}

In this section we now use Lemmas~\ref{lem:emp_exp}~and~\ref{lem:lip_emp} to prove that the various controls of Definition~\ref{def:hyp_good_set} hold with high probability. First, the number of background particles interacting with the tagged particle remains of order $N$.


\begin{lemma}[On the maximum number of interacting particles]
\label{lem:max_part}
There exists $C>0$ such that for all $N\geq 1$
\begin{equation}\label{eq:proba_trop_part_traj}
\mathbb{P}\left[\exists t\in[-2N,2N],\ \sum_{i=1}^\mathcal{N}\mathds{1}_{x^i_{t}\in\mathcal{B}(X_{t},R)}\geq CN\right]=O\left(  e^{-N^{1/4}}\right).
\end{equation}
\end{lemma}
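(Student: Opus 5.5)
The plan is to combine a fixed-time exponential bound with the time-discretization argument already used in the proof of Lemma~\ref{lem:lip_emp}. The fixed-time estimate is essentially \eqref{eq:nb_part_temps_0}. Apply Lemma~\ref{lem:emp_exp} with $F(x,v,V)=\mathds{1}_{|x|\leq 2R}$; by stationarity it does not matter which time $t$ we use. Then \eqref{eq:exp_moment_0} gives
\begin{align*}
\mathbb{E}\left[\exp\left(\sum_i \mathds{1}_{x^i_t\in\mathcal{B}(X_t,2R)}\right)\right]\leq \exp\left(N(e-1)C_d(2R)^de^{\|\Phi\|_\infty}\right).
\end{align*}
Markov's inequality then yields, for $C_0=2(e-1)C_d(2R)^de^{\|\Phi\|_\infty}$, the bound $\mathbb{P}[\sum_i\mathds{1}_{x^i_t\in\mathcal{B}(X_t,2R)}\geq C_0N]=O(e^{-C_0N/2})$, uniformly in $t$.

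Next, I would pass from a fixed time to a short time window $[t_j,t_j+N^{-10}]$. This is exactly the stopping-time argument giving \eqref{eq:nb_part_interval}. On the event of \eqref{eq:control_vitesse}, all velocities are bounded by $N^{1/2}$. Hence over a time $N^{-10}$ every particle, tagged or background, moves by at most $N^{1/2-10}\ll R$. Therefore, for every $s\in[t_j,t_j+N^{-10}]$, the condition $x^i_s\in\mathcal{B}(X_s,R)$ implies $x^i_{t_j}\in\mathcal{B}(X_{t_j},2R)$. It follows that
\begin{align*}
\sup_{s\in[t_j,t_j+N^{-10}]}\sum_i\mathds{1}_{x^i_s\in\mathcal{B}(X_s,R)}\leq \sum_i\mathds{1}_{x^i_{t_j}\in\mathcal{B}(X_{t_j},2R)}.
\end{align*}
Note that I use the global velocity cutoff \eqref{eq:control_vitesse}, which already holds on all of $[-2N,2N]$ with probability $1-O(e^{-N^{1/4}})$. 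This avoids redoing the stopping-time bootstrap, since the velocity bound on the tagged particle is itself part of that lemma.

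Finally, I would subdivide $[-2N,2N]$ into the $4N^{11}$ points $t_j=-2N+jN^{-10}$ and take a union bound, as in \eqref{eq: adding time intervals}:
\begin{align*}
\mathbb{P}\left[\exists t\in[-2N,2N],\ \sum_i\mathds{1}_{x^i_t\in\mathcal{B}(X_t,R)}\geq C_0N\right]\leq 4N^{11}\,O\left(e^{-C_0N/2}\right)+O\left(e^{-N^{1/4}}\right)=O\left(e^{-N^{1/4}}\right).
\end{align*}
This proves the lemma with $C=C_0$.

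The only delicate point is the possible circularity in Lemma~\ref{lem:lip_emp}: its proof of the velocity bound for the tagged particle itself relied on controlling the number of interacting particles on short windows. I expect this to be harmless, because that proof already establishes \eqref{eq:nb_part_interval} on each small window. If one prefers to avoid relying on the conclusion \eqref{eq:control_vitesse}, the argument can instead be run directly on each window: combine \eqref{eq:nb_part_interval}, time invariance, and the union bound over the $4N^{11}$ windows, and the same estimate follows.
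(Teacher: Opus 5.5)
Your proposal is correct and is essentially the paper's argument. The paper deduces the lemma directly from the per-window bound \eqref{eq:nb_part_interval} together with the union bound \eqref{eq: adding time intervals}, while you route the per-window step through the global velocity cutoff \eqref{eq:control_vitesse}, which the paper itself derives from those same window estimates; the circularity you worry about does not arise, since Lemma~\ref{lem:lip_emp} is proved without any appeal to Lemma~\ref{lem:max_part}.
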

The proof is a direct consequence of \eqref{eq:nb_part_interval} by summing as in \eqref{eq: adding time intervals}. Next, we show that the empirical moment of a function of mean $0$ remains of order $\sqrt{N}$ up to a small correction.


\begin{lemma}\label{lem:max_value_drift}
For any bounded and Lipschitz continuous function $F:\mathbb{R}^d\mapsto \mathbb{R}$ with a compact support included in $\mathcal{B}(0,R)$, assuming that $F$ has mean $0$, for  all $N\in\mathbb{N}\setminus\{0,1\}$ and all $\delta\in]0,1/4[$
\begin{equation}\label{eq:control_drift}
\mathbb{P}\left[\exists t\in[-2N,2N],\ \left|\frac{1}{\sqrt{N}}\sum_{i=1}^\mathcal{N}F(X_t-x^i_t)\right|\geq N^{\delta/2}\right]=O\left(e^{-N^{\delta/4}}\right).
\end{equation}
In particular, statement \eqref{eq:borne_der_Phi} follows by choosing $F = \partial_{\mathcal{I}} \Phi$.

Furthermore for any $\delta_v\in[N^{-\frac{1}{d}},1]$ 
\begin{equation}\label{eq:control_drift_avec_contrainte}
\mathbb{P}\left[\exists t\in[-2N,2N],\ \left|\frac{1}{\sqrt{N\delta_v^d}}\sum_{i=1}^\mathcal{N}F(X_t-x^i_t)\mathds{1}_{|v^i_t-V_t|\leq \delta_v}\right|\geq N^{\delta/2}\right]=O\left(e^{-N^{\delta/4}}\right),
\end{equation}
as well as for any $\delta_v\in[0,1]$ 
\begin{equation}\label{eq:control_drift_avec_contrainte_sup}
\mathbb{P}\left[\exists t\in[-2N,2N],\ \left|\frac{1}{\sqrt{N}}\sum_{i=1}^\mathcal{N}F(X_t-x^i_t)\mathds{1}_{|v^i_t-V_t|\geq \delta_v}\right|\geq N^{\delta/2}\right]=O\left(e^{-N^{\delta/4}}\right).
\end{equation}
\end{lemma}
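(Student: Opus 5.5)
The plan is the same three-step scheme used for Lemma~\ref{lem:max_part}: a fixed-time exponential moment bound from Lemma~\ref{lem:emp_exp}, continuity in time from Lemma~\ref{lem:lip_emp}, and a union bound over a grid of mesh $N^{-10}$ in $[-2N,2N]$, as in \eqref{eq: adding time intervals}.

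\textbf{Fixed time.} By stationarity it suffices to take $t=0$. Apply \eqref{eq:exp_moment} to the test function $\lambda F(x)/\sqrt{N}$ with $\lambda = N^{\delta/2}$, and also to its negative.
\begin{itemize}
\item Since $F$ has mean zero, the first-order term $\int dx\,dv\,\gamma(v)\,\lambda F(x)/\sqrt N$ vanishes.
\item Because $\lambda/\sqrt N\le N^{\delta/2-1/2}\ll1$, the factor $e^{|F|}$ stays bounded, and the error is $O\big(\lambda^2/N+\lambda/N^{3/2}\big)$, all integrated over $\mathcal B(0,R)$.
\end{itemize}
Multiplying by $N$ gives
\[
\ln\mathbb{E}\Big[\exp\Big(\pm\tfrac{\lambda}{\sqrt N}\textstyle\sum_i F(X_0-x^i_0)\Big)\Big]=O(\lambda^2)=O(N^{\delta}).
\]
Choosing $\lambda = N^{\delta/4}$ is cleaner. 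Markov's inequality then gives
\[
\mathbb{P}\Big[\big|\tfrac{1}{\sqrt N}\textstyle\sum_i F\big|\ge N^{\delta/2}\Big]\le 2\exp\big(-N^{3\delta/4}+O(N^{\delta/2})\big),
\]
which is far smaller than $e^{-N^{\delta/4}}$.

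\textbf{Velocity-restricted versions.} For \eqref{eq:control_drift_avec_contrainte}, use the test function
\[
G(x,v,V)=\frac{\lambda}{\sqrt{N\delta_v^d}}\,F(x)\,\mathds{1}_{|v-V|\le\delta_v},
\]
conditioned on $V_0$ as Lemma~\ref{lem:emp_exp} allows.
\begin{itemize}
\item The first-order term still vanishes: the $x$-integral of $F$ factorizes from the $v$-integral.
\item The quadratic error is $N\cdot \frac{\lambda^2}{N\delta_v^d}\int\gamma(v)\mathds 1_{|v-V|\le\delta_v}\,dv=O(\lambda^2)$, uniformly in $V$.
\item The sup norm of $G$ is $\lambda/\sqrt{N\delta_v^d}\le\lambda$, using $\delta_v^d\ge N^{-1}$. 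With $\lambda=N^{\delta/4}$ this is not small, so $e^{|G|}$ is only bounded by $e^{N^{\delta/4}}$. To avoid this, take $\lambda$ small, e.g. $\lambda= N^{\delta/4}$ times $\min(1,\sqrt{N\delta_v^d})$. The threshold $N^{\delta/2}\lambda$ still dominates $O(\lambda^2)$ whenever $\lambda\ge N^{-\delta/4}$, so the bound survives. This is the step I expect to need the most care: one must track the factor $e^{|G|}$ and the borderline $\delta_v\approx N^{-1/d}$.
\end{itemize}
For \eqref{eq:control_drift_avec_contrainte_sup}, the indicator $\mathds 1_{|v-V|\ge\delta_v}$ also does not affect the mean-zero cancellation in $x$. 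Here the function is bounded by $\lambda/\sqrt N$, and the argument of the first step applies verbatim.

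\textbf{Uniformity in time.} The indicators in $v$ make the test functions non-Lipschitz, so Lemma~\ref{lem:lip_emp} cannot be applied to them directly. I would handle this in three steps.
\begin{itemize}
\item Sandwich the indicator between Lipschitz cutoffs $\chi_\pm$ with transition width $N^{-2}$, so the Lipschitz constant is at most $N$ up to a harmless rescaling.
\item Split $F=F^+-F^-$, so the sandwich is monotone. The difference between $\chi_+$ and $\chi_-$ costs at most the number of particles with $|v^i-V|$ within $N^{-2}$ of $\delta_v$. By the same exponential moment bound, that number is $O(N^{\delta})$ with overwhelming probability, which is negligible after dividing by $\sqrt{N\delta_v^d}\ge1$.
\item Note that the mean-zero property is only used for the main term, and that the fixed-time bound above holds for $\chi_\pm$ as well.
\end{itemize}
Lemma~\ref{lem:lip_emp} then shows that the Lipschitz empirical sums move by at most $1/N$ on each interval of length $N^{-10}$. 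This is negligible against the threshold $N^{\delta/2}\sqrt{N\delta_v^d}$.

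\textbf{Conclusion.} A union bound over the $O(N^{11})$ grid points of mesh $N^{-10}$ in $[-2N,2N]$, combined with the exceptional set of probability $O(e^{-N^{1/4}})$ from \eqref{eq:time_increment}, yields the $O(e^{-N^{\delta/4}})$ bound in all three statements. Applying this with $F=\partial_{\mathcal I}\Phi$ gives \eqref{eq:borne_der_Phi}. This is allowed because $\Phi\in\mathcal C^3_c$ is even, so $\partial_{\mathcal I}\Phi$ is Lipschitz with mean zero for $1\le|\mathcal I|\le 2$. For $|\mathcal{I}|\in\{3,4\}$, where Lipschitz regularity is not available, one first uses a smoothing approximation.
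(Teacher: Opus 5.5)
Your route is the paper's: a fixed-time Chernoff bound from Lemma~\ref{lem:emp_exp}, a Lipschitz regularization of the velocity cut-off with the thin velocity shell controlled by a second exponential-moment bound, then Lemma~\ref{lem:lip_emp} and a union bound over the grid of mesh $N^{-10}$. However, two steps do not close as written at the borderline $N\delta_v^d\approx 1$ of \eqref{eq:control_drift_avec_contrainte}.

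\textbf{The choice of $\lambda$.} The hypothesis $\delta_v\ge N^{-1/d}$ forces $N\delta_v^d\ge 1$. So your choice $\lambda=N^{\delta/4}\min(1,\sqrt{N\delta_v^d})$ is just $\lambda=N^{\delta/4}$, and it does not cure the problem you correctly spotted. When $N\delta_v^d=1$, the exact formula \eqref{eq:exp_moment_0} shows that $\ln\mathbb{E}[e^{\pi_t(G)}\,|\,V_t]$ is genuinely of size $e^{c\lambda}$ for typical $V_t$. At this scale the number of slow particles in the ball is Poisson with mean $O(1)$. This swamps the Markov gain $\lambda N^{\delta/2}=N^{3\delta/4}$. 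What you need is $\lambda/\sqrt{N\delta_v^d}=O(1)$. The paper simply takes $\lambda=1$, i.e.\ it exponentiates $\pi_t(G)$ with $G=F(x)h(|v-V|)/\sqrt{N\delta_v^d}$. Then $\|G\|_\infty\le\|F\|_\infty$ and $\ln\mathbb{E}[e^{\pm\pi_t(G)}\,|\,V_t]=O(1)$ uniformly in $V_t$. The resulting fixed-time tail $O(e^{-cN^{\delta/2}})$ easily absorbs the $O(N^{11})$ grid points.

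\textbf{The shell count.} Bounding the number of particles in the velocity shell by $O(N^{\delta})$ is too weak: after dividing by $\sqrt{N\delta_v^d}\approx 1$ it exceeds the threshold $N^{\delta/2}$. The fix is immediate. The expected number of particles in $\mathcal{B}(X_t,R)$ with $|v^i_t-V_t|$ within $N^{-2}$ of $\delta_v$ is $O(\delta_v^{d-1}N^{-1})$. So the same exponential-moment bound, applied to the Lipschitz weight $|F|(\chi_+-\chi_-)/\sqrt{N\delta_v^d}$, keeps this count below $\tfrac14 N^{\delta/2}$ at every grid time. By Lemma~\ref{lem:lip_emp} it then stays below this at all times, outside an event of probability $O(e^{-N^{\delta/4}})$. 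This is exactly the role of $\tilde G$ in the paper, and with these two corrections your argument is complete.

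\textbf{Regularity of $\Phi$.} Your closing remark is fair: when $\Phi\in\mathcal C^3_c$, $\partial_{\mathcal I}\Phi$ is not Lipschitz for $|\mathcal I|=3$ and not defined for $|\mathcal I|=4$, and the paper passes over this. But a smoothing argument needs a rate. Controlling $\frac{1}{\sqrt N}\sum_i(F-F_\epsilon)(X_t-x^i_t)$ uniformly in $t$ costs either $\sqrt N\|F-F_\epsilon\|_\infty$ or a modulus of continuity of $F$. So the honest fix is to assume more regularity on $\Phi$.
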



\begin{proof}
By Lemma~\ref{lem:emp_exp}, since $F$ is of mean $0$, there exists $C>0$ such that for all $t\in[-2N,2N]$
\begin{align*}
\mathbb{P}\left[\frac{1}{\sqrt{N}}\pi_{t}(F)\geq \frac{1}{2}N^{\delta/2}\right]\leq Ce^{-\frac{1}{2}N^{\delta/2}},\quad \mathbb{P}\left[\frac{1}{\sqrt{N}}\pi_{t}(-F)\geq \frac{1}{2}N^{\delta/2}\right]\leq Ce^{-\frac{1}{2}N^{\delta/2}}.
\end{align*}
Like previously, by subdividing $[-2N,2N]$ into subintervals of size $N^{-10}$ and using Lemma~\ref{lem:lip_emp}, we conclude on \eqref{eq:control_drift}. 

Consider now a function $h:\mathbb{R}^+\mapsto \mathbb{R}$ such that $h(v)=1$ for $v\leq \delta_v$, $h(v)=0$ for $v\geq \delta_v+1/N$, and $h$ is in between a linear interpolation. This implies that $h$ is Lipschitz continuous, of Lipschitz constant of order $N$. We define $G(x,v,V):= \frac{1}{\sqrt{N\delta_v^d}}F(x)h(|v-V|)$ (which is therefore Lipschitz continuous, of Lipschitz constant of order $\delta_v^{-\frac{d}{2}}\sqrt{N}\leq N$). By Lemma~\ref{lem:emp_exp}
\begin{align}
\frac{1}{N}\ln \mathbb{E} \left[ \exp \left(  \pi_t (G) \right) \Big| V_t\right]=&\int_{\mathbb{T}}\int_{\mathbb{R}^{d}}dxdv\gamma(v)G \left( x,v, V_t \right)\nonumber\\
&+O\left(e^{\frac{\|F\|_\infty}{\sqrt{N\delta_v^d}}}\left[\frac{\|\Phi\|_\infty\|F\|_\infty}{N^{3/2}\delta_v^{d/2}}+\frac{\|F\|_\infty^2}{N\delta_v^d}\right]\int_{\mathbb{R}^d}dv\gamma(v)\mathds{1}_{|v-V_t|\leq 2\delta_v}\right)\nonumber\\
=&O\left(e^{\|F\|_\infty}\left[\frac{\|\Phi\|_\infty\|F\|_\infty}{N^{3/2}\delta_v^{d/2}}+\frac{\|F\|_\infty^2}{N\delta_v^d}\right]\delta_v^d\right)\label{eq:int_calcul_control_drift}\\
=&O\left(\frac{1}{N}e^{\|F\|_\infty}\left[\|\Phi\|_\infty\|F\|_\infty+\|F\|_\infty^2\right]\right)\nonumber,
\end{align}
where we used $\delta_v\geq N^{-1/d}$ in order to bound $\frac{\|F\|_\infty}{\sqrt{N\delta_v^d}}\leq \|F\|_\infty$ (and the right-hand side is uniform in $V_t$). Again, by subdividing $[-2N,2N]$ into small intervals of size $N^{-10}$ and using Lemma~\ref{lem:lip_emp} (since $G$ is Lipschitz continuous of Lipschitz constant smaller than $N$), we obtain 
\begin{align}
\bbP\left[\exists t\in[-2N,2N],\quad \left|\frac{1}{\sqrt{N\delta_v^d}}\sum_iF(X_t-x^i_t)h(|v^i_t-V_t|)\right|\geq \frac{N^{\delta/2}}{2}\right]=O\left(e^{-N^{\delta/4}}\right).\label{eq:pi_t_G}
\end{align}

We now wish to replace $h$ by $\mathds{1}_{|v|\leq \delta_v}$ in the control above. Consider a function $\tilde{h}:\mathbb{R}^+\mapsto \mathbb{R}$ such that $\tilde{h}(v)=1$ for $v\in [\delta_v,\delta_v+1/N]$, $\tilde{h}(v)=0$ for $v\leq \delta_v-1/N$ and $v\geq \delta_v+2/N$, and $\tilde{h}$ is in between a linear interpolation. This implies that $\tilde{h}$ is Lipschitz continuous, of Lipschitz constant of order $N$. We define $\tilde{G}(x,v,V):= \frac{1}{\sqrt{N\delta_v^d}}|F(x)|\tilde{h}(|v-V|)$ (which is therefore Lipschitz continuous, of Lipschitz constant of order $\delta_v^{-\frac{d}{2}}\sqrt{N}\leq N$). This way
\begin{align}
\left|\frac{1}{\sqrt{N\delta_v^d}}\sum_iF(X_t-x^i_t)\mathds{1}_{|v^i_t-V_t|\leq \delta_v}-\frac{1}{\sqrt{N\delta_v^d}}\sum_iF(X_t-x^i_t)h(|v^i_t-V_t|)\right|\leq \pi_t(\tilde{G}).\label{eq:pi_t_G_tilde_0}
\end{align}
By Lemma~\ref{lem:emp_exp}, since $\tilde{h}(v)\leq \mathds{1}_{v\in[\delta_v-1/N, \delta_v+2/N]}$
\begin{align}
\frac{1}{N}\ln \bbE\left[\exp\pi_t(\tilde{G})\big |V_t\right]=O\left(\left(e^{\frac{\|F\|_\infty}{\sqrt{N\delta_v^d}}}-1\right)e^{\|\Phi\|_\infty}\int dv\gamma(v)\mathds{1}_{|v-V_t|\in[\delta_v-1/N, \delta_v+2/N]}\right).\label{eq:pi_t_G_tilde_1}
\end{align}
Note that 
\begin{align*}
\int dv\gamma(v)\mathds{1}_{|v-V_t|\in[\delta_v-1/N, \delta_v+2/N]}=&O\left( \int dv\mathds{1}_{|v|\in[\delta_v-1/N, \delta_v+2/N]}\right)
=O\left(\frac{\delta_v^{d-1}}{N}\right).
\end{align*}
Plugging this estimate back in \eqref{eq:pi_t_G_tilde_1}, and using $e^{\frac{\|F\|_\infty}{\sqrt{N\delta_v^d}}}-1\leq \frac{\|F\|_\infty}{\sqrt{N\delta_v^d}}e^{\|F\|_\infty}$, we get, uniformly in $V_t$
\begin{align*}
\frac{1}{N}\ln \bbE\left[\pi_t(\tilde{G})\big |V_t\right]=O\left(e^{\|F\|_\infty}\|F\|_\infty\frac{\delta_v^{\frac{d}{2}-1}}{N^{3/2}}\right)=O\left(\frac{e^{\|F\|_\infty}\|F\|_\infty}{N^{3/2}}\right),
\end{align*}
which yields for all $t\in\bbR$
\begin{align*}
\bbP\left[\pi_t(\tilde{G})\geq \frac{1}{4}N^{\delta/2}\right]=O\left(e^{-N^{\delta/2}/4}\right).
\end{align*}
Since $\tilde{G}$ is Lipschitz continuous of Lipschitz constant smaller than $N$, we may now use Lemma~\ref{lem:lip_emp} and obtain like previously that
\begin{align}
\bbP\left[\exists t\in[-2N,2N],\ \pi_t(\tilde{G})\geq \frac{1}{2}N^{\delta/2}\right]=O\left(e^{-N^{\delta/4}}\right).\label{eq:pi_t_G_tilde_2}
\end{align}
Combining \eqref{eq:pi_t_G}, \eqref{eq:pi_t_G_tilde_0} and \eqref{eq:pi_t_G_tilde_2}, we conclude on \eqref{eq:control_drift}. Finally, \eqref{eq:control_drift_avec_contrainte_sup} is obtained similarly, without the possibility to integrate on the velocity as in \eqref{eq:int_calcul_control_drift}.
\end{proof}

Finally, because we show in Section~\ref{sec:max_inter} below that $T_m$ given by \eqref{eq:def_inter_time 0} is an upper bound on the interaction time of a single background particle, we also give here some estimates on its empirical moments.


\begin{lemma}[Sum of maximum interaction times]\label{lem:max_sum_inter_time}
Let $k\in\{1,...,6\}$, $\delta>0$ and set
\begin{align}
\label{eq: def CN}
C(N)=N^\delta\left(N^{\frac{k-3}{3}}\mathds{1}_{k>3}+1\right).
\end{align}
We have, for all $N\in\mathbb{N}\setminus\{0\}$ 
\begin{align}
\mathbb{P}\left[\exists t\in[-2N,2N],\quad \sum_{i}\left(T_m(v^i_t,V_t)\wedge N^{\frac{1}{3}}\right)^k\mathds{1}_{x^i_t\in\mathcal{B}(X_t,R)}\geq 3N^{1+\delta} C(N)\right]=O\left(e^{-N^{\delta/2}}\right).\label{eq:proba_sum_max_time_traj}
\end{align}
\end{lemma}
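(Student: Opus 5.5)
We follow the same three-step scheme as for Lemmas~\ref{lem:max_part} and \ref{lem:max_value_drift}: an exponential moment bound at a fixed time via Lemma~\ref{lem:emp_exp}, then a union bound over a grid of mesh $N^{-10}$ in $[-2N,2N]$, then a continuity argument between grid points. The observable is $\pi_t(F)$ with
\begin{align*}
F(x,v,V)=\lambda\left(T_m(v,V)\wedge N^{1/3}\right)^k\mathds{1}_{|x|\leq R},
\end{align*}
for a parameter $\lambda>0$ to be chosen. The truncation at $N^{1/3}$ makes $F$ bounded, with $\|F\|_\infty\leq \lambda N^{k/3}$.

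\textbf{Fixed-time bound.} Take $\lambda=N^{-k/3}$, so that $\|F\|_\infty\leq1$ and $e^{|F|}=O(1)$. Lemma~\ref{lem:emp_exp} then gives
\begin{align*}
\frac{1}{N}\ln\mathbb{E}\left[e^{\pi_t(F)}\,\big|\,V_t\right]=O\left(\int dv\,\gamma(v)\left(F+F^2\right)\right).
\end{align*}
With $w=|v-V_t|$ and $\gamma$ bounded, we have $\int dv\,\gamma(v)(w^{-1}\wedge N^{1/3})^{j}=O\big(1+N^{(j-d)_+/3}\log N\big)$, uniformly in $V_t$. We only need $d\geq3$ here.

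For $k\leq3$ this gives $\int\gamma F=O(\lambda)$ and $\int\gamma F^2=O\big(\lambda^2(1+N^{(2k-3)_+/3}\log N)\big)=O(\lambda N^{\delta/2})$, using $\lambda N^{(2k-3)/3}=N^{(k-3)/3}\leq1$. Similarly, for $k>3$ we get $\int\gamma F+\int\gamma F^2=O\big(\lambda N^{(k-3)/3}\log N\big)$. In both cases
\begin{align*}
\ln\mathbb{E}\left[e^{\pi_t(F)}\right]=O\left(N^{1+\delta/2}\lambda\,\big(1+N^{(k-3)/3}\mathds{1}_{k>3}\big)\right)=O\left(N^{-\delta/2}\lambda N^{1+\delta}C(N)\right).
\end{align*}
Markov's inequality then yields
\begin{align*}
\mathbb{P}\left[\pi_t\big(F/\lambda\big)\geq N^{1+\delta}C(N)\right]\leq\exp\left(-\lambda N^{1+\delta}C(N)\,\big(1-o(1)\big)\right).
\end{align*}
The exponent is at least $N^{1+\delta-k/3}\geq N^{\delta}$ since $k\leq 6$. (If needed, one can instead pick $\lambda$ so that $\lambda N^{1+\delta}C(N)=N^{\delta}$, which still keeps $\|F\|_\infty\leq1$.) Either way the fixed-time probability is $O(e^{-N^{\delta}/2})$.

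\textbf{Uniform in time.} Lemma~\ref{lem:lip_emp} requires a Lipschitz, compactly supported $F$, whereas $\mathds{1}_{|x|\leq R}$ is discontinuous. The fix is a monotone sandwich, which is also where the factor $3$ in the statement is spent. We replace the indicator by a Lipschitz cutoff $\chi$, equal to $1$ on $\mathcal{B}(0,R)$ and $0$ outside $\mathcal{B}(0,R+1/N)$, with Lipschitz constant $N$. The map $v\mapsto (|v-V|^{-1}\wedge N^{1/3})^k$ has Lipschitz constant $O(N^{(k+1)/3})\leq O(N^{7/3})$.

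Applied to the rescaled function, which is bounded by $1$, Lemma~\ref{lem:lip_emp} needs Lipschitz constant $\ell\leq N$. With the factor $\lambda=N^{-k/3}$ the constant is $O(N^{1/3}\cdot N)$, slightly too large. We therefore rerun the proof of Lemma~\ref{lem:lip_emp} directly: its increment bound is $O(\ell|t-s|N^{3/2})$, which is still $\ll 1$ on intervals of length $N^{-10}$ when $\ell$ is polynomial in $N$. So any polynomial Lipschitz constant is fine.

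The enlarged ball only changes the constants in the fixed-time integrals. Dominating $\mathds{1}_{|x|\leq R}$ by $\chi$, we get the bound at all grid points $t_j$ with total probability $O(N^{11}e^{-N^\delta/2})=O(e^{-N^{\delta/2}})$, and continuity propagates it between grid points.

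\textbf{Main obstacle.} The main point is tracking the exponents so that the fixed-time tail is genuinely stretched-exponential for every $k\leq6$. This holds because $d\geq3$ gives integrability of $|v-V|^{-j}$ up to $j<3$ and the truncation at $N^{1/3}$ controls the rest. The factor $3$ absorbs the $\log N$ terms and the smoothing errors.
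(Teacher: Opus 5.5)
This is correct and is essentially the paper's proof. Both arguments use a fixed-time Chernoff bound from Lemma~\ref{lem:emp_exp}, applied to a Lipschitz cutoff dominating $\mathds{1}_{|x|\leq R}$, then a union bound over a grid of mesh $N^{-10}$, with Lemma~\ref{lem:lip_emp} handling the times in between. The paper tilts by $1/(C(N)N)$, which is your ``if needed'' choice, and cuts off between $R$ and $2R$; this keeps the Lipschitz constant $O(N^{1/3})$, so Lemma~\ref{lem:lip_emp} applies verbatim without rerunning its proof.

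There is one arithmetic slip: $N^{1+\delta-k/3}\geq N^{\delta}$ is false for $k\geq 4$, because you dropped the factor $N^{(k-3)/3}\mathds{1}_{k>3}$ in $C(N)$. Keeping it gives $\lambda N^{1+\delta}C(N)\geq N^{2\delta}$ for every $k\leq 6$, so your conclusion is unaffected.
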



\begin{proof}
Consider a function $h:\mathbb{R}^+\mapsto \mathbb{R}$ such that $h(x)=1$ for $x\leq R$, $h(x)=0$ for $x\geq 2R$, and $h$ is a linear interpolation in between. Likewise, we consider $\tilde{h}:\mathbb{R}^+\mapsto \mathbb{R}$ defined by $\tilde{h}(v):=\left(\frac{1}{v}\wedge N^{1/3}\right)^k$. This way, $\tilde{h}$ is continuous of Lipschitz constant of order $N^{\frac{k+1}{3}}$. Recalling $C(N)$ in \eqref{eq: def CN}, we set
\begin{align*}
G(x,v,V):=\frac{1}{C(N)N}h(|x|)\tilde{h}(|v-V|),
\end{align*}
which is continuous of Lipschitz constant of order $N^{\frac{1}{3}-\delta}$ and satisfies for all $(x,v,V)$
\begin{align*}
1\geq \frac{N^{\frac{k-3}{3}}}{C(N)}\geq G(x,v,V)\geq \frac{1}{C(N)N}\left(T_m(v,V)\wedge N^{\frac{1}{3}}\right)^k\mathds{1}_{x\in\mathcal{B}(X,R)}.
\end{align*}
Let us now prove that for all $t\in[-2N,2N]$
\begin{equation}\label{eq:int_mes_emp_tps_inter}
\ln\mathbb{E}\left[\exp(\pi_t(G))\Big| V_t\right]=O\left(1\right).
\end{equation}
We have
\begin{align*}
\int_{\mathds{T}\times \mathbb{R}^d}dxdv\gamma(v)G(x,v,V_t)=\frac{1}{C(N)N}O\left(\int_{\mathbb{R}^d}dv\gamma(v)\left(T_m(v,V_t)\wedge N^{\frac{1}{3}}\right)^k\right).
\end{align*}
We have
\begin{align}
\int_{\mathbb{R}^{d}} dv \gamma(v)&\left(T_m(v,V_t)\wedge N^{1/3}\right)^k 
\leq \int_{\mathbb{R}^{d}} dv \frac{\gamma(v)}{|v-V_t|^k}\mathds{1}_{|v-V|\geq N^{-1/3}}+\int_{\mathbb{R}^{d}} dv \gamma(v) N^{k/3}\mathds{1}_{|v-V_t|\leq N^{-1/3}}.
\label{eq: Tm moment borne}
\end{align}
First we bound the last term by
\begin{align*}
\int_{\mathbb{R}^{d}} dv \gamma(v+V)N^{k/3}\mathds{1}_{|v|\leq N^{-1/3}}=O\left(N^{\frac{k-d}{3}}\right).
\end{align*}
The first term in \eqref{eq: Tm moment borne} is estimated by the following technical result: 
\begin{align}
\label{lem:technique_retirer_V_moment_inverse}
\forall V \in\mathbb{R}^d, \qquad 
\int_{\mathbb{R}^{d}} dv \gamma(v+V) \frac{1}{|v|^k}\mathds{1}_{|v|\geq N^{-1/3}}=O_k\left( \int_{\mathbb{R}^{d}} dv \gamma(v) \frac{1}{|v|^k}\mathds{1}_{|v|\geq N^{-1/3}}\right).
\end{align}
Then, we have
\begin{align*}
\int_{\mathbb{R}^{d}} dv \gamma(v) \frac{\mathds{1}_{|v|\geq N^{-1/3}}}{|v|^k}=&O\left(\int_{0}^\infty e^{-\frac{x^2}{2}}\frac{x^{d-1}}{x^k}\mathds{1}_{|x|\geq N^{-1/3}}\right)\\
=&O\left(N^{\frac{k-d}{3}}\mathds{1}_{k>d}+\ln(N)\mathds{1}_{k=d}+\mathds{1}_{k<d}\right).
\end{align*}
Therefore
\begin{align*}
\int_{\mathbb{R}^{d}} dv \gamma(v)&\left(T_m(v,V)\wedge N^{1/3}\right)^k=O\left(N^{\frac{k-d}{3}}\mathds{1}_{k>d}+\ln(N)\mathds{1}_{k=d}+\mathds{1}_{k<d}\right).
\end{align*}
We thus finally obtain
\begin{align*}
\int_{\mathds{T}\times \mathbb{R}^d}dxdv\gamma(v)G(x,v,V)=O\left(\frac{N^{\frac{k-d}{3}}\mathds{1}_{k>d}+\ln(N)\mathds{1}_{k=d}+\mathds{1}_{k<d}}{C(N)N}\right)=O\left(\frac{1}{N}\right).
\end{align*}
By Lemma~\ref{lem:emp_exp}, we obtain \eqref{eq:int_mes_emp_tps_inter}. Again, by subdividing $[-2N,2N]$ into small intervals of size $N^{-10}$ and using Lemma~\ref{lem:lip_emp}, we get the result.
\end{proof}

%
%
%
%

\subsection{On the maximum interaction time}
\label{sec:max_inter}

Assume given the initial velocity $V_0$ of the tagged particle, and $v_0^1$ of the background particle $i=1$. We wish to bound the time particle $1$ stays in the interaction sphere $\mathcal{B}(X_t,R)$ (or rather, to be "sure", in the sphere $\mathcal{B}(X_t,2R)$). Lemma~\ref{lem:max_inter_time} below gives an almost sure bound on the maximum interaction time as a function of $|V_0-v_0^1|$, and provides a lower bound for the time of re-entry both in the good set (see Definition \ref{def:hyp_good_set}) and the better set (see Definition \ref{def:hyp_boot}).


\begin{lemma}\label{lem:max_inter_time}
Let $s\in[-N,N]$, and assume that the particle $i=1$ is in $\mathcal{B}(X_s,R)$, i.e. $|X_s-x^1_s|\leq R$.

\noindent 
{\bf In the good set.} Consider a set of initial conditions in $\mathcal{G}_N(\delta)$. If $|V_s-v_s^1|\geq N^{-\frac{1}{4}+\frac{\delta}{2}}$, then
\begin{align}
\forall t\quad \text{ s.t. }\quad \frac{6R}{|v^1_s-V_s|} \leq |t-s|
 \leq |v_s^1-V_s| \, N^{\frac{1}{2} -\delta},
\qquad  |x^1_t-X_t|\geq 2R.
\label{eq: exit/return times good set}
\end{align}

\noindent 
{\bf In the better set.} Consider a set of initial conditions in $\mathcal{G}_N(\delta,\alpha, \beta)$ and assume $\beta\geq \frac{1-2\alpha}{3}+\delta$. If ${|V_s-v^1_s|\geq N^{-\frac{1-2\alpha}{3}+\delta}}$, then
\begin{align}
\forall t\  \text{ s.t. }\  \frac{6R}{|v^1_s-V_s|} \leq |t-s|\leq \frac{1}{5}\min\left(N^{1-2\alpha}|v^1_s-V_s|^2,N^{\frac{1+\beta}{2}-\alpha}|v^1_s-V_s|\right),\quad  |x^1_t-X_t|\geq 2R.
\label{eq: exit/return times better set}
\end{align}
\end{lemma}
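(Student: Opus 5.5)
We compare both trajectories to straight lines issued at time $s$. Write $w=v^1_s-V_s$ and $\Delta_t=(x^1_t-X_t)-(x^1_s-X_s)-(t-s)w$. Then
\begin{align*}
|x^1_t-X_t|\geq |t-s||w|-R-|\Delta_t|.
\end{align*}
For $|t-s|\geq 6R/|w|$ the first two terms give at least $5R$. It therefore suffices to show that $|\Delta_t|\leq 3R$ throughout the stated time window. By symmetry we treat $t\geq s$.

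We split $\Delta_t$ into two parts, $\Delta_t=\int_s^t(v^1_u-v^1_s)du-\int_s^t(V_u-V_s)du$. The background particle feels only the force $\frac1N\nabla\Phi$, so $|v^1_u-v^1_s|\leq \|\nabla\Phi\|_\infty|u-s|/N$. Its contribution is thus $O(|t-s|^2/N)$. In the good set this is $O(|w|^2N^{-2\delta})$, which is small because velocities are bounded by $N^{1/2}$ by \eqref{eq:G_N_0}. In the better set it is $O(N^{-2\alpha}|w|^4)\ll 1$ under the same cutoff, using the bound $|t-s|\leq \frac15 N^{1-2\alpha}|w|^2$. Actually, to avoid relying on $|w|\lesssim N^{1/2}$, I would note that $|w|\leq 2N^{1/2}$ and check each window exponent directly; this works for both windows.

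The tagged particle is the main term. In the good set, \eqref{eq:G_N_3} gives $|\dot V_u|\leq N^{-(1-\delta)/2}$, so
\begin{align*}
\left|\int_s^t(V_u-V_s)du\right|\leq \tfrac12|t-s|^2N^{-\frac{1-\delta}{2}}.
\end{align*}
This alone is too crude. The condition $|t-s|\leq|w|N^{1/2-\delta}$ is designed for comparing the quadratic drift with the linear separation $|t-s||w|$. So rather than requiring $|\Delta_t|\leq 3R$ uniformly, I would use the sharper inequality
\begin{align*}
|x^1_t-X_t|\geq |t-s|\,|w|-R-|\Delta_t|, \qquad |\Delta_t|\leq C|t-s|^2N^{-\frac{1-\delta}{2}}.
\end{align*}
Then $|\Delta_t|\leq C|t-s||w|N^{-\delta/2}$, which is at most $\frac12|t-s||w|$ for large $N$. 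Hence $|x^1_t-X_t|\geq \frac12|t-s||w|-R\geq 2R$ when $|t-s|\geq 6R/|w|$. The same relative-error trick handles the background term. The hypothesis $|w|\geq N^{-1/4+\delta/2}$ only guarantees that the window is nonempty, since $6R/|w|\leq |w|N^{1/2-\delta}$.

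In the better set, for $|t-s|\leq N^\beta$, we use \eqref{eq:control_diff_velocite_alpha} inside the integral:
\begin{align*}
\left|\int_s^t (V_u-V_s)du\right|\leq C N^{\alpha-\frac12}|t-s|^{3/2}.
\end{align*}
This is at most $\frac14|t-s||w|$ exactly when $|t-s|\leq cN^{1-2\alpha}|w|^2$, which is the first constraint in the window. For $|t-s|>N^\beta$, we chain the control over blocks of length $N^\beta$. This gives $|V_u-V_s|\lesssim \frac{|u-s|}{N^\beta}\sqrt{N^{\beta-1}}N^\alpha$, and after integration $|\Delta_t|\lesssim |t-s|^2N^{\alpha-\frac{1+\beta}{2}}$. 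That is at most $\frac14|t-s||w|$ when $|t-s|\leq cN^{\frac{1+\beta}{2}-\alpha}|w|$, which is the second constraint. The background term, $|t-s|^2/N$, is dominated by the same constraints. The hypotheses $|w|\geq N^{-\frac{1-2\alpha}{3}+\delta}$ and $\beta\geq\frac{1-2\alpha}{3}+\delta$ ensure that $6R/|w|$ lies below both thresholds and inside $[0,N^\beta]$.

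The main obstacle is the block-chaining step beyond $N^\beta$, together with tracking the constants so that the factor $\frac15$ suffices. I would first try a crude triangle-inequality sum over blocks, as above. Periodicity of the torus is harmless because all displacements involved are polynomially smaller than $N^\eta$.
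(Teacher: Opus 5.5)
Your proposal is correct and follows the paper's proof: you compare $x^1_t-X_t$ with the straight line $(x^1_s-X_s)+(t-s)(v^1_s-V_s)$, bound the deviation using $|\dot V_u|\le N^{-\frac{1-\delta}{2}}$ in the good set and \eqref{eq:control_diff_velocite_alpha} (chained over blocks of length $N^\beta$ beyond time $N^\beta$) in the better set, and check that this deviation stays below half of the linear separation $|t-s||w|$ on the window. Your relative-error inequality is just a repackaging of the paper's computation of the roots $T_\pm$, $T_{+,1}$, $T_{+,2}$.

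The one flaw is in your second paragraph, where the absolute bounds on the background term are wrong as stated. First, $|w|^2N^{-2\delta}$ is not small when $|w|\sim N^{1/2}$. Second, the first constraint only gives $(t-s)^2/N\le \frac{1}{25}N^{1-4\alpha}|w|^4$, not $O(N^{-2\alpha}|w|^4)\ll 1$. These bounds are not needed, however: the relative bounds you invoke afterwards suffice, namely $(t-s)^2/N\le |t-s||w|N^{-\frac12-\delta}$ in the good set and $(t-s)^2/N\le \frac15|t-s||w|N^{\frac{\beta-1}{2}-\alpha}$ in the better set.
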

This lemma and the idea of its proof are illustrated in Figure~\ref{fig:inter_time}. In particular, notice that Lemma~\ref{lem:max_inter_time} ensures that, maybe under some restrictions on $|v^1_s-V_s|$ or provided the observation window is not too large, the interaction time is bounded by $T_m$ given in Definition~\ref{def:inter_time_0}. 


\begin{proposition}
Let $t,s\in[-N,N]$ such that $|t-s|\leq N^{\frac{1-2\delta}{4}}$ and consider a set of initial conditions in $\mathcal{G}_N(\delta)$. Define
\begin{align}
\label{eq: def sigma i}
    \sigma_i=\inf\{u\geq -N,\quad x^i_u\in\mathcal{B}(X_u,R)\}.
\end{align}
Assuming $x^i_s\in\mathcal{B}(X_s,R)$ and $(s-\sigma_i)_+\leq N^{\frac{1-2\delta}{4}}$ (i.e. we consider a time $s$ in the first interaction time frame), we have
\begin{align*}
\mathds{1}_{x^i_t\in\mathcal{B}(X_t,R)}(t-\sigma_i)_+=O\left(T_m(v^i_s,V_s)\wedge N^{\frac{1-2\delta}{4}}\right).
\end{align*}
Note that this control holds for any choice of $s,t$ satisfying the above conditions. In other words, the maximum interaction time is bounded by $T_m(v^1_s,V_s)$, for any $s$ belonging to the interaction time frame\footnote{We write the result with respect to the first interaction time, but the same would hold for the second, third, etc. We just require $s$ and $t$ to belong to the same time frame.}.

Likewise, let $t,s\in[-N,N]$ such that $|t-s|\leq N^{\frac{1-2\alpha}{3}-\delta}$, consider a set of initial conditions in $\mathcal{G}_N(\delta,\alpha, \beta)$ and assume $x^i_s\in\mathcal{B}(X_s,R)$ and $(s-\sigma_i)_+\leq N^{\frac{1-2\alpha}{3}-\delta}$. We have
\begin{equation}\label{eq: interaction time better}
\mathds{1}_{x^i_t\in\mathcal{B}(X_t,R)}(t-\sigma_i)_+=O\left(T_m(v^i_s,V_s)\wedge N^{\frac{1-2\alpha}{3}-\delta}\right).
\end{equation}
\end{proposition}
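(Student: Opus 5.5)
The plan is to derive the proposition directly from Lemma~\ref{lem:max_inter_time}, applied at the reference time $s$ (with $R$ replaced by the smaller ball, which only strengthens the conclusion since $|x^1_t-X_t|\geq 2R$ implies $x^1_t\notin\mathcal{B}(X_t,R)$). I treat the good set; the better set is identical with the thresholds $N^{-\frac{1-2\alpha}{3}+\delta}$ and $N^{\frac{1-2\alpha}{3}-\delta}$ in place of $N^{-\frac14+\frac\delta2}$ and $N^{\frac{1-2\delta}{4}}$. We may assume $t\geq\sigma_i$ and $x^i_t\in\mathcal{B}(X_t,R)$, since otherwise the left-hand side vanishes. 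Set $w=|v^i_s-V_s|$ and $L=N^{\frac{1-2\delta}{4}}$. We split into two cases according to whether $w\geq N^{-\frac14+\frac\delta2}$ or not.

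\textbf{Slow case} ($w<N^{-\frac14+\frac\delta2}$, up to constants). Here $T_m(v^i_s,V_s)\gtrsim N^{\frac14-\frac\delta2}= L$, so it suffices to show $(t-\sigma_i)_+=O(L)$. This follows from the triangle inequality:
\begin{equation*}
(t-\sigma_i)_+\leq |t-s|+(s-\sigma_i)_+\leq 2L.
\end{equation*}

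\textbf{Fast case} ($w\geq N^{-\frac14+\frac\delta2}$). Now Lemma~\ref{lem:max_inter_time} gives an exclusion window: no time $u$ with $6R/w\leq |u-s|\leq wN^{\frac12-\delta}$ has particle $i$ in $\mathcal{B}(X_u,2R)$. The upper end of the window satisfies
\begin{equation*}
wN^{\frac12-\delta}\geq N^{\frac14-\frac\delta2}=L\geq |t-s|.
\end{equation*}
Since $x^i_t\in\mathcal{B}(X_t,R)$, $t$ cannot lie in the window, so $|t-s|<6R/w=6R\,T_m$.

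It remains to bound $s-\sigma_i$ when $s>\sigma_i$. The key observation is that the particle is in $\mathcal{B}(X_u,R)$ both at $u=\sigma_i$ and at $u=s$, and $s-\sigma_i\leq L\leq wN^{\frac12-\delta}$. The same exclusion argument, applied with $u=\sigma_i$, then forces $s-\sigma_i<6R/w$. Combining the two estimates,
\begin{equation*}
(t-\sigma_i)_+\leq |t-s|+(s-\sigma_i)_+\leq 12R\,T_m(v^i_s,V_s).
\end{equation*}
Since also $(t-\sigma_i)_+\leq 2L$ always, the bound is $O(T_m\wedge L)$, as claimed.

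\textbf{Main subtlety.} The point needing care is that the exclusion window is centred at $s$ and uses the relative velocity at $s$. One must check that every time compared to $s$ ($t$ and $\sigma_i$) is within distance $L$ of $s$; this is exactly what the hypotheses $|t-s|\leq L$ and $(s-\sigma_i)_+\leq L$ guarantee. Without them, a genuine recollision outside the window could not be excluded.

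For the better set, we use \eqref{eq: exit/return times better set}. There the upper end of the window is $\frac15\min\bigl(N^{1-2\alpha}w^2,N^{\frac{1+\beta}{2}-\alpha}w\bigr)$. For $w\geq N^{-\frac{1-2\alpha}{3}+\delta}$ it dominates $N^{\frac{1-2\alpha}{3}-\delta}$, using $\beta\geq\frac{1-2\alpha}{3}+\delta$ for the second term, which gives \eqref{eq: interaction time better} by the same two-case argument.
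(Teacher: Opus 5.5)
Your proof is correct and follows the paper's route: the paper only says the proposition is a direct consequence of Lemma~\ref{lem:max_inter_time}, and your argument fills in exactly that step. You split into slow and fast relative velocities and apply the exclusion window centred at $s$ to both $t$ and $\sigma_i$, each of which lies within $N^{\frac{1-2\delta}{4}}$ (resp.\ $N^{\frac{1-2\alpha}{3}-\delta}$) of $s$. You also correctly point out that the better-set case needs $\beta\geq\frac{1-2\alpha}{3}+\delta$, a hypothesis of Lemma~\ref{lem:max_inter_time} that the proposition leaves implicit.
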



\begin{proof}
This is a direct consequence of Lemma~\ref{lem:max_inter_time}
\end{proof}


\begin{figure}
	\centering
	\includegraphics[width=0.7\linewidth]{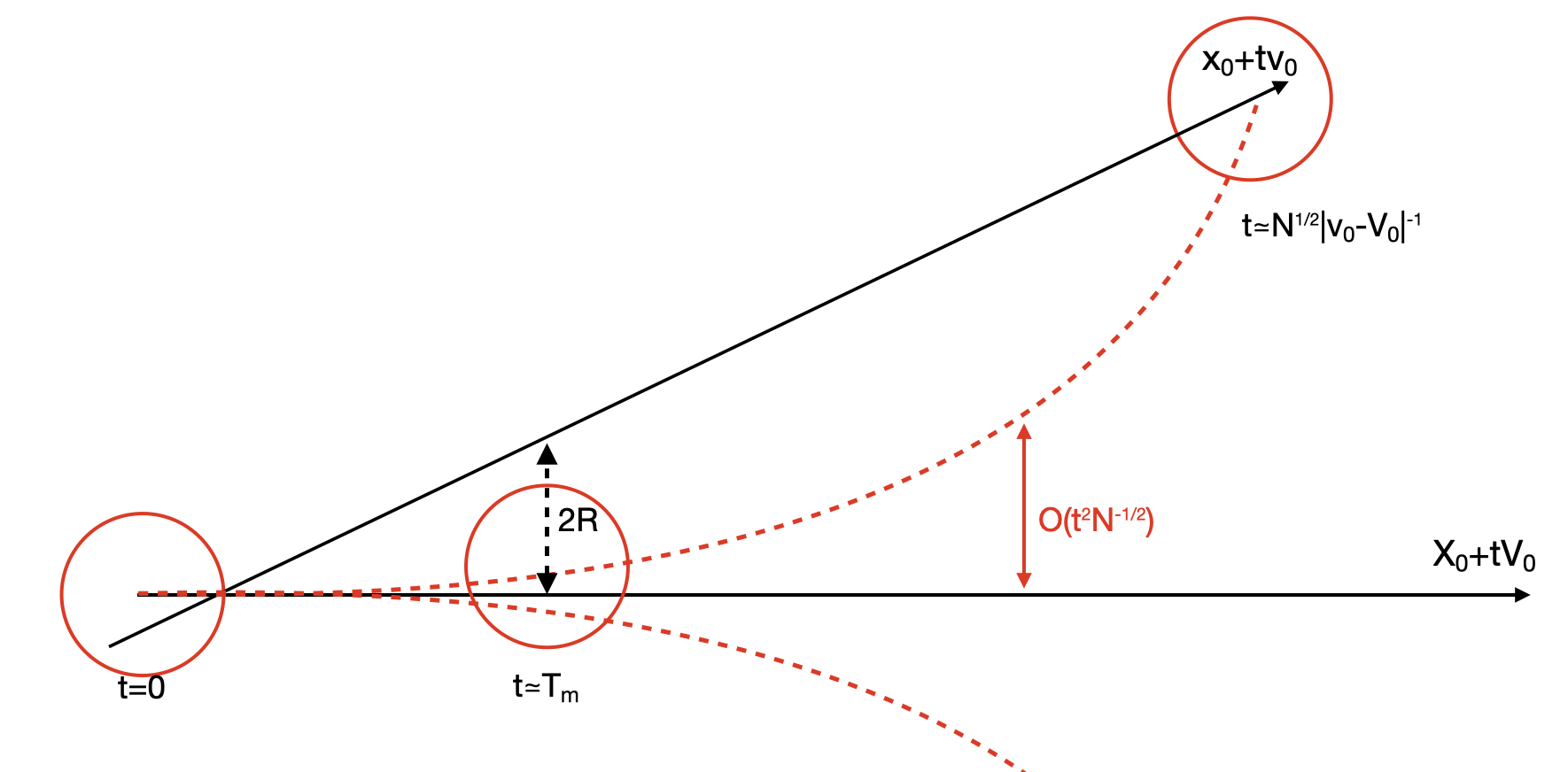}
	\caption{{\small Proof of the maximum interaction time and minimum recollision time. Assuming $x^1_0\in\mathcal{B}(X_0,R)$, we compare $X_t$ to $X_0+tV_0$ and $x^1_t$ to $x^1_0+tv^1_0$. In the case of $\mathcal{G}_N(\delta)$, the possible values for $X_t$ must be in between the two red dashed lines, which represent the bound on the error. We are interested in the time at which, despite the error, the comparison with straight lines ensures that the background particle exits the interaction radius, and the time at which the error becomes so big that the tagged particle may re-interact with particle $1$. For an initial difference of velocities which is too small, the known error does not allow us to ensure that the difference of positions becomes larger than $2R$.}}
	\label{fig:inter_time}
\end{figure}


\begin{proof}[Proof of Lemma~\ref{lem:max_inter_time}]
Using the time-invariance and time-reversibility of the process, it is sufficient to prove the result for $s=0$, and to consider times $t\geq0$. In both cases, we compare $X_t-x^1_t$ to $t(V_0-v_0^1)$, control the error on their difference, and consider the first time this error becomes so big that this approximation no longer ensures that the background particle has exited the radius of interaction of the tagged particle.  

\medskip

\noindent
{\bf In the good set $\mathcal{G}_N(\delta)$.} Let us start by considering initial configurations in $\mathcal{G}_N(\delta)$. We know that
\begin{align*}
\left|v_t^1-v_0^1\right|\leq \frac{\|\nabla \Phi\|_\infty t}{N}
\quad \text{ and } \quad 
\left|V_t-V_0\right| \leq  \frac{ t}{N^{\frac{1-\delta}{2}}}.
\end{align*}
For $N$ large enough to ensure  $1\geq \|\nabla\Phi\|_\infty N^{-\frac{1+\delta}{2}}$, we get
\begin{align*}
\left|(X_t-x^1_t)-(X_0-x^1_0)-\int_0^t(V_0-v_0^1)ds\right|\leq \int_0^t\left(\left|V_s-V_0\right|+\left|v_s^1-v_0^1\right|\right)ds\leq \frac{  t^2}{N^{\frac{1-\delta}{2}}},
\end{align*}
and thus
\begin{align*}
\left|(X_0-x^1_0 )+t(V_0-v_0^1)\right| -\frac{  t^2}{N^{\frac{1-\delta}{2}}} \leq |X_t-x^1_t|\leq\left|(X_0-x^1_0 )+t(V_0-v_0^1)\right|+\frac{ t^2}{N^{\frac{1-\delta}{2}}},
\end{align*}
since  $x^1_0$ belongs to $\mathcal{B}(X_0,R)$
\begin{equation}\label{eq:control_diff_pos}
-R+t|V_0-v_0^1| -\frac{  t^2}{N^{\frac{1-\delta}{2}}} \leq |X_t-x^1_t|\leq R+ t|V_0-v_0^1|+\frac{ t^2}{N^{\frac{1-\delta}{2}}}.
\end{equation}
We are now looking for all the values of  $t$ such that $ |X_t-x^1_t|\geq 2R$. It is therefore sufficient to have
\begin{align}
\label{eq: polynome ordre 2}
-R+t|V_0-v_0^1| -\frac{  t^2}{N^{\frac{1-\delta}{2}}} \geq 2R.
\end{align}
Note that, since for $N$ sufficiently large we have $N^{\frac{1-\delta}{2}}|V_0-v_0^1|^2\geq N^{\frac{\delta}{2}}\geq 12R$, the inequality is satisfied for $t\in[T_-,T_+]$ 
where the roots of the polynomial \eqref{eq: polynome ordre 2} are
\begin{align*}
T_\pm=&\frac{1}{2}N^{\frac{1-\delta}{2}}|V_0-v_0^1|\left(1\pm\sqrt{1-\frac{12R }{N^{\frac{1-\delta}{2}}|V_0-v_0^1|^2}}\right).
\end{align*} 
Since for $x\in[0,1]$ we have $\sqrt{1-x}\geq 1-x$, we get
\begin{equation}
\label{eq:tps_inter_minus}
T_- 
\leq \frac{6R}{|V_0-v_0^1|}
\quad \text{and} \quad 
T_+\geq \frac{|V_0-v_0^1|}{2 }N^{\frac{1-\delta}{2}}
\geq |V_0-v_0^1| \, N^{\frac{1}{2} -\delta}
.
\end{equation}
Hence we obtain the first result.

\medskip

\noindent
{\bf In the better set $\mathcal{G}_N(\delta,\alpha, \beta)$.}
We deal separately with the cases $t\leq N^\beta$ and $t\geq N^\beta$.

 For $t\leq N^\beta$ and configurations in $\mathcal{G}_N(\delta,\alpha, \beta)$, the velocity is well controlled (see Definition \ref{def:hyp_boot})
\begin{equation}
\label{eq:tps_inter_control_vitesse}
t\leq N^\beta, \qquad
\left|V_t-V_0\right| \leq  \sqrt{\frac{t}{N}} N^\alpha.
\end{equation}
Proceeding as \eqref{eq:control_diff_pos} we get
\begin{equation}\label{eq:control_diff_pos_boot}
-R+t|V_0-v_0^1| -\frac{  t^{\frac{3}{2}}N^\alpha}{N^{\frac{1}{2}}} \leq |X_t-x^1_t|\leq R+ t|V_0-v_0^1|+\frac{ t^{\frac{3}{2}}N^\alpha}{N^{\frac{1}{2}}}.
\end{equation}
We are looking for $t \leq N^\beta$ such that $ |X_t-x^1_t|\geq 2R$. It is therefore sufficient to have
\begin{equation}
\label{eq:cdt_dehors}
-R+t|V_0-v_0^1| -\frac{  t^{\frac{3}{2}}}{N^{\frac{1}{2}-\alpha}} \geq 2R.
\end{equation} 
Since $\beta\geq \frac{1-2\alpha}{3}+\delta$ and ${|V_0 -v^1_0|\geq N^{-\frac{1-2\alpha}{3}+\delta}}$,
we easily obtain that, denoting
\begin{align}
T_-=\frac{6R}{|V_0-v_0^1|}  \ll N^\beta
\qquad\text{ and }\qquad T_{+,1}=\frac{N^{1-2\alpha}|V_0-v_0^1|^2}{5}  \wedge N^\beta,
\label{eq: time estimate T1}
\end{align}
we have  (for $N$ large enough) $0<T_-<T_{+,1}$ and a simple function study ensures that (again, for $N$ large enough) for all $t\in[T_-,T_{+,1}]$, \eqref{eq:cdt_dehors} is satisfied.

\medskip 

For $t\geq N^\beta$, the control \eqref{eq:tps_inter_control_vitesse} is no longer valid, but it can  be applied on subintervals of size $N^\beta$
\begin{align*}
\left|V_t-V_0\right| \leq & \Big|V_t-V_{\left\lfloor\frac{t}{N^\beta}\right\rfloor N^\beta}\Big|+...+\left|V_{N^\beta}-V_0\right|
\leq 
\Big( \left\lfloor\frac{t}{N^\beta}\right\rfloor + 1 \Big) 
\sqrt{\frac{N^\beta}{N}} N^\alpha
\leq 2\frac{t N^\alpha}{N^{\frac{1+\beta}{2}}}.
\end{align*}
 Thus for $t\geq N^\beta$
\begin{align*}
\left|(X_t-x^1_t)-(X_0-x^1_0)-\int_0^t(V_0-v_0^1)ds\right|\leq& 
 \int_0^t \left(\left|V_s-V_0\right|+\left|v_s^1-v_0^1\right|\right)ds
\leq  \frac{5}{2}\frac{  t^{2}}{N^{\frac{1+\beta}{2}}}N^\alpha.
\end{align*}
This way
\begin{equation}\label{eq:cdt_dehors_2}
-R+t|V_0-v_0^1| - \frac{5}{2}\frac{  t^{2}N^\alpha}{N^{\frac{1+\beta}{2}}}\leq |X_t-x^1_t|\leq R+ t|V_0-v_0^1|+ \frac{5}{2}\frac{ t^{2}N^\alpha}{N^{\frac{1+\beta}{2}}}.
\end{equation}
Note that this is a control similar to \eqref{eq:control_diff_pos} thus we get  
\begin{align}
T_-=\frac{6R}{|V_0-v_0^1|}\qquad\text{ and }\qquad T_{+,2}=\frac{N^{\frac{1+\beta}{2}-\alpha}|V_0-v_0^1|}{5},
\label{eq: time estimate T2}
\end{align}
with $0<T_-<T_{+,2}$ (for $N$ large enough)  and a simple function study ensures that (again for $N$ large enough) for all $t\in[T_-,T_{+,2}]$, we ensure that $|X_t-x^1_t|\geq 2R$.\\

To conclude  \eqref{eq: exit/return times better set}, we need to combine the estimates  
\eqref{eq: time estimate T1}, \eqref{eq: time estimate T2}.
First note that, for $|V_0-v_0^1|<N^{-\frac{1-\beta}{2}+\alpha}$, we have $T_{+,1}\leq N^\beta$. Hence $T_{+,1}$ is a valid recollision time satisfying the a priori assumption that $t\leq N^\beta$. For $|V_0-v_0^1|\geq  N^{-\frac{1-\beta}{2}+\alpha}$ we have $T_{+,2}\leq T_{+,1}$. Hence \eqref{eq: exit/return times better set} follows.
\end{proof}

%
%
%
%

\section{Some useful controls}
\label{sec: Some useful controls}

%
%
%
%

\subsection{Influence of a single particle}\label{sec:influence_part}

We are going to derive the controls on the influence of one background particle on the tagged particle stated in Lemma \ref{lem:abstract_approx}. Given an initial data $\mathcal{N}, Z_0,z_{1:\mathcal{N}}$, recall the definition of the auxiliary process $\overline{Z}_t$ given in \eqref{eq: dynamique barres}, which consists in removing the influence of the background particle $1$. We can write
\begin{equation}\label{eq:lien_tagged_process}
\overline{X}_\cdot(Z_0,z_{1:\mathcal{N}})=X_\cdot(Z_0,z_{2:\mathcal{N}}),\quad \overline{V}_\cdot(Z_0,z_{1:\mathcal{N}})=V_\cdot(Z_0,z_{2:\mathcal{N}}).
\end{equation}
We first study the stability of the microscopic dynamics under the perturbation \eqref{eq: dynamique barres} for initial configuration in the better set. 
Analogous estimates will be stated in Lemma \ref{lem:influence_une_particule} for initial data in the good set.


\begin{lemma}[Better set estimates] 
\label{Lem: Better set estimates}
Given $\alpha \in ]0,1/2[$ and $\delta >0$ a small parameter, choose 
\begin{equation}
\label{eq: def gamma beta}
\gamma=\frac{1-2\alpha}{3}-\delta
\quad  \text{and} \quad 
\beta \geq \gamma + \delta.
\end{equation}
Recall $\sigma^+_1$ is the first positive interaction time \eqref{eq:def_tps_entree}.
For a configuration $(\mathcal{N},Z,z_{1:\mathcal{N}})$ in the better set $\mathcal{G}_N(\delta,\alpha, \beta)$ (see  Definition \ref{def:hyp_boot}),
the distance between the trajectories  $(Z_t)_t$ and $(\overline{Z}_t)_t$ defined in \eqref{eq:def_tagged}-\eqref{eq: dynamique barres} can be controlled as follows  for all  $t\in[\sigma^+_1,\sigma^+_1+N^{\gamma}],$ 
\begin{align} 
&
\left|X_t-\overline{X}_t\right|+(t-\sigma^+_1)_+ \left|V_t-\overline{V}_t\right| =O\left(\frac{(t-\sigma^+_1)_+^2}{N}\right) .
\label{eq:est_diff_ordre_1_boot}
\end{align}
One has also
\begin{align} 
&\left|X_t-\overline{X}_t+\frac{1}{N}\int_{0}^t\int_0^s\nabla\Phi(\overline{X}_u-\overline{x}^1_u)duds\right|+(t-\sigma^+_1)_+ \left|V_t-\overline{V}_t+\frac{1}{N}\int_{0}^t\nabla\Phi(\overline{X}_s-\overline{x}^1_s)ds\right|\nonumber\\
&\hspace{3cm}=O\left(\frac{(t-\sigma^+_1)_+^4}{N^{\frac{3-\delta}{2}}}+\frac{(t-\sigma^+_1)_+^6}{N^{2}}\right),
\label{eq:est_diff_ordre_2_boot}\\
&\left|X_t-\overline{X}_t+\frac{1}{N}\int_{0}^t\int_0^s\nabla\Phi(X_u-x^1_u)duds\right|+(t-\sigma^+_1)_+ \left|V_t-\overline{V}_t+\frac{1}{N}\int_{0}^t\nabla\Phi(X_s-x^1_s)ds\right|\nonumber\\
&\hspace{3cm}=O\left(\frac{(t-\sigma^+_1)_+^4}{N^{\frac{3-\delta}{2}}}+\frac{(t-\sigma^+_1)_+^6}{N^{2}}\right).
\label{eq:est_diff_ordre_2_V2_boot}
\end{align}
\end{lemma}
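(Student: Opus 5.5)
We compare the two dynamics through the differences $\Delta X_t=X_t-\overline{X}_t$, $\Delta V_t=V_t-\overline{V}_t$, together with the background differences $\Delta x^i_t=x^i_t-\overline{x}^i_t$, $\Delta v^i_t=v^i_t-\overline{v}^i_t$ for $i\geq 2$. All of them vanish up to time $\sigma^+_1$, since particle $1$ has not interacted before then and the two systems coincide. Write $\tau=(t-\sigma^+_1)_+\leq N^\gamma$. The tagged particle satisfies
\begin{align*}
\frac{d}{dt}\Delta V_t=-\frac{1}{N}\nabla\Phi(X_t-x^1_t)-\frac{1}{N}\sum_{i\geq 2}\left(\nabla\Phi(X_t-x^i_t)-\nabla\Phi(\overline{X}_t-\overline{x}^i_t)\right),
\end{align*}
and each background particle satisfies $\frac{d}{dt}\Delta v^i_t=\frac{1}{N}(\nabla\Phi(X_t-x^i_t)-\nabla\Phi(\overline{X}_t-\overline{x}^i_t))$.

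\textbf{Step 1: background particles.} We first bound $\Delta x^i$ in terms of $\Delta X$. A Taylor expansion gives $|\Delta v^i_t|\leq \frac{C}{N}\int_{\sigma^+_1}^t(|\Delta X_u|+|\Delta x^i_u|)\mathds{1}_{\text{$i$ near $X_u$}}du$. Only times when particle $i$ is in the ball contribute, and for particles in a single interaction window this time is controlled by $T_m\wedge N^\gamma$ through \eqref{eq: interaction time better}. Together with $\tau\leq N^\gamma\ll N^{1/2}$, this yields $|\Delta x^i_t|=O(\frac{\tau^2}{N}\sup_{u\leq t}|\Delta X_u|)$, which is a small multiple of $\sup|\Delta X|$.

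\textbf{Step 2: first-order bound \eqref{eq:est_diff_ordre_1_boot}.} We linearise the sum over $i\geq2$ around $(\overline{X},\overline{x}^i)$. This produces the term $-A_t\Delta X_t$, where $A_t=\frac1N\sum_i\text{Hess}\Phi(X_t-x^i_t)$, plus the $\Delta x^i$ terms, plus quadratic remainders. The remainders are bounded using \eqref{eq:G_N_1} and the moment bounds \eqref{eq:G_N_4}. The key input is the better-set control \eqref{eq:control_drift_hyp_alpha}, namely $\|\int_s^tA_u du\|\leq\sqrt{|t-s|/N}\,N^\alpha$. We write $\Delta X_t=\int_{\sigma^+_1}^t\Delta V_s\,ds$ and integrate by parts the term $\int\int A_u\Delta X_u$, moving the integral onto $A$. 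This gives a Grönwall inequality of the form $y(t)\leq \frac{C\tau^2}{N}+C\frac{\tau^{3/2}N^\alpha}{N^{1/2}}\sup y$, where we also used $|\frac{d}{dt}\Delta X|=|\Delta V|$. The improved Grönwall coefficient is $\tau^{3/2}N^{\alpha-1/2}\leq N^{-3\delta/2}\ll1$ exactly because $\tau\leq N^\gamma$ with $\gamma=\frac{1-2\alpha}{3}-\delta$. So no exponential growth occurs, and we obtain $|\Delta X|=O(\tau^2/N)$ and $|\Delta V|=O(\tau/N)$ by a bootstrap (continuity) argument on $[\sigma^+_1,\sigma^+_1+N^\gamma]$. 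I expect this integration-by-parts Grönwall, in the style of Lemma~\ref{lem:Grönwall_precis_alpha}, to be the main obstacle, because the term $\int\int A_u\Delta V_u$ produced by the integration by parts has to be controlled in the same way.

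\textbf{Step 3: second order \eqref{eq:est_diff_ordre_2_V2_boot} and \eqref{eq:est_diff_ordre_2_boot}.} We subtract the direct force $-\frac1N\int\nabla\Phi(X_u-x^1_u)$. What remains is the indirect term $\int A_u\Delta X_u$ plus the background feedback and the quadratic remainders. By Step 2 and the same integration by parts, the indirect term is $O(\sqrt{\tau/N}N^{\alpha}\cdot\tau^2/N)$. Alternatively, the pointwise bound \eqref{eq:G_N_3} $\|A\|\leq N^{(\delta-1)/2}$ gives $O(\tau^3/N^{(3-\delta)/2})$ for the velocity, which is the form stated after multiplying by $\tau$. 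The quadratic remainders and the background terms give $O(\tau^5/N^2)$ for the velocity and $O(\tau^6/N^2)$ for the position. This proves \eqref{eq:est_diff_ordre_2_V2_boot}. Finally, \eqref{eq:est_diff_ordre_2_boot} follows by replacing $X-x^1$ with $\overline{X}-\overline{x}^1$. By Step 2 and $|x^1_u-\overline{x}^1_u|=O(\tau^2/N)$, this costs $\frac1N\int|\Delta X|+|x^1-\overline x^1|=O(\tau^3/N^2)$ in velocity, which is absorbed into the error.
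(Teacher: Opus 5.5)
Two parts of your proposal are sound. Your integration-by-parts substitute for Lemma~\ref{lem:gronw_opt_racine} works: since $\frac{d}{du}\big(\mathcal A(u)\Delta X_u\big)=A_u\Delta X_u+\mathcal A(u)\Delta V_u$ with $\mathcal A(u)=\int_{\sigma_1^+}^u A$, no commutation is needed. The resulting coefficient $\tau^{3/2}N^{\alpha-1/2}\le N^{-3\delta/2}$ gives the same threshold as the Peano--Baker argument, so this is not the real obstacle. Your Step 3 is also fine once Step 2 holds.

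The genuine gap is in Step 2, where you bound the quadratic Taylor remainders with \eqref{eq:G_N_1} and \eqref{eq:G_N_4}.
\begin{itemize}
\item The remainder contains the pure term $\frac1N\sum_i O(|\Delta X_t|^2)\,\mathds{1}_{x^i_t\in\mathcal B(X_t,\frac32R)}$. Counting the $O(N)$ interacting particles only gives $O(|\Delta X_t|^2)$.
\item Under your bootstrap hypothesis $|\Delta X|\lesssim \tau^2/N$, this is a source of size $\tau^4/N^2$. It exceeds the main source $\frac1N\nabla\Phi$ as soon as $\tau\gg N^{1/4}$.
\item Equivalently, your Grönwall inequality is missing a term $C\tau^2\sup|\Delta X|^2$, and the continuity argument only closes for $\tau\ll N^{1/4}$. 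This is the crude bound used in Lemma~\ref{lem:influence_une_particule}, which only reaches $N^{(1-\delta)/4}$.
\item Since $\gamma=\frac{1-2\alpha}{3}-\delta$ exceeds $1/4$ in the relevant regime (e.g.\ $\gamma\simeq 11/36$ for $d=4$, $\alpha=\alpha^*$), the interval $[\sigma_1^+,\sigma_1^++N^\gamma]$ is not reached.
\end{itemize}
The missing idea is to expand $\nabla\Phi$ to third order and isolate $\frac12\big(\frac1N\sum_i\nabla^3\Phi(X_t-x^i_t)\big)(\Delta X_t)^{\otimes2}$. Its prefactor is a mean-zero empirical sum, bounded by $N^{-\frac{1-\delta}{2}}$ by \eqref{eq:borne_der_Phi} with $|\mathcal I|=3$. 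So this term is $O\big(N^{-\frac{1-\delta}{2}}\tau^4/N^2\big)=o(1/N)$ for $\tau\le N^\gamma$. The cubic remainder is $O(\tau^6/N^3)=o(1/N)$ because $\gamma<1/3$, and the mixed terms involving $\Delta x^i$ are smaller.

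The same loss appears in the linear background term. Your Step 1 conclusion is $|\Delta x^i_t|=O\big(\frac{\tau^2}{N}\sup|\Delta X|\big)$. Inserted into $\frac1N\sum_i\text{Hess}\Phi(X_t-x^i_t)\Delta x^i_t$ with \eqref{eq:G_N_1}, it produces a Grönwall coefficient of order $\tau^4/N$, again not small beyond $N^{1/4}$. Being a small multiple of $\sup|\Delta X|$ is not enough: $O(N)$ particles with weight $1/N$ contribute, and the term is then integrated twice in time.

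You must keep the interaction-time dependence you allude to but then discard. For $i$ in the ball at time $t$, $\Delta x^i$ has only accumulated during the current interaction window. By \eqref{eq: interaction time better} this gives $|\Delta x^i_t|\lesssim \frac{(T_m(v^i_t,V_t)\wedge N^\gamma)^2}{N}\sup|\Delta X|$. Then \eqref{eq:G_N_4} with $k=2$ makes the coefficient $O(N^{\delta-1})$, which is harmless since $\tau^2N^{\delta-1}\ll1$.
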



\begin{proof}
Consider an initial configuration $(\mathcal{N},Z,z_{1:\mathcal{N}})$ in the better set $\mathcal{G}_N(\delta,\alpha, \beta)$ with $\beta>\gamma$.
For times $0\leq t \leq \sigma^+_1$, the tagged particle has not interacted with particle $1$ so that $Z_t = \overline{Z}_t$. It is enough to assume that  the interaction starts at time $\sigma^+_1=0$ and consider the shifts in the trajectories for $t \geq 0$.

We introduce the function $C(t)=\sup_{s\in[0,t]}|X_s-\overline{X}_s|$. This function is continuous, nondecreasing and satisfies $C(0)=0$. Define the stopping time 
\begin{equation}
\label{eq: Def T}
T = \sup \left \{ t \leq N^\gamma, \quad C(t)\leq \frac{1}{N^{1-2\gamma-\delta}}
= \frac{1}{N^{\frac{1}{3} + \frac{4 \alpha}{3}  + \delta}}
 \right \}.
\end{equation}
We first analyse the dynamics up to   time $T$  and then check that $T =  N^\gamma$.\\


\noindent
{\bf Step 1: Controlling the shifts on the background particles.}

The evolution equations \eqref{eq:def_tagged}-\eqref{eq: dynamique barres} lead to 
\begin{align}
\label{eq: shift background}
\frac{d^2}{dt^2}(x^1_t-\overline{x}^1_t)=\frac{1}{N}\nabla\Phi(X_t-x^1_t), 
\qquad
\forall i \geq 2,\ \frac{d^2}{dt^2}(x^i_t-\overline{x}^i_t)=\frac{1}{N}\left( \nabla\Phi(X_t-x^i_t)-\nabla\Phi(\overline{X}_t-\overline{x}^i_t)\right). 
\end{align}
This implies the following crude estimate
\begin{align}
\label{eq: shift small particles}
\forall i \leq \mathcal{N}, \qquad 
|x^i_t-\overline{x}^i_t|=O\left(\frac{(t - \sigma^+_i)_+^2}{N}\right),
\end{align}
where $\sigma^+_i = \inf\{t\geq 0,\quad |X_t-x^i_t|\leq 3 R/2\}$.

For $i \geq 2$, this upper bound can be improved as removing particle $1$ from the dynamics induces a small shift on the background particles only from the retroaction with the tagged particle (see \eqref{eq: shift background}). 
Note that  by a Taylor expansion
\begin{align}
\label{eq: Taylor expansion Phi}
\nabla\Phi(\overline{X}_t-\overline{x}^i_t)=&\nabla\Phi(X_t-x^i_t)+\text{Hess}\Phi(X_t-x^i_t)\left(\overline{X}_t-X_t\right)+\text{Hess}\Phi(X_t-x^i_t)\left(x^i_t-\overline{x}^i_t\right)\\
&+
F_{\Phi}(x^i_t,\overline{x}^i_t,X_t,\overline{X}_t), \nonumber
\end{align}
where the remainder $F_{\Phi}(x^i_t,\overline{x}^i_t,X_t,\overline{X}_t)$  satisfies 
\begin{equation}
\label{eq: def FPhi}
\forall x,y,X,Y\in\mathbb{R}^d,  \qquad F_{\Phi}(x,y,X,Y)=O\left( |x-y|^2+|X-Y|^2\right) 
\mathds{1}_{x \in\mathcal{B}(X,R)\text{ or } y \in\mathcal{B}(Y,R)}.
\end{equation}
Thus for $i\in\{2,...,\mathcal{N}\}$ and  $\sigma^+_i \leq t \leq T\wedge N^\gamma$, \eqref{eq: shift background} leads to 
\begin{align}
\label{eq: borne xi C(t)}
\frac{d^2}{dt^2}(x^i_t-\overline{x}^i_t) 
= 
&a_i(t) \left(x^i_t-\overline{x}^i_t\right)+b_i(t),
\end{align}
where
\begin{align}
\left\|a_i(t)\right\|=O\left(\frac{1}{N}\right),\qquad
\left|b_i(t)\right|=O\left(\frac{C(t)}{N} + \frac{t^4}{N^3}+\frac{C(t)^2}{N}\right),\label{eq: borne xi C(t) a et b}
\end{align}
and where we used 
\begin{align*}
&\left\|\frac{1}{N}\text{Hess}\Phi(X_t-x^i_t)\right\|=O\left(\frac{1}{N}\right), 
\quad 
\left|\frac{1}{N}\text{Hess}\Phi(X_t-x^i_t)\left(\overline{X}_t-X_t\right)\right|=O\left(\frac{C(t)}{N}\right),
\end{align*}
and by estimate \eqref{eq: shift small particles}
\begin{align}
\label{eq: borne F}
\left| F_{\Phi}(x^i_t,\overline{x}^i_t,X_t,\overline{X}_t)\right|=O\left(\frac{t^4}{N^3}+\frac{C(t)^2}{N}\right) .
\end{align}
For $t \leq T$ we have $t \leq N^{1/3}$ (as $\gamma \leq 1/3$). Furthermore, the a priori estimate \eqref{eq: Def T} implies that $C(t)\leq  1/N^{1/3}$ and  thus \eqref{eq: borne xi C(t) a et b} becomes
\begin{align*}
\forall i\geq 2,\quad \forall t\in[0, T],\quad \left\|a_i(t)\right\|=O\left(\frac{1}{N}\right),\qquad
\left|b_i(t)\right|=O\left(\frac{1}{N^{2-2\gamma-\delta}}\right) .
\end{align*}
Using \eqref{eq:gron_formule_simple_cas_simple} from Lemma~\ref{lem:Grönwall_precis_alpha}, we obtain from \eqref{eq: borne xi C(t)} and for 
$ t \leq T \leq N^\gamma$
\begin{align}
\label{eq: shift particule i}
\forall i\geq 2,\qquad|x^i_t-\overline{x}^i_t|
=O\left(\frac{(t - \sigma^+_i)_+^2}{N^{2-2\gamma-\delta}}\right)
=O\left(\frac{(t - \sigma^+_i)_+^2}{N^{\frac{4}{3} + \frac{4 \alpha}{3}  + \delta}}\right) .
\end{align}
This estimate will be used below to control the shift on the tagged particle.\\


\noindent
{\bf Step 2: Derivation of  \eqref{eq:est_diff_ordre_1_boot}.}

Using the evolution equations \eqref{eq:def_tagged}-\eqref{eq: dynamique barres}, we get
\begin{align}
\frac{d^2}{dt^2}(X_t-\overline{X}_t)=-\frac{1}{N}&\sum_{i=1}^\mathcal{N}\left( \nabla\Phi(X_t-x^i_t)-\nabla\Phi(\overline{X}_t-\overline{x}^i_t)\right)
- \frac{\nabla\Phi(\overline{X}_t-\overline{x}^1_t)}{N}.
\label{eq:evol_error_part_en_moins_boot_1}
\end{align}
Applying again the Taylor expansion \eqref{eq: Taylor expansion Phi}, we get 
\begin{align}
\frac{d^2}{dt^2}(X_t-\overline{X}_t) 
=&-\left(\frac{1}{N}\sum_{i=1}^\mathcal{N}\text{Hess}\Phi(X_t-x^i_t)\right)(X_t-\overline{X}_t)  
- \frac{\nabla\Phi(\overline{X}_t-\overline{x}^1_t)}{N}  \nonumber \\
& +  \frac{1}{N}\sum_{i=1}^\mathcal{N}\text{Hess}\Phi(X_t-x^i_t)(x^i_t-\overline{x}^i_t) 
 -\frac{1}{N}\sum_{i=1}^\mathcal{N}
F_{\Phi}(x^i_t,\overline{x}^i_t,X_t,\overline{X}_t) \nonumber \\
=:&
a(t)(X_t-\overline{X}_t)+b_1(t) +b_2(t) -\frac{1}{N}\sum_{i=1}^\mathcal{N}
F_{\Phi}(x^i_t,\overline{x}^i_t,X_t,\overline{X}_t).
\label{eq: X-barX}
\end{align}
For configurations in the better set $\mathcal{G}_N(\delta,\alpha, \beta)$, the coefficients are bounded as follows by assumption \eqref{eq: borne sqrt Phi}
\begin{align}
\label{eq: a(t) integrale}
t \leq N^\gamma, \quad 
a(t):=-\left(\frac{1}{N}\sum_{i=1}^\mathcal{N}\text{Hess}\Phi(X_t-x^i_t)\right)
\quad\text{ satisfies }\quad 
\Big\| \int_0^t a(u) \, du \Big\| \leq C_{vel}\sqrt{\frac{t}{N}}N^{\alpha},
\end{align}
with $\beta > \gamma$ \eqref{eq: def gamma beta}.
The main contribution of the source term is given by
\begin{align}
\label{eq: b1 (t)}
b_1(t):=  \frac{\nabla\Phi(\overline{X}_t-\overline{x}^1_t)}{N} = O \left( \frac{1}{N} \right).
\end{align}
We are going to show that the other terms are smaller.
First of all using the control \eqref{eq: shift particule i} on the shifts of the small particles, we get 
\begin{align}
\label{eq: b2 (t) 0}
b_2(t) &:=
\frac{1}{N}\sum_{i=1}^\mathcal{N}\text{Hess}\Phi(X_t-x^i_t)(x^i_t-\overline{x}^i_t)\\
&=\frac{1}{N}\sum_{i=2}^\mathcal{N}\text{Hess}\Phi(X_t-x^i_t)(x^i_t-\overline{x}^i_t)+\frac{1}{N}\text{Hess}\Phi(X_t-x^1_t)(x^1_t-\overline{x}^1_t) \nonumber\\
& =   \frac{1}{N^{\frac{4}{3} + \frac{4 \alpha}{3}  + \delta}}  O\left( \frac{1}{N}\sum_{i=2}^\mathcal{N} (t - \sigma^+_i)_+^2 \; \mathds{1}_{x^i_t\in\mathcal{B}\left(X_t,\frac{3}{2}R\right)} \right)+O\left(\frac{t^2}{N^2}\right). \nonumber
\end{align}
Using a uniform bound $t \leq N^\gamma$ would be too crude, therefore we rely on the 
interaction time estimate \eqref{eq: interaction time better} to obtain a sharper upper bound
\begin{align}
\label{eq: b2 (t)}
|b_2(t)|  =    \frac{1}{N^{\frac{4}{3} + \frac{4 \alpha}{3}  + \delta}}  
O\left( \frac{1}{N}\sum_{i=2}^\mathcal{N}  \big( T_m(v^i_t,V_t)\wedge N^{\gamma} \big)^2\mathds{1}_{x^i_t\in\mathcal{B}\left(X_t,\frac{3}{2}R\right)} \right)+O\left(\frac{t^2}{N^2}\right)
= O\left(\frac{1}{N^{\frac{4}{3} + \frac{4 \alpha}{3}}}\right),
\end{align}
where the second estimate follows from \eqref{eq:G_N_4} as the particle configuration belong to the better set.

The last term to estimate in \eqref{eq: X-barX} involves $F_\Phi$. 
The 2nd order Taylor expansion in \eqref{eq: def FPhi} lead to errors of order $C(t)^2$
which are not sharp enough (by the a priori bound \eqref{eq: Def T}) to deduce an upper bound of order $1/N$. 
Thus we will need to improve \eqref{eq: def FPhi} by expanding $\nabla \Phi$ in  \eqref{eq: Taylor expansion Phi} to the third order\footnote{Where we denote $\left(\nabla^3 \Phi(X-x)\left(Y-X + x- y \right)^{\otimes 2}\right)_i=\sum_{j,k=1}^d\partial_{i,j,k}\Phi(X-x)(Y-X + x- y)_k(Y-X + x- y)_j$}
\begin{align}
\label{eq: def FPhi 3rd}
F_{\Phi}(x,y,X,Y)=
\frac{1}{2}\nabla^3 \Phi(X-x)\left(Y-X + x- y \right)^{\otimes 2}
+ O\left( |x-y|^3+|X-Y|^3\right)\mathds{1}_{x \in\mathcal{B}(X,R)\text{ or } y \in\mathcal{B}(Y,R)}.
\end{align}
Splitting the different contributions in $F_{\Phi}$ by expanding the quadratic term, we obtain
\begin{align}
-\frac{1}{N}\sum_{i=1}^\mathcal{N}
F_{\Phi}(x^i_t,\overline{x}^i_t,X_t,\overline{X}_t)
= \left( \frac{1}{N}\sum_{i=1}^\mathcal{N} \nabla^3 \Phi(X_t-x^i_t) \right) \; \left(X_t - \overline{X}_t  \right)^{\otimes 2}
+ b_3 (t),
\label{eq: F error 3}
\end{align}
where the last term $b_3$ involves terms of order $2$ containing $x^i_t - \overline{x}^i_t$ or order 3 terms:
\begin{align}
\big| b_3 (t) \big| \leq &
 \frac{C \, | X_t - \overline{X}_t |}{N}\sum_{i=1}^\mathcal{N} |x^i_t - \overline{x}^i_t|\mathds{1}_{x^i_t \in\mathcal{B}\left(X_t,R\right)}
+ \frac{C}{N}\sum_{i=1}^\mathcal{N} |x^i_t - \overline{x}^i_t|^2\mathds{1}_{x^i_t \in\mathcal{B}\left(X_t,R\right)}\nonumber
\\
&+ C \, | X_t - \overline{X}_t |^3\left(\frac{1}{N}\sum_{i=1}^\mathcal{N}\mathds{1}_{x^i_t \in\mathcal{B}\left(X_t,\frac{3}{2}R\right)}\right)+\frac{C}{N}\sum_{i=1}^\mathcal{N}| x^i_t - \overline{x}^i_t|^3\mathds{1}_{x^i_t \in\mathcal{B}\left(X_t,\frac{3}{2}R\right)}.
\label{eq: b3 error}
\end{align}
for some constant $C>0$ depending only on $\Phi$. By the a priori estimate \eqref{eq: Def T}, for $t \leq T$ the following bound holds
\begin{equation}
\label{eq: b3 X - bar X|}
| X_t - \overline{X}_t | \leq C(t)\leq  \frac{1}{N^{\frac{1}{3} + \frac{4 \alpha}{3}  + \delta}}
\quad \text{so that} \quad | X_t - \overline{X}_t |^3 = o \left( \frac{1}{N} \right).
\end{equation}
Furthermore for $t \leq N^\gamma$ (with $\gamma \leq 1/3$) we get by
\eqref{eq: shift particule i}
\begin{equation}
\label{eq: b3 x - bar x|}
|x^i_t-\overline{x}^i_t|\mathds{1}_{x^i_t \in\mathcal{B}\left(X_t,\frac{3}{2}R\right)}
=O\left(\frac{(t - \sigma^+_i)_+^2}{N^{\frac{4}{3} + \frac{4 \alpha}{3}  + \delta}}\mathds{1}_{x^i_t \in\mathcal{B}\left(X_t,\frac{3}{2}R\right)}\right) =O\left(\frac{1}{N^{\frac{2}{3} + \frac{8 \alpha}{3}  + 3\delta}}\mathds{1}_{x^i_t \in\mathcal{B}\left(X_t,\frac{3}{2}R\right)}\right) .
\end{equation}
Combining \eqref{eq: b3 X - bar X|} and \eqref{eq: b3 x - bar x|}, we deduce that 
\begin{align}
 b_3 (t) =o\left(\frac{1}{N} \right),
\label{eq: b3 error finale}
\end{align}
as the number of interacting particles is bounded by $O\left(N\right)$ (see \eqref{eq:G_N_3}) for configurations in the better set.
Finally the first term in \eqref{eq: F error 3} is also negligible as the prefactor is uniformly controlled by 
\eqref{eq:G_N_3} for configurations in the better set:
\begin{align*}
\left | \left( \frac{1}{N}\sum_{i=1}^\mathcal{N} \nabla^3 \Phi(X_t-x^i_t) \right) \; \left(X_t - \overline{X}_t  \right)^{\otimes 2}\right|
=O\left( \frac{1}{N^{\frac{1- \delta}{2} }} C(t)^2 \right)= o \left(\frac{1}{N} \right),
\end{align*}
with $C(t)$ bounded by \eqref{eq: b3 X - bar X|}. This implies that 
\begin{align}
\label{eq: def FPhi 3rd final}
\frac{1}{N}\sum_{i=1}^\mathcal{N}
F_{\Phi}(x^i_t,\overline{x}^i_t,X_t,\overline{X}_t)
 =o\left(\frac{1}{N} \right).
\end{align}
Combining the different contributions \eqref{eq: b1 (t)}, \eqref{eq: b2 (t)} and \eqref{eq: def FPhi 3rd final}, we deduce from \eqref{eq: X-barX} that 
\begin{align}
\frac{d^2}{dt^2}(X_t-\overline{X}_t) 
=
a(t)h(t)+ b(t) 
\qquad \text{with $a(t)$ as in \eqref{eq: a(t) integrale} and 
$b(t) = O  \left(\frac{1}{N} \right)$} .
\label{eq: X-barX finale}
\end{align}
Recall that $\gamma < \frac{1-2\alpha}{3}$. 
 Lemma~\ref{lem:gronw_opt_racine} implies
\begin{align*}
\forall t\leq T\wedge N^{\gamma}, \qquad 
|X_t-\overline{X}_t|+t|V_t-\overline{V}_t|=O\left(\frac{t^2}{N}\right).
\end{align*}
For $t\leq N^\gamma$, this implies that $|X_t-\overline{X}_t| \leq \frac{1}{N^{1-2\gamma}}$ thus the 
stopping time $T$ introduced in \eqref{eq: Def T} coincides with  $N^\gamma$. 
This completes the proof of \eqref{eq:est_diff_ordre_1_boot}.\\


\noindent
{\bf Step 3: Derivation of  \eqref{eq:est_diff_ordre_2_boot} and \eqref{eq:est_diff_ordre_2_V2_boot}.}

By symmetry it is enough to prove  \eqref{eq:est_diff_ordre_2_boot}.
To improve the previous computations, we are going to use estimate \eqref{eq:est_diff_ordre_1_boot} 
\begin{equation}
\label{eq: enhanced tagged particle}
\forall t \leq N^\gamma, \qquad |X_t-\overline{X}_t| \leq C(t) = O\left(\frac{t^2}{N}\right).
\end{equation}
Let us first improve the estimate  \eqref{eq: shift particule i} on the shift of the small particles. 
In particular, the estimates \eqref{eq: borne xi C(t)}-\eqref{eq: borne xi C(t) a et b} on the evolution of the background particles for $i\in\{2,...,\mathcal{N}\}$ implies now  for $\sigma^+_i \leq  t \leq N^\gamma$
\begin{align*}
\frac{d^2}{dt^2}(x^i_t-\overline{x}^i_t) 
= 
&a_i(t) \left(x^i_t-\overline{x}^i_t\right)+b_i(t),
\end{align*}
with
\begin{align*}
\left\|a_i(t)\right\|=O\left(\frac{1}{N}\right),\qquad
\left|b_i(t)\right|=O\left(\frac{t^2}{N^2} + \frac{t^4}{N^3}\right)=O\left(\frac{t^2}{N^2}\right).
\end{align*}
Using \eqref{eq:gron_formule_simple_cas_simple} from Lemma~\ref{lem:Grönwall_precis_alpha},
estimate \eqref{eq: shift particule i} can therefore be enhanced for  
$t  \leq N^\gamma$ and $i \geq 2$ to
\begin{align}
\label{eq: shift particule i improved}
|x^i_t-\overline{x}^i_t|
=O\left(\frac{(t - \sigma^+_i)_+^4}{N^2}\right) = O\left(\frac{t^4}{N^2}\right).
\end{align}
Let $h(t):=X_t-\overline{X}_t+\frac{1}{N}\int_{0}^t\int_0^s\nabla\Phi(\overline{X}_u-\overline{x}^1_u)duds$. 
We have
\begin{align*}
h''(t)=&\frac{d}{dt}V_t-\frac{d}{dt}\overline{V}_t+\frac{1}{N}\nabla\Phi(\overline{X}_t-x^1_t)
=-\frac{1}{N}\sum_{i=1}^\mathcal{N}\left( \nabla\Phi(X_t-x^i_t)-\nabla\Phi(\overline{X}_t-\overline{x}^i_t)\right).
\end{align*}
Applying again the Taylor expansion \eqref{eq: Taylor expansion Phi}, we get 
\begin{align*}
h''(t)=&-\left(\frac{1}{N}\sum_{i=1}^\mathcal{N}\text{Hess}\Phi(X_t-x^i_t)\right)h(t)
+  \frac{1}{N}\sum_{i=1}^\mathcal{N}\text{Hess}\Phi(X_t-x^i_t)(x^i_t-\overline{x}^i_t)\\
&+\left(\frac{1}{N}\sum_{i=1}^\mathcal{N}\text{Hess}\Phi(X_t-x^i_t)\right)\left(\frac{1}{N}\int_{0}^t\int_0^s\nabla\Phi(\overline{X}_u-\overline{x}^1_u)duds\right)\\
&+\frac{1}{N}\sum_{i=1}^\mathcal{N} F_{\Phi}(x^i_t,\overline{x}^i_t,X_t,\overline{X}_t).
\end{align*}
This can be rewritten as 
\begin{align}
\label{eq: h}
h''(t) =& a(t)h(t)+b_2(t)+b_4(t) +\frac{1}{N}\sum_{i=1}^\mathcal{N} F_{\Phi}(x^i_t,\overline{x}^i_t,X_t,\overline{X}_t),
\end{align}
where $a(t)$ was defined in \eqref{eq: a(t) integrale}, $b_2(t)$
as in \eqref{eq: b2 (t) 0} 
 \begin{align}
b_2(t):=\frac{1}{N} \text{Hess}\Phi(X_t-x^1_t)(x^1_t-\overline{x}^1_t) +
\frac{1}{N}\sum_{i=2}^\mathcal{N}\text{Hess}\Phi(X_t-x^i_t)(x^i_t-\overline{x}^i_t) 
= O\left(\frac{t^2}{N^2}+\frac{t^4}{N^2}\right),
\end{align} 
where we used \eqref{eq: shift particule i improved} (and \eqref{eq: shift small particles} for $i=1$) to improve on estimate \eqref{eq: b2 (t)}.
The term $b_4$ can be controlled easily thanks to assumption  \eqref{eq:G_N_3}
\begin{align*}
b_4(t):= \left(\frac{1}{N}\sum_{i=1}^\mathcal{N}\text{Hess}\Phi(X_t-x^i_t)\right)\left(\frac{1}{N}\int_{0}^t\int_0^s\nabla\Phi(\overline{X}_u-\overline{x}^1_u)duds\right)
= O\left(\frac{t^2}{N^{\frac{3-\delta}{2}}}\right).
\end{align*}
Finally to control $F_{\Phi}$, it is enough to combine the quadratic bound \eqref{eq: def FPhi} with the enhanced estimates \eqref{eq: enhanced tagged particle} and \eqref{eq: shift particule i improved}
\begin{align*}
\frac{1}{N}\sum_{i=1}^\mathcal{N} F_{\Phi}(x^i_t,\overline{x}^i_t,X_t,\overline{X}_t)
= O\left(\frac{t^8}{N^4} + \frac{t^4}{N^2}\right)=O\left(\frac{t^4}{N^2}\right).
\end{align*}
From the previous estimates, \eqref{eq: h} becomes, for $t\leq N^\gamma$
\begin{align}
h''(t) = a(t)h(t)+ O \left(\frac{t^4}{N^2}+\frac{t^2}{N^{\frac{3-\delta}{2}}}\right).
\end{align}
Using Lemma~\ref{lem:gronw_opt_racine}, we obtain for $t\leq N^\gamma$
\begin{align*}
|h(t)|+t|h'(t)|=O\left(\left(\frac{t^4}{N^2}+\frac{t^2}{N^{\frac{3-\delta}{2}}}\right)t^2\right).
\end{align*}
This completes the derivation of \eqref{eq:est_diff_ordre_2_boot}.
\end{proof}


Using similar (but easier) methods as in the derivation of Lemma \ref{Lem: Better set estimates}, one can also obtain bounds for configurations in the good set.
\begin{lemma}[Good set estimates]
\label{lem:influence_une_particule}
Consider a configuration $(\mathcal{N},Z,z_{1:\mathcal{N}})$ in the good set $\mathcal{G}_N(\delta)$
introduced in Definition \ref{def:hyp_good_set}. 
The distance between the trajectories  $(Z_t)_t$ and $(\overline{Z}_t)_t$ defined in \eqref{eq:def_tagged}-\eqref{eq: dynamique barres} can be controlled as follows  for all $t\in[\sigma^+_1,\sigma^+_1+N^{\frac{1-\delta}{4}}]$
\begin{align}
&\left|X_t-\overline{X}_t\right|+(t-\sigma^+_1)_+ \left|V_t-\overline{V}_t\right| =O\left(\frac{(t-\sigma^+_1)_+^2}{N}\right).
\label{eq:est_diff_ordre_1}
\end{align}
\end{lemma}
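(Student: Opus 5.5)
The proof will follow the scheme of Lemma~\ref{Lem: Better set estimates}, Steps 1 and 2, with the integrated control \eqref{eq: borne sqrt Phi} replaced by the pointwise control \eqref{eq:G_N_3}. The admissible time window then shrinks from $N^\gamma$ to $N^{\frac{1-\delta}{4}}$. By time invariance, we may set $\sigma^+_1=0$ and only consider $t\geq 0$, since $Z_t=\overline{Z}_t$ before the first interaction.

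We introduce $C(t)=\sup_{s\le t}|X_s-\overline{X}_s|$ and the stopping time
\[
T=\sup\{t\le N^{\frac{1-\delta}{4}},\ C(t)\le N^{-\frac{1}{2}-\frac{\delta}{2}}\}.
\]
On this window we have $t^2/N\le N^{-\frac{1}{2}-\frac{\delta}{2}}$, so the threshold is consistent with the target bound.

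\textbf{Step 1: shifts of the background particles.} For $i\ge 2$, the equation \eqref{eq: borne xi C(t)} still holds, with $\|a_i\|=O(1/N)$ and
\[
|b_i|=O\!\left(\frac{C(t)}{N}+\frac{t^4}{N^3}+\frac{C(t)^2}{N}\right)=O\!\left(N^{-\frac{3}{2}-\frac{\delta}{2}}\right).
\]
Lemma~\ref{lem:Grönwall_precis_alpha} (in the form \eqref{eq:gron_formule_simple_cas_simple}) then gives
\[
|x^i_t-\overline{x}^i_t|=O\!\left((t-\sigma_i^+)_+^2\,N^{-\frac{3}{2}-\frac{\delta}{2}}\right).
\]
For $i=1$, we keep the crude bound \eqref{eq: shift small particles}.

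\textbf{Step 2: the equation for $X_t-\overline{X}_t$.} We write it in the form \eqref{eq: X-barX}, namely $h''=a(t)h+b(t)$ with $h=X-\overline{X}$.

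The coefficient now satisfies the pointwise bound $\|a(t)\|\le N^{-\frac{1-\delta}{2}}$, by \eqref{eq:G_N_3}.

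The source term $b$ collects three contributions:
\begin{itemize}
\item the main term $b_1=O(1/N)$;
\item the term $b_2$, which is $O(t^2/N^2)$ for particle $1$ and $O(N^{-\frac{3}{2}-\frac{\delta}{2}}\,t^2)$ for the others. In the latter we bound $(t-\sigma_i^+)^2\le t^2\le N^{\frac{1-\delta}{2}}$, and the number of interacting particles is $O(N)$ by \eqref{eq:G_N_1}. The result is $o(1/N)$, since $t^2\le N^{1/2}$. Alternatively, \eqref{eq:G_N_4} with $k=2$ gives the same conclusion.
\item the remainder $F_\Phi$, which is $O\!\left(C(t)^2+\frac{1}{N}\sum_i |x^i_t-\overline{x}^i_t|^2\right)$. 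To get this bound at $O(1/N)$, we must not take the naive $O(C(t)^2)$ estimate (which would only give $N^{-1-\delta}$ anyway, so it is fine). We instead handle it as in \eqref{eq: def FPhi 3rd}–\eqref{eq: def FPhi 3rd final}: the quadratic term carries the prefactor $\frac{1}{N}\sum_i\nabla^3\Phi=O(N^{-\frac{1-\delta}{2}})$, and the cubic terms are negligible.
\end{itemize}
Altogether, $b(t)=O(1/N)$.

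For a second-order equation $h''=ah+b$ with $h(0)=h'(0)=0$, $\|a\|\le\epsilon:=N^{-\frac{1-\delta}{2}}$ and $|b|\le 1/N$, comparison with $\cosh(\sqrt{\epsilon}\,t)$ gives
\[
|h(t)|+t|h'(t)|=O\!\left(\frac{t^2}{N}\right)\qquad\text{as long as } \epsilon t^2=O(1),
\]
that is, for $t\le N^{\frac{1-\delta}{4}}$. This is the Grönwall estimate of Lemma~\ref{lem:Grönwall_precis_alpha}.

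\textbf{Step 3: closing the bootstrap.} On the window, the bound $C(t)=O(t^2/N)\le O(N^{-\frac{1}{2}-\frac{\delta}{2}})$ holds. This improves on the defining threshold of $T$, after adjusting constants or shrinking $\delta$ slightly. Hence $T=N^{\frac{1-\delta}{4}}$, which gives \eqref{eq:est_diff_ordre_1}.

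The main obstacle is keeping the background-particle feedback $b_2$ at size $o(1/N)$ on the longer window. I would first try the crude uniform bound $t\le N^{\frac{1-\delta}{4}}$ together with \eqref{eq:G_N_1}. Checking the exponents, this gives $b_2=O(N^{-1-\delta})$, which suffices. Only if the constants of the Grönwall lemma force a tighter threshold would I fall back on \eqref{eq:G_N_4} and the interaction-time bound of Lemma~\ref{lem:max_inter_time}.
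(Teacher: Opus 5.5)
Your proposal is correct and follows essentially the same argument as the paper. Both use the same bootstrap on $C(t)$, the Grönwall estimate \eqref{eq:gron_formule_simple_cas_simple} first for the shifts $|x^i_t-\overline{x}^i_t|$ and then for $X_t-\overline{X}_t$, with the pointwise bound $\|a(t)\|\le N^{-\frac{1-\delta}{2}}$ from \eqref{eq:G_N_3} and a source term $b=O(1/N)$.

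The one point to tidy is the stopping-time threshold. Your $N^{-\frac{1}{2}-\frac{\delta}{2}}$ is of the same order as the bound $C(t)=O(t^2/N)\le O(N^{-\frac{1}{2}-\frac{\delta}{2}})$, so the bootstrap does not literally close without an unspecified constant adjustment. The paper takes the threshold $N^{-1/2}$, which has margin $N^{\delta/2}$ and closes for $N$ large. That choice also makes the naive estimate $\frac{1}{N}\sum_i F_\Phi=O(C(t)^2+t^4/N^3)=O(1/N)$ sufficient, so the third-order expansion you invoke is not needed.
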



\begin{proof}
The proof follows the argument in Lemma \ref{Lem: Better set estimates} and we will use the same notations. Again, we assume without loss of generality $\sigma^+_1=0$.
The main difference is the shorter timescale which makes the proof easier and allows to use 
only the cruder structure of the good set. The  stopping time in \eqref{eq: Def T} is now given by 
\begin{equation}
\label{eq: Def T 2}
T = \sup \Big \{ t \leq N^{\frac{1-\delta}{4}}, \quad C(t)\leq 1/ \sqrt{N} \Big \}.
\end{equation}
The evolution equations are similar so that the shifts on the background particles obey equations
\eqref{eq: borne xi C(t)}-\eqref{eq: borne xi C(t) a et b} 
\begin{align*}
i \geq 2, t \leq T,  \qquad 
\frac{d^2}{dt^2}(x^i_t-\overline{x}^i_t) 
= 
&a_i(t) \left(x^i_t-\overline{x}^i_t\right)+b_i(t),
\end{align*}
with
\begin{align*}
\left\|a_i(t)\right\|=O\left(\frac{1}{N}\right),\qquad
\left|b_i(t)\right|=O\left(\frac{C(t)}{N} + \frac{t^4}{N^3}+\frac{C(t)^2}{N}\right)=O\left(\frac{1}{N^{3/2}}\right),
\end{align*}
as for $t \leq T$ then $C(t)\leq \frac{1}{\sqrt{N}}$ and $t \leq N^{\frac{1-\delta}{4}}$.
Using \eqref{eq:gron_formule_simple_cas_simple} from Lemma~\ref{lem:Grönwall_precis_alpha}, we obtain 
\begin{align}
\label{eq: shift particule i 2}
i\in\{2,.., \mathcal{N} \}, \qquad 
|x^i_t-\overline{x}^i_t|
=O\left(\frac{t^2}{N^{\frac{3}{2}}}\right).
\end{align}
To control the shift \eqref{eq:est_diff_ordre_1} of the tagged particle, we recall  \eqref{eq: X-barX}
\begin{align}
\frac{d^2}{dt^2}(X_t-\overline{X}_t) 
=&-\left(\frac{1}{N}\sum_{i=1}^\mathcal{N}\text{Hess}\Phi(X_t-x^i_t)\right)(X_t-\overline{X}_t)  
- \frac{\nabla\Phi(\overline{X}_t-\overline{x}^1_t)}{N}  \nonumber \\
& +  \frac{1}{N}\sum_{i=1}^\mathcal{N}\text{Hess}\Phi(X_t-x^i_t)(x^i_t-\overline{x}^i_t) 
 -\frac{1}{N}\sum_{i=1}^\mathcal{N}
F_{\Phi}(x^i_t,\overline{x}^i_t,X_t,\overline{X}_t) \nonumber \\
=:&
a(t)h(t)+b_1(t) +b_2(t) -\frac{1}{N}\sum_{i=1}^\mathcal{N}
F_{\Phi}(x^i_t,\overline{x}^i_t,X_t,\overline{X}_t).
\label{eq: X-barX bis}
\end{align} 
For configurations in the good set $\mathcal{G}_N(\delta)$, the coefficients are bounded as follows 
\begin{align*}
a(t):=-\left(\frac{1}{N}\sum_{i=1}^\mathcal{N}\text{Hess}\Phi(X_t-x^i_t)\right)\quad\text{ satisfies }\quad \|a(t)\|=O\left(N^{-\frac{1-\delta}{2}}\right).
\end{align*}
A direct estimate shows that the larger source term is $b_1(t) = O(1/N)$.
By the new bound \eqref{eq: shift particule i 2} on the shifts of the background particles 
$|x^i_t-\overline{x}^i_t| = O\left(\frac{t^2}{N^{\frac{3}{2}}}\right)$ for $i \geq 2$, we deduce that 
for $t \leq T\leq N^{\frac{1-\delta}{4}}$
\begin{align*}
b_2(t)=O\left(\frac{t^2}{N^{\frac{3}{2}}} \right) =o\left(\frac{1}{N}\right) ,
\end{align*}
as the number of particles interacting at time $t$ is always of order $N$.
Finally the error term $F_{\Phi}$ is controlled as in \eqref{eq: borne F} so that by the a a priori estimate
$C(t)\leq 1/ \sqrt{N}$ in \eqref{eq: Def T 2}, we get for $t \leq T$
\begin{align*}
\left| F_{\Phi}(x^i_t,\overline{x}^i_t,X_t,\overline{X}_t)\right|
=O\left(\frac{t^4}{N^3}+C(t)^2\right) =O\left(\frac{1}{N}\right) .
\end{align*}
From the previous estimates, we deduce from 
Lemma~\ref{lem:Grönwall_precis_alpha} that for $t\leq T$
\begin{align}
\label{eq: borne h(t)}
\forall t \leq T, \qquad
\left|X_t-\overline{X}_t\right|+ t \left|V_t-\overline{V}_t\right| =O\left(t^2/N \right).
\end{align}
For $t \leq N^{\frac{1-\delta}{4}}$, this is compatible with the a priori estimate  \eqref{eq: Def T 2} thus 
$T = N^{\frac{1-\delta}{4}}$. 
This completes the proof of Lemma \ref{lem:influence_une_particule}.
\end{proof}

%
%
%
%

\subsection{Being in the better set}

The goal of this section is to improve Proposition \ref{prop:good_set_good} and show that with high probability,  the initial configurations belong to a better set (with well chosen parameters).


\begin{proposition}
\label{prop:fin_bootstrap}
Let $\delta>0$ small enough, assume $d\geq 3$ and consider 
\begin{align}
\label{eq: optimal beta*, alpha*}
\beta^*=\frac{2d^2+5d+4}{2(d+2)(d+4)}-\delta 
 > \frac{1}{2},
\qquad \alpha^*=\frac{1}{4(d+2)}+\delta.
\end{align}
There exists $\delta'>0$ such that 
the better set (see Definition \ref{def:hyp_boot}) occurs with high probability 
\begin{equation}
\mathbb{P}\left[ \overline{\mathcal{G}_N(\delta,\alpha^*,\beta^*)} \right]
= O\left(e^{-N^{\delta'}}\right).
\end{equation}
\end{proposition}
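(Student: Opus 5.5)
The proof will be a bootstrap in the pair $(\alpha,\beta)$. It starts from the good set and improves the time-integrated controls \eqref{eq: borne sqrt Phi} step by step. The initialisation (Proposition~\ref{prop:init_bootstrap}, announced in the sketch) will produce a first better set with $\alpha_0$ small and $\beta_0$ slightly above $1/4$. The tool is the pointwise bound \eqref{eq:borne_der_Phi}: for $|t-s|\le N^{1/4}$ it gives $|\frac1N\int_s^t\sum_i\partial_{\mathcal I}\Phi|\le |t-s|N^{-\frac{1-\delta}{2}}\le\sqrt{|t-s|/N}\,N^{\delta}$ whenever $|t-s|\le N^{\delta}$. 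So for short windows the better-set condition holds trivially, and for longer windows I would use the fast/slow splitting described below with good-set inputs only (Lemma~\ref{lem:influence_une_particule} and the good-set part of Lemma~\ref{lem:max_inter_time}).

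The iteration step reads as follows. Assume the configuration lies in $\mathcal G_N(\delta,\alpha,\beta)$ with probability $1-O(e^{-N^{\delta''}})$. Fix a window $[s,t]$ of length $\tau\le N^{\beta'}$ with $\beta'>\beta$, and split the sum $\frac1N\sum_i\partial_{\mathcal I}\Phi(X_u-x^i_u)$ at a relative-velocity threshold $\delta_v=N^{-\kappa}$.

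For the slow particles I will use \eqref{eq:control_drift_avec_contrainte}. It bounds their instantaneous contribution by $N^{-1/2}\delta_v^{d/2}N^{\delta/2}$, so their time integral is at most $\tau N^{-1/2}\delta_v^{d/2}N^{\delta}$. Requiring this to be $\le\sqrt{\tau/N}N^{\alpha'}$ gives the constraint $\tau^{1/2}\delta_v^{d/2}\le N^{\alpha'}$.

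For the fast particles, \eqref{eq: interaction time better} makes each interaction last at most $T_m\le\delta_v^{-1}\ll N^{\frac{1-2\alpha}{3}}$, and Lemma~\ref{lem:max_inter_time} excludes short-time recollisions. I then cut $[s,t]$ into blocks of length $L\gg\delta_v^{-1}$ and write the integrated force as $\sum_k\xi_k$, where $\xi_k$ is the contribution of the particles entering during block $k$. Each $\xi_k$ is of size $\sqrt{L/N}N^{\delta}$ by \eqref{eq:G_N_5}. To recover a martingale structure I will replace, within each block, the true trajectories by the ``bar'' dynamics that remove the entering particles. Lemma~\ref{Lem: Better set estimates} controls the resulting error by $O(T_m^2/N)$ per particle, and these errors are summed with \eqref{eq:G_N_4}. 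After this replacement, conditionally on the past, the entering particles are essentially fresh Poisson particles (time invariance together with Lemma~\ref{lem:emp_exp}), so $\xi_k$ has conditional mean $O(1/N)$ per block, as in the estimate \eqref{eq: negliger moyenne terme dominant}. An Azuma/Freedman-type exponential inequality then gives $|\sum_k\xi_k|\le\sqrt{\tau/N}N^{\delta}$ up to a drift error of order $\tau/N\cdot(\text{stuff})$, with exponentially small failure probability. The requirement that recollisions do not spoil the independence across blocks is where $\beta$ enters, through $T_{+,2}$ in \eqref{eq: exit/return times better set}. Finally, a union bound over a polynomial grid of windows, combined with Lemma~\ref{lem:lip_emp}, makes the control uniform over all $s,t\in[-2N,2N]$.

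Optimizing $\kappa$ and $L$ against the constraints (slow contribution, error from Lemma~\ref{Lem: Better set estimates}, block independence requiring $L\le N^{\frac{1-2\alpha}{3}}$, and the window length $\tau\le N^{\beta'}$) defines a map $(\alpha,\beta)\mapsto(\alpha',\beta')$. I expect its fixed point to be $\alpha^*=\frac1{4(d+2)}$, $\beta^*=\frac{2d^2+5d+4}{2(d+2)(d+4)}$, up to $\delta$. For instance, $\alpha=\frac1{4(d+2)}$ should come from balancing $\tau^{1/2}\delta_v^{d/2}$ against the fast-particle error at $\delta_v^{-1}\sim N^{\frac{1-2\alpha}{3}}$. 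Since each step changes the exponents by at least a fixed amount until they are within $\delta$ of the fixed point, finitely many iterations suffice, and the final probability is $O(e^{-N^{\delta'}})$.

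The main obstacle is the decorrelation step for the fast particles. The force from particles entering in block $k$ depends on the tagged trajectory, which itself depends on all earlier particles. Justifying the approximate martingale structure therefore requires the bar-process comparison of Lemma~\ref{Lem: Better set estimates} for many particles simultaneously, with errors summable via \eqref{eq:G_N_4}, and this is where the precise exponent arithmetic has to be checked.
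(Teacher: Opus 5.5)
Your architecture matches the paper's: initialization in the good set, then a fast/slow bootstrap with a block martingale, slow particles controlled through \eqref{eq:control_drift_avec_contrainte}, and recollisions through $T_{+,2}$. The gap is in the exponent bookkeeping, which is the entire content of this proposition, so the proposal does not actually reach $(\alpha^*,\beta^*)$.

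\textbf{The failing step.} You claim ``each $\xi_k$ is of size $\sqrt{L/N}N^{\delta}$ by \eqref{eq:G_N_5}''. But \eqref{eq:G_N_5} is pointwise in time, so at best it gives $|\xi_k|\le LN^{-\frac{1-\delta}{2}}$. A $\sqrt{L/N}$ bound comes only from the induction hypothesis \eqref{eq:control_boot_avec_contrainte_sup}, and then with $N^{\alpha}$ rather than $N^{\delta}$. Hoeffding/Azuma then returns $\sqrt{\tau/N}\,N^{\alpha+\delta/2}$. So with an almost-sure increment bound, as in the paper, the bootstrap never lowers $\alpha$. With your bound, the fluctuation term would carry only $N^{O(\delta)}$, which is inconsistent with the fixed point $\alpha^*=\frac1{4(d+2)}$ you predict.

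\textbf{Consequence for the initialization.} Since the iteration cannot lower $\alpha$, the value $\alpha^*$ must already come out of the initialization. Your description of that step (``$\alpha_0$ small, $\beta_0$ slightly above $1/4$'') is wrong on both counts.
\begin{itemize}
\item With good-set inputs only, Hoeffding gives $\sqrt{T\delta_t/N}\,N^{\delta}$ with $\delta_t\sim\delta_v^{-1}$.
\item The admissible window is not limited by the $N^{1/4}$ stability scale of Lemma~\ref{lem:influence_une_particule}, which is only used inside blocks of length $\delta_t$. It is limited by the good-set recollision time $T_f=N^{\frac12-\delta}\delta_v$ from Lemma~\ref{lem:max_inter_time}.
\item Balancing the fluctuations against the slow term $T\delta_v^{d/2}N^{-\frac{1-\delta}{2}}$ on $[0,T_f]$ forces $\delta_v=N^{-\frac1{2(d+2)}}$.
\item This gives $\alpha_0=\frac1{4(d+2)}+\delta$ (already the final value) and $\beta_0=\frac{d+1}{2(d+2)}-\delta$, which is Proposition~\ref{prop:init_bootstrap}.
\end{itemize}
A $\beta_0\approx1/4$ would not even satisfy $\beta\ge\frac{1-2\alpha}{3}+\delta$. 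That condition is needed to use the better-set part of Lemma~\ref{lem:max_inter_time} in the next step.

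\textbf{The iteration moves only $\beta$.} This is Proposition~\ref{prop:boot_beta}, and the term you left as ``stuff'' is what decides the outcome.
\begin{itemize}
\item The conditional mean per block is $O(\delta_t^3/N)$, not $O(1/N)$. Summed over blocks this gives a drift error $T\delta_t^2/N\sim T/(N\delta_v^2)$.
\item Balancing this drift error and the slow term against $\sqrt{T/N}N^{\alpha}$ selects $\delta_v=N^{-\frac1{d+4}}$ and gives $T\le N^{\frac d{d+4}+2\alpha}$.
\item The recollision constraint $T\le T_f=\frac15N^{\frac{1+\beta}2-\alpha}\delta_v$ gives the other branch of $\Psi_\alpha$.
\item The fixed point $\frac d{d+4}+2\alpha^*=\frac{2d^2+5d+4}{2(d+2)(d+4)}$ lies above the threshold $\frac{d-2}{d+4}+6\alpha^*$, beyond which $\Psi_{\alpha^*}$ is constant. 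It is therefore reached in finitely many steps.
\item Taking $\delta_0$ small absorbs the $\delta/2$ that $\alpha$ accumulates at each step.
\end{itemize}
Your heuristic (balancing $\tau^{1/2}\delta_v^{d/2}$ at $\delta_v^{-1}\sim N^{\frac{1-2\alpha}3}$) does not produce these values.

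\textbf{Source of the martingale structure.} Lemma~\ref{Lem: Better set estimates} is a one-particle statement, so it cannot justify removing all particles entering a block at once. In the paper, the martingale structure comes instead from averaging exactly over the particles in $A_{\delta_v,\delta_t}(t_{k-1})$ (Lemma~\ref{lem:mart_I}). One-particle removal together with exchangeability is used only to bound $\widehat I(t_k)$.
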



\begin{remark}
 As the statement above holds also in dimension 3, we kept it for completeness. 
\end{remark}

To prove Proposition~\ref{prop:fin_bootstrap} above, we first show that $\mathcal{G}_N(\delta,\alpha, \beta)$ holds for a certain $\beta$, smaller than the one claimed  in \eqref{eq: optimal beta*, alpha*}, and then bootstrap the proof in order to reach a final value for $\beta$. We thus start proving:


\begin{proposition}\label{prop:init_bootstrap}
Assume $d\geq3$. Given $\delta>0$ small enough, we consider the better set $\mathcal{G}_N(\delta,\alpha,\beta)$ with the following parameters
\begin{align}
\label{eq: sub-optimal beta, alpha}
\alpha=\frac{1}{4(d+2)}+\delta,  \quad \beta=\frac{1}{2} - \frac{1}{2(d+2)}-\delta.
\end{align}
There exists $\delta'>0$ such that the better set occurs with high probability 
\begin{equation}
\mathbb{P}\left[ \overline{\mathcal{G}_N(\delta,\alpha,\beta)} \right]
= O\left(e^{-N^{\delta'}}\right).
\label{eq: better set niveau 1}
\end{equation}
\end{proposition}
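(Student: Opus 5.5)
We must show that with high probability, for all $|t-s|\le N^\beta$ and $|\mathcal I|\le 3$, $\frac1N\int_s^t\sum_i\partial_{\mathcal I}\Phi(X_u-x^i_u)du = O(\sqrt{|t-s|/N}\,N^\alpha)$. The same bound is needed with the velocity constraint $|v^i_u-V_u|\ge M_N$, and the velocity increment bound \eqref{eq:control_diff_velocite_alpha} then follows by integrating the equation of motion. We work on $\mathcal{G}_N(\delta)$, which by Proposition~\ref{prop:good_set_good} fails only with probability $O(e^{-N^{\delta/4}})$.

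\textbf{Reduction to a fixed window.} By time invariance, and by discretizing $s,t$ on a grid of mesh $N^{-10}$ (continuity is controlled via Lemma~\ref{lem:lip_emp} and the pointwise bounds of the good set), it suffices to treat a single window $[0,T]$ with $T\le N^\beta$ and pay a polynomial union bound.

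\textbf{Splitting fast and slow particles.} Fix a threshold $\delta_v=N^{-\kappa}$ and split the sum according to $|v^i_u-V_u|\ge\delta_v$ (fast) or $<\delta_v$ (slow).
\begin{itemize}
\item[] \emph{Slow part.} Bound it pointwise in time using \eqref{eq:control_drift_avec_contrainte}. Its size is $\frac{1}{N}\sqrt{N\delta_v^d}\,N^{\delta/2}$ per unit time, so the integral over $[0,T]$ is at most $T N^{-1/2}\delta_v^{d/2}N^{\delta/2}$. This is $\le\sqrt{T/N}N^\alpha$ whenever $\sqrt T\,\delta_v^{d/2}\le N^{\alpha-\delta/2}$.
\item[] \emph{Fast part.} Cut $[0,T]$ into blocks of length $\ell$, with $\ell$ larger than the interaction time $\delta_v^{-1}$ but smaller than $N^{(1-\delta)/4}$, the regime where Lemma~\ref{lem:max_inter_time} and the good set estimates give straight-line motion and no recollision for fast particles. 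The contribution of one block is essentially determined by the particles entering the interaction ball during that block. Block contributions separated by one block should then be nearly independent, because each fast particle interacts for at most $O(\delta_v^{-1})$ time.
\end{itemize}

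\textbf{Concentration for the fast part.} Separate even and odd blocks. For each family, I would show an exponential-moment bound of Bernstein/Hoeffding type, conditioning successively on the past of the blocks (an approximate martingale difference structure). The conditional mean of a block is small because $\int\partial_{\mathcal I}\Phi=0$ and incoming particles are approximately Poisson-distributed with uniform positions. The conditional variance is $O(\ell/N)$, from the correlation computation that produces $D$, using the moments \eqref{eq:G_N_4}. This should yield the bound $\sqrt{T/N}\,N^{\delta}$ with failure probability $e^{-N^{\delta'}}$.

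\textbf{Main obstacle.} The hard step is justifying the near-independence. The tagged trajectory depends on all particles, so the conditional law of the incoming particles given the past is not exactly Poisson. I would try to control this by replacing the tagged trajectory on each block with its straight-line approximation, which is valid up to $N^{1/4}$ in the good set. The error this introduces is $\ell^2/\sqrt N$ in position, times $N^{\delta}$, and it must be absorbed into $N^\alpha$.

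\textbf{Choice of parameters.} Balancing this error and the slow-particle bound against the constraints on $\ell$ and $T$ should fix $\kappa$. It should also produce $\alpha=\frac1{4(d+2)}+\delta$ and $\beta=\frac12-\frac1{2(d+2)}-\delta$. As a check, taking $\delta_v^{-1}\sim N^{1/(2(d+2))}$ and $T=N^{\beta}$ makes the slow term $\sqrt T\,\delta_v^{d/2}$ match $N^{\alpha}$. Finally, the constrained version \eqref{eq:control_boot_avec_contrainte_sup} uses the same decomposition with the extra indicator, which does not affect the fast-particle argument.
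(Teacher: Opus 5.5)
Your outline (good set, fast/slow split at $\delta_v$, slow part via \eqref{eq:control_drift_avec_contrainte}, odd/even blocks, exponential martingale estimate) has the same architecture as the paper's proof. However, the step you yourself flag as the main obstacle is left open, and the fix you propose does not work.

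Replacing the tagged trajectory on a block by its straight line is a pathwise position error $\Delta\sim\ell^2N^{-(1-\delta)/2}$, and it enters the force of all $O(N)$ interacting particles. Even if you use the good-set bound on $\frac1N\sum_i\nabla F$ for the linear term, the quadratic term leaves $\ell^4N^{-1+\delta}$ per unit time, i.e.\ $T\ell^4N^{-1+\delta}$ over the window. With $\ell\gtrsim\delta_v^{-1}=N^{1/(2(d+2))}$ and $T=N^\beta$, this exceeds $\sqrt{T/N}\,N^\alpha$ when $d=3$. More to the point, a pathwise replacement does not by itself show that the particles entering block $k$ are independent of the past.

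The missing idea is the paper's conditioning device. At time $t_{k-1}$, one block early, define $A_{\delta_v,\delta_t}(t_{k-1})$: the phase-space points which, under free flight, meet the straight-line extrapolation of the tagged particle (enlarged radius $\frac32R$) during $[t_{k-1}+\delta_t,t_{k-1}+2\delta_t]$ with relative speed at least $\delta_v$. The straight line is used only to define this set, never in the integrand. On the time-localized good set $\mathcal{G}^{k-1}_N(\delta)$, particles in $A$ have not interacted before $t_{k-1}$. Hence the past (earlier increments and the indicator) is a function of the other particles alone, and conditionally on those, the particles in $A$ are exactly Poisson with Gaussian velocities. Averaging the true $I(t_k)$ over them defines $\widehat I(t_k)$ and yields the exact identity of Lemma~\ref{lem:mart_I}.

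This requires that no fast particle returns during the whole window. By Lemma~\ref{lem:max_inter_time} this imposes $T\le T_f\approx N^{1/2-\delta}\delta_v$, and this constraint is what determines $\beta$. It is absent from your proposal, since you attach recollision only to the block length; without it, nothing in your outline limits $T$.

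Also, $\widehat I(t_k)$ is not small merely because $\int F=0$. The tagged trajectory during the block depends on the incoming particle, so one must first pass to $\overline{Z}$ via Lemma~\ref{lem:influence_une_particule}, at a cost of $O(\delta_t^3/N)$ per block.

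Second, your claim that the conditional variance of a block is $O(\ell/N)$, giving $\sqrt{T/N}\,N^{\delta}$, is unproven. The conditional second moment contains off-diagonal terms coupling two incoming particles through the tagged particle, and you do not control these. The claim is also unnecessary. The paper uses plain Hoeffding with the almost-sure bound $|I(t_k)-\widehat I(t_k)|\le C\delta_tN^{-(1-\delta)/2}$ from \eqref{eq:G_N_5}, which gives $N^{\delta}\sqrt{T\delta_t/N}$. The loss $\delta_t^{1/2}\sim\delta_v^{-1/2}$ is exactly the factor $N^{\alpha}$. Balancing this against the slow term $T\delta_v^{d/2}N^{-(1-\delta)/2}$, under $T\le N^{1/2-\delta}\delta_v$, forces $\delta_v=N^{-1/(2(d+2))}$ and then the stated $\alpha,\beta$. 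Your parameter check only verifies the slow term at values read off from the statement, rather than deriving them.
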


The bootstrap estimates are obtained as follows:


\begin{proposition}\label{prop:boot_beta}
Assume  $d\geq 3$ and that there exists $\alpha, \beta, \delta>0$ such that $\mathbb{P}\left[\overline{\mathcal{G}_N(\delta,\alpha, \beta)}\right]=O\left(e^{-N^{\delta_2}}\right)$ for some $\delta_2>0$ with 
\begin{equation}\label{eq:cond_beta}
0<\alpha<\frac{d+1}{2(d+4)}-\frac{3}{2}\delta,\qquad \frac{1-2\alpha}{3}+\delta\leq\beta \leq\frac{d+2}{d+4}-2\alpha-2\delta.
\end{equation}
Then, provided $\delta$ is small enough, defining 
\begin{align}
\tilde{\alpha}=\alpha+\frac{\delta}{2},\qquad \tilde{\beta}=
\Psi_\alpha(\beta) - \delta
\quad \text{with} \quad  
\Psi_\alpha(\beta) = 
\left\{\begin{array}{ll}\frac{d}{d+4}+2\alpha &\text{ if }\beta\geq\frac{d-2}{d+4}+6\alpha\\ \frac{\beta}{2}+\frac{d+2}{2(d+4)}-\alpha &\text{ else.}\end{array}\right.
\label{eq: recurrrence Psi}
\end{align} 
we obtain $\mathbb{P}\left[\overline{\mathcal{G}_N(\delta,\tilde{\alpha},\tilde{\beta})}\right]=O\left(e^{-N^{\delta_3}}\right)$ for some $\delta_3>0$.
\end{proposition}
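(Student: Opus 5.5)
The plan is to prove the bound \eqref{eq: borne sqrt Phi} at level $(\tilde\alpha,\tilde\beta)$, working on the event $\mathcal{G}_N(\delta,\alpha,\beta)\cap\mathcal{G}_N(\delta)$. By hypothesis and Proposition~\ref{prop:good_set_good}, the complement of this event has probability $O(e^{-N^{\delta_2}})+O(e^{-N^{\delta/4}})$. The bound \eqref{eq:control_boot_avec_contrainte_sup} is handled identically, and \eqref{eq:control_diff_velocite_alpha} follows from the case $|\mathcal{I}|=1$.

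\textbf{Reductions.} By time invariance it suffices to treat a window $[0,L]$ with $L\le N^{\tilde\beta}$. The uniform statement over all $s,t\in[-2N,2N]$ then follows from a union bound over a polynomial grid, together with the continuity estimate of Lemma~\ref{lem:lip_emp}. For a velocity threshold $\delta_v$ to be optimised, I split the force into a slow part (particles with $|v^i_u-V_u|\le\delta_v$) and a fast part (the complement).

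\textbf{Slow part.} I bound it pointwise using \eqref{eq:control_drift_avec_contrainte}:
\[
\Big|\frac1N\sum_i\partial_{\mathcal I}\Phi\,\mathds{1}_{|v^i_u-V_u|\le\delta_v}\Big|\le N^{\delta/2}\sqrt{\delta_v^d/N}.
\]
Integrating over $[0,L]$ gives $L N^{\delta/2}\sqrt{\delta_v^d/N}$. This is at most $\sqrt{L/N}\,N^{\tilde\alpha}$ as soon as $L\delta_v^d\le N^{2\alpha}$, so I choose $\delta_v=(N^{2\alpha}/L)^{1/d}$.

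\textbf{Fast part: block decomposition.} I cut $[0,L]$ into blocks of length $\tau=N^{\gamma'}$, with $\delta_v^{-1}\ll\tau\le N^{\gamma}$, where $\gamma=\frac{1-2\alpha}{3}-\delta$ as in Lemma~\ref{Lem: Better set estimates}. I then separate odd and even blocks, in the spirit of the notations $\Afodd,\Afeven$. On a block, every fast particle interacts only once and for a time $O(\delta_v^{-1})$, by Lemma~\ref{lem:max_inter_time} and \eqref{eq: interaction time better}. Recollisions within the whole window $[0,L]$ must also be excluded. The better-set version \eqref{eq: exit/return times better set} does this provided
\[
L\le \tfrac15\min\big(N^{1-2\alpha}\delta_v^2,\;N^{\frac{1+\beta}{2}-\alpha}\delta_v\big).
\]
Plugging in $\delta_v=(N^{2\alpha}/L)^{1/d}$, the first constraint gives $L\lesssim N^{\frac{d}{d+4}+2\alpha}$, up to $N^{O(\delta)}$ losses. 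The second gives $L\lesssim N^{\frac{\beta}{2}+\frac{d+2}{2(d+4)}-\alpha}$. The binding constraint switches exactly at $\beta=\frac{d-2}{d+4}+6\alpha$, which is where the two branches of $\Psi_\alpha$ meet. This is how I expect $\tilde\beta=\Psi_\alpha(\beta)-\delta$ to arise. The hypotheses \eqref{eq:cond_beta} should be what guarantees that $\delta_v\ge N^{-1/d}$ and $\delta_v\ge N^{-\frac{1-2\alpha}{3}+\delta}$, and that $\tilde\beta\ge\beta$.

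\textbf{Fast part: concentration.} On each block I compare the fast-particle force with its counterpart for the decoupled dynamics \eqref{eq: dynamique barres}. The error is controlled by Lemma~\ref{Lem: Better set estimates}, and the tagged trajectory is approximated by a straight line via \eqref{eq:control_diff_velocite_alpha}. Each block increment then has conditional mean $O(\tau/N\cdot N^{\delta})$ (a friction-type term, negligible) and conditional variance $O(\tau/N)$. The reason is that the fast particles entering a given block are, conditionally on the past, close to a fresh Poisson sample, so I can use the exponential moment computation of Lemma~\ref{lem:emp_exp} together with \eqref{eq:G_N_4} for the interaction-time moments. An Azuma/Bernstein inequality over the $L/\tau$ alternating blocks then gives $\sqrt{L/N}\,N^{\alpha+\delta/2}$ with probability $1-O(e^{-N^{\delta_3}})$.

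\textbf{Main obstacle.} I expect the hardest step to be this last one: building a genuine approximate martingale structure. The tagged trajectory depends on all particles, so independence of the block increments is not automatic. My first attempt will be to condition on the configuration at the start of each odd block, and to show via the stability lemmas that the contribution of particles that already interacted in earlier blocks is negligible, thanks to the no-recollision property. The exponential moment of the new entrants would then be computed essentially as under the stationary measure.
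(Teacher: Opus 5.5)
Your architecture matches the paper's: the slow part is bounded pointwise by \eqref{eq:control_drift_avec_contrainte}, the fast part is split into odd and even blocks, increments are centred by comparison with the dynamics without one particle, and the recollision window comes from Lemma~\ref{lem:max_inter_time}. Two steps, however, do not hold as written.

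\textbf{The concentration step is the real gap.} It lacks the ingredient that makes this proposition a bootstrap. Azuma/Hoeffding needs an almost-sure bound on each centred block increment. The conditional variance $O(\tau/N)$ you propose cannot be obtained from Lemma~\ref{lem:emp_exp}. That lemma controls exponential moments of a fixed-time linear statistic $\pi_t(F)$ under the invariant measure. A block increment is instead a time-integrated functional of the interacting dynamics, conditioned on the past. (A genuine $O(\tau/N)$ conditional variance would, via Freedman's inequality, beat the factor $N^{\alpha}$ on long windows and so improve $\alpha$ itself; that is far more than the statement claims.)

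The paper uses the hypothesis at this point. On $\mathcal{G}^{k+1}_N(\delta,\alpha,\beta)$, \eqref{eq:control_boot_avec_contrainte_sup} with $M_N=2\delta_v$ gives $|I(t_k)-\widehat I(t_k)|=O(\sqrt{\delta_t/N}\,N^{\alpha})$, which is legitimate because $\delta_t\le N^{\beta}$. Hoeffding over all increments then gives $\sqrt{T/N}\,N^{\alpha+\delta/2}$, whatever the block length. By contrast, in Proposition~\ref{prop:init_bootstrap} the increments were only bounded by $\delta_t N^{-(1-\delta)/2}$.

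Exact centring (Lemma~\ref{lem:mart_I}) is obtained by conditioning at $t_{k-1}$, one block earlier, and averaging over the Poisson particles of $A_{\delta_v,\delta_t}(t_{k-1})$, none of which has yet interacted. If you condition at the start of the odd block itself, the particles already inside the ball remain in the conditioning. Their remaining contribution is not conditionally centred, and the no-recollision property does not remove it.

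Finally, an $L$-dependent $\delta_v$ would require \eqref{eq:control_boot_avec_contrainte_sup} and \eqref{eq:control_drift_avec_contrainte} for a whole family of thresholds, while the hypothesis provides them for one sequence $M_N$. The paper avoids this by fixing $\delta_v=N^{-1/(d+4)}$.

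\textbf{Your exponent bookkeeping is also wrong.} With $\delta_v=(N^{2\alpha}/L)^{1/d}$:
\begin{itemize}
\item $L\le\frac15N^{1-2\alpha}\delta_v^2$ gives $L\lesssim N^{\frac{d-2(d-2)\alpha}{d+2}}$;
\item $L\le\frac15N^{\frac{1+\beta}{2}-\alpha}\delta_v$ gives $L\lesssim N^{\frac{d}{d+1}(\frac{1+\beta}{2}-\alpha)+\frac{2\alpha}{d+1}}$.
\end{itemize}
Neither is a branch of $\Psi_\alpha$.

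In the paper, $\delta_v=N^{-1/(d+4)}$ balances the slow error $T\delta_v^{d/2}N^{-(1-\delta)/2}$ against the mean term $T\delta_t^2/N\sim T/(N\delta_v^2)$ from \eqref{eq:control_moy_pas_mart}. This balance produces the branch $\frac{d}{d+4}+2\alpha$, and $T_f=\frac15N^{\frac{1+\beta}{2}-\alpha}\delta_v$ produces the other. The constraint $N^{1-2\alpha}\delta_v^2=N^{\frac{d+2}{d+4}-2\alpha}$ never binds. Up to $\delta$, the upper bound on $\beta$ in \eqref{eq:cond_beta} says exactly $\delta_v\ge N^{-\frac{1-\beta}{2}+\alpha}$, i.e.\ $T_{+,2}\le T_{+,1}$ at relative speed $\delta_v$. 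The bound on $\alpha$ says exactly $\delta_v\ge N^{-\frac{1-2\alpha}{3}+\delta}$.

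This slip does not damage the conclusion. Your sharper friction estimate keeps $T_m^2$ instead of $\delta_t^2$ when comparing with $\overline Z$. This gives $O(\tau/N)$ per block, since $\int\gamma(v)|v-V|^{-2}dv<\infty$ uniformly in $V$ for $d\ge3$, and it removes the mean-term constraint altogether. The pair $(\delta_v,L)=(N^{-1/(d+4)},N^{\Psi_\alpha(\beta)-\delta})$ then satisfies all your remaining constraints. So once the concentration step is repaired as above, your scheme gives a window at least as long as the paper's, but not through the computation you wrote.
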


We can then deduce Proposition~\ref{prop:fin_bootstrap}
 from Proposition~\ref{prop:init_bootstrap} by iterating 
 Proposition~\ref{prop:boot_beta}.


\begin{figure}
\centering
\includegraphics[width=0.6\textwidth,]{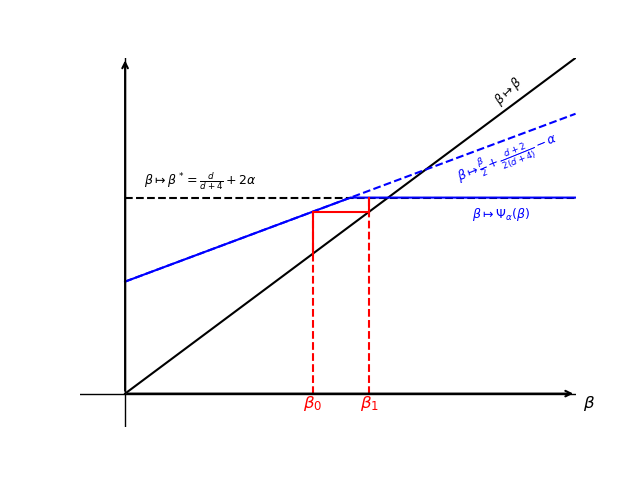}
\vspace{-1cm}
\caption{{\small Illustration of the proof of Proposition~\ref{prop:fin_bootstrap} for $d=4$, only two iterations are needed. }}
	\label{fig:preuve_recurrence}
\end{figure}


\begin{proof}[Proof of Proposition~\ref{prop:fin_bootstrap}]
Using Proposition~\ref{prop:init_bootstrap}, and defining  for some $\delta_0>0$
\begin{align}
\label{eq: initial alpha 0}
\alpha_0=\frac{1}{4(d+2)}+
 \delta_0,
\qquad \text{ and }  \qquad
\beta_0=\frac{1}{2}-\frac{1}{2(d+2)}-\delta_0,
\end{align}
 we know that $\mathbb{P}\left[\overline{\mathcal{G}_N(\delta_0,\alpha_0,\beta_0)}\right]=O\left(e^{-N^{\delta'}}\right)$ holds for some $\delta'>0$. We now use Proposition~\ref{prop:boot_beta} to construct a sequence of $\alpha_n, \beta_n, \delta_n$ satisfying the same property, defined recursively by \eqref{eq: recurrrence Psi}.

We just need to prove that the values for $\alpha^*, \beta^*$ given in Proposition~\ref{prop:fin_bootstrap} are eventually reached (see Figure \ref{fig:preuve_recurrence}).

For this, we first solve the recurrence \eqref{eq: recurrrence Psi} with $\delta =0$ and introduce the sequence $\tilde \beta_{n+1} = \Psi_{\alpha^*} (\tilde \beta_n)$ with $\alpha^*=\frac{1}{4(d+2)}$ and initial data $\beta_0$. This recurrence has the following properties.
\begin{itemize}
\item The recurrence fixed point $\tilde \beta^*$  is 
\begin{align}
\label{eq: tilde beta}
\tilde \beta^* = \frac{d}{d+4} + 2 \alpha^* = \frac{2 d^2 + 5 d+4}{2(d+4)(d+2)}>\frac{1}{2}.
\end{align}
Note that this fixed point $\beta^*$ is greater than the threshold $\frac{d-2}{d+4}+6\alpha^*$ 
beyond which $\Psi_{\alpha^*}$ is constant.
\item 
The sequence $(\tilde \beta_n)_n$ is non-decreasing, satisfying $\frac{1}{3}<\tilde \beta_n\leq \tilde \beta^*<\frac{d+2}{d+4}-2\alpha_n$ for all $n$. 
In particular, this sequence satisfies the necessary condition \eqref{eq:cond_beta} to use Proposition~\ref{prop:boot_beta}. 
\item Since $\Psi_{\alpha^*}(\beta)$ is constant and equal to $\tilde \beta^*$ for $\beta\geq \frac{d-2}{d+4}+6\alpha^*$, the sequence $(\tilde \beta_n)_n$ reaches $\tilde \beta^*$ in a finite number of iterations.
\end{itemize}
Up to considering $\delta_0$ small enough in \eqref{eq: initial alpha 0}, the full recurrence \eqref{eq: recurrrence Psi} shares the same properties as above. In particular the fixed point is reached in a finite number of steps and is therefore a perturbation of 
$\tilde \beta^*$.
This concludes the proof of Proposition~\ref{prop:fin_bootstrap}.
\end{proof}


\begin{proof}[Proof of Proposition~\ref{prop:init_bootstrap}]

Given $\delta >0$, we know, from Proposition \ref{prop:good_set_good}, that the particle configurations belong to the good set $\mathcal{G}_N(\delta)$ with high probability. 
We are going to upgrade this information to prove that, in fact, the configurations can be chosen in  the better set
\eqref{eq: better set niveau 1}.

Fix $(\alpha,\beta)$ as in \eqref{eq: sub-optimal beta, alpha} and $\mathcal{I}\in\{1,...,d\}^{|\mathcal{I}|}$ a multi-index with $|\mathcal{I}|\in\{1,2,3\}$.
We set $F = \partial_{\mathcal{I}} \Phi$.
Recalling \eqref{eq: borne sqrt Phi}, our goal is to show that
\begin{align}
\label{eq: brone srqt local}
\forall T\in[0, N^{\beta}], \qquad 
\left|\frac{1}{N}\int_0^T\sum_{i} F (X_t-x^i_t)dt\right|
\leq& \sqrt{\frac{T}{N}}N^{\alpha},
\end{align}
with probability greater than $1-e^{-N^{\delta'}}$ for some $\delta'>0$. 
Note that the same proof holds for \eqref{eq:control_boot_avec_contrainte_sup} with an additional velocity cut-off. 

To derive \eqref{eq: brone srqt local}, we approximate the dynamics by a sum of (almost) independent random variables, in order to use an Hoeffding-like inequality. This approximation is constructed by separating the influence of fast background particles (interacting for a short time with the tagged particle) and the slow background particles, which induce some time correlation, but which are rare enough so that their influence can be controlled.
\medskip

Let $\delta_v>0$ satisfy 
\begin{equation}\label{eq:borne_delta_v_init}
\delta_v\in[ N^{-\frac{1}{5}}\vee N^{-\frac{1}{d}},N^{-\delta}].
\end{equation}
The exact lower bound above doesn't matter, we only need it to be greater than $N^{-\frac{1-\delta}{4}}$ and $N^{-\frac{1}{d}}$, and we will tune its value latter. Likewise, the upper bound appears for technical reasons (in Step 2.3 below), and one could in fact take it as close to $1$ as necessary, we simply give an explicit value for the sake of clarity.

We now define 
\begin{equation}
    \label{eq: parameters}
T_f=N^{ \frac{1}{2} -\delta}\delta_v, \quad 
\qquad \delta_t=10\times \frac{6R}{\delta_v}, 
\qquad t_k=k\delta_t,
\qquad K= \left\lfloor \frac{T}{\delta_t} \right\rfloor,
\end{equation}
where $T_f$ stands for the final time such that by Lemma~\ref{lem:max_inter_time} and for configurations in the good set $\mathcal{G}_N(\delta)$, a particle interacting at time $0$ with a relative speed greater than $\delta_v$ does not re-interact before  time $T_f$ . In this proof, we always choose $T\leq T_f$. 

Proposition \ref{prop:good_set_good}  ensures that with high probability the configurations belong to the good set $\mathcal{G}_N(\delta)$, thus it is enough to establish \eqref{eq: brone srqt local}  for configurations in $\mathcal{G}_N(\delta)$.
We decompose the integral as follows 
\begin{align}
\int_0^{T}\frac{1}{N}\sum_{i}F(X_s -x^i_s) ds=& \sum_{k=0}^{K} \frac{1}{N}\sum_i 
 \int_{t_k}^{t_{k+1}}  F(X_s-x^i_s)ds
= \Af_T + \As_T+\mathcal{E}^G,
\label{eq: decomposition AfT et AsT}
\end{align}
where the contribution of  the fast relative velocities is 
\begin{align}
\label{eq: AfT}
\Af_T: =
\sum_{k=0}^{K} \frac{\mathds{1}_{\mathcal{G}^{k+1}_N(\delta)}}{N}\sum_{i } 
 \int_{t_k}^{t_{k+1}} F(X_s - x^i_s) \mathds{1}_{|v^i_s -  V_s | \geq  2\delta_v}  ds,
\end{align}
the contribution of the 
slow relative velocities is
\begin{align}
\label{eq: AsT}
\As_T :=
\sum_{k=0}^{K} \frac{1}{N}\sum_{i }
  \int_{t_k}^{t_{k+1}} F(X_s - x^i_s) \mathds{1}_{| v^i_s -   V_s  |< 2\delta_v} ds,
\end{align}
and finally the error due to the good set is
\begin{align}
\label{eq:Error_G}
\mathcal{E}^G:=\sum_{k=0}^{K} \frac{\mathds{1}_{\overline{\mathcal{G}^{k+1}_N(\delta)}}}{N}\sum_{i }  \int_{t_k}^{t_{k+1}}  F(X_s-x^i_s)\mathds{1}_{|v^i_s -  V_s | \geq  2\delta_v} ds.
\end{align}
 We denote by $\mathcal{G}^k_N(\delta)$ the set of initial conditions such that \eqref{eq:G_N_1}-\eqref{eq:G_N_5}, (but, crucially, not \eqref{eq:G_N_0}) holds for  $0\leq t\leq t_k$. 
Unlike $\mathcal{G}_N(\delta)$, the set  $\mathcal{G}^k_N(\delta)$ is not global in time and will be suitable later to implement a martingale type argument: it   involves only particles  interacting with the tagged particle on the time frame $[0,t_k]$. 
 Since the process is deterministic, it is equivalent to say that the configuration at time $0$ ensures the various controls, or that it is the configuration at any other given time which does so. 
In particular the indicator $\mathds{1}_{\mathcal{G}^k_N(\delta)} = \mathds{1}_{(Z_{t_{k}},z^{1:\mathcal{N}}_{t_{k}})\in \mathcal{G}^k_N(\delta)}$ can be understood as a function of the configuration at time $t_{k}$.

The goal of decomposition \eqref{eq: decomposition AfT et AsT} is to separate the effect of the particles with slow relative velocities which are rare but might induce long range correlations as they remain close to the tagged particle for a long time.  Since $\mathbb{E}\left[\mathcal{E}^G\right]=O\left(e^{-N^{\delta/4}/4}\right)$ by Proposition~\ref{prop:good_set_good} and since $F$ is bounded, we directly have by Markov's inequality
\begin{equation}
\label{eq:borne_proba_erreur}    \mathbb{P}\left[\left|\mathcal{E}^G\right|\geq \frac{T\delta_t^2}{N}\right]=O\left(e^{-N^{\delta/8}}\right).
\end{equation}
Then, setting  
\begin{align}
\label{eq: def Istk}
I(t_k):=   \mathds{1}_{\mathcal{G}^{k+1}_N(\delta)}\frac{1}{N}\sum_{i }   \int_{t_k}^{t_{k+1}} 
F(X_s - x^i_s) \mathds{1}_{|v^i_s -  V_s | \geq  2\delta_v}
ds,
\end{align}
we further decompose  $\Af$ into 
\begin{align}
\label{eq: def odd even}
\Afodd_T =  \sum_{\scriptsize{\begin{array}{l}k=0,\\ k\text{ odd}\end{array}}}^{K} I(t_k), 
\qquad 
\Afeven_T =  \sum_{\scriptsize{\begin{array}{l}k=0,\\ k\text{ even}\end{array}}}^{K} I(t_k).
\end{align}
Given the parameter $\delta_v$, the fast particles interact with  the tagged particle at most during a time of order $\delta_t$ (and in fact, since we ensure $|v^i_s -  V_s | \geq  2\delta_v$, a particle interacts for a time smaller than $\delta_t/2$), thus two disjoint time intervals $[t_k, t_{k+1}], [t_\ell, t_{\ell+1}]$ with $|k - l |\geq 2$ in \eqref{eq: AfT} involve different particles. This allows us to recover some independence and to average out in time.

Bounding $\Afodd_T$ and $\Afeven_T$ is done equivalently, up to a small time shift $\delta_t$, so we only deal with the former.
We split the proof into the following steps:
\begin{itemize}
\item \underline{Step 1:} We create a set $A_{\delta_v, \delta_t}(t)$ 
to distinguish between the fast background particles and the slow given the initial conditions and to  prescribe on which interval $[t_k, t_{k+1}]$ they interact.
\item \underline{Step 2:} 
 We first deal with the fast background particles and prove a concentration inequality to control the time average of $\Afodd_T$.
\subitem \underline{Step 2.1:} We start by defining a conditional-like expectation with respect to the trajectory up to time $t_k$, denoted $\widehat I(t_k)$, using the sets $A_{\delta_v, \delta_t}(t_k)$, which later allows us to use the martingale structure of  $\Afodd_T$  and separate iteratively the last term from the rest of the sum.
\subitem \underline{Step 2.2:} Since the terms of the sum are not exactly of mean $0$, we decompose $\Afodd_T$  into $\sum_k (I(t_k)-\widehat I(t_k))+ \sum_k \widehat I(t_k)$ and deal with the second sum.
\subitem \underline{Step 2.3:} We now use Hoeffding's argument to deal with $\sum I(t_k)-\widehat I(t_k)$, and conclude on the control of $\Afodd_T$.
\item \underline{Step 3:} The term $\As_T$ \eqref{eq: AsT}
involving slow background particles is small as it depends on fewer particles.
\item \underline{Step 4:} We conclude by tuning the parameters to obtain the best possible bound. 
\end{itemize}
\vspace{0.5cm}


\noindent\textbf{$\bullet$ Step 1: Localizing the time integrals.}

In order to deal independently with each integral in \eqref{eq: AfT}, we thus start by "fixing" the set of fast background particles which may interact during a given time frame of order $\delta_t$.
 For a time $t \geq 0$, we introduce the following  set 
 \begin{align*}
A_{\delta_v, \delta_t}(t):=\left\{(x,v)\in\mathbb{T}\times\mathbb{R}^d\ \text{ s.t. }x\in\bigcup_{s\in[ \delta_t,2\delta_t]}\mathcal{B}\left(X_t+ s(V_t-v),\frac{3}{2}R\right) \ \text{ and }\ |v-V_t|\geq \delta_v\right\}.
\end{align*}
We stress that this set is determined only  by the tagged particle at time $t$, i.e. by $Z_t$.
We will see that any background  particle belonging to this set at time $t$ may interact with the tagged particle only during the time frame $[t+\delta_t, t+2\delta_t]$. The time separation between $t$ and $t + \delta_t$ will be used to decouple the dynamical correlations. 

These quantities are constructed such that, if the relative velocity of a background particle is greater than $2\delta_v$, then it interacts for at most a time $\delta_t/20$, and once it leaves the interaction radius cannot come back before a time $T_f$ (both these properties are consequences of Lemma~\ref{lem:max_inter_time}) for configurations in the good set $\mathcal{G}_N(\delta)$.
From this we can deduce the following properties which will be useful afterwards.


\begin{lemma}
    \label{lem: set A}
\begin{itemize}
\item[(i)]     Given a configuration in $\mathcal{G}^{k-1}_N(\delta)$, the background particles  in $A_{\delta_v, \delta_t}(t_{k-1})$ at time $t_{k-1}$ did not interact with the tagged particle in $[0, t_{k-1}]$.
\item[(ii)]  For a configuration in $\mathcal{G}^{k+1}_N(\delta)$, if there exists $t\in[t_k,t_{k+1}]$ such that $x^1_t\in\mathcal{B}\left(X_t,R\right)$ and $|v^1_t-V_t|\geq 2\delta_v$, then, for $N$ large enough,  $z^1_{t_{k-1}} \in A_{\delta_v, \delta_t}(t_{k-1})$.
\end{itemize}
\end{lemma}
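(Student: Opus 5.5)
Both items follow from the good-set straight-line comparison of Lemma~\ref{lem:max_inter_time} (first part), applied on windows of length at most $3\delta_t \ll T_f = N^{\frac12-\delta}\delta_v$. The key estimate is the one obtained in its proof. For a configuration in the good set on the relevant window, $|V_s-V_{s'}|\le |s-s'|N^{-\frac{1-\delta}{2}}$ and $|v^1_s-v^1_{s'}|=O(|s-s'|/N)$. Hence
\begin{align*}
\left|(X_s-x^1_s)-(X_{s'}-x^1_{s'})-(s-s')(V_{s'}-v^1_{s'})\right|\le \frac{(s-s')^2}{N^{\frac{1-\delta}{2}}}.
\end{align*}
For $|s-s'|\le 3\delta_t=O(1/\delta_v)$ the right-hand side is $O(\delta_v^{-2}N^{-\frac{1-\delta}{2}})$. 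The constraint \eqref{eq:borne_delta_v_init} gives $\delta_v\ge N^{-1/5}$, so this is $O(N^{-\frac1{10}+\frac{\delta}{2}})=o(1)$. On such windows both relative positions and relative velocities are therefore straight lines up to $o(1)$ errors; for the velocities the error is $O(\delta_t N^{-\frac{1-\delta}{2}})=o(\delta_v)$. I only need the good-set controls \eqref{eq:G_N_1}--\eqref{eq:G_N_5} up to the relevant time; these are exactly what $\mathcal{G}^{k-1}_N$ or $\mathcal{G}^{k+1}_N$ provides. I will not use \eqref{eq:G_N_0}.

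For (ii), let $t\in[t_k,t_{k+1}]$ with $|x^1_t-X_t|\le R$ and $|v^1_t-V_t|\ge 2\delta_v$. Apply the estimate backward from $t$ to $t_{k-1}$; the gap $u:=t-t_{k-1}\in[\delta_t,2\delta_t]$. I get $|v^1_{t_{k-1}}-V_{t_{k-1}}|\ge 2\delta_v-o(\delta_v)\ge\delta_v$. I also get
\begin{align*}
x^1_{t_{k-1}}=X_{t_{k-1}}+(x^1_t-X_t)-u\,(v^1_{t_{k-1}}-V_{t_{k-1}})+o(1).
\end{align*}
It follows that $x^1_{t_{k-1}}\in\mathcal{B}\bigl(X_{t_{k-1}}+u(V_{t_{k-1}}-v^1_{t_{k-1}}),R+o(1)\bigr)$. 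This ball is contained in the $\frac32 R$ ball for $N$ large, so $z^1_{t_{k-1}}\in A_{\delta_v,\delta_t}(t_{k-1})$.

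For (i), take $z=z^1_{t_{k-1}}\in A_{\delta_v,\delta_t}(t_{k-1})$. There is $s_0\in[\delta_t,2\delta_t]$ with $|X_{t_{k-1}}-x^1_{t_{k-1}}+s_0(V_{t_{k-1}}-v^1_{t_{k-1}})|\le\frac32R$, and $w:=|v^1_{t_{k-1}}-V_{t_{k-1}}|\ge\delta_v$. Suppose the particle interacted at some time $s\in[0,t_{k-1}]$, and set $r=t_{k-1}-s$. I distinguish two cases.

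If $r\le\delta_t$, I run the straight-line estimate backward over a window of length at most $3\delta_t$. At time $s$ the relative position is displaced from the ball of radius $\frac32R$ around the straight line by $(s_0+r)\,w$, up to $o(1)$. Since $(s_0+r)\,w\ge\delta_t\delta_v=60R$, this is far larger than $\frac32R+R$, contradicting $|x^1_s-X_s|\le R$.

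If $r>\delta_t$, I argue by contradiction via the last interaction time. Let $s^*=\sup\{s\le t_{k-1}:|x^1_s-X_s|\le R\}$. The case above shows $t_{k-1}-s^*>\delta_t$. At $s^*$, and on the whole segment from $s^*$ forward, the relative velocity is still at least $\delta_v/2$, because over times up to $T_f$ the velocity drift is $o(\delta_v)$. The good-set part of Lemma~\ref{lem:max_inter_time} (exit/no-return window, with $\delta_v\ge N^{-\frac14+\frac\delta2}$) then applied at $s^*$ forbids the particle from being within $2R$ at any time in $[s^*+12R/\delta_v,\,s^*+\tfrac{\delta_v}{2}N^{\frac12-\delta}]$. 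But the particle reaches the $\frac32R$-neighbourhood of the straight line at time $t_{k-1}+s_0\le t_k+\delta_t$, which falls in this window provided $t_{k-1}-s^*\lesssim T_f$, a contradiction. This bounds the look-back only up to time $T_f$, and that suffices because $T\le T_f$ in the present proof, so $t_{k-1}\le T_f$.

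The main obstacle is this second case of (i). Lemma~\ref{lem:max_inter_time} is stated for a particle inside $\mathcal{B}(X_s,R)$ with a given relative velocity at that time. To invoke it, the constraint $w\ge\delta_v$ at $t_{k-1}$ must be transferred back to $s^*$, and the time $t_{k-1}+s_0$ must be placed inside the no-return window. Both require the velocity drift over horizons up to $T_f$ to be $o(\delta_v)$, i.e.\ $T_fN^{-\frac{1-\delta}{2}}\ll\delta_v$, which holds since $T_fN^{-\frac{1-\delta}{2}}=N^{-\delta/2}\delta_v$. I will also need to keep track of the factor-of-two losses in the constants $6R$ versus $60R$.
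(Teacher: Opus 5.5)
Your proof of (ii), and your treatment of the short look-back case $r\le\delta_t$ in (i), are correct. They use the same straight-line comparison the paper relies on, namely its estimates \eqref{eq:init_boot_control_1}--\eqref{eq:init_boot_control_3}, obtained by localizing the good-set controls.

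\textbf{The gap: the second case of (i).} There you reach the contradiction at the time $t_{k-1}+s_0\in[t_k,t_{k+1}]$, by invoking Lemma~\ref{lem:max_inter_time} on $[s^*,t_{k-1}+s_0]$; that lemma is stated for the global good set. Its conclusion $|x^1_t-X_t|\ge 2R$ concerns the actual positions. Membership of $z^1_{t_{k-1}}$ in $A_{\delta_v,\delta_t}(t_{k-1})$, by contrast, only says where the free-flight extrapolations from $t_{k-1}$ meet. Converting that into a statement about $x^1_{t_{k-1}+s_0}-X_{t_{k-1}+s_0}$ requires \eqref{eq:G_N_3} on $[t_{k-1},t_{k-1}+s_0]$, and $\mathcal{G}^{k-1}_N(\delta)$ does not provide this.

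So, as written, you prove (i) only under $\mathcal{G}^{k+1}_N(\delta)$, even though you said you would use only controls up to the relevant time. This matters. In Lemma~\ref{lem:mart_I}, item (i) is what guarantees that $\mathds{1}_{\mathcal{G}_N^{-(k-1)}(\delta)}$ and the dynamics on $[-t_{k-1},0]$ do not depend on the particles in $A_{\delta_v,\delta_t}(0)$. An indicator involving controls after the conditioning time would destroy that factorization.

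Separately, the window you quote ends at $s^*+\frac{\delta_v}{2}N^{\frac12-\delta}=s^*+T_f/2$, while $t_{k-1}-s^*$ can be close to $T_f$. This is the factor-two issue you flagged; it can only be cured by going back into the proof of Lemma~\ref{lem:max_inter_time}.

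\textbf{The repair.} Drop $s^*$ and extend your first case to the whole look-back. Set $D_u=X_u-x^1_u$ and $W=V_{t_{k-1}}-v^1_{t_{k-1}}$, and write $D_{t_{k-1}}=-s_0W+e$ with $|e|\le\frac32R$. The controls on $[0,t_{k-1}]$ alone give, for every $r\in[0,t_{k-1}]$,
\begin{align*}
|D_{t_{k-1}-r}|\ge (s_0+r)|W|-\tfrac32R-r^2N^{-\frac{1-\delta}{2}}.
\end{align*}
Since $r\le t_{k-1}\le T\le T_f=N^{\frac12-\delta}\delta_v$, the error satisfies $r^2N^{-\frac{1-\delta}{2}}\le r\,N^{-\delta/2}\delta_v$. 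This is beaten by the linear gain $r|W|\ge r\delta_v$. Together with $s_0|W|\ge\delta_t\delta_v=60R$, this gives $|D_{t_{k-1}-r}|\ge 60R-\frac32R>R$.

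The error need not be $o(1)$; it only has to be smaller than the linear separation, and that is exactly what the scale $T_f$ encodes. This single computation covers both of your cases and uses only past controls. It also removes the need to transfer the velocity bound back to $s^*$ and the window bookkeeping in Lemma~\ref{lem:max_inter_time}.
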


This lemma follows by localizing in time the various controls obtained in $\mathcal{G}_N(\delta)$.
In particular, note that 
 $\delta_t\leq 60R N^{\frac{1}{5}}$ and thus, for any 
$t \leq t_{k-1}$
 and configurations in $\mathcal{G}^{k+1}_N(\delta)$, we have  (see Definition~\ref{def:hyp_good_set})
\begin{align}
\sup_{s\in[ t, t+ 2\delta_t]}|X_t+(s-t)V_t-X_s|=&O\left(\frac{\delta_t^2}{N^{\frac{1-\delta}{2}}}\right)=O\left(\frac{1}{N^{\frac{1}{10}-\frac{\delta}{2}}}\right),\label{eq:init_boot_control_1}\\
\sup_{s\in[ t, t+ 2\delta_t]}|x^1_t+(s-t)v^1_t-x^1_s|=&O\left(\frac{\delta_t^2}{N}\right)=O\left(\frac{1}{N^{\frac{3}{5}}}\right),\label{eq:init_boot_control_2}\\
\text{and }\sup_{s\in[ t, t+ 2\delta_t]}\Big| |v^1_t-V_t|-|v^1_s-V_s| \Big|
=&O\left(\frac{\delta_t}{N^{\frac{1-\delta}{2}}}\right)=O\left(\frac{1}{N^{\frac{3}{10}-\frac{\delta}{2}}}\right)\ll \delta_v.\label{eq:init_boot_control_3}
\end{align}

We may now use the sets $A_{\delta_v, \delta_t}(t_k)$ in our computations.
\vspace{0.5cm}


\noindent\textbf{Step 2.1: Conditional expectation, integrating on $A_{\delta_v,\delta_t}(t_{k-1})$.}

We are going to define an analogous of the conditional expectation of $I(t_k)$ \eqref{eq: def Istk} given a particle configuration $\{ Z_s, z^{1:M'}_s , \ s \leq t_{k-1} \}$ belonging to the set $\mathcal{G}_N^{k-1}(\delta)$.
As the dynamics is deterministic, it is enough to prescribe the configuration at time $t_{k-1}$ and the set $A_{\delta_v, \delta_t}(t_{k-1}) \subset \bbT \times \bbR^d$ is determined only by $Z_{t_{k-1}}$. 
The background configuration is then split into the particles $\{ z^{1:M}_s, \ s \leq t_{k-1} \}$ which do not belong to $A_{\delta_v, \delta_t}(t_{k-1})$ and the $m = M'-M$ others. 
According to Lemma \ref{lem: set A} (i), for configurations in $\mathcal{G}_N^{k-1}(\delta)$, the tagged particle may have interacted only with the particles in $\overline{A_{\delta_v, \delta_t}(t_{k-1})}$ up to time $t_{k-1}$. Thus we can average over the remaining particles in $A_{\delta_v, \delta_t}(t_{k-1})$ to define
\begin{align}
\widehat I(t_k)  
 = & 
\frac{\mathds{1}_{\mathcal{G}_N^{k-1}(\delta)}}{\mathcal{\hat Z}}  \sum_{m \geq 1}^\infty \frac{N^m}{m!}
\int dz_{M+1:M+m} \prod_{i=M+1}^{M+m} \gamma(v_i)  \mathds{1}_{z^i_{t_{k-1}}\in A_{\delta_v,\delta_t}(t_{k-1})}
 I (t_k) ,
\label{eq:  Istk averaged}
\end{align}
where the normalization constant $\mathcal{\hat Z}$ depends on  $Z, z_{1:M}$ and $t_{k-1}$
\begin{align}
\label{eq:  Istk averaged hat Z}
\mathcal{\hat Z} (Z, z_{1:M})
: =   \sum_{m  \geq 1}^\infty\frac{N^m}{m!}
\int dz_{M+1:M+m} \prod_{i=M+1}^{M+m} \gamma(v_i)  
\mathds{1}_{z^i_{t_{k-1}}\in A_{\delta_v,\delta_t}(t_{k-1})} .
\end{align} 
By construction $\widehat I(t_k)$ is a function of the configurations at time $t_{k-1}$ (or alternatively by the backward flow of the configurations at time $0$).
Note that in \eqref{eq:  Istk averaged} and in \eqref{eq:  Istk averaged hat Z}, the integral is over the initial data and the constraint is at time $t_{k-1}$ obtained by free motion of the corresponding particles as the configurations are in $\mathcal{G}_N^{k-1}(\delta)$.
\medskip

The following lemma summarizes the key property of $\widehat I(t_k)$.


\begin{lemma}\label{lem:mart_I}
Assume, without loss of generality, that $k$ is odd. For $\lambda \geq 0$, we set
\begin{align}
\label{eq:  G tilde}
\tilde G^k = \exp \Bigg( \lambda \sum_{\scriptsize{\begin{array}{l}i=1,\\ i\text{ odd}\end{array}}}^{k-2}\left(I(t_i)-\widehat I(t_i)\right)  \Bigg) ,
\end{align} 
then 
\begin{align}
\label{eq:  Istk averaged expectation}
\bbE \left[ \tilde G^k \left(  I(t_k)  - \widehat I(t_k) \right) \right] = 0.
\end{align}
\end{lemma}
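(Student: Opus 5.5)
The plan is to show that $\widehat I(t_k)$ is the conditional expectation of $I(t_k)$ given everything that $\tilde G^k$ depends on. We do this by using the Poisson (grand canonical) structure of \eqref{eq:def_gce} to split the background configuration at time $t_{k-1}$ into two independent pieces. The first is the particles outside $A_{\delta_v,\delta_t}(t_{k-1})$. The second is the particles inside it, which are not yet correlated with the tagged particle.

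\textbf{Step 1: stationarity.} By the stationarity lemma, write $\bbE[\tilde G^k(I(t_k)-\widehat I(t_k))]$ as an integral over the configuration at time $t_{k-1}$. This configuration is again distributed by \eqref{eq:def_gce}, and the dynamics can be run forward and backward from it. Given $Z_{t_{k-1}}$, split the Poisson cloud into particles in $A:=A_{\delta_v,\delta_t}(t_{k-1})$ and particles in $\overline{A}$. The weight $e^{-\frac1N\Phi(X-x_i)}$ equals $1$ on $A$, because $A$ is at distance at least $\delta_t\delta_v-\frac32R\gg R$ from $X_{t_{k-1}}$. Hence the restriction of the point process to $A$ is, conditionally on $Z_{t_{k-1}}$, a Poisson process with intensity $N\gamma(v)\mathds{1}_A$, independent of the points in $\overline{A}$. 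This is exactly the averaging measure in \eqref{eq:  Istk averaged} together with its normalization $\mathcal{\hat Z}$, up to the $m\ge1$ versus $m\ge0$ convention. That convention should match, and I would check it, since the $m=0$ term contributes $I=0$ in the relevant region.

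\textbf{Step 2: measurability of $\tilde G^k$.} Show that $\tilde G^k$, and $\mathds{1}_{\mathcal G^{k-1}_N(\delta)}$, are functions of $Z_{t_{k-1}}$ and of the particles in $\overline{A}$ only. For $i\le k-2$ odd, $I(t_i)$ involves the trajectory on $[0,t_{i+1}]\subset[0,t_{k-1}]$, with the indicator of $\mathcal G^{i+1}_N(\delta)$. The term $\widehat I(t_i)$ is a function of the configuration at $t_{i-1}$. By Lemma~\ref{lem: set A}(i), on $\mathcal G^{k-1}_N(\delta)$ no particle of $A$ has interacted with the tagged particle on $[0,t_{k-1}]$. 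These particles therefore do not affect $Z_s$ for $s\le t_{k-1}$, nor the other particles' trajectories.

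The main obstacle is that the indicators $\mathds{1}_{\mathcal G^{i+1}_N(\delta)}$ involve sums over \emph{all} particles, for instance \eqref{eq:G_N_5} and \eqref{eq:G_N_4}. Particles of $A$ do not contribute to these sums on $[0,t_{k-1}]$, since the sums are restricted to particles inside $\mathcal B(X_t,R)$ and such particles have not interacted. So these indicators are indeed functions of the $\overline{A}$-part. Off $\mathcal G^{k-1}_N(\delta)$, all the relevant $I(t_i)$ and $\widehat I(t_i)$ carry vanishing indicators, since $\mathcal G^{i+1}\supset\mathcal G^{k-1}$. I would handle this by writing $\tilde G^k=\tilde G^k\mathds{1}_{\mathcal G^{k-1}_N}+\tilde G^k\mathds{1}_{\overline{\mathcal G^{k-1}_N}}$. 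On the second event one needs $I(t_k)=\widehat I(t_k)=0$, which holds because $\widehat I$ carries $\mathds{1}_{\mathcal G^{k-1}_N}$ and $I(t_k)$ carries $\mathds{1}_{\mathcal G^{k+1}_N}\le\mathds{1}_{\mathcal G^{k-1}_N}$.

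\textbf{Step 3: conclusion.} By Lemma~\ref{lem: set A}(ii), $I(t_k)$ only sees particles that are in $A$ at time $t_{k-1}$, together with the $\overline{A}$-configuration (which enters through $Z$ and through the indicator). Conditioning on $Z_{t_{k-1}}$ and the $\overline{A}$-particles, Fubini and the Poisson independence of Step 1 give
\begin{align*}
\bbE\big[I(t_k)\,\big|\,Z_{t_{k-1}},\ \overline{A}\text{-particles}\big]=\widehat I(t_k).
\end{align*}
Multiplying by the $\overline{A}$-measurable $\tilde G^k$ and integrating yields \eqref{eq:  Istk averaged expectation}.
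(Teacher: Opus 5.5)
Your proposal is correct and is essentially the paper's proof. Both shift time to $t_{k-1}$ and split the grand-canonical configuration according to $A_{\delta_v,\delta_t}(t_{k-1})$; there $e^{-\Phi/N}=1$, so those particles form an independent Poisson cloud with Gaussian velocities. Both then use Lemma~\ref{lem: set A}(i) to see that $\tilde G^k\mathds{1}_{\mathcal G^{k-1}_N(\delta)}$ depends only on the remaining particles, and integrate out the particles in $A_{\delta_v,\delta_t}(t_{k-1})$ to produce $\widehat I(t_k)$. Your conditional-expectation phrasing is a compact form of the paper's explicit binomial splitting of the sum over $M'$.

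Two slips, neither affecting the argument. First, off $\mathcal G^{k-1}_N(\delta)$ the earlier $I(t_i)$, $i\le k-2$, need not vanish: the inclusion $\mathcal G^{i+1}_N\supset\mathcal G^{k-1}_N$ goes the wrong way for that conclusion. You only use that $I(t_k)$ and $\widehat I(t_k)$ vanish there, which is true. Second, for the identity to be exact the normalization $\mathcal{\hat Z}$ must include the $m=0$ term. That term vanishes in the numerator (by Lemma~\ref{lem: set A}(ii)) but equals $1$ in the normalization, so your stated reason covers only the numerator. The paper's final ``rebuild the full expectation'' step tacitly requires the same convention.
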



\begin{proof}
We are going to use time $t_{k-1}$ as the new time origin (recall that $\widehat I(t_k)$ is a function of the configuration at time $t_{k-1}$). 
We write $\mathcal{G}_N^{-k}(\delta)$ as the time shift by $-t_k$ of $\mathcal{G}^k_N(\delta)$, it refers to the set of initial conditions $(Z, z_{1:M} )$ such that \eqref{eq:G_N_1}-\eqref{eq:G_N_4} holds for 
$-t_k\leq t \leq 0$.
Denote $\tilde{G}^{-k}$ the time-shifted version of $\tilde{G}^{k}$ by $t_{k-1}$.
By time invariance, proving \eqref{eq:  Istk averaged expectation}  is equivalent to showing
\begin{align}
\bbE \left[ \tilde G^{-k}    I(t_1) \right] = 
\bbE \left[ \tilde G^{-k}  \widehat I(t_1)  
\right] .
\label{eq:time_shift_G_tilde_2}
\end{align}
We start by computing the RHS of \eqref{eq:time_shift_G_tilde_2}.
By decomposing the configuration at time 0 according to the set $A_{\delta_v,\delta_t}(0)$ which depends only on $Z_0 = Z$, we get
\begin{align*}
\bbE&\left[ \tilde G^{-k} I(t_1) \right]
=\frac{1}{\mathcal{Z}}\sum_{M'=0}^\infty \frac{N^{M'}}{M'!}\int dZ \int dz_{1:M'}\rho_{M'}(Z,z_{1:M'}) 
\, \tilde G^{-k}    I(t_1)  \mathds{1}_{\mathcal{G}_N^{-(k-1)}(\delta)} \\
&\qquad=\frac{1}{\mathcal{Z}}\sum_{M'=0}^\infty \frac{N^{M'}}{M'!}\sum_{m=0}^{M'}\left(\begin{array}{c}M'\\m\end{array}\right)\int dZ \int dz_{1:M'}\rho_{M'}(Z,z_{1:M'})\left(\prod_{i=1}^{M'-m}\mathds{1}_{z^i}\notin A_{\delta_v,\delta_t}(0) \right)\\
&\qquad\qquad \qquad\qquad \times\left(\prod_{i=M'-m+1}^{M'}\mathds{1}_{z^i \in A_{\delta_v,\delta_t}(0)}\right)
\, \tilde G^{-k}    I(t_1)  \mathds{1}_{\mathcal{G}_N^{-(k-1)}(\delta)} .
\end{align*}
By construction $I(t_k)$ is non zero for configurations in $\mathcal{G}^{k+1}_N(\delta) \subset \mathcal{G}^{k-1}_N(\delta)$ (see \eqref{eq: def Istk}). Thus it was legitimate to add the indicator $\mathds{1}_{\mathcal{G}_N^{-(k-1)}(\delta)}$ in the equations above.
Note that in $\mathcal{G}^{k-1}_N(\delta)$, the entire dynamics for $[-t_{k-1},0]$ (and therefore $\tilde G^{-k}$) is independent of particles $i$ such that $z^i \in A_{\delta_v,\delta_t}(0)$, as formalized in Lemma \ref{lem: set A} (i).
Likewise, $\mathds{1}_{\mathcal{G}_N^{-(k-1)}(\delta)} $ is independent of $z_{M'-m+1:M'}$, thus by abuse of notation we can consider this indicator as a function of $z_{1:M'-m}$. Through a direct change of variables $M'=M+m$, and separating the integrals, we get
\begin{align}
\bbE\left[  \tilde G^{-k}    I(t_1)  \right]
= & \frac{1}{\mathcal{Z}}\sum_{M=0}^\infty \frac{N^{M}}{M!}\sum_{m=0}^{\infty}\frac{N^m}{m!}\int dZ \int dz_{1:M}\rho_{M}(Z,z_{1:M})\left(\prod_{i=1}^{M}\mathds{1}_{z^i \notin A_{\delta_v,\delta_t}(0)}\right)
\mathds{1}_{\mathcal{G}_N^{-(k-1)}(\delta)} \tilde G^{-k}   
\nonumber\\
& \times\int dz_{M+1:M+m}\rho_{m}(Z,z_{M+1:M+m})\left(\prod_{i=M+1}^{M+m}\mathds{1}_{z^i \in A_{\delta_v,\delta_t}(0 )}\right) \,  I(t_1, Z,z_{1:M+m}) .
\label{eq:jaiseparef}
\end{align}
For the same reason, the particles in $A_{\delta_v,\delta_t}(0)$ do not interact with the tagged particle at time $0$, so that the measure $\rho_m \left(Z,z_{M+1:M+m}\right)$ in the calculations above is simply a product of Gaussian measures.
We get 
\begin{align}
\bbE&\left[  \tilde G^{-k}    I(t_1)    \right]
=\frac{1}{\mathcal{Z}}\sum_{M=0}^\infty \frac{N^{M}}{M!}
\int dZ \int dz_{1:M}\rho_{M}(Z,z_{1:M})\left(\prod_{i=1}^{M}\mathds{1}_{z^i \notin A_{\delta_v,\delta_t}(0)}\right)
\mathds{1}_{\mathcal{G}_N^{-(k-1)}(\delta)} \tilde G^{-k}   
\nonumber\\
&\qquad\qquad \qquad\qquad  \times   \sum_{m \geq 1}^\infty \frac{N^m}{m!}
\int dz_{M+1:M+m} \prod_{i=M+1}^{M+m} \gamma(v_i) 
\mathds{1}_{z^i \in A_{\delta_v,\delta_t}(0)}
 I(t_1, Z,z_{1:M+m}) \nonumber \\
 &\qquad=\frac{1}{\mathcal{Z}}\sum_{M=0}^\infty \frac{N^{M}}{M!}
\int dZ \int dz_{1:M} \rho_{M}(Z,z_{1:M})
\left(\prod_{i=1}^{M}\mathds{1}_{z^i \notin A_{\delta_v,\delta_t}(0)}\right)
 \tilde G^{-k}   \, 
\widehat I(t_k)\mathcal{\hat Z} (Z,z_{1:M}) ,
\label{eq:jaiseparef 2}
\end{align}
where we used the definition \eqref{eq:  Istk averaged}  of 
$\widehat I(t_k)$.
Replacing  $\mathcal{\hat Z}$ by its value \eqref{eq:  Istk averaged hat Z}, one can rebuild the full 
expectation and conclude the derivation of \eqref{eq:time_shift_G_tilde_2}. Thus the proof of the Lemma \ref{lem:mart_I} is complete.
\end{proof}
\vspace{0.3cm}


\noindent\textbf{Step 2.2: Splitting the sum over the fast background particles.} 

Let us now consider 
\begin{align}
\Afodd_T =  \sum_{\scriptsize{\begin{array}{l}k=0,\\ k\text{ odd}\end{array}}}^{K} I(t_k)
= \sum_{\scriptsize{\begin{array}{l}k=0,\\ k\text{ odd}\end{array}}}^{K} \left(I(t_k)-\widehat I(t_k)\right) 
+ \sum_{\scriptsize{\begin{array}{l}k=0,\\ k\text{ odd}\end{array}}}^{K} \widehat I(t_k) 
\label{eq:breakitdown_0}.
\end{align}
As shown in Lemma~\ref{lem:mart_I}, the term $\sum I(t_k)-\widehat I(t_k)$ has essentially a martingale structure, as it is a sum of almost independent terms with mean $0$. 
In Step 2.3, we will deal with this term by adapting the standard Hoeffding's argument. The following lemma takes care of the remainder.


\begin{lemma}
\label{lemma: control_moy_pas_mart}
There exists $C_{2,F}>0$ (independent of $N$) such that
\begin{align}
\mathbb{P}\left[\Big|\sum_{\scriptsize{\begin{array}{l}k=0,\\ k\text{ odd}\end{array}}}^{K} \widehat I(t_k) \Big|\geq C_{2,F}\frac{T\delta_t^2}{N}\right]
=O\left(e^{-N^{\delta/4}/4}\right).\label{eq:control_moy_pas_mart}
\end{align}
\end{lemma}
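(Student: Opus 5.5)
The plan is to show that each $\widehat I(t_k)$ is deterministically of order $\delta_t^3/N$ on the good set, with no cancellation across $k$ needed. Then summing over $K\simeq T/\delta_t$ indices gives $O(T\delta_t^2/N)$. The probability $O(e^{-N^{\delta/4}/4})$ comes only from the event that the configuration leaves the good set, exactly as in \eqref{eq:borne_proba_erreur}.

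Fix $k$ odd and a configuration in $\mathcal{G}^{k-1}_N(\delta)$. In \eqref{eq:  Istk averaged}, the particles in $A_{\delta_v,\delta_t}(t_{k-1})$ form, under the normalized measure, a Poisson point process of intensity $N\gamma(v)\,dx\,dv$ restricted to $A_{\delta_v,\delta_t}(t_{k-1})$. By Lemma~\ref{lem: set A}(ii), every particle contributing to $I(t_k)$ belongs to this set at time $t_{k-1}$. So $I(t_k)$ is a sum over these Poisson particles of $\frac1N\int_{t_k}^{t_{k+1}}F(X_s-x^i_s)\mathds{1}_{|v^i_s-V_s|\ge2\delta_v}ds$.

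The key idea is to compare each summand with its free-motion version. In that version the tagged particle moves along the line $X_{t_{k-1}}+(s-t_{k-1})V_{t_{k-1}}$ and particle $i$ moves along $x^i_{t_{k-1}}+(s-t_{k-1})v^i_{t_{k-1}}$, with $X_{t_{k-1}}$ fixed.
\begin{itemize}
\item For the free-motion sum, the intensity measure is invariant under $x\mapsto x+$(shift in relative position). Also, the set $A_{\delta_v,\delta_t}(t_{k-1})$ contains, for each $v$, the whole set of positions that can reach the interaction ball during $[t_k,t_{k+1}]$.
\item Hence integrating $x$ over $A$ gives $\int F=0$ for each fixed $v$ and $s$, exactly as in \eqref{eq: main order =0}. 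The velocity cutoff $|v-V_s|\ge2\delta_v$ is also evaluated along free motion up to the error \eqref{eq:init_boot_control_3}. I would handle its boundary layer by the same smoothing trick as in Lemma~\ref{lem:max_value_drift}, or simply absorb it into the error.
\item The correction $e^{-\Phi/N}$ is absent, because particles in $A$ are outside the interaction ball at time $t_{k-1}$.
\end{itemize}

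It remains to bound the difference between the true and free trajectories on $[t_{k-1},t_{k+1}]$, a window of length $2\delta_t$.
\begin{itemize}
\item The errors from \eqref{eq:init_boot_control_1}--\eqref{eq:init_boot_control_2} are only $O(\delta_t^2N^{-(1-\delta)/2})$, which is too crude.
\item Instead, I would split $X_s-X_{t_{k-1}}-(s-t_{k-1})V_{t_{k-1}}$ into two parts. The first is a deterministic function of the configuration outside $A$, namely the trajectory $\overline X$ obtained by removing the particles of $A$, which is a shifted line. Its contribution integrates to zero by translation invariance in $x$, since $F$ depends only on the relative position.
\item The second part is the influence of the particles of $A$ themselves, of size $O(s^2/N)$ per particle by Lemma~\ref{lem:influence_une_particule}, plus the deviation of particle $i$, also $O(\delta_t^2/N)$ from \eqref{eq: shift small particles}.
\end{itemize}
A first-order Taylor expansion of $F$ then gives, per particle, an error $O(\delta_t^2/N)$ times the interaction duration $O(\delta_t)$, divided by $N$. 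The expected number of particles in $A$ that interact is $O(N)$, using \eqref{eq:G_N_1} localized. The total is therefore $O(\delta_t^3/N)$, and summing over $k$ gives $C_{2,F}T\delta_t^2/N$.

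The main obstacle is precisely this step. The influence of the other particles of $A$ is collective: removing them shifts $X$ by the sum of their forces. Lemma~\ref{lem:influence_une_particule} controls removing one particle, not a whole Poisson cloud. I would try to use the good set control \eqref{eq:G_N_3} on $\frac1N\sum_{i\in A}\nabla\Phi$, which is $O(N^{-(1-\delta)/2})$. This yields a shift of $X$ relative to $\overline X$ of order $\delta_t^2N^{-1/2+\delta}$, but at first order that shift is common to all particles and again integrates to zero by translation invariance. Only the second-order Taylor term, of size $\delta_t^4N^{-1+2\delta}$, survives, together with the per-particle recoil. Making this decoupling rigorous (the self-consistency of $X$ with the particles being averaged) is where I expect most of the work, possibly through a Grönwall argument as in Lemma~\ref{lem:Grönwall_precis_alpha}.
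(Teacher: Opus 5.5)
Your reduction to a per-$k$ bound $|\widehat I(t_k)|\lesssim \delta_t^3/N$ matches the paper. So do your reading of $\widehat I(t_k)$ as an average over a Poisson cloud in $A_{\delta_v,\delta_t}(t_{k-1})$ with pure Gaussian weights, and your use of Lemma~\ref{lem: set A}. The gap is that you leave open the step that actually produces the cancellation, and the route you sketch for it is the wrong reduction.

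Removing the whole cloud at once forces you to control a collective shift of $X$, and the good set does not give you this: \eqref{eq:G_N_3} concerns sums over all particles, not over the sub-cloud in $A$. Even granting your sizes, the bound fails. Your surviving quadratic term $\delta_t^4N^{-1+2\delta}$ must be integrated over the total time $O(N\delta_t)$ that cloud particles spend in the interaction ball, then divided by $N$. This gives $\delta_t^5N^{2\delta}/N$, which exceeds the claimed $\delta_t^3/N$ by a factor $\delta_t^2N^{2\delta}$. (Quadratic cross terms coming from pairs of other particles do integrate to zero in $x^i$. Pushing that observation to all orders is, however, just the one-particle argument below.)

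The missing idea is that only one particle ever needs to be removed.
\begin{enumerate}
\item Since $I(t_k)$ is a sum over particles, exchangeability, i.e.\ the identity $\frac{N^m}{m!}\cdot\frac{m}{N}=\frac{N^{m-1}}{(m-1)!}$, rewrites $\widehat I(t_k)$ as an integral over a single distinguished particle $z_{M+1}$. The others, $z_{M+2:M+m}$, stay in the environment.
\item Let $\overline Z$ be the process with only $z_{M+1}$ removed. It still feels the rest of the cloud but is independent of $z_{M+1}$.
\item Lemma~\ref{lem:influence_une_particule} applies, because $\delta_t=O(N^{1/5})$ and particle $M+1$ has not interacted before $t_{k-1}$. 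It replaces $F(X_s-x^{M+1}_s)$ by $F(\overline X_s-x_{M+1}-sv_{M+1})$ up to $O(\delta_t^2/N)$ during the interaction. This is where $\delta_t^3/N$ comes from.
\item The main term is then exactly zero after integrating $x_{M+1}$ at fixed $v_{M+1}$, because $\{(x,v):|v-\overline V_s|\geq 2\delta_v,\ \exists u\in[t_k,t_{k+1}],\ x\in\mathcal{B}(\overline X_u-uv,R)\}\subset A_{\delta_v,\delta_t}(t_{k-1})$. No straight-line approximation of $X$ and no Gr\"onwall argument for the cloud is needed.
\end{enumerate}

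Two further steps are also required.
\begin{enumerate}
\item You must transfer the velocity cutoff to $|v_{M+1}-\overline V_s|$. This costs a boundary layer of width $O(1/(N\delta_v))$, whose contribution is $O(\delta_t\delta_v^{d-2}/N)$.
\item You must remove $\mathds{1}_{\mathcal{G}^{k+1}_N(\delta)}$, which depends on $x_{M+1}$, by writing it as $1-\mathds{1}_{\overline{\mathcal{G}^{k+1}_N}(\delta)}$. The complementary term is not deterministically small on $\mathcal{G}^{k-1}_N(\delta)$. Only its expectation is small, being at most $\delta_t\,\mathbb{E}[\frac{\mathcal{N}}{N}\mathds{1}_{\overline{\mathcal{G}^{k+1}_N}(\delta)}]$. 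Markov's inequality applied to this term is what produces the probability bound.
\end{enumerate}
So your claim that $\widehat I(t_k)$ is deterministically $O(\delta_t^3/N)$ on the good set is not correct as stated.
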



\begin{proof}
It is enough to show that for any $k$ odd  the following holds
\begin{align}
    \mathbb{P}\left[\left|  \widehat I(t_k)  \right|\geq C_{2,F}\frac{\delta_t^3}{N}\right]=O\left(e^{-N^{\delta/4}/4}\right).
    \label{eq:control_moy_pas_mart separes}
\end{align}
Summing then over $K = T/\delta_t$ terms, \eqref{eq:control_moy_pas_mart} can be obtained by  the previous bound. We turn now to the derivation of \eqref{eq:control_moy_pas_mart separes}.
By definition \eqref{eq:  Istk averaged} of $\widehat I(t_k)$ and \eqref{eq: def Istk} of $I(t_k)$,
we have
\begin{align}
    \widehat I(t_k)=\frac{\mathds{1}_{\mathcal{G}_N^{k-1}(\delta)}}{\mathcal{\hat Z}}  \sum_{m \geq 1}^\infty & \frac{N^m}{m!}\int dz_{M+1:M+m} \prod_{i=M+1}^{M+m} \gamma(v_i)  \mathds{1}_{z^i_{t_{k-1}}\in A_{\delta_v,\delta_t}(t_{k-1})} \nonumber \\
 & \qquad \qquad \times    \int_{t_k}^{t_{k+1}} \frac{\mathds{1}_{\mathcal{G}^{k+1}_N(\delta)}}{N}
 \sum_{i=M+1}^{M+m}F(X_s-x^i_s)\mathds{1}_{|v^i_s-V_s|\geq 2\delta_v}ds,
 \label{eq: version symetrique}
 \end{align}
where we used that under the condition $\mathcal{G}^{k+1}_N (\delta)$, any particle involved in $I(t_k)$ belongs to $A_{\delta_v,\delta_t}(t_{k-1})$. From the exchangeability of the background particles 
\begin{align}
\widehat I(t_k)  =&\frac{\mathds{1}_{\mathcal{G}_N^{k-1}(\delta)}}{\mathcal{\hat Z}}  \sum_{m \geq 1}^\infty \frac{N^{m-1}}{(m-1)!}\int dz_{M+1:M+m} \prod_{i=M+1}^{M+m} \gamma(v_i)  \mathds{1}_{z^i_{t_{k-1}}\in A_{\delta_v,\delta_t}(t_{k-1})} 
\nonumber \\
& \qquad \qquad \times  
\int_{t_k}^{t_{k+1}}\mathds{1}_{\mathcal{G}^{k+1}_N (\delta)}F(X_s-x^{M+1}_s)\mathds{1}_{|v^{M+1}_s-V_s|\geq 2\delta_v}ds.
\label{eq: version asymetrique}
\end{align}
First, since $z^{M+1}_{t_{k-1}}\in A_{\delta_v,\delta_t}(t_{k-1})$  ensures that there hasn't been any interaction in the time frame $[0,t_{k-1}]$ and since $\delta_t=O\left(N^{\frac{1}{5}}\right)$, Lemma~\ref{lem:influence_une_particule} yields (denoting here $\overline{Z}$ the process in which particle $M+1$ has been removed) 
\begin{align}
\mathds{1}_{\mathcal{G}^{k+1}_N(\delta)} &\left| \int_{t_k}^{t_{k+1}}F(X_s-x^{M+1}_s)\mathds{1}_{|v^{M+1}_s-V_s|\geq 2\delta_v}ds 
- \int_{t_k}^{t_{k+1}}F(\overline{X}_s-x_{M+1}-sv_{M+1})\mathds{1}_{|v^{M+1}_s-V_s|\geq 2\delta_v}ds\right|\nonumber\\
&\hspace{3cm}=O\left(\frac{\delta_t^2}{N}\mathds{1}_{\mathcal{G}^{k+1}_N(\delta)}\int_{t_{k}}^{t_{k+1}}\mathds{1}_{x_{M+1}\in\mathcal{B}(\overline{X}_s-sv_{M+1},2R)}\right).
\label{eq:on_peut_moyenner_1}
\end{align}
This could a priori allow us to integrate over $x_{M+1}$, if it were not for the indicator function on the velocity. Our goal is therefore to replace $\mathds{1}_{|v^{M+1}_s-V_s|\geq 2\delta_v}$ by $\mathds{1}_{|v_{M+1}-\overline{V}_s|\geq 2\delta_v}$. Using \eqref{eq:est_diff_ordre_1} and the fact that $\delta_t=O(\delta_v^{-1})$ with $\delta_v^2\gg N^{-1}$, we know that for configurations in $\mathcal{G}^{k+1}_N(\delta)$, 
 there exists some universal constant $C$ such that (for $N$ large enough) 
\begin{align*}
\mathds{1}_{ |v_{M+1}-\overline{V}_s | \geq 2\delta_v + \frac{C}{N\delta_v}}
\leq  \mathds{1}_{|v^{M+1}_s-V_s|\geq 2\delta_v} 
\leq \mathds{1}_{ |v_{M+1}-\overline{V}_s| \geq 2\delta_v - \frac{C}{N\delta_v}},
\end{align*}
so that 
\begin{align*}
0 \leq  \mathds{1}_{|v^{M+1}_s-V_s|\geq 2\delta_v} - \mathds{1}_{ |v_{M+1}-\overline{V}_s | \geq 2\delta_v + \frac{C}{N\delta_v}}
\leq \mathds{1}_{|v_{M+1}-\overline{V}_s|\in\left[2\delta_v-\frac{C}{N\delta_v}, 2\delta_v+\frac{C}{N\delta_v}\right]}.
\end{align*}
 As a consequence
\begin{align}
& \mathds{1}_{\mathcal{G}^{k+1}_N(\delta)}  \mathds{1}_{z^{M+1}_{t_{k-1}}\in A_{\delta_v,\delta_t}(t_{k-1})}\int_{t_k}^{t_{k+1}}F(\overline{X}_s-x_{M+1}-sv_{M+1})\mathds{1}_{|v^{M+1}_s-V_s|\geq 2\delta_v}ds\nonumber\\
& =\mathds{1}_{\mathcal{G}^{k+1}_N(\delta)}\mathds{1}_{z^{M+1}_{t_{k-1}}\in A_{\delta_v,\delta_t}(t_{k-1})}\int_{t_k}^{t_{k+1}}F(\overline{X}_s-x_{M+1}-sv_{M+1})\mathds{1}_{|v_{M+1}-\overline{V}_s|\geq 2\delta_v+\frac{C}{N\delta_v}}ds 
+O  \left(
 \mathds{1}_{\mathcal{G}^{k+1}_N(\delta)} \mathcal{E}_1 \right),
\label{eq:on_peut_moyenner_2}
\end{align}
with the error term 
\begin{align}
\mathcal{E}_1 = \mathds{1}_{z^{M+1}_{t_{k-1}}\in A_{\delta_v,\delta_t}(t_{k-1})}
\int_{t_{k}}^{t_{k+1}}\mathds{1}_{|v_{M+1}-\overline{V}_s|\in\left[2\delta_v-\frac{C}{N\delta_v}, 2\delta_v+\frac{C}{N\delta_v}\right]}\mathds{1}_{x_{M+1}\in\mathcal{B}(\overline{X}_s-sv_{M+1},R)}ds .\label{eq:on_peut_moyenner_2 error}
\end{align}
We estimate first the error term after integrating over  the background particles.
For this, we  note that 
$$
\mathds{1}_{\mathcal{G}^{k+1}_N(\delta)} \mathcal{E}_1 \leq \mathds{1}_{\mathcal{G}^{k-1}_N(\delta)} \mathcal{E}_1.
$$ 
Integrating the LHS, we are going to show that  uniformly over the configurations $\{ Z (t_{k-1}), z_{1:M}(t_{k-1}) \}$ in $\mathcal{G}^{k-1}_N(\delta)$
\begin{align}
 \frac{1}{\mathcal{\hat Z}}  \sum_{m \geq 1}^\infty \frac{N^{m-1}}{(m-1)!}
 & \int dz_{M+1:M+m} \prod_{i=M+1}^{M+m} \gamma(v_i)  \mathds{1}_{z^i_{t_{k-1}}\in A_{\delta_v,\delta_t}(t_{k-1})} \mathds{1}_{\mathcal{G}^{k-1}_N(\delta)} \mathcal{E}_1  
 = O\left( \delta_t \frac{\delta_v^{d-2}}{N}\right) .
 \label{eq:on_peut_moyenner_4}
\end{align}
For this, first integrate the velocity $v_{M+1}$ of the background particle in $\mathcal{E}_1$. This leads to  
\begin{align*}
\int dv \gamma(v)\int dx \int_{t_{k}}^{t_{k+1}} &  \mathds{1}_{|v-\overline{V}_s|\in\left[2\delta_v-\frac{C}{N\delta_v}, 2\delta_v
+\frac{C}{N\delta_v}\right]}\mathds{1}_{x \in \mathcal{B}(\overline{X}_s-sv_{M+1},R)}\nonumber\\
& = 
O\left(\int_{t_{k}}^{t_{k+1}} \int dv \gamma(v) \mathds{1}_{|v-\overline{V}_s|\in\left[2\delta_v-\frac{C}{N\delta_v}, 2\delta_v+\frac{C}{N\delta_v}\right]}\right)
= O\left( \delta_t \frac{\delta_v^{d-2}}{N}\right) ,
\end{align*}
Then to complete \eqref{eq:on_peut_moyenner_4},  we use that under the constraint  $\mathcal{G}^{k-1}_N(\delta)$ the variables in the integral \eqref{eq:on_peut_moyenner_4} are independent so that the factor $\mathcal{\hat Z}$ (see \eqref{eq:  Istk averaged hat Z}) is compensated.

The second error term in \eqref{eq:on_peut_moyenner_1} can be integrated in the same way and bounded from above by $O\left( \frac{ \delta_t^3}{N}\right)$.
Thus combining  \eqref{eq:on_peut_moyenner_1}, \eqref{eq:on_peut_moyenner_2} and \eqref{eq:on_peut_moyenner_4}, we have
\begin{align}
 \widehat I(t_k)
=&
\frac{\mathds{1}_{\mathcal{G}_N^{k-1}(\delta)}}{\mathcal{\hat Z}}  \sum_{m \geq 1}^\infty \frac{N^{m-1}}{(m-1)!}\int dz_{M+1:M+m} \prod_{i=M+1}^{M+m} \gamma(v_i)  \mathds{1}_{z^i_{t_{k-1}}\in A_{\delta_v,\delta_t}(t_{k-1})} \nonumber\\
&\qquad\qquad \times \int_{t_k}^{t_{k+1}}
\mathds{1}_{\mathcal{G}^{k+1}_N(\delta)}  F(\overline{X}_s-x_{M+1}-sv_{M+1})\mathds{1}_{|v_{M+1}-\overline{V}_s|\geq 2\delta_v}ds + O\left(\frac{\delta_t^3}{N}\right).
\label{eq:erreur_a_expliquer}
\end{align}
To conclude, we are going to integrate \eqref{eq:erreur_a_expliquer} over $x_{M+1}$ to use that $F$ has mean 0. However the set $\mathds{1}_{\mathcal{G}^{k+1}_N(\delta)}$ depends also on $x_{M+1}$ and to remove this correlation, we add an additional error term by writing $\mathds{1}_{\mathcal{G}_N^{k+1}(\delta)}= 1- \mathds{1}_{\overline{\mathcal{G}_N^{k+1}}(\delta)}$: 
\begin{align}
\widehat I(t_k)
=&  \frac{\mathds{1}_{\mathcal{G}_N^{k-1}(\delta)}}{\mathcal{\hat Z}}  \sum_{m \geq 1}^\infty \frac{N^{m-1}}{(m-1)!}\int dz_{M+2:M+m} \prod_{i=M+2}^{M+m} \gamma(v_i)  \mathds{1}_{z^i_{t_{k-1}}\in A_{\delta_v,\delta_t}(t_{k-1})} \nonumber\\
&\qquad\qquad \int dz_{M+1}\gamma(v_{M+1})\int_{t_k}^{t_{k+1}}F(\overline{X}_s-x_{M+1}-sv_{M+1})\mathds{1}_{|v_{M+1}-\overline{V}_s|\geq 2\delta_v}ds\label{eq:breakitdown_1}\\
&+O\left( \frac{\mathds{1}_{\mathcal{G}^{k-1}_N(\delta)}}{\mathcal{\hat Z}}  
\sum_{ m \geq 1}^\infty \frac{N^{m-1}}{(m-1)!}\int dz_{M+1:M+m} \prod_{i=M+1}^{M+m} \gamma(v_i)  \mathds{1}_{z^i_{t_{k-1}}\in A_{\delta_v,\delta_t}(t_{k-1})} \right.\nonumber\\
&\hspace{5cm}
\left.\int_{t_k}^{t_{k+1}}ds \, \mathds{1}_{x_{M+1}\in\mathcal{B}(\overline{X}_s-sv_{M+1},R)}\mathds{1}_{\overline{\mathcal{G}^{k+1}_N}(\delta)}\right)\label{eq:breakitdown_2}\\
&+ O\left(\frac{\delta_t^3}{N}\right) .
\label{eq:breakitdown_3}
\end{align}
We have used that, by definition of $A_{\delta_v,\delta_t}(t_{k-1})$, we have
\begin{align*}
\left\{(x,v)\text{ s.t. } |v-\overline{V}_s|\geq 2\delta_v,\ \exists u\in[t_k,t_{k+1}],\ x\in \mathcal{B}(\overline{X}_u-uv,R)\right\}\subset A_{\delta_v,\delta_t}(t_{k-1}).
\end{align*}
Therefore, integrating over $x_{M+1}$ and using the mean 0 of $F$, we get that \eqref{eq:breakitdown_1} is $0$. 
To evaluate \eqref{eq:breakitdown_2}, one can bound $\mathds{1}_{x_{M+1}\in\mathcal{B}(\overline{X}_s-sv_{M+1},R)}$ by $1$ to get
\begin{align*}
\eqref{eq:breakitdown_2}
\leq& \delta_t\frac{\mathds{1}_{\mathcal{G}^{k-1}_N(\delta)}}{\mathcal{\hat Z}}  
\sum_{ m \geq 1}^\infty \frac{N^{m}}{m!}\int dz_{M+1:M+m} \frac{m}{N}\prod_{i=M+1}^{M+m} \gamma(v_i)  \mathds{1}_{z^i_{t_{k-1}}\in A_{\delta_v,\delta_t}(t_{k-1})}\mathds{1}_{\overline{\mathcal{G}^{k+1}_N}(\delta)},
\end{align*}
Then, taking the expectation of \eqref{eq:breakitdown_2} (i.e. integrating over the remaining particles) and proceeding as in \eqref{eq:jaiseparef 2} to evaluate the conditional expectation, we find that 
$$
\mathbb{E}\left[\eqref{eq:breakitdown_2}\right] 
\leq \delta_t \, \mathbb{E}\left[ \frac{\mathcal{N}}{N}
\,  \mathds{1}_{\overline{\mathcal{G}^{k+1}_N}(\delta)} \right] =O\left(e^{-N^{\delta/4}/2}\right) ,
$$ 
using Proposition~\ref{prop:good_set_good}. 
Thanks to Markov's inequality, we find that $\eqref{eq:breakitdown_2}=O\left(\delta_t^3/N\right)$ with high probability (i.e. exponentially small in $N$ probability of not occurring).
This completes the derivation of \eqref{eq:control_moy_pas_mart separes} and thus of Lemma \ref{lemma: control_moy_pas_mart}.
\end{proof}
This ends this step, and we now turn our attention to the term $\sum I(t_k)-\widehat I(t_k)$ in \eqref{eq:breakitdown_0}.\\


\noindent\textbf{Step 2.3: Concentration estimates.} 

For $\lambda \geq 0$ and $l\in\{1,... ,K\}$ odd, we recall the notation $\tilde G^k$ \eqref{eq:  G tilde}. 
We proceed recursively to estimate $\bbE\left[ \exp \left( \lambda\sum I(t_k)-\widehat I(t_k) \right) \right] = \bbE[ \tilde G^{K+2} ]$.
By construction of the good set $\mathcal{G}^k_N(\delta)$, the property \eqref{eq:G_N_5} (with $M_N=2\delta_v$) implies 
a first estimate on the integral $I(t_k)-\widehat I(t_k)$ introduced in \eqref{eq: def Istk}-\eqref{eq:  Istk averaged}
\begin{align}
\label{eq:  Istk upper bound}
\big| I(t_k)-\widehat I(t_k) \big|
\leq 
2\left|  \frac{\mathds{1}_{\mathcal{G}^{k+1}_N(\delta)}}{N}\sum_{i }   \int_{t_k}^{t_{k+1}} 
F(X_s - x^i_s) \mathds{1}_{|v^i_s -  V_s | \geq  2\delta_v}
ds \right|
\leq C \frac{\delta_t}{N^{\frac{1-\delta}{2}}},
\end{align}
for some explicit constant $C>0$ which  depends only on $R$, $d$ and $F$. Using the inequality $e^x \leq x + e^{x^2}$ for all $x\in\mathbb{R}$, we get
\begin{align*}
\bbE \left[ \tilde G^{\ell +2} \right] 
=& \bbE \left[ \tilde G^\ell \exp \left( \lambda  \left(I(t_\ell)-\widehat I(t_\ell)\right) \right) \right]
\leq \lambda \bbE \left[ \tilde G^\ell \left(I(t_\ell)-\widehat I(t_\ell)\right) \right]
+ 
e^{C^2\frac{\lambda^2\delta_t^2}{N^{1-\delta}}} \bbE \left[ \tilde G^\ell  \right]\\
\leq &e^{C^2\frac{\lambda^2\delta_t^2}{N^{1-\delta}}} \bbE \left[ \tilde G^\ell \right],
\end{align*}
where we used \eqref{eq:  Istk averaged expectation} to cancel the linear term. By iteration, we get
\begin{align*}
\bbE  \big[ \tilde G^{K+2}  \big]  & \leq \exp\left(\frac{C^2\lambda^2 T\delta_t}{N^{1-\delta}}\right).
\end{align*}
This upper bound on the exponential moment implies the concentration estimate by Markov's inequality 
\begin{align}
\bbP  \left[ \Big|\sum_{\scriptsize{\begin{array}{l}k=0,\\ k\text{ odd}\end{array}}}^{K} I(t_k)-\widehat I(t_k) \Big| > N^{\frac{\delta}{2}}\sqrt{\frac{T\delta_t}{N^{1-\delta}}} \right]
\leq&  2\exp \left( - \lambda N^{\frac{\delta}{2}}\sqrt{\frac{T\delta_t}{N^{1-\delta}}}\right) \bbE \big[ \tilde G^{K+2} \big]
=O\left( e^{ - N^{\delta/4}}\right)
\label{eq:control_hoeff} ,
\end{align}
by choosing $\lambda = \frac{N^{\frac{\delta}{2}}}{2C} \sqrt{\frac{N^{1-\delta}}{T\delta_t}}$. Using \eqref{eq:breakitdown_0} with both \eqref{eq:control_moy_pas_mart} and \eqref{eq:control_hoeff}, we get 
\begin{equation}\label{eq:boot_control_rapide_impair}
\mathbb{P}\left[\left|\Afodd_T\right|
\geq {C_{2,F} }\frac{T\delta_t^2}{N}
+  N^{\frac{\delta}{2}}\sqrt{\frac{T\delta_t}{N^{1-\delta}}} \right]=O\left(\exp\left(-N^{\delta/4}\right)\right).
\end{equation}
The same result holds for $\Afeven_T$.
\vspace{0.3cm}


\noindent\textbf{Step 3: Slow background particles.} Let us now show that 
\begin{equation}
\label{eq:boot_control_lent}
\mathbb{P}\left[
|\As_T| \geq \frac{T \delta_v^{\frac{d}{2}} }{N^\frac{1-\delta}{2}}\right]
=
\mathbb{P}\left[\left|\int_{0}^T\frac{1}{N}\sum_{i}F(X_t-x^i_t)\mathds{1}_{|v^i_t-V_t|< 2\delta_v}dt\right|
\geq \frac{T \delta_v^{\frac{d}{2}}}{N^\frac{1-\delta}{2}}\right]
=O\left(e^{-N^{\delta/4}}\right).
\end{equation}
This is a direct consequence of \eqref{eq:control_drift_avec_contrainte}, as 
\begin{equation*}
\mathbb{P}\left[\exists t\in[0,N],\quad \left|\frac{1}{N}\sum_{i}F(X_t-x^i_t)\mathds{1}_{|v^i_t-V_t|< 2\delta_v}dt\right|\geq \frac{\delta_v^{\frac{d}{2}}}{N^\frac{1-\delta}{2}}\right]=O\left(e^{-N^{\delta/4}}\right).
\end{equation*}
Here we use the assumption $\delta_v\geq N^{-\frac{1}{d}}$ from \eqref{eq:borne_delta_v_init}.\\


\noindent\textbf{Step 4: Conclusion.}  
Recall that $T_f=N^{ \frac{1}{2} -\delta} \delta_v /2$ by \eqref{eq: parameters}.
Plugging \eqref{eq:borne_proba_erreur}, \eqref{eq:boot_control_rapide_impair} and \eqref{eq:boot_control_lent} back into \eqref{eq: decomposition AfT et AsT}, we get, with probability greater than $1-e^{-N^{\delta/8}}$, that for $T \leq T_f$
\begin{align*}
\left|\int_0^{T}\frac{1}{N}\sum_{i}F(X_t-x^i_t)dt\right|
 = O\left(\frac{T}{N\delta_v^2}+\sqrt{\frac{T}{\delta_vN^{1-2\delta}}}+\frac{T}{N^\frac{1-\delta}{2}}\delta_v^{\frac{d}{2}}\right)
 = O\left(\sqrt{\frac{T}{\delta_vN^{1-2\delta}}}+\frac{T}{N^\frac{1-\delta}{2}}\delta_v^{\frac{d}{2}}\right),
\end{align*}
where the contribution $\frac{T}{N\delta_v^2}$ could be neglected as 
 $T \leq T_f \leq N^{1+ 2 \delta}\delta_v^3$ from 
the assumption 
$\delta_v \gg N^{-1/4}$ \eqref{eq:borne_delta_v_init}.
The two terms above quantify the errors due to the fluctuations of the fast background particles and the error  from the slow particles.
If $T_f\leq N^\delta \delta_v^{-(d+1)}$
then the fluctuations are the main error term, i.e.
$\sqrt{\frac{T}{\delta_vN^{1-2\delta}}}\geq \frac{T}{N^\frac{1-\delta}{2}}\delta_v^{\frac{d}{2}}$.
Recalling \eqref{eq: parameters},
this condition is satisfied with the following choice
\begin{align*}
\delta_v= N^{-\frac{1}{2(d+2)}}
= N^{- 2 \alpha + 2 \delta},\qquad T_f=
N^{\frac{d+1}{2(d+2)}-\delta} = N^\beta,
\end{align*}
with the parameters 
$\alpha, \beta$ as in \eqref{eq: sub-optimal beta, alpha}
(note that $\delta_v$ satisfies \eqref{eq:borne_delta_v_init} for $d\geq3$). 
We thus get
\begin{align*}
T \leq T_f, \qquad 
\left|\int_0^{T}\frac{1}{N}\sum_{i}F(X_t-x^i_t)dt\right|=O\left(\sqrt{\frac{T}{\delta_vN^{1-2\delta}}}\right)=O\left(\sqrt{\frac{T}{N}}N^{\alpha}\right).
\end{align*}
Note that we technically assumed that $T\geq \delta_t$ in order to derive the inequality above. 
For $T\leq \delta_t$, the same result holds as we simply use that in the good set
\begin{align*}
\left|\int_0^{T}\frac{1}{N}\sum_{i}F(X_t-x^i_t)dt\right|
=O\left(\frac{T}{N^{(1-\delta)/2}}\right)
=O\left(\sqrt{\frac{T}{N}}N^{\alpha}\right).
\end{align*} 
By time invariance, we obtain that there exist $C,\delta>0$ such that for all $s,t\in[-2N,2N]$ satisfying $|t-s|\leq N^{\beta}$ we have
\begin{align*}
\mathbb{P}\left[\left|\int_s^{t}\frac{1}{N}\sum_{i}F(X_u-x^i_u)du\right|\geq C\sqrt{\frac{|t-s|}{N}}N^\alpha\right]=O\left(e^{-N^{\delta/8}}\right).
\end{align*}
By considering a sequence of time discretization $(s_j)_{j}$ and $(t_k)_k$ with $s_j=-2N+\frac{j}{N}$ and  $t_k=-2N+\frac{k}{N}$, and by using $\mathbb{P}\left[\exists s,t\in [-2N,2N],\ \left|\int_s^{t}\frac{1}{N}\sum_{i}F(X_t-x^i_t)dt\right|\geq C\frac{|t-s|}{N^{\frac{1-\delta}{2}}}\right]=O\left(e^{-N^{\delta/8}}\right)$, we may obtain that there exist $C>0$ such that
\begin{align*}
\mathbb{P}\left[\exists s,t\in [-2N,2N],\ \left|\int_s^{t}\frac{1}{N}\sum_{i}F(X_u-x^i_u)du\right|\geq C\sqrt{\frac{|t-s|}{N}}N^\alpha\right]=O\left(e^{-N^{\delta/16}}\right).
\end{align*}
Up to changing the factor $\delta$, the constant $C$ can be removed as $\alpha = \frac{1}{4(d+2)}+\delta$.

Finally \eqref{eq:control_boot_avec_contrainte_sup} is then a consequence of the exact same proof: note that the trick done in \eqref{eq:on_peut_moyenner_1} and \eqref{eq:on_peut_moyenner_2}-\eqref{eq:on_peut_moyenner_4} to use the mean $0$ of $F$ still works\footnote{To use the same trick as \eqref{eq:on_peut_moyenner_2}-\eqref{eq:on_peut_moyenner_4} in order to remove the influence of particle $M+1$ from the indicator function, one rigorously needs $(N\delta_v)^{-1}\ll M_N\ll 1$. The lower bound however is not necessary, as for $M_N\lesssim (N\delta_v)^{-1}$ the constraint $|v-\overline{V}_s|\in\left[M_N-\frac{2C}{N\delta_v}, M_N+\frac{2C}{N\delta_v}\right]$ in particular implies $|v-\overline{V}_s|\leq M_N+\frac{2C}{N\delta_v}$, which is of small enough probability. }, and that otherwise we only use $F$ bounded. Hence the result.
\end{proof}


\begin{proof}[Proof of Proposition~\ref{prop:boot_beta}]
The proof of Proposition~\ref{prop:boot_beta} follows closely the one of Proposition~\ref{prop:init_bootstrap}. Being in a better set $\mathcal{G}_N(\delta,\alpha, \beta)$ improves the estimates (in particular on the recollision time and on the almost sure bound of the martingale increments) and thus allows to bootstrap the argument up to adjusting the parameters. We insist that, in this proof, we consider the set of initial configurations $\mathcal{G}_N(\delta,\alpha, \beta)$ instead of $\mathcal{G}_N(\delta)$.

Again, let $F$ be $\partial_{\mathcal{I}} \Phi$ for some multi-indices $\mathcal{I}\in\{1,...,d\}^{|\mathcal{I}|}$ with $|\mathcal{I}|\in\{1,2,3 \}$ and let 
\begin{equation}\label{eq:delta_v_boot}
\delta_v=N^{-\frac{1}{d+4}}.
\end{equation}
This specific value for $\delta_v$ is motivated by later calculations. We then define similarly as before
\begin{equation}
\label{eq: new Tf}
T_f=\frac{N^{\frac{1+\beta}{2}-\alpha}\delta_v}{5}, \quad \delta_t=10\times \frac{6R}{\delta_v},\quad t_k=k\delta_t,\quad K=\left\lfloor\frac{T}{\delta_t}\right\rfloor.
\end{equation}
Note that, using \eqref{eq:cond_beta}
\begin{itemize}
\item For this given value of $\delta_v$, the choice for $T_f$ given in \eqref{eq: new Tf} is such that a particle interacting at time $0$ does not re-interact before time $T_f$. 
\item The time frame $\delta_t$ is small enough that the various controls on $\mathcal{G}_N(\delta,\alpha, \beta)$ and Lemma~\ref{Lem: Better set estimates} are valid.
\item We have $\delta_v\geq N^{-\frac{1-2\alpha}{3}+\delta}$, as well as $\delta_v\geq N^{-1/d}$, ensuring that \eqref{eq:control_drift_avec_contrainte} holds.
\end{itemize}
We now consider $T\leq T_f$.
Finally, recall the definition of $A_{\delta_v, \delta_t}$ from the proof of Proposition~\ref{prop:init_bootstrap}. In the set of initial conditions $\mathcal{G}_N(\delta,\alpha, \beta)$, for any $t\in[0,T]$, we have
\begin{align*}
\sup_{s\in[t,t+2\delta_t]}|X_t+(s-t)V_t-X_s|=&O\left(\frac{\delta_t^{\frac{3}{2}}}{N^{\frac{1}{2}-\alpha}}\right)=O\left(\frac{1}{N^{\frac{3}{2}\delta}}\right)\text{ since $\delta_t\leq N^{\frac{1-2\alpha}{3}-\delta}$},\\
\sup_{s\in[t,t+2\delta_t]}|x^1_t+(s-t)v^1_t-x^1_s|=&O\left(\frac{\delta_t^2}{N}\right)=O\left(\frac{1}{N^{\frac{1}{3}}}\right)\text{ since $\delta_t\leq N^{\frac{1}{3}}$}\\
\text{and}
\sup_{s\in[t,t+2\delta_t]} \big|\left|v^1_t-V_t\right|-\left|v^1_s-V_s\right| \big|
=&O\left(\frac{\delta_t^{\frac{1}{2}}}{N^{\frac{1}{2}-\alpha}}\right)=O\left(\frac{1}{N^{\frac{1-2\alpha}{3}+\frac{\delta}{2}}}\right)\ll N^{-\frac{1-2\alpha}{3}+\delta}\leq \delta_v.
\end{align*}
Note that all the right hand side terms not only tend to $0$ as $N$ goes to infinity, but the latter is also of a smaller order of magnitude than $\delta_v$. Therefore, if there exists $t\in[0,T]$ such that $x^1_t\in\mathcal{B}\left(X_t,R\right)$ and $|v^1_t-V_t|\geq 2\delta_v$, then, for $N$ large enough, there exists $k\in\{0,...,K\}$ such that $(x^1_{t_k},v^1_{t_k})\in A_{\delta_v, \delta_t}(t_k)$. Similarly, any background particle satisfying $(x^i_{t_k},v^i_{t_k})\notin A_{\delta_v, \delta_t}(t_k)$ and $x^i_t\in\mathcal{B}\left(X_t,R\right)$ for some $t\in[t_{k+1},t_{k+2}]$ must satisfy $|v^1_t-V_t|<2\delta_v$. Using the decomposition \eqref{eq: decomposition AfT et AsT}, we write
\begin{align*}
\int_0^{T}\frac{1}{N}\sum_{i}F(X_t-x^i_t)dt
= \Af_T + \As_T+\mathcal{E}^G,
\end{align*}
 where the indicator function of $\mathcal{G}^{k+1}_N(\delta)$ is replaced by the indicator of  $\mathcal{G}^{k+1}_N(\delta,\alpha, \beta)$ which is the set of initial configurations for which the various controls of $\mathcal{G}_N(\delta,\alpha, \beta)$ are restricted to the time interval $[0, t_{k+1}]$.
 Using again \eqref{eq:borne_proba_erreur} to estimate the error term and
 \eqref{eq:boot_control_lent}
 to control the slow particles, we get
\begin{equation}
\label{eq:boot_control_lent_2}
\mathbb{P}\left[\left|\mathcal{E}^G\right|\geq \frac{T\delta_t^2}{N}\right]=O\left(e^{-N^{\delta/8}}\right), 
\qquad 
\mathbb{P}\left[
|\As_T| \geq \frac{T \delta_v^{\frac{d}{2}} }{N^\frac{1-\delta}{2}}\right]
=O\left(\exp\left(-N^{\delta/4}\right)\right).
\end{equation}
The better set is only useful to improve the fluctuation estimates on the fast background particles by looking at larger timescales.

Define, similarly as \eqref{eq: def Istk}, \eqref{eq:  Istk averaged}, the terms $I(t_k),\widehat I(t_k)$. Thanks to \eqref{eq:control_boot_avec_contrainte_sup}, which we assume to hold for the specific choice $M_N=2\delta_v$, we know that 
\begin{align*}
\left|I(t_k) - \widehat I(t_k)\right|
= 2 \left|\frac{\mathds{1}_{\mathcal{G}^{k+1}_N(\delta,\alpha, \beta)}}{N} \sum_{i}\int_{t_k}^{t_{k+1}} ds  F(X_t-x^i_t)\mathds{1}_{|v^i_t-V_t|\geq 2\delta_v}
\right|\leq \sqrt{\frac{\delta_t}{N}} N^\alpha.
\end{align*}
This estimate is the counterpart to \eqref{eq:  Istk upper bound}. With the exact same calculations as in Proposition~\ref{prop:init_bootstrap} (using Lemma~\ref{lem:influence_une_particule} along with  $\delta_t=O\left(N^{\frac{1-2\alpha}{3}-\delta}\right)$ in order to compare $Z_t$ with $\overline{Z}_t$ on this longer time frame in the counterpart of \eqref{eq:on_peut_moyenner_1}) we obtain
\begin{equation}\label{eq:boot_control_rapide_impair_2}
\mathbb{P}\left[ \Af_T\geq C_{2,F}\frac{T\delta_t^2}{N}+ \sqrt{\frac{T}{N}} N^{\alpha+ \delta/2} \right]=O\left(\exp\left(-N^{\delta/4}\right)\right).
\end{equation}

\medskip

\noindent\textbf{Conclusion.}  Combining \eqref{eq:boot_control_rapide_impair_2} and \eqref{eq:boot_control_lent_2}, with probability greater than $1-e^{-N^{\delta_4}}$ for some $\delta_4>0$, we get
\begin{align}
\left|\int_0^{T}\frac{1}{N}\sum_{i}F(X_t-x^i_t)dt\right|=O\left(\frac{T}{N\delta_v^2}+\sqrt{\frac{T}{N}} N^{\alpha+ \delta/2} +\frac{T}{N^\frac{1-\delta}{2}}\delta_v^{\frac{d}{2}}\right)
=O\left(\sqrt{\frac{T}{N}} N^{\alpha+ \delta/2} \right),\label{eq:borne_T_hoeff_V2}
\end{align}
provided that $T\leq N^{\frac{d}{d+4}+2\alpha}$. We stress that  the value of $\delta_v$  in \eqref{eq:delta_v_boot} maximizes this upper bound on $T$ ensuring \eqref{eq:borne_T_hoeff_V2}. Recall that we should also have $T\leq T_f=\frac{N^{\frac{1+\beta}{2}-\alpha}\delta_v}{5}$,
and we thus choose
\begin{align*}
T\leq \min\left(N^{\frac{d}{d+4}+2\alpha}, N^{\frac{d+2}{2(d+4)}+\frac{\beta}{2}-\alpha}\right)N^{-\delta}=N^{-\delta}\left\{\begin{array}{ll}N^{\frac{d}{d+4}+2\alpha}&\text{ if }\beta\geq\frac{d-2}{d+4}+6\alpha\\N^{\frac{d+2}{2(d+4)}+\frac{\beta}{2}-\alpha}&\text{ else,}\end{array}\right.
\end{align*}
where this last additional $N^{-\delta}$ deals with the various universal constants. Under this constraint, we have 
\begin{align*}
\left|\int_0^{T}\frac{1}{N}\sum_{i}F(X_t-x^i_t)dt\right|=O\left(\sqrt{\frac{T}{N}}N^{\alpha+\frac{\delta}{2}}\right).
\end{align*}
We conclude in the same way  we concluded Proposition~\ref{prop:init_bootstrap}.
\end{proof}

%
%
%
%

\subsection{Probability of recollision}
\label{sec:recollision}

As we have mentioned several times so far, our controls are only valid provided there is no recollision. Denote $C^1_t$ the recollision event
\begin{align*}
C^1_t:=\left\{\exists s\in[0,N] \text{ s.t. }\  |s-t|\geq\frac{6R}{|v^1_t-V_t|} ,\quad  x^1_s\in\mathcal{B}(X_s,R)\right\}\cap\left\{x^1_t\in\mathcal{B}(X_t,R)\right\},
\end{align*}

 which translates the fact that, assuming at time $t$  particle $1$ is in the interaction ball, there exists a time $s$ such that $|s-t|\geq\frac{6R}{|v^1_t-V_t|}$ (which, as shown in Lemma~\ref{lem:max_inter_time}, ensures that the background particle has left the interaction ball) and $x^1_s\in\mathcal{B}(X_s,R)$.
We are going to evaluate the probability of $C^1_t$.


\begin{proposition}
\label{prop:proba_recol}
Assume $d\geq 4$ and fix
\begin{align*}
\gamma_r=\frac{1}{4}+\frac{1}{36}.
\end{align*}
Consider $\delta, \alpha^*, \beta^*$ defined in Proposition~\ref{prop:fin_bootstrap}. For any $\delta>0$ small enough, we have
\begin{equation}\label{eq:proba_recollision}
\hat{\mathbb{E}}_{\mathcal{G}_N(\delta,\alpha^*,\beta^*)}\left[\mathds{1}_{C^1_t}\mathds{1}_{|v^1_t-V_t|\geq N^{-\gamma_r}}\right]=O\left(N^{-\frac{19}{18}+2(d-1)\delta}\right).
\end{equation}
\end{proposition}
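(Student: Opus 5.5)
By time invariance I reduce to $t=0$: I assume $x^1_0\in\mathcal{B}(X_0,R)$ and $w:=|v^1_0-V_0|\geq N^{-\gamma_r}$, and I bound the probability that particle $1$ re-enters $\mathcal{B}(X_s,R)$ for some $s$ with $6R/w\leq |s|\leq N$. By time reversibility it is enough to treat $s>0$. The estimate splits into three time regimes, and the plan is to show that the first two are deterministically excluded on the better set while only the third carries probability.

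\textbf{Short and intermediate times.} Since $w\geq N^{-\gamma_r}\geq N^{-\frac{1-2\alpha^*}{3}+\delta}$ (this uses $\gamma_r<\frac{1-2\alpha^*}{3}$ for $d\geq4$) and $\beta^*\geq\frac{1-2\alpha^*}{3}+\delta$, Lemma~\ref{lem:max_inter_time} in the better set $\mathcal{G}_N(\delta,\alpha^*,\beta^*)$ excludes any recollision for
\[
6R/w\leq s\leq T_+(w):=\tfrac15\min\bigl(N^{1-2\alpha^*}w^2,\,N^{\frac{1+\beta^*}{2}-\alpha^*}w\bigr).
\]
So the recollision event $C^1_0$ forces a re-entry at some $s\in[T_+(w),N]$. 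At the first time $T_+(w)$ the two particles are already far apart. Indeed, by \eqref{eq:control_diff_pos_boot}, $|X_s-x^1_s|\gtrsim s\,w$ there, which is large. Moreover, after the first interaction particle $1$ is essentially free again: its velocity changed only by $O(T_m/N)$.

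\textbf{Long-time re-entry: a probabilistic estimate.} For $s\geq T_+(w)$ the trajectory of the tagged particle is no longer deterministically controlled. Here I decouple particle $1$ from the rest. Using Lemma~\ref{Lem: Better set estimates}, for $s\geq\sigma^+_1+9RT_m$ the tagged trajectory $X_s$ equals $\overline{X}_s$ up to a deterministic shift of size $O(T_m/N)\,s+O(T_m^2/N)$, and $x^1_s=x^1_0+s v^1_0+O(sT_m/N)$. A recollision at time $s$ therefore requires
\[
x_1+s v_1\in\mathcal{B}\bigl(\overline{X}_s,\,R+\epsilon(s)\bigr),\qquad \epsilon(s)=O\bigl(sN^{\gamma_r-1}+N^{2\gamma_r-1}\bigr).
\]
The process $\overline{Z}$ is independent of $(x_1,v_1)$, and $(x_1,v_1)$ is Gaussian in velocity and constrained to $\mathcal{B}(X_0,R)$. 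I then integrate over $(x_1,v_1)$ under $\hat{\mathbb{E}}$, discretizing $s\in[T_+,N]$ into unit windows (velocities are bounded by $N^{1/2}$ on the good set). The probability of re-entry in the window $[s,s+1]$, with $|x_1-X_0|\leq R$, is then at most the probability that $v_1$ lies in a ball of radius $O((R+\epsilon(s))/s)$ centred at $(\overline{X}_s-x_1)/s$. This gives a bound of order $(1+\epsilon(s))^d s^{-d}$. The extra factor coming from $w$ being small is handled by integrating against $\gamma$. Summing over windows gives
\[
\sum_{s\geq T_+(w)} s^{-d}\lesssim T_+(w)^{1-d}.
\]
The conditioning on $x^1_0\in\mathcal{B}(X_0,R)$ costs nothing extra in $\hat{\mathbb{E}}$. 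The $\epsilon(s)$ factors are harmless because $s\leq N$ and $\gamma_r<1/3$, but they must be kept to produce the $N^{2(d-1)\delta}$ loss.

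\textbf{Optimization and the main obstacle.} The final step is to integrate $T_+(w)^{1-d}$ against the density of $w$, which behaves like $w^{d-1}\,dw$ near $0$, over $w\geq N^{-\gamma_r}$. Both regimes of the minimum in $T_+(w)$ must be inserted, with $\alpha^*=\frac1{4(d+2)}$ and $\beta^*>\frac12$. The exponent $\gamma_r=\frac14+\frac1{36}$ should be exactly where the worst case, $w=N^{-\gamma_r}$ in the regime $T_+\sim N^{1-2\alpha^*}w^2$, yields $N^{-19/18}$. I expect the main obstacle to be the rigorous decoupling in the long-time step. Over times up to $N$, $X_s-\overline{X}_s$ is not controlled by Lemma~\ref{Lem: Better set estimates}, whose estimates only hold up to $N^{\gamma}$. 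I would try to use that, after the first interaction, particle $1$ no longer interacts before the hypothetical recollision time. Between the end of the interaction and the first re-entry, the difference $X-\overline{X}$ then evolves through the linearized equation $\ddot h=a(t)h$ with no source term. Controlling this requires the integrated bound on $a$ from the better set over windows of length $N^{\beta^*}$, concatenated up to time $N$. If this Gronwall growth is too large, the fallback is to apply the argument only on the first re-entry window, where the growth is still small. The probability is then estimated on the union over windows by stationarity, conditioning at the start of each window as in Lemma~\ref{lem:mart_I}.
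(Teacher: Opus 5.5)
\textbf{There is a genuine gap in the long-time step.} Your handling of the window $[6R/w,T_+(w)]$ through Lemma~\ref{lem:max_inter_time} is the paper's, but the long-time step does not go through, and you flag this without closing it. You compare $X_s$ with $\overline{X}_s$, the dynamics without particle~$1$ started at the original time $0$. That process never sees the impulse exchanged during the first interaction, neither by the tagged particle nor, through it, by the other background particles, so $X-\overline{X}$ is genuinely nonzero after exit.
\begin{itemize}
\item Lemma~\ref{Lem: Better set estimates} controls this difference only on $[\sigma^+_1,\sigma^+_1+N^{\gamma}]$ with $\gamma=\frac{1-2\alpha}{3}-\delta$. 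Beyond the scale $N^{\frac{1-2\alpha}{3}}$, Lemma~\ref{lem:gronw_opt_racine} gives nothing.
\item The difference is not governed by $\ddot h=a(t)h$ alone, because the shifts $x^i-\overline{x}^i$, $i\geq2$, act as sources.
\end{itemize}
So your $\epsilon(s)=O(sN^{\gamma_r-1}+N^{2\gamma_r-1})$ up to $s\sim N$ is unsupported, and the whole integration over $(x_1,v_1)$ rests on it. Your fallback does not help either. A first re-entry may happen at times of order $N$, and the growth between time $0$ and that re-entry is exactly what is uncontrolled.

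\textbf{The missing idea is to make the decoupling exact rather than perturbative.}
\begin{itemize}
\item The paper buckets the relative speed, $N^{k\alpha_r}\leq w\leq N^{(k+1)\alpha_r}$. Then $\sigma=6RN^{-k\alpha_r}\geq 6R/w$ is a deterministic post-exit time at which, on the better set, $4R\leq|x^1_\sigma-X_\sigma|\leq 8RN^{\alpha_r}$.
\item It keeps only this annulus constraint and the deterministic condition $s\geq t_{rec}(N^{k\alpha_r})$, discards the rest of the past, and shifts time by $-\sigma$ using stationarity.
\item At the new origin particle~$1$ is outside the interaction range, so under the invariant measure it is independent of everything else.
\item On the first-recollision event $\{\forall u\in[0,s-\sigma],\ x^1_u\notin\mathcal{B}(X_u,R)\}$ one has exactly $x^1_u=x^1_0+uv^1_0$ and $X_u=\overline{X}_u$, with $\overline{X}$ now built from the new origin.
\item Re-entry then forces $v^1_0$ into a ball of radius $\frac{3R}{2(s-\sigma)}$. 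This gives $O(N^{d\alpha_r}s^{-d})$ with no error term and no Gr\"onwall argument at all.
\end{itemize}

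\textbf{Two secondary problems would also need fixing.}
\begin{itemize}
\item Unit sampling windows with velocities only bounded by $N^{1/2}$ miss fast crossings of the ball between sampling times. The obvious repairs cost far more than the available margin: a target radius of $N^{1/2}$ costs $N^{d/2}$, and a mesh of $N^{-1/2}$ costs $N^{1/2}$. For $d=4$ the worst case is $N^{-13/12}$, only $N^{-1/12}$ below $N^{-1}$. The paper instead cuts off relative speeds at $\frac{R}{4}N^{\alpha_r}$ and samples with mesh $N^{-\alpha_r}$. This, together with the annulus volume, is where the loss $N^{(d+1)\alpha_r}$ comes from.
\item Once you have integrated $v_1$ over the recollision ball to get $s^{-d}$, you cannot integrate again against the law of $w=|v_1-V_0|$, since that reuses the same variable. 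The paper simply uses monotonicity, $t_{rec}(w)\geq t_{rec}(N^{-\gamma_r})$.
\end{itemize}
Finally, the $N^{2(d-1)\delta}$ loss comes from the $\delta$ in $\alpha^*$ inside $t_{rec}$, not from $\epsilon(s)$.
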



\begin{remark}
The exponent $\gamma_r$ is such that $d\gamma_r>1$, and thus the event $\{|v^1_t-V_t|\leq N^{-\gamma_r}\}$ is of small probability when compared to $N^{-1}$. We choose $\gamma_r=\frac{1}{4}+\frac{1}{36}$ in order to have some margin in the calculations. Furthermore, from $\alpha^*, \beta^*$ given in Proposition~\ref{prop:fin_bootstrap}, we have for $d\geq4$ and $\delta$ small enough
\begin{equation}
\label{eq:gamma_r_suff_petit}
    \gamma_r<\frac{1-2\alpha^*}{3}-\delta
  \quad \text{and} \quad   \gamma_r \leq \beta^* .
\end{equation}
\end{remark}


\begin{proof}[Proof of Proposition~\ref{prop:proba_recol}]
By time invariance, we may consider $t=0$ without loss of generality, and by time reversibility, we only deal with recollisions at future time $s>0$. Finally, throughout this proof, the indicator function $\mathds{1}_{\mathcal{G}_N(\delta,\alpha^*,\beta^*)}$ appears in every calculations. For the sake of readability, we shall omit it. 

Define for $\Delta_v\geq N^{-\gamma_r}$ the recollision time
\begin{align}
\label{def: trec def bis}
t_{rec}(\Delta_v):=\frac{1}{5}\min\left(N^{1-2\alpha}\Delta_v^2, N^{\frac{1+\beta^*}{2}-\alpha^*}\Delta_v\right).
\end{align}
By Lemma~\ref{lem:max_inter_time}, given $|v^1_0-V_0|\geq N^{-\gamma_r}$, the time $t_{rec}(|v^1_0-V_0|)$ is the first time the background particle $1$ may re-enter the interaction ball. We know that if $x^1_0\in\mathcal{B}(X_0,R)$ and $|v^1_0-V_0|\geq N^{-\gamma_r}$ (recall \eqref{eq:gamma_r_suff_petit}), then for any $s$ such that $\frac{6R}{|v^1_0-V_0|}\leq s\leq t_{rec}\left(|v^1_0-V_0|\right)$ we have $x^1_s\notin\mathcal{B}(X_s,\frac{3}{2}R)$.  

Now, let us divide the remaining interval ${[t_{rec}\left(|v^1_0-V_0|\right), T]}$ into subintervals of size $N^{-\alpha_r}$ with a small $\alpha_r>0$, $T\simeq N$ being the final time. We set the value
\begin{align}
\label{Def: alpha r}
\alpha_r= \frac{1}{36(d+1)} .
\end{align}
To evaluate the exit time, it is enough to probe the trajectories at the sampling times $t_j =jN^{-\alpha_r}$, with ${j\in\{0, ...., TN^{\alpha_r}\}}$.
 Indeed, if there exists $s\in[0,T]$ satisfying $s \geq \frac{6R}{|v^1_0-V_0|}$ such that $x^1_s\in\mathcal{B}(X_s,R)$ and $x^1_0\in\mathcal{B}(X_0,R)$ then 
\begin{itemize}
\item  either there exists $j$ such that $x^1_{t_j}\in\mathcal{B}(X_{t_j},\frac{3}{2}R)$ and $\forall u\in\left[\frac{6R}{|v^1_0-V_0|}, t_j\right],  x^1_u \notin\mathcal{B}(X_u,R)$ (we consider in this sense the first recollision time) with in particular $t_j\geq  t_{rec}\left(|v^1_t-V_t|\right)$,
\item or there exists $j$ such that $|v^1_{t_j}-V_{t_j}|\geq \frac{R}{4}N^{\alpha_r}$.
\end{itemize}
In other words  (see also Figure \ref{fig:preuve_recollision}), if there is a recollision, either the background particle is in the (possibly slightly bigger) interaction radius at one of the discrete time $t_j$, or  the relative velocity at some discrete time $t_j$ is so large that the crossing would have occurred during the small time interval $[t_j, t_{j+1}]$.

Denoting
\begin{align}
\label{def: Is}
I_s:=\mathds{1}_{x^1_{0}\in\mathcal{B}(X_{0},R)}\mathds{1}_{\forall u\in\left[\frac{6R}{|v^1_0-V_0|}, s\right], x^1_u\notin\mathcal{B}(X_u,R)}\mathds{1}_{x^1_{s}\in\mathcal{B}(X_{s},\frac{3}{2}R)}\mathds{1}_{s\geq t_{rec}(|v^1_0-V_0|)}\mathds{1}_{|v^1_0-V_0|\geq N^{-\gamma_{r}}},
\end{align}
we deduce from the previous sampling that
\begin{align}
\mathds{1}_{C^1_0}&\mathds{1}_{|v^1_0-V_0|\geq N^{-\gamma_{r}}}
\leq \sum_{j=0}^{TN^{\alpha_r}} I_{t_j} + \mathds{1}_{|v^1_{t_j}-V_{t_j}|\geq \frac{R}{4}N^{\alpha_r}}\label{eq:control_rec}.
\end{align}
As we work in $\mathcal{G}_N(\delta,\alpha^*,\beta^*)$, we can cutoff the velocity and assume that $|v_{t_j}-V_{t_j}|\geq \frac{R}{4}N^{\alpha_r}$ never occurs (it has, in fact, a probability exponentially small to happen, similarly to \eqref{eq:G_N_0}). 
Let us decompose the terms of $I_{t_j}$:
\begin{itemize}
\item the three indicator functions $\mathds{1}_{x^1_{0}\in\mathcal{B}(X_{0},R)} \mathds{1}_{\forall u\in\left[\frac{6R}{|v^1_0-V_0|}, t_j\right], x^1_u\notin\mathcal{B}(X_u,R)}\mathds{1}_{x^1_{t_j}\in\mathcal{B}(X_{t_j},\frac{3}{2}R)}$ ensure that: the background particle is interacting at time $0$, is out of the interaction ball between times $\frac{6R}{|v^1_0-V_0|}$ (which is an upper bound on the exit time) and $t_j$, and finally that at time $t_j$  is back into the (slightly extended) interaction ball,
\item the function $\mathds{1}_{t_j\geq t_{rec}(|v^1_0-V_0|)}$ indicates that we only consider times $t_j$ greater than the lower bound on the possible re-entry time,
\item and finally $\mathds{1}_{|v^1_0-V_0|\geq N^{-\gamma_{r}}}$ is a bound on the relative velocity ensuring that the various controls we use indeed hold.
\end{itemize}
We shall individually control each $I_{t_j}$ \eqref{def: Is}. \\


\begin{figure}
    \centering
	\includegraphics[width=\linewidth,]{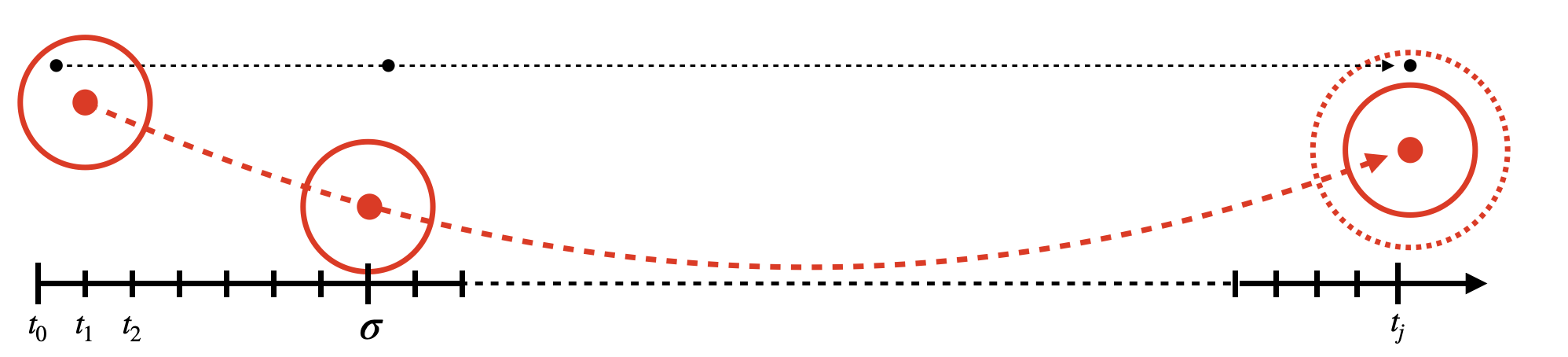}
	\caption{{\small Illustration of the proof of Proposition~\ref{prop:proba_recol}. The tagged particle (in red) and the background particle 1 (in black) overlap initially and then recollide at a later time. The recollision time is estimated by using the sampling $\{t_i \}$ with mesh $N^{- \alpha_r}$. The time at which both particles no longer overlap is estimated in terms of $\sigma$, indexed by a sampling of much cruder mesh $N^{\alpha_r}$.  
    }}
	\label{fig:preuve_recollision}
\end{figure}


\noindent\textbf{Re-entry at a given time.} In order to give a bound on $\hat{\mathbb{E}}\left[I_s\right]$ for any given $s\geq0$, the idea is to start by shifting time and considering the time $0$ to be when the background particle has just left the interaction ball. This way, because we assume that $\forall u\in\left[\frac{6R}{|v^1_0-V_0|}, s\right], x_u\notin\mathcal{B}(X_u,R)$, we may separately deal with the dynamics of the tagged particle and of the background particle. However, we cannot shift the time by $\frac{6R}{|v^1_0-V_0|}$, which would be the natural candidate, because it is an a priori random time. We thus separate $|v^1_0-V_0|$ into smaller subsets and use these subsets to construct the bounds. For the sake of simplicity, we also consider these small subsets to be defined by the parameter $\alpha_r$ \eqref{Def: alpha r}. 
For some $k\in\mathbb{Z}, k\geq-\frac{\gamma_r}{\alpha_r}$,
assume 
\begin{align}
\label{def: k recollision}
N^{k\alpha_r} \leq |v^1_0-V_0| \leq N^{(k+1)\alpha_r}
\end{align}
Notice that $\frac{\gamma_r}{\alpha_r}\in\mathbb{N}$. Like previously, note that we only need to consider $k\leq 0$, as the cutoff of velocities ensures that for $k\geq1$ this is of exponentially small probability. Given $k \leq 0$, let $\sigma:=6RN^{-k\alpha_r}\leq6RN^{\gamma_r} $
and fix the relative velocity so that \eqref{def: k recollision} holds.
 \begin{align*}
 \mathds{1}_{\forall u\in\left[\frac{6R}{|v^1_0-V_0|},s\right],\ x^1_u\notin\mathcal{B}(X_u,R)}\leq\mathds{1}_{\forall u\in[\sigma,s],\ x^1_u\notin\mathcal{B}(X_u,R)}.
  \end{align*}
 We will use this $\sigma$ (which is no longer random) to 
shift the time and  define the new time origin at some point where the background particle has exited the tagged one. 

We start by controlling the position of the background particle at time $\sigma$ based on the controls at time $0$. 
From direct computations,  in $\mathcal{G}_N(\delta,\alpha^*,\beta^*)$ 
and if $x^1_0\in\mathcal{B}(X_0,R)$,  we know using \eqref{eq:control_diff_pos_boot} (since $\sigma<N^{\beta^*}$  by \eqref{eq:gamma_r_suff_petit}) that for $N$ large enough
\begin{align*}
4R \leq |x^1_\sigma-X_\sigma|\leq 8R N^{\alpha_r}
 \quad \text{as} \quad 
\sigma |v^1_0-V_0| \leq 6 R N^{-k \alpha_r} N^{(k+1) \alpha_r} \leq 6  R N^{ \alpha_r}.
\end{align*}
 Therefore we have 
\begin{align*}
\hat{\mathbb{E}}_{\mathcal{G}_N(\delta,\alpha^*,\beta^*)}\left[I_s\right]
& \leq\sum_{k=-\frac{\gamma_r}{\alpha_r}}^0 \hat{\mathbb{E}}_{\mathcal{G}_N(\delta,\alpha^*,\beta^*)}\left[\mathds{1}_{4R \leq|x^1_\sigma-X_\sigma|\leq 8R N^{\alpha_r}}\mathds{1}_{x^1_s\in\mathcal{B}(X_s,\frac{3}{2}R)}\mathds{1}_{\forall u\in[\sigma,s],\ x^1_u\notin\mathcal{B}(X_u,R)} \right.\\
&\hspace{2cm}\left. \mathds{1}_{s\geq t_{rec}(|v^1_0-V_0|)} \, \mathds{1}_{|v^1_0-V_0|\in\left[N^{k\alpha_r},N^{(k+1)\alpha_r}\right]}\right]\\
\leq&\sum_{k=-\frac{\gamma_r}{\alpha_r}}^0   \mathds{1}_{s\geq t_{rec}(N^{k\alpha_r})}  \hat{\mathbb{E}}_{\mathcal{G}_N(\delta,\alpha^*,\beta^*)}\left[\mathds{1}_{4R \leq|x^1_\sigma-X_\sigma|\leq 8R N^{\alpha_r}}\mathds{1}_{x^1_s\in\mathcal{B}(X_s,\frac{3}{2}R)}\mathds{1}_{\forall u\in[\sigma,s],\ x^1_u\notin\mathcal{B}(X_u,R)} \right],
\end{align*}
 where we used the constraint on the initial velocity  and the fact that $t_{rec}(\cdot)$ is increasing \eqref{def: trec def bis} to obtain a lower bound on the recollision time $s$. Once this is done, we drop the condition on the initial velocity in the last line. 
Finally we shift time by $-\sigma$ so that both particles no longer overlap at the new time origin
\begin{align*}
\hat{\mathbb{E}}_{\mathcal{G}_N(\delta,\alpha^*,\beta^*)}\left[I_s\right]
\leq & \sum_{k=-\frac{\gamma_r}{\alpha_r}}^0
 \mathds{1}_{s \geq t_{rec}(N^{k\alpha_r})}
\hat{\mathbb{E}}_{\mathcal{G}_N(\delta,\alpha^*,\beta^*)}\left[\mathds{1}_{4R \leq|x^1_0-X_0|\leq8RN^{\alpha_r}}\mathds{1}_{x^1_{s-\sigma}\in\mathcal{B}(X_{s-\sigma},\frac{3}{2}R)}
\right.\\
&\hspace{2cm}\left.
\mathds{1}_{\forall u\in[0,s-\sigma],\ x^1_u\notin\mathcal{B}(X_u,R)} \right].
\end{align*}
To show that the expectation of $I_s$ is small, 
we are now going to use the strong geometric constraint imposed by a recollision after a long time.
Note that if $\forall u\in[0,s-\sigma],\ x^1_u\notin\mathcal{B}(X_u,R)$, we have both $\forall u\in[0,s-\sigma],\ x^1_u=x^1_0+uv^1_0$ and $\forall u\in[0,s-\sigma],\ (\overline{X}_{u},\overline{V}_{u}, \overline{x}_{1:\mathcal{N}}, \overline{v}_{1:\mathcal{N}} )=(X_u,V_{u}, x_{1:\mathcal{N}}, v_{1:\mathcal{N}} )$.  This way we can write
\begin{align*}
\mathds{1}_{x^1_{s-\sigma}\in\mathcal{B}(X_{s-\sigma},\frac{3}{2}R)}=\mathds{1}_{x^1_0+(s-\sigma)v^1_0\in\mathcal{B}\left(\overline{X}_{s-\sigma},\frac{3}{2}R\right)}=\mathds{1}_{v^1_0\in\mathcal{B}\left(\frac{\overline{X}_{s-\sigma}-x^1_0}{s-\sigma},\frac{3}{2(s-\sigma)}R\right)},
\end{align*}
and thus
\begin{align*}
\hat{\mathbb{E}}_{\mathcal{G}_N(\delta,\alpha^*,\beta^*)}&\left[I_s\right]\\
\leq&\sum_{k=-\frac{\gamma_r}{\alpha_r}}^0
\mathds{1}_{s\geq t_{rec}\left(N^{k\alpha_r}\right)}
\hat{\mathbb{E}}_{\mathcal{G}_N(\delta,\alpha^*,\beta^*)}\left[\mathds{1}_{4R \leq|x^1_0-X_0|\leq 8RN^{\alpha_r}}\mathds{1}_{v^1_0\in\mathcal{B}\left(\frac{\overline{X}_{s-\sigma}-x^1_0}{s-\sigma},\frac{3}{2(s-\sigma)}R\right)}\right].
\end{align*}
Notice that the above quantity requires $v^1_0$ to be in a small ball of radius $\frac{3}{2(s-\sigma)}R$, and $x^1_0$ to be in a ball of radius $N^{\alpha_r}$. The integration therefore yields
\begin{align*}
\hat{\mathbb{E}}_{\mathcal{G}_N(\delta,\alpha^*,\beta^*)}\left[I_s\right]
=O\left(\sum_{k=-\frac{\gamma_r}{\alpha_r}}^0\mathds{1}_{s\geq t_{rec}\left(N^{k\alpha_r}\right)}\frac{N^{d\alpha_r}}{(s-\sigma)^d}\right)
 =O\left(\sum_{k=-\frac{\gamma_r}{\alpha_r}}^0\mathds{1}_{s\geq t_{rec}\left(N^{k\alpha_r}\right)}\frac{N^{d\alpha_r}}{s^d}\right),
\end{align*}
where we used in the last equality that $\sigma =6RN^{-k\alpha_r}\leq6RN^{\gamma_r} \ll t_{rec}\left(N^{k\alpha_r} \right)$ as seen in Lemma~\ref{lem:max_inter_time}.
Finally, $k$ takes only a finite number of values $\{0, \dots, -\frac{\gamma_r}{\alpha_r} \}$ (independent of $N$) and $t_{rec}( \cdot)$ is increasing so that we simply get
\begin{align}
\label{eq: estimation finale Is}
\hat{\mathbb{E}}_{\mathcal{G}_N(\delta,\alpha^*,\beta^*)}\left[I_s\right]
 =O\left( \mathds{1}_{s\geq t_{rec}\left(N^{- \gamma_r}\right)}\frac{N^{d\alpha_r}}{s^d}\right).
\end{align}

\vspace{0.3cm}


\noindent\textbf{Back to \eqref{eq:control_rec}.}
Recalling that $t_j = j N^{-\alpha_r}$,
we deduce from \eqref{eq: estimation finale Is} that 
\begin{align*}
\hat{\mathbb{E}}_{\mathcal{G}_N(\delta,\alpha^*,\beta^*)}\left[\mathds{1}_{C^1_0}\mathds{1}_{|v^1_0-V_0|\geq N^{-\gamma_{r}}}\right]
=&O\left(\sum_{j=0}^{TN^{\alpha_r}}\mathds{1}_{t_j\geq t_{rec}\left(N^{- \gamma_r}\right)}\frac{N^{d\alpha_r}}{t_j^d}\right)
=
O\left( \frac{N^{(d+1)\alpha_r}}{t_{rec}\left(N^{- \gamma_r}\right)^{d-1}}\right).
\end{align*}
Recalling the definition \eqref{def: trec def bis} of $t_{rec}$,  we know from the choice of $\alpha^*, \beta^*$ that 
\begin{align*}
t_{rec}\left(N^{-\gamma_r}\right)=
O\left(N^{1-2\alpha^*- 2 \gamma_r}\right).
\end{align*}
Since for $d\geq 4$,
\begin{align*}
 (d-1) \left(1-2\alpha^*-2\gamma_r \right)- &(d+1)\alpha_r
\geq\frac{19}{18}-2(d-1)\delta,
\end{align*}
we deduce that 
\begin{align*}
\hat{\mathbb{E}}_{\mathcal{G}_N(\delta,\alpha^*,\beta^*)}\left[\mathds{1}_{C^1_0}\mathds{1}_{|v^1_0-V_0|\geq N^{-\gamma_{r}}}\right]=&O\left(N^{-\left(\frac{19}{18}-2(d-1)\delta\right)}\right).
\end{align*}
This completes the proof of Proposition \ref{prop:proba_recol}.
\end{proof}

%
%
%
%

\appendix

%
%
%
%

\section{Technical lemmas}

%
%
%
%

\subsection{Some results from the proof of Proposition~\ref{prop:martingale_formulation}}\label{sec:preuves_lemmes_mart}


\begin{proof}[Proof of Lemma~\ref{lem:mart_toutvabien_avec_bar}]
We start by deriving \eqref{eq:mart_toutvabien_avec_bar}. First note that 
we can write
\begin{align*}
\mathds{1}_{\mathfrak{G}_N(t)}=\mathds{1}_{x^1_t\in\mathcal{B}(X_t,\frac{3}{2}R)}\mathds{1}_{|v^1_t-V_t|\geq N^{-\gamma_r}}\mathds{1}_{\overline{C^1_t}}\mathds{1}_{\text{controls from Lemma~\ref{lem:abstract_approx}}},
\end{align*}
and thus 
\begin{align}
\mathds{1}_{\overline{\mathfrak{G}_N(t)}}=&\mathds{1}_{x^1_t\notin\mathcal{B}(X_t,\frac{3}{2}R)}+\mathds{1}_{x^1_t\in\mathcal{B}(X_t,\frac{3}{2}R)}\mathds{1}_{|v^1_t-V_t|< N^{-\gamma_r}}+\mathds{1}_{x^1_t\in\mathcal{B}(X_t,\frac{3}{2}R)}\mathds{1}_{|v^1_t-V_t|\geq N^{-\gamma_r}}\mathds{1}_{C^1_t}\nonumber\\
&+\mathds{1}_{x^1_t\in\mathcal{B}(X_t,\frac{3}{2}R)}\mathds{1}_{|v^1_t-V_t|\geq N^{-\gamma_r}}\mathds{1}_{\overline{C^1_t}}\mathds{1}_{\text{controls from Lemma~\ref{lem:abstract_approx} do not hold}}.\label{eq:int_toutvabien_avec_bar_1}
\end{align}
Then
\begin{align}
\mathds{1}_{|\overline{x}^1_t-\overline{X}_t|\leq R}\mathds{1}_{\overline{\mathfrak{G}_N(t)}}=&\mathds{1}_{|\overline{x}^1_t-\overline{X}_t|\leq R}\mathds{1}_{\overline{\mathfrak{G}_N(t)}}\mathds{1}_{\mathcal{G}_N(\delta,\alpha^*,\beta^*)}+\mathds{1}_{|\overline{x}^1_t-\overline{X}_t|\leq R}\mathds{1}_{\overline{\mathfrak{G}_N(t)}}\mathds{1}_{\overline{\mathcal{G}_N(\delta,\alpha^*,\beta^*)}}\nonumber\\
\leq& \mathds{1}_{|\overline{x}^1_t-\overline{X}_t|\leq R}\mathds{1}_{\overline{\mathfrak{G}_N(t)}}\mathds{1}_{\mathcal{G}_N(\delta,\alpha^*,\beta^*)}
+
\mathds{1}_{\overline{\mathcal{G}_N(\delta,\alpha^*,\beta^*)}}.\label{eq:int_toutvabien_avec_bar_2}
\end{align}
By the definition \eqref{eq: definition E chapeau} of 
$\hat{\bbE}$, we get
\begin{align*}
\hat{\bbE}\left[\mathds{1}_{\overline{\mathcal{G}_N(\delta,\alpha^*,\beta^*)}}\right]
=&\bbE\left[ \frac{\mathcal{N}}{N} \mathds{1}_{\overline{\mathcal{G}_N(\delta,\alpha^*,\beta^*)}}\right]
\leq \bbE\left[\frac{\mathcal{N}^2}{N^2}\right]^{1/2}\bbE\left[\mathds{1}_{\overline{\mathcal{G}_N(\delta,\alpha^*,\beta^*)}}\right]^{1/2}=O\left(e^{-N^{\delta'}/2}\right).
\end{align*}
This deals with the second term of \eqref{eq:int_toutvabien_avec_bar_2}. Let us now consider the first one, using \eqref{eq:int_toutvabien_avec_bar_1}. 
For $|t-\sigma^+_1|\leq 9RN^{\gamma_r}$, and since Lemma~\ref{Lem: Better set estimates} ensures that in $\mathcal{G}_N(\delta,\alpha^*,\beta^*)$, then $\overline{X}_t$ is close to $X_t$ and $\overline{x}^1_t$ to $x^1_t$ so that 
\begin{align*}
\mathds{1}_{|\overline{x}^1_t-\overline{X}_t|\leq R} \mathds{1}_{\mathcal{G}_N(\delta,\alpha^*,\beta^*)}
\leq \mathds{1}_{|x^1_t-X_t|\leq \frac{3}{2}R}.
\end{align*}
Thus \eqref{eq:int_toutvabien_avec_bar_1} can be estimated by applying 
\eqref{eq:proba_abstract_good}
\begin{equation}
\hat{\mathbb{E}}\left[\mathds{1}_{|\overline{x}^1_t-\overline{X}_t|  \leq R}\mathds{1}_{\overline{\mathfrak{G}_N(t)}}\mathds{1}_{\mathcal{G}_N(\delta,\alpha^*,\beta^*)}\right]
\leq
\hat{\mathbb{E}}\left[\mathds{1}_{\overline{\mathfrak{G}_N(t)}}\mathds{1}_{|x^1_t-X_t|\leq \frac{3}{2}R}\right]=O\left(N^{-(1+\omega')}\right).
\end{equation}
This yields \eqref{eq:mart_toutvabien_avec_bar}. 
Concerning \eqref{eq:mart_toutvabien_avec_bar_chez_bar}, that is with $\widehat{\good(t)}$ defined by
\begin{align*}
\mathds{1}_{\widehat{\good(t)}}=\mathds{1}_{\overline{x}^1_t\in\mathcal{B}(\overline{X}_t,\frac{3}{2}R)}\mathds{1}_{|\overline{v}^1_t-\overline{V}_t|\geq N^{-\gamma_r}}\mathds{1}_{\forall s\text{ s.t. }|s-t|\geq \frac{9R}{|\overline{v}^1_t-\overline{V}_t|},\ \overline{x}^1_s\notin\mathcal{B}(\overline{X}_s,\frac{3}{2}R)},
\end{align*}
the proof of Lemma~\ref{lem:abstract_tout_va_bien} can be directly adapted\footnote{Since the process $(\overline{Z}_t)_t$ is independent from $(x_1,v_1)$, one can first easily prove $\hat{\bbE}\left[\mathds{1}_{\overline{x}^1_t\in\mathcal{B}(\overline{X}_t,\frac{3}{2}R)}\mathds{1}_{|\overline{v}^1_t-\overline{V}_t|< N^{-\gamma_r}}\right]=O\left(N^{-d\gamma_r}\right)$ as it is a direct integration over $(x_1,v_1)$. Then, again because the tagged particle is independent of particle $1$, the result equivalent to Proposition~\ref{prop:proba_recol} for the process $(\overline{Z}_t)_t$ concerning the possible recollision is a direct modification of its proof: it is just a constraint on $v_1$ ensuring that $\overline{x}$  belongs to two balls distant in time.}.

\end{proof}

\begin{proof}[Proof of Lemma~\ref{lem:reecrire_drift}]
First notice that, by approximating $\left|e^{-\frac{\Phi(x)}{N}}-1\right|\leq \frac{2\|\Phi\|_\infty}{N}\mathds{1}_{x\in\mathcal{B}(0,R)}$ and using the symmetry of $\Phi$ to flip the signs, we have
\begin{align}
\tilde{\Lambda}(t,X,\overline{X}_t,\overline{V}_t)
=& - \int_{\mathbb{T}}\int_{\mathbb{R}^d}\gamma(v_1)\text{Hess}\Phi(x_1+tv_1 -\overline{X}_t)\int_0^t\int_0^s \nabla\Phi(-\overline{X}_t+(t-u)\overline{V}_t+x_1+uv_1)dudsdv_1dx_1\nonumber\\
&\times\left(1+O\left(1/N\right)\right)
\label{eq: Lambda intermediaire}
\\
=&-\int_{\mathbb{T}}\int_{\mathbb{R}^d}\gamma(v+\overline{V}_t)\text{Hess}\Phi\left(x\right)\int_0^t\int_0^s \nabla\Phi\left(x+(u-t) v\right)dudsdvdx \times\left(1+O\left(1/N\right)\right)\nonumber\\
=&-\int_{\mathcal{B}(0,R)}\int_{\mathbb{R}^d}\gamma(v+\overline{V}_t)\text{Hess}\Phi\left(x\right)\int_{-t}^0\int_{-t}^{s} \nabla\Phi\left(x+u v\right)dudsdvdx \times\left(1+O\left(1/N\right)\right), \nonumber
\end{align}
where the second equality follows from the change of variables 
\begin{equation}
\label{eq: change of variable Lambda}
(x_1,v_1) \mapsto(x= x_1+tv_1 -\overline{X}_t, v = v_1 - \overline{V}_t).
\end{equation}
We now use the following lemma, the proof of which can be found below.


\begin{lemma}\label{lem:retirer_temps_drift}
We have 
\begin{align*}
&\left|\int_{\mathcal{B}(0,R)}\int_{\mathbb{R}^d}\gamma(v+\overline{V}_t)\text{Hess}\Phi\left(x\right)\int_{-t}^0\int_{-t}^{s} \nabla\Phi\left(x+u v\right)dudsdvdx\right.\\
&\qquad\left.-\int_{\mathcal{B}(0,R)}\int_{\mathbb{R}^d}\gamma(v+\overline{V}_t)\text{Hess}\Phi\left(x\right)\int_{-\infty}^0\int_{-\infty}^{s} \nabla\Phi\left(x+u v\right)dudsdvdx\right|=O\left(\frac{1}{t^{d-2}}\wedge 1\right)
\end{align*}
\end{lemma}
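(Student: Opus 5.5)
The plan is to write the difference of the two integrals as the integral of $\gamma(v+\overline{V}_t)\,\text{Hess}\Phi(x)$ against
\begin{align*}
\int_{-\infty}^0\int_{-\infty}^{s}\nabla\Phi(x+uv)\,du\,ds-\int_{-t}^0\int_{-t}^{s}\nabla\Phi(x+uv)\,du\,ds
=\int_{-\infty}^{-t}\int_{-\infty}^{s}\nabla\Phi(x+uv)\,du\,ds+\int_{-t}^{0}\int_{-\infty}^{-t}\nabla\Phi(x+uv)\,du\,ds .
\end{align*}
The key geometric observation is that, since $x\in\mathcal{B}(0,R)$ and $\text{Support}(\Phi)\subset\mathcal{B}(0,R)$, the integrand $\nabla\Phi(x+uv)$ vanishes unless $|u||v|\leq 2R$. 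Both remainder terms only involve $u\leq -t$, so they vanish unless $|v|\leq 2R/t$. On that set, the first remainder is bounded by $\|\nabla\Phi\|_\infty$ times the area of $\{(s,u):u\leq s\leq -t,\ |u|\leq 2R/|v|\}$, which is at most $(2R/|v|)^2$. The second remainder is bounded by $\|\nabla\Phi\|_\infty\,t\cdot 2R/|v|$, which is at most $(2R)^2/|v|^2$ since $t\leq 2R/|v|$ there. Hence the difference is bounded pointwise by $C|v|^{-2}\mathds{1}_{|v|\leq 2R/t}$.

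It then remains to integrate. Using $\|\text{Hess}\Phi\|_\infty$, the bounded volume of $\mathcal{B}(0,R)$, and $\gamma\leq(2\pi)^{-d/2}$ uniformly in $\overline{V}_t$, the error is bounded by
\begin{align*}
C\int_{|v|\leq 2R/t}|v|^{-2}\,dv=C'\int_0^{2R/t}r^{d-3}\,dr=O\left(t^{-(d-2)}\right),
\end{align*}
which is finite because $d\geq 3$. For small $t$ this bound blows up, so I also need the cap $O(1)$. For that I would instead use the crude bound $C|v|^{-2}$ without the indicator, and integrate it against $\gamma(v+\overline{V}_t)$. This is $O(1)$ uniformly in $\overline{V}_t$ by \eqref{lem:technique_retirer_V_moment_inverse}, or directly because $|v|^{-2}$ is locally integrable when $d\geq3$. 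The same computation shows that both full integrals are well defined, consistent with Lemma~\ref{lem:prop_coeff}(i).

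The main point needing care is the second remainder term. Its naive bound $t\cdot 2R/|v|$ grows with $t$, so one must use the constraint $t|v|\leq 2R$ to convert it into $|v|^{-2}$; otherwise the argument is routine. Combining the two regimes gives $O\left(\frac{1}{t^{d-2}}\wedge1\right)$.
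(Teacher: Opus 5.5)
Your proof is correct and is essentially the paper's argument. You use the same decomposition of the difference into the two tail terms (the paper's $I_1$ and $I_2$), and the same support constraint $|u||v|\le 2R$ forcing $|v|\le 2R/t$. The only difference is that you bound the $(s,u)$-integrals pointwise in $v$ by $C|v|^{-2}\mathds{1}_{|v|\le 2R/t}$ and then integrate in $v$, whereas the paper integrates in $v$ first to get $O(|u|^{-d}\wedge 1)$ and then in $(u,s)$; by Fubini this makes no difference.
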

This finishes the proof of Lemma~\ref{lem:reecrire_drift}.
\end{proof}


\begin{proof}[Proof of Lemma~\ref{lem:reecrire_diffusion}]
 We proceed as in \eqref{eq: Lambda intermediaire}, 
 by approximating $\left|e^{-\frac{\Phi(x)}{N}}-1\right|\leq \frac{2\|\Phi\|_\infty}{N}\mathds{1}_{x\in\mathcal{B}(0,R)}$, using the symmetry of $\Phi$ and the change of variables \eqref{eq: change of variable Lambda}, we have
\begin{align*}
\tilde{D}_{i,j}(t,X,\overline{X}_t,\overline{V}_t)=&\int_{\mathbb{T}} \int_{\mathbb{R}^d}\gamma(v_1)\left(\int_0^t\partial_j \Phi(- \overline{X}_t+(t-s)\overline{V}_t+x_1+sv_1)ds\right)\\
&\qquad\times\partial_i\Phi(- \overline{X}_t+x_1+tv_1)dv_1dx_{1} 
\times \left(1+O\left(1/N\right)\right)\\
=&\int_{\mathbb{T}}\int_{\mathbb{R}^d}\gamma(v+\overline{V}_t)\partial_i\Phi(x)\left(\int_0^t\partial_j \Phi(x+(s-t)v)ds\right)dvdx \times \left(1+O\left(1/N \right) \right)\\
=&\int_{\mathbb{T}}\int_{\mathbb{R}^d}\gamma(v+\overline{V}_t)\partial_i\Phi(x)\left(\int_{-t}^0\partial_j \Phi(x+sv)ds\right)dvdx \times \left(1+O\left(1/N\right) \right).
\end{align*}
We conclude using the following lemma.


\begin{lemma}\label{lem:retirer_temps_diffusion}
We have 
\begin{align}
\label{eq: difference lemme A.2}
&\left|\int_{\mathbb{T}}\int_{\mathbb{R}^d}\gamma(v+\overline{V}_t)\partial_i\Phi(x)\left(\int_{-t}^0\partial_j \Phi(x+sv)ds\right)dvdx\right.\\
&\qquad\left.-\int_{\mathbb{T}}\int_{\mathbb{R}^d}\gamma(v+\overline{V}_t)\partial_i\Phi(x)\left(\int_{-\infty}^0\partial_j \Phi(x+sv)ds\right)dvdx\right|=O\left(\frac{1}{t^{d-1}}\wedge 1\right)  \nonumber
\end{align}
\end{lemma}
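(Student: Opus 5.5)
The difference in \eqref{eq: difference lemme A.2} is exactly the tail
\begin{align*}
\int_{\mathbb{T}}\int_{\mathbb{R}^d}\gamma(v+\overline{V}_t)\,\partial_i\Phi(x)\left(\int_{-\infty}^{-t}\partial_j\Phi(x+sv)\,ds\right)dv\,dx ,
\end{align*}
and the plan is to bound it by geometry, using the support and the boundedness of $\nabla\Phi$.

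First we reduce to a volume computation. Since $\partial_i\Phi(x)\neq0$ forces $x\in\mathcal{B}(0,R)$, and $\partial_j\Phi(x+sv)\neq 0$ forces $|x+sv|\le R$, both factors are nonzero only if $|s||v|\le 2R$. So for fixed $v$ the $s$-integral is supported on $|s|\le 2R/|v|$. For $s\le -t$ this requires $|v|\le 2R/t$. Moreover the $s$-integral has length at most $4R/|v|$, so the tail is bounded in absolute value by
\begin{align*}
C\|\nabla\Phi\|_\infty^2 \int_{\mathcal{B}(0,R)}dx\int_{\mathbb{R}^d}dv\,\gamma(v+\overline{V}_t)\frac{1}{|v|}\mathds{1}_{|v|\le 2R/t}.
\end{align*}

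Next we integrate in $v$ uniformly in $\overline{V}_t$. We bound $\gamma(v+\overline{V}_t)\le(2\pi)^{-d/2}$ and pass to polar coordinates:
\begin{align*}
\int_{|v|\le 2R/t}\frac{dv}{|v|}=O\left(\int_0^{2R/t}w^{d-2}\,dw\right)=O\left(t^{-(d-1)}\right),
\end{align*}
which is integrable since $d\ge 2$. Together with the factor $|\mathcal{B}(0,R)|$, this gives the bound $O(t^{-(d-1)})$ for $t\ge1$.

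For $t\le 1$ we drop the constraint $|v|\le 2R/t$. We then bound $\int\gamma(v+\overline{V}_t)|v|^{-1}dv=O(1)$ uniformly in $\overline{V}_t$, using the estimate \eqref{lem:technique_retirer_V_moment_inverse} or the direct split between $|v|\le1$ and $|v|>1$. This gives $O(1)$, hence the bound $O\left(\frac{1}{t^{d-1}}\wedge 1\right)$.

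The only step needing care is the interchange and absolute integrability used to write the difference as the tail integral. The plan is to justify it by the same bound: the integrand $|\partial_i\Phi(x)\,\partial_j\Phi(x+sv)|\gamma(v+\overline{V}_t)$ is integrable on $\mathcal{B}(0,R)\times\mathbb{R}^d\times(-\infty,0]$ by the computation above with $t=0$, so Fubini applies. The periodic torus plays no role, because the constraint $|sv|\le 2R$ keeps everything inside a ball much smaller than $\mathbb{T}$.
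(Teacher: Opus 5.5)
Your proposal is correct and is essentially the paper's argument: bound both derivatives by their sup norms, and use that $x\in\mathcal{B}(0,R)$ together with $x+sv\in\mathcal{B}(0,R)$ forces $|s||v|\le 2R$. The only difference is the order of integration, which is immaterial by Tonelli: the paper integrates in $v$ first to get $\int_{-\infty}^{-t}(|s|^{-d}\wedge 1)\,ds$, whereas you integrate in $s$ first to get $\int\gamma(v+\overline{V}_t)|v|^{-1}\mathds{1}_{|v|\le 2R/t}\,dv$.
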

\end{proof}


\begin{proof}[Proof of Lemma~\ref{lem:retirer_temps_drift}]
We may compute
\begin{align*}
&\left|\int_{\mathcal{B}(0,R)}\int_{\mathbb{R}^d}\gamma(v+\overline{V}_t)\text{Hess}\Phi\left(x\right)\int_{-t}^0\int_{-t}^{s} \nabla\Phi\left(x+u v\right)dudsdvdx\right.\\
&\qquad \qquad \qquad\left.-\int_{\mathcal{B}(0,R)}\int_{\mathbb{R}^d}\gamma(v+\overline{V}_t)\text{Hess}\Phi\left(x\right)\int_{-\infty}^0\int_{-\infty}^{s} \nabla\Phi\left(x+u v\right)dudsdvdx\right| \leq I_1+I_2,
\end{align*}
with
\begin{align}
I_1 := & \left|\int_{\mathcal{B}(0,R)}\int_{\mathbb{R}^d}\gamma(v+\overline{V}_t)\text{Hess}\Phi\left(x\right)\int_{-t}^0\int_{-\infty}^{-t} \nabla\Phi\left(x+u v\right)dudsdvdx\right|,
\label{eq: def I1}\\
I_2 := &\left|\int_{\mathcal{B}(0,R)}\int_{\mathbb{R}^d}\gamma(v+\overline{V}_t)\text{Hess}\Phi\left(x\right)\int_{-\infty}^{-t}\int_{-\infty}^{s} \nabla\Phi\left(x+u v\right)dudsdvdx\right| .
\label{eq: def I2}
\end{align}
We first deal, since $x\in\mathcal{B}(0,R)$ and $x+uv\in\mathcal{B}(0,R)$ imply $v\in\mathcal{B}\left(0,\frac{2R}{|u|}\right)$, with
\begin{align*}
I_1
&=O\left( \int_{-t}^0\int_{-\infty}^{-t}\int_{\mathbb{R}^d}\gamma(v+\overline{V}_t)\mathds{1}_{v\in\mathcal{B}(0,\frac{2R}{|u|})}\int_{\mathbb{T}}\mathds{1}_{x\in\mathcal{B}(0,R)}dxdvduds\right)\\
&=O\left(   \int_{-t}^0\int_{-\infty}^{-t}\int_{\mathbb{R}^d}\gamma(v+\overline{V}_t)\mathds{1}_{v\in\mathcal{B}(0,\frac{2R}{|u|})}dvduds\right)
=O\left( t\int_{-\infty}^{-t}\left(\frac{1}{|u|^d}\wedge 1\right)du\right),
\end{align*}
since we can always bound by $\int_{\mathbb{R}^d}\gamma(v+\overline{V}_t)\mathds{1}_{v\in\mathcal{B}(0,\frac{2R}{|u|})}$, by $1$ or by $\int_{\mathbb{R}^d}\mathds{1}_{v\in\mathcal{B}(0,\frac{2R}{|u|})}dv=C_d \left(\frac{2R}{|u|}\right)^d$. This yields
\begin{align*}
I_1=&O\left( t\left((1-t)_++\frac{1}{d-1}\left(\frac{1}{t^{d-1}}\wedge1\right)\right)\right)=O\left(\frac{1}{t^{d-2}}\wedge 1\right).
\end{align*}
Let us now deal with the second term
\begin{align}
I_2 =&O\left( \int_{-\infty}^{-t}\int_{-\infty}^{s}\int_{\mathbb{R}^d}\gamma(v+\overline{V}_t)\int_{\mathbb{T}}\mathds{1}_{x\in\mathcal{B}(0,R)}\mathds{1}_{x+uv\in\mathcal{B}(0,R)}dxdvduds\right) 
\label{eq: bound I2}\\
=&O\left(\int_{-\infty}^{-t}\int_{-\infty}^{s}\int_{\mathbb{R}^d}\gamma(v+\overline{V}_t)\mathds{1}_{v\in\mathcal{B}(0,\frac{2R}{|u|})}dvduds\right) \nonumber \\
=&O\left(\int_{-\infty}^{-t}\int_{-\infty}^{s}\left(\frac{1}{|u|^{d}}\wedge 1\right)duds\right) = O\left(\frac{1}{t^{d-2}}\wedge1\right) .
\nonumber
\end{align}
Hence the result.
\end{proof}


\begin{proof} [Proof of Lemma~\ref{lem:retirer_temps_diffusion}]
Likewise, we may compute the difference in \eqref{eq: difference lemme A.2}
and, similarly as the proof of Lemma~\ref{lem:retirer_temps_drift}
\begin{align}
\label{eq: boundedness D}
&\left|\int_{\mathbb{T}}\int_{\mathbb{R}^d}\gamma(v+\overline{V}_t)\partial_i\Phi(x)\left(\int_{-\infty}^{-t}\partial_j \Phi(x+sv)ds\right)dvdx\right|\\
&\qquad\qquad\leq \|\partial_i\Phi\|_\infty\|\partial_j\Phi\|_\infty\int_{\mathbb{T}}\int_{\mathbb{R}^d}\gamma(v+\overline{V}_t)\int_{-\infty}^{-t}\mathds{1}_{x\in\mathcal{B}(0,R)}\mathds{1}_{x+sv\in\mathcal{B}(0,R)} \nonumber\\
&\qquad\qquad=O\left(\int_{-\infty}^{-t}\left(\frac{1}{|s|^d}\wedge 1\right)\right)= O\left(\frac{1}{t^{d-1}}\wedge 1\right). \nonumber
\end{align}
Hence the result.
\end{proof}

%
%
%
%

\subsection{Results on the macroscopic coefficients}\label{sec:tech_coeff}


\begin{proof}[Proof of Lemma~\ref{lem:prop_coeff}]


\textbf{(i)} 
Note that $\Lambda$ coincides with $I_2$ \eqref{eq: def I2} with $t=0$ and it has been bounded in \eqref{eq: bound I2}.
In the same way, $D_i,j(V)$ has been bounded in \eqref{eq: boundedness D}.\\


\noindent\textbf{(ii)} We are going to recover a symmetric form of the diffusion coefficient $D$ \eqref{def:diffusion}.
First, by using the symmetry of $\Phi$ and the change of variables $x \to -x$ and $s \to -s$, we get
\begin{align}
\label{eq: D symmetric temps}
D(V)=\frac{1}{2} \int_{\mathbb{R}^d}dv\gamma(v+V)
\int_{\bbR^d}dx \nabla\Phi(x)\otimes\int_\bbR \nabla\Phi(x+sv)ds.
\end{align}
Given $v,V$, we decompose $x = u v + x'$ with $x'$ in the hyperplane 
$v^\perp$ orthogonal to $v$ to obtain a symmetric representation
\begin{align}
D(V)&=\frac{1}{2} \int_{\mathbb{R}^d}dv\gamma(v+V)
\int_\bbR du \, |v| \int_{v^\perp}dx' \nabla\Phi(x' + u v )\otimes\int_\bbR \nabla\Phi(x'+(s+u)v)ds \nonumber \\
& =\frac{1}{2} \int_{\mathbb{R}^d}dv \gamma(v+V)|v|
 \int_{v^\perp}dx'
 \left( \int_\bbR du \nabla\Phi(x' + u v ) \right) \otimes  \left(  \int_\bbR ds \nabla\Phi(x'+s v) \right).
 \label{eq: D symmetric}
\end{align}
Thus $D$ is symmetric and definite positive.\\


\noindent\textbf{(iii)}
We compute for $i,j,k\in\{1,...,d\}$
\begin{align*}
\partial_k & D_{i,j}(V)=- \frac{1}{2} \int_{\mathbb{T}}\int_{\mathbb{R}^d}(v_k+V_k)\gamma(v+V)\partial_{i}\Phi(x)
\int_{-\infty}^{+\infty} \partial_{j}\Phi(x+sv)dsdvdx\\
=&-\int_{\mathbb{T}}\int_{\mathbb{R}^d}\gamma(v+V)\partial_{i}\Phi(x)\int_{-\infty}^0\partial_k\partial_{j}\Phi(x+sv)sdsdvdx
\quad \text{integrating by parts  }v_k,\\
=&\int_{\mathbb{T}}\int_{\mathbb{R}^d}\gamma(v+V)\partial_{i}\Phi(x)\int_{-\infty}^0\int_{-\infty}^s\partial_k\partial_{j}\Phi(x+uv)dudsdvdx
\quad \text{integrating by parts $s$},\\
=&-\int_{\mathbb{T}}\int_{\mathbb{R}^d}\gamma(v+V)\partial_k\partial_{j}\Phi(x)\int_{-\infty}^0\int_{-\infty}^s\partial_{i}\Phi(x+uv)dudsdvdx
\end{align*}
where we used the change of variable $x \to -(x+uv)$ and the symmetry of $\Phi$ to conclude. 
Thus $\sum_{i=1}^d\partial_iD_{i,j}(V)=\Lambda_j(V)$,
i.e, since $D$ is symmetric, for all $i\in\{1,...,d\}$ we have 
\begin{equation}
\label{eq: identite bis Lambda D}
\Lambda_i(V)=\sum_{j=1}^d\partial_jD_{i,j}(V).
\end{equation}


\noindent\textbf{(iv)} 
Using the expression \eqref{eq: D symmetric temps}, we can write
\begin{align*}
\forall i \leq d, \qquad 
\sum_{j=1}^d
\partial_j D_{i,j}(V)=&- \frac{1}{2} \sum_{j=1}^d \int_{\mathbb{T}}\int_{\mathbb{R}^d}(v_j + V_j )\gamma(v+V)\partial_{i}\Phi(x)\int_{-\infty}^{+\infty} \partial_{j}\Phi(x+sv)dsdvdx.
\end{align*}
Since 
\begin{align*}
\int_{-\infty}^{+\infty} v \cdot \nabla_x \Phi(x+sv)ds =  \int_{-\infty}^{+\infty}  \partial_s \Phi(x+sv)ds = 0,
\end{align*}
we deduce from \eqref{eq: identite bis Lambda D} that 
\begin{align*}
\forall i \leq d, \quad 
\Lambda_i (V) = 
\sum_{j=1}^d\partial_jD_{i,j}(V)=-\sum_{j}V_jD_{i,j}(V)
\quad \Rightarrow \quad
\Lambda (V)= -D(V)V.
\end{align*}
This completes (iv).\\


\noindent\textbf{(v)} Write
\begin{align*}
\hat{\Phi}(k)=\int_{\mathbb{R}^d}e^{-ik\cdot x}\Phi(x) dx\qquad\implies\qquad \Phi(x)=\frac{1}{(2\pi)^d}\int_{\mathbb{R}^d}e^{i k\cdot x}\hat{\Phi}(k) dk.
\end{align*}
Note that, since $\Phi$ is even and real, so is $\hat{\Phi}$. Therefore we have, for $j,l\in\{1,...,d\}$ 
\begin{align*}
D_{j,l}(V)=&\frac{1}{2}\int_{\mathbb{R}^d}dx\int_{\mathbb{R}^d}dv\gamma(v+V)\partial_{j}\Phi(x)\int_{-\infty}^\infty\partial_{l}\Phi(x+sv)ds\\
=&-\frac{1}{2(2\pi)^{2d}}\int_{\mathbb{R}^d}dx\int_{\mathbb{R}^d}dv\gamma(v+V)\int_{\mathbb{R}^d}dk k_je^{i k\cdot x}\hat{\Phi}(k) \int_{-\infty}^\infty\int_{\mathbb{R}^d}dk' k'_le^{i k'\cdot (x+sv)}\hat{\Phi}(k')ds\\
=&-\frac{1}{2(2\pi)^{2d}}\int_{\mathbb{R}^d}dv\int_{\mathbb{R}^d}dk\int_{\mathbb{R}^d}dk' k_j k'_l \gamma(v+V) \hat{\Phi}(k)\hat{\Phi}(k')\left(\int_{\mathbb{R}^d}dxe^{i(k+k')\cdot x}\right) \left(\int_{-\infty}^\infty e^{i s k'\cdot v}ds\right)\\
=&-\frac{\pi}{(2\pi)^{d}}\int_{\mathbb{R}^d}dv\int_{\mathbb{R}^d}dk\int_{\mathbb{R}^d}dk' k_j k'_l \gamma(v+V) \hat{\Phi}(k)\hat{\Phi}(k')\delta_{k+k'}\delta_{k'\cdot v},
\end{align*}
and thus, using the parity of $\hat{\Phi}$, we get
\begin{align*}
D(V)=&\frac{\pi}{(2\pi)^{d}} \int_{\mathbb{R}^d}dv\int_{\mathbb{R}^d}dk  \gamma(v+V) (k\otimes k)|\hat{\Phi}(k)|^2\delta_{k\cdot v}
\end{align*}
Since, by the previous point, for all $V\in\mathbb{R}^d$ we have $\Lambda(V)=\nabla\cdot D(V)$, where the divergence is taken line by line, we have
\begin{align*}
\Lambda (V)=&\frac{\pi}{(2\pi)^{d}}\int_{\mathbb{R}^d}dv\int_{\mathbb{R}^d}dk  (k\otimes k)\nabla\gamma(v+V)|\hat{\Phi}(k)|^2\delta_{k\cdot v}.
\end{align*}

\end{proof}


\begin{proof}[Proof of Lemma~\ref{lem:coeff_bornes}]


\noindent\textbf{$\bullet$ Boundedness.} The uniform in $V$ bound on $\Lambda$ and $D$ is obtained by the same proof as Lemma~\ref{lem:prop_coeff} (i).\\


\noindent\textbf{$\bullet$ Lipschitz continuity.} Similarly\footnote{For the last inequality: first assume $|V|\geq 2$. In this case, for $|v|\leq 1$, we have $|v+V|\geq 1\geq |v|$. Using that  $x\mapsto |x|e^{-x^2/4}$ is bounded and that $d\geq 3$, we write
\begin{align*}
\int_{\mathbb{R}^d}dv\gamma(v+V)\frac{|v+V|}{|v|^2}=&O\left( \int_{\mathbb{R}^d}dv\gamma\left(\frac{v+V}{\sqrt{2}}\right)\frac{1}{|v|^2}\right)=O\left(\int_{\mathcal{B}(0,1)}dv\gamma\left(\frac{v+V}{\sqrt{2}}\right)\frac{1}{|v|^2}+\int_{\mathbb{R}^d\setminus \mathcal{B}(0,1)}dv\gamma\left(\frac{v+V}{\sqrt{2}}\right)\frac{1}{|v|^2}\right)\\
=& O\left(\int_{\mathcal{B}(0,1)}dv\frac{\gamma\left(\frac{v+V}{2}\right)}{|v+V|^2}+\int_{\mathbb{R}^d\setminus \mathcal{B}(0,1)}dv\gamma\left(\frac{v+V}{\sqrt{2}}\right)\right)<\infty.
\end{align*}
If $|V|\leq 2$, we just bound $|v+V|\leq |v|+2$ and $|v+V|^2\geq \frac{|v|^2}{2}-4$. This yields the result.
}
\begin{align*}
\left|\nabla \Lambda(V)\right|=&\left|\int_{\mathcal{B}(0,R)}dx\int_{\mathbb{R}^d}dv(v+V)\gamma(v+V)\text{Hess}\Phi(x)\int_{-\infty}^0\int_{-\infty}^s\nabla\Phi(x+uv)duds\right|\\
=&O\left(\int_{\mathbb{R}^d}\gamma(v+V)|v+V|\int_{-\infty}^0\int_{-\infty}^s\mathds{1}_{|u|\in\mathcal{B}(0,\frac{2R}{|v|})}dudsdv\right)\\
=&O\left(\int_{\mathbb{R}^d}dv\gamma(v+V)\frac{|v+V|}{|v|^2}\right)<\infty,
\end{align*}
which implies that $\Lambda$ is Lipschitz continuous. Again,  similar calculations allow us to  bound $|\partial_{k}D_{i,j}(V)|$ uniformly in $V$ for all $i,j,k\in\{1,...,d\}$. Hence the result.

\end{proof}

%
%
%
%

\subsection{Tightness estimates}\label{sec:proof_tight}

Without loss of generality, it is enough to assume in this section $g_0=1$. We first state a variation on Proposition~\ref{prop:martingale_formulation} which will be useful later on. 


\begin{lemma}
\label{lem:calcul_inc_pow4}
Let $f(v) = v^k$ with $k=2,4$, then  we have for all $t\in[0,N]$
\begin{align*}
\mathbb{E}\left[f(V_t-V_0)\right]=&\frac{1}{N}\mathbb{E}\left[\int_0^t 2\nabla f(V_s-V_0)\cdot \Lambda(V_s)+\text{Hess}f(V_s-V_0):D(V_s)
ds\right]
+O\left(\frac{t}{N^{1+\frac{2}{3}\omega}}+\frac{t\wedge N^{\gamma_r}}{N^{1-\frac{\omega}{3}}}\right).
\end{align*}
Note that the error term is worse than in  Proposition~\ref{prop:martingale_formulation}.
\end{lemma}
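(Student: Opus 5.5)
The plan is to rerun the proof of Proposition~\ref{prop:martingale_formulation} with $n=0$ (no past test functions $g_i$), $\tau_n N = 0$, $\tau_{n+1}N=t$. The only difference is that the test function $f(V_s-V_0)$ now depends on $V_0$. With $f(v)=|v|^k$, $k\in\{2,4\}$, the functions $\nabla f$ and $\text{Hess} f$ are polynomials and hence unbounded. We first write
\begin{align*}
\mathbb{E}\left[f(V_t-V_0)\right]=-\hat{\mathbb{E}}\left[\int_0^t\nabla f(V_s-V_0)\cdot\nabla\Phi(X_s-x^1_s)ds\right].
\end{align*}
We then introduce $\good(s)$ and Taylor expand $\nabla f(V_s-V_0)$ around $\overline{V}_s-\overline{V}_0$. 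Since particle $1$ interacts only near $s$ on $\good(s)$, we have $V_0=\overline{V}_0$ except when $s\le 9RT_m$, and this boundary contribution gives the term $\frac{t\wedge N^{\gamma_r}}{N}$, exactly as in the decoupling-the-past step.

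The new difficulty is that the derivatives of $f$ are not bounded. To handle this, we introduce a truncation at the level $|V_s-V_0|\le N^{\omega/3}$, or use the better set. On $\mathcal{G}_N(\delta,\alpha^*,\beta^*)$, the estimate \eqref{eq:control_diff_velocite_alpha} gives $|V_s-V_0|=O(N^{\alpha^*})$ for $s\le N^{\beta^*}$. For larger $s\le N$, chaining over intervals of length $N^{\beta^*}$ gives a polynomial bound $N^{c}$. So on the better set we have $\|\nabla f\|_{W^{2,\infty}}=O(N^{3c'})$ on the relevant range. Off the better set, Proposition~\ref{prop:fin_bootstrap} gives probability $e^{-N^{\delta'}}$. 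Combined with the crude bound $|V_s-V_0|\le N^{1/2}$ from \eqref{eq:G_N_0}, or with Cauchy--Schwarz and the moments of $\mathcal{N}$ from Lemma~\ref{lem:N_Poisson}, this contribution is negligible.

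Cleaner than chaining is a cutoff. Replace $f$ by $f_M(v)=f(v)\chi(v/M)$ with $M=N^{\omega/(3(k-1))}$ (more precisely, with $M$ tuned so that $\|\nabla f_M\|_{W^{2,\infty}}\lesssim N^{\omega/3}$). Now apply the proof of Proposition~\ref{prop:martingale_formulation}. Following Remark~\ref{rk:dep_norme}, the recollision error $\frac{t}{N^{1+\omega}}$ multiplies only $\|\nabla f_M\|_\infty\lesssim N^{\omega/3}$, which yields $\frac{t}{N^{1+2\omega/3}}$. The Taylor and interaction-time errors are divided by $N^{\Omega}$ with $\Omega>\omega$. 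The boundary term multiplies $\|\nabla f_M\|_\infty$, which yields $\frac{t\wedge N^{\gamma_r}}{N^{1-\omega/3}}$.

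Finally, we remove the cutoff. The difference $f-f_M$ is supported on $\{|V_s-V_0|\ge M\}$. By Proposition~\ref{prop:fin_bootstrap} and the chaining bound above, this event is contained in the complement of the better set whenever $M$ exceeds the a priori polynomial bound. If it does not, we use a Markov/hypercontractive bound on $|V_s-V_0|$ derived from the moment bounds in $\mathcal{G}_N$.

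The main obstacle is this last step: the cutoff level must be small enough ($N^{\omega/3}$) to keep the errors, yet large enough that $\{|V_s-V_0|\ge M\}$ is negligible for $s$ up to $N$. I would try to obtain this from exponential concentration, iterating \eqref{eq: borne sqrt Phi} over $N^{1-\beta^*}$ blocks with a martingale-type Hoeffding argument as in Step~2.3, giving $|V_s-V_0|\le N^{\delta}$ with high probability.
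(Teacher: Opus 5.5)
Your plan differs from the paper's route, and as written it has a genuine gap at the step you flag yourself: removing the cutoff. You need $\{|V_s-V_0|\geq M\}$, with $M=N^{\omega/(3(k-1))}$, to be negligible for all $s\leq N$, and neither tool you propose gives this.

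\begin{itemize}
\item Chaining \eqref{eq:control_diff_velocite_alpha} over $N^{1-\beta^*}$ blocks of length $N^{\beta^*}$ only yields $|V_s-V_0|=O(N^{(1-\beta^*)/2+\alpha^*})$. For $d=4$ ($\beta^*\approx 7/12$, $\alpha^*\approx 1/24$) this is $N^{1/4}$, far above $N^{\omega/9}$. So $\{|V_s-V_0|\geq M\}$ is not contained in the complement of the better set.
\item The block-Hoeffding argument would be a new concentration result on the time scale $N$, which the paper does not have. The decorrelation in Step~2.3 of Proposition~\ref{prop:init_bootstrap} relies on fast particles not recolliding before $T_f\leq N^{\beta}$. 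Nothing provides a martingale structure for the increments $V_{(j+1)N^{\beta^*}}-V_{jN^{\beta^*}}$ across blocks.
\item The good set $\mathcal{G}_N(\delta)$ only gives $|V_s-V_0|\leq sN^{-(1-\delta)/2}$, which is useless for $s\sim N$.
\end{itemize}

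The missing observation is elementary. With $g_0=1$ the measure is stationary, so $V_s$ has law $\gamma$ for every $s$. Hence $\{|V_s-V_0|\geq M\}\subset\{|V_s|\geq M/2\}\cup\{|V_0|\geq M/2\}$ has probability $O(e^{-M^2/8})$; this is \eqref{eq:lem_inc_4_erreur_2}.

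You also need a factor $t$ in the truncation error on the left-hand side, since the lemma is claimed for all $t\in[0,N]$ and a bare $O(e^{-N^{\epsilon}})$ is too large for tiny $t$. The paper gets this factor from $|V_t-V_0|\leq\|\nabla\Phi\|_\infty\mathcal{N}t/N$, the moments of $\mathcal{N}$ (Lemma~\ref{lem:N_Poisson}), and Cauchy--Schwarz. With these two fixes your plan closes.

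The paper's route is lighter. It expands $|V_t-V_0|^4$ algebraically into terms $G(V_0)\,(H(V_t)-H(V_0))$, which are of the product form covered by Proposition~\ref{prop:martingale_formulation}. It truncates $G$ and $H$ at $|v|\leq N^{\omega/12}$, so the product of norms is $O(N^{\omega/3})$, and applies the proposition as a black box. Multiplying the proposition's error by $N^{\omega/3}$ gives exactly the stated error, so neither a rerun of the proof for the non-product test function $f(V_s-V_0)$ nor Remark~\ref{rk:dep_norme} is needed.

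In your rerun, note also that $V_0=\overline V_0$ identically, since $\overline Z_0=Z_0$. The boundary terms therefore come from the time-truncated kernels in Lemmas~\ref{lem:reecrire_drift}--\ref{lem:reecrire_diffusion} and from the first-order term, not from decoupling $V_0$. They are still of size $O((t\wedge 1)/N)$ times the norms, so this is harmless.
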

The proof of the lemma is postponed to the end of this section. It 
follows from Proposition~\ref{prop:martingale_formulation} by regularizing the test function. As an almost direct consequence we get


\begin{lemma}
Consider $s,t\in[0,N]$ such that $|t-s|\geq N^{\gamma_r+\frac{\omega}{3}}$, where we assume $\gamma_r+\frac{\omega}{3}\leq \frac{1-2\alpha^*}{3}-\delta$ (up to considering a smaller $\omega$ in Proposition~\ref{prop:martingale_formulation}). We have
\begin{equation}\label{eq:kolm_2_1}
\bbE[|V_t-V_s|^2]=O\left(\frac{|t-s|}{N}\right).
\end{equation}
Furthermore, for any $s,t\in[0,N]$, we have
\begin{equation}\label{eq:kolm_2_2}
\bbE[|V_t-V_s|^2]=O\left(\frac{|t-s|^2}{N^{1-\delta}}\right).
\end{equation}
\end{lemma}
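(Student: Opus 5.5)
We prove \eqref{eq:kolm_2_1} and \eqref{eq:kolm_2_2} separately, both after a time shift. Since the grand canonical measure is stationary (with $g_0=1$), we may assume $s=0$ and $t\ge0$. Then $\bbE[|V_t-V_s|^2]=\bbE[|V_{t-s}-V_0|^2]$.

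\textbf{First estimate.} We apply Lemma~\ref{lem:calcul_inc_pow4} with $f(v)=|v|^2$, which gives $\nabla f(w)=2w$ and $\text{Hess}f=2\,\text{Id}$. This yields
\begin{align*}
\bbE[|V_t-V_0|^2]=\frac{1}{N}\bbE\left[\int_0^t \left(4(V_u-V_0)\cdot\Lambda(V_u)+2\,\text{Tr}\,D(V_u)\right)du\right]+O\left(\frac{t}{N^{1+\frac{2}{3}\omega}}+\frac{t\wedge N^{\gamma_r}}{N^{1-\frac{\omega}{3}}}\right).
\end{align*}
The trace term is $O(t/N)$ because $D$ is bounded (Lemma~\ref{lem:coeff_bornes}). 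For the drift term we use Cauchy--Schwarz and the boundedness of $\Lambda$:
\begin{align*}
\frac{1}{N}\left|\bbE\left[\int_0^t (V_u-V_0)\cdot\Lambda(V_u)\,du\right]\right|\le \frac{C}{N}\int_0^t \bbE[|V_u-V_0|^2]^{1/2}du.
\end{align*}
Set $\phi(t)=\sup_{u\le t}\bbE[|V_u-V_0|^2]$. The inequality becomes $\phi(t)\le C\frac{t}{N}+C\frac{t}{N}\phi(t)^{1/2}+\text{error}$. Because $t\le N$, the term $\frac{t}{N}\phi^{1/2}$ is at most $\frac12\phi+C\frac{t^2}{N^2}\le\frac12\phi+C\frac tN$, so it can be absorbed. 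For the error, the assumption $t\ge N^{\gamma_r+\omega/3}$ gives $\frac{N^{\gamma_r}}{N^{1-\omega/3}}\le \frac{t}{N}$, and the other error piece is also $O(t/N)$. Hence $\phi(t)=O(t/N)$.

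One point needs care: the supremum defining $\phi$ also runs over small $u$, where the lower bound $u\ge N^{\gamma_r+\omega/3}$ fails. For those $u$ we use \eqref{eq:kolm_2_2} instead, which gives $O(u^2N^{\delta-1})$. This is $\le C\frac tN$ as long as $u^2N^{\delta}\le t$, which holds for $u\le N^{\gamma_r+\omega/3}$ provided $\delta$ and $\omega$ are small; if needed we shrink $\omega$. It is therefore convenient to prove \eqref{eq:kolm_2_2} first.

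\textbf{Second estimate.} Here we bound $|V_t-V_0|$ directly on the better set $\mathcal{G}_N(\delta)$. By \eqref{eq:G_N_3}, on $\mathcal{G}_N(\delta)$ we have $|\dot V_u|\le N^{-(1-\delta)/2}$, so $|V_t-V_0|^2\le t^2N^{\delta-1}$. On the complement, whose probability is $O(e^{-N^{\delta/4}})$ by Proposition~\ref{prop:good_set_good}, we bound $|V_t-V_0|^2\le 2|V_t|^2+2|V_0|^2$. Cauchy--Schwarz together with stationarity (each $V_u$ is Gaussian under $g_0=1$) makes this contribution $O(e^{-N^{\delta/4}/2})$. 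That quantity is negligible against $t^2N^{\delta-1}$ whenever $t\ge N^{-10}$. For $t<N^{-10}$ we instead use the short-time velocity bound from Lemma~\ref{lem:lip_emp}, namely $|\dot V|=O(1)$, which gives $O(t^2)$ off an exponentially small event. Together these prove \eqref{eq:kolm_2_2}.

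\textbf{Main obstacle.} The delicate step is closing the self-referential bound in the first estimate, namely justifying the use of $\phi$ when Lemma~\ref{lem:calcul_inc_pow4} is applied only at the final time $t$. I would avoid a true Gr\"onwall argument. Instead, apply the lemma at every intermediate time $u\in[N^{\gamma_r+\omega/3},t]$ and use \eqref{eq:kolm_2_2} for smaller $u$, which closes the inequality for $\phi(t)$ directly.
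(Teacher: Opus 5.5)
Your first estimate contains a step that fails. To bound the supremum in $\phi(t)$ over $u\le N^{\gamma_r+\omega/3}$, you need $u^2N^{\delta}\lesssim t$ for all such $u$ and all $t\ge N^{\gamma_r+\omega/3}$. Taking $u=t=N^{\gamma_r+\omega/3}$, this reads $N^{\gamma_r+\omega/3+\delta}\lesssim 1$. That is false because $\gamma_r=\frac14+\frac1{36}$ is fixed, and shrinking $\omega$ or $\delta$ cannot help. So $\phi(t)=O(t/N)$ is not established for $t$ between $N^{\gamma_r+\omega/3}$ and roughly $N^{2\gamma_r+2\omega/3+\delta}$, and your absorption argument does not close there as written.

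The fix is that no self-consistent bound is needed. The drift term only enters as $\frac{C}{N}\int_0^t\bbE[|V_u-V_0|^2]^{1/2}du$. By stationarity $V_u$ is Gaussian, so $\bbE[|V_u-V_0|^2]\le 4\bbE[|V_0|^2]=O(1)$, and the drift term is $O(t/N)$ outright. Combine this with the boundedness of $D$ and your correct treatment of the error term for $t\ge N^{\gamma_r+\omega/3}$, and \eqref{eq:kolm_2_1} follows in one line. This is exactly the paper's argument, which uses $\bbE[|V_u-V_0|]=O(1)$ and $\Lambda\in L^\infty$; the $\phi$-bootstrap is a detour.

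For \eqref{eq:kolm_2_2}, your good-set part is fine. On $\overline{\mathcal{G}_N(\delta)}$, however, the bound $|V_t-V_0|^2\le 2|V_t|^2+2|V_0|^2$ throws away the factor $t^2$. The resulting $O(e^{-N^{\delta/4}/2})$ is then not $O(t^2N^{\delta-1})$ when $t$ is exponentially small in $N$. Your patch via Lemma~\ref{lem:lip_emp} still leaves an exponentially small event on which you have no bound proportional to $t^2$.

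The remedy is to keep the $t^2$ on the bad event. Write $V_t-V_0=-\int_0^tF_u\,du$ with $F_u=\frac1N\sum_i\nabla\Phi(X_u-x^i_u)$. Jensen in time, H\"older and stationarity then give $\bbE[|V_t-V_0|^2\mathds{1}_{\overline{\mathcal{G}_N(\delta)}}]\le t^2\,\bbE[|F_0|^4]^{1/2}\,\bbP[\overline{\mathcal{G}_N(\delta)}]^{1/2}$, uniformly in $t$. This is how the paper concludes. Alternatively, use the deterministic bound $|F_u|\le\|\nabla\Phi\|_\infty\mathcal{N}/N$ together with the moment bound of Lemma~\ref{lem:N_Poisson}. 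With these two repairs your proof coincides with the paper's.
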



\begin{proof}
By time-invariance, let us prove this result for $s=0$. Assume $t\geq N^{\gamma_r+\frac{\omega}{3}}$. We have, by Lemma~\ref{lem:calcul_inc_pow4}, using the boundedness of $\Lambda, D$ from Lemma~\ref{lem:coeff_bornes} and the fact that $\bbE[|V_t-V_0|]=O(1)$

\begin{align*}
\mathbb{E}\left[\left|V_t-V_0\right|^2\right]=O\left(\frac{t}{N}+\frac{t}{N^{1+\frac{2\omega}{3}}}+\frac{t\wedge N^{\gamma_r}}{N^{1-\frac{\omega}{3}}}\right)=O\left(\frac{t}{N}\right),
\end{align*}
hence \eqref{eq:kolm_2_1}. 
 To get \eqref{eq:kolm_2_2}, it is sufficient to 
use the control \eqref{eq:G_N_3} in the good set
\begin{align}
\mathbb{E}\left[|V_t-V_0|^2\right]=&\mathbb{E}\left[\left|\frac{1}{N}\int_0^t \sum_{i}\nabla\Phi(X_s-x^i_s)ds\right|^2\mathds{1}_{\mathcal{G}_N(\delta)}\right]
+ t^2 \mathbb{E}\left[\left|\frac{1}{N t}\int_0^t \sum_{i}\nabla\Phi(X_s-x^i_s)ds\right|^2\mathds{1}_{\overline{\mathcal{G}_N(\delta)}}\right]
\nonumber \\
\leq & O\left(\frac{t^2}{N^{1-\delta}}\right)
+ t^2 \mathbb{E}\left[ \left(\frac{\sum_{i}\nabla\Phi(X_0-x^i_0)}{N}\right)^4\right]^{1/2}
\mathbb{P}\left[\overline{\mathcal{G}_N(\delta)}\right]^{1/2}
= O\left(\frac{t^2}{N^{1-\delta}}\right),
\label{eq: difference temps court}
\end{align}
where we used successively Jensen and H\"older inequalities as well as Proposition \ref{prop:good_set_good} to estimate the complement of the good set.
\end{proof}

 We now turn our attention to the proof of \eqref{eq:equi_pow_4}
 

\begin{proof}[Proof of Lemma~\ref{lem:equi}] 
The proof is split into two steps according to short and large times.
We start by assuming $t\geq N^{\gamma_r+\frac{2\omega}{3}}$. Using Lemma \ref{lem:calcul_inc_pow4}, we get 
\begin{align}
\mathbb{E}\left[|V_t-V_0|^4\right]=&\frac{1}{N}\mathbb{E}\left[\int_0^t \left(8\Lambda (V_s)\cdot (V_s-V_0)|V_s-V_0|^2+ 8 D(V_s): (V_s-V_0)\otimes(V_s-V_0)\right.\right.\nonumber\\
&\left.\left.\qquad\qquad+4\text{Tr}(D(V_s))|V_s-V_0|^2\right)ds\right] +O\left(\frac{t}{N^{1+\frac{2}{3}\omega}}+\frac{t\wedge N^{\gamma_r}}{N^{1-\frac{\omega}{3}}}\right)\label{eq:int_equi_pow_4_int_1}.
\end{align}
By the properties $\Lambda\in L^\infty(\mathbb{R}^d)$  and $D\in L^\infty(\mathbb{R}^d,\mathbb{R}^{d\times d})$ from Lemma~\ref{lem:coeff_bornes}, we get
\begin{align}
\mathbb{E}\left[|V_t-V_0|^4\right]\leq&\frac{1}{N}\mathbb{E}\left[\int_0^t 4\|\Lambda\|_\infty |V_s-V_0|^4+(4\|\Lambda\|_\infty+(8+4d)\|D\|_\infty )|V_s-V_0|^2ds\right]\nonumber\\
&+O\left(\frac{t}{N^{1+\frac{2\omega}{3}}}+\frac{t\wedge N^{\gamma_r}}{N^{1-\frac{\omega}{3}}}\right).\label{eq:int_equi_pow_4}
\end{align}
Notice that for all $s\in[0,t]$
\begin{align}
\mathbb{E}\left[|V_s-V_0|^4\right]\leq 8\mathbb{E}\left[|V_s|^4+|V_0|^4\right]=O\left(1\right),\label{eq:int_kolm_4_1}
\end{align}
and that for $t\geq N^{\gamma_r+\frac{2\omega}{3}}$, using \eqref{eq:kolm_2_1}-\eqref{eq:kolm_2_2}, we have
\begin{align}
\frac{1}{N}\int_0^t \bbE[|V_s-V_0|^2]ds=&\frac{1}{N}\int_0^{N^{\gamma_r+\frac{2\omega}{3}}} \bbE[|V_s-V_0|^2]ds+\frac{1}{N}\int_{N^{\gamma_r+\frac{2\omega}{3}}}^t \bbE[|V_s-V_0|^2]ds\nonumber\\
=&O\left(\frac{1}{N}\int_0^{N^{\gamma_r+\frac{2\omega}{3}}} \frac{s^2}{N^{1-\delta}}ds+\frac{1}{N}\int_{N^{\gamma_r+\frac{2\omega}{3}}}^t \frac{s}{N}ds\right)\nonumber\\
=&O\left(\frac{N^{3(\gamma_r+\frac{2\omega}{3})}}{N^{2-\delta}}+\frac{t^2}{N^2}\right)
=O\left(\left(\frac{t}{N}\right)^{3/2}\right),\label{eq:int_kolm_4_2}
\end{align}
where for this last estimate we use $\frac{3}{2}\gamma_r+\omega\leq 1-\delta$ (again, up to considering a smaller $\omega$ in Proposition~\ref{prop:martingale_formulation}).

Plugging \eqref{eq:int_kolm_4_1}-\eqref{eq:int_kolm_4_2} back into \eqref{eq:int_equi_pow_4} first yields
\begin{align*}
\mathbb{E}\left[|V_t-V_0|^4\right]=O\left(\frac{t}{N}+\left(\frac{t}{N}\right)^{3/2}+\frac{t}{N^{1+\frac{2\omega}{3}}}+\frac{t\wedge N^{\gamma_r}}{N^{1-\frac{\omega}{3}}}\right)=O\left(\frac{t}{N}\right),
\end{align*}
and then using again this new estimate in \eqref{eq:int_equi_pow_4}, we conclude that 
\begin{align*}
\mathbb{E}\left[|V_t-V_0|^4\right]=O\left(\left(\frac{t}{N}\right)^{3/2}+\frac{t}{N^{1+\frac{2\omega}{3}}}+\frac{t\wedge N^{\gamma_r}}{N^{1-\frac{\omega}{3}}}\right).
\end{align*}
For $t\geq  N^{\gamma_r+\frac{2\omega}{3}}$, we thus have 
\begin{equation}
\label{eq:tight_borne_eps}
\mathbb{E}\left[|V_t-V_0|^4\right]=O\left(\left(\frac{t}{N}\right)^{1+\epsilon}\right)
\quad \text{with} \quad 
\epsilon=\min\left( \frac{\omega/3}{1-\gamma_r-\frac{\omega}{3}}, \frac{2\omega}{3},\frac{1}{2}\right)>0.
\end{equation}

\medskip

Considering now $t\leq N^{\gamma_r+\frac{2\omega}{3}}$, we get, by proceeding as in \eqref{eq: difference temps court}
\begin{align*}
\mathbb{E}\left[|V_t-V_0|^4\right]
=&O\left(\frac{t^4}{N^{2(1-\delta)}}\right)=O\left(\left(\frac{t}{N}\right)^{1+\epsilon}\right),
\end{align*}
where we use \eqref{eq:tight_borne_eps} for this last bound. This and \eqref{eq:tight_borne_eps} completes the proof of Lemma \ref{lem:equi}. 
\end{proof}

\medskip

It now only remains to prove Lemma~\ref{lem:calcul_inc_pow4}.


\begin{proof}[Proof of Lemma~\ref{lem:calcul_inc_pow4}] 
We simply consider the case $f(v) =|v|^4$.
We expand 
\begin{align*}
\mathbb{E}\left[|V_t-V_0|^4\right]=&2\mathbb{E}\left[|V_0|^2(|V_t|^2-|V_0|^2)\right]-4\mathbb{E}\left[V_0\cdot (V_t|V_t|^2-V_0|V_0|^2)\right]-4\mathbb{E}\left[V_0|V_0|^2\cdot (V_t-V_0)\right]\\
&+4\sum_{i,j=1}^d\mathbb{E}\left[V_0^iV_0^j(V_t^iV_t^j-V_0^iV_0^j)\right]+\mathbb{E}\left[(|V_t|^4-|V_0|^4)\right]\\
=&:I_1+I_2+I_3+I_4+I_5
\end{align*}
Note that $I_5=0$, we only add it to be able to close the calculations below. For each term above, we want to use our martingale approximation in order to control the increments. 
Below, we extensively use the following facts
\begin{itemize}
\item for any $s\in[0,t]$ and for any $k\in\mathbb{N}$, we have 
\begin{align}
\mathbb{E}\left[|V_s|^k\right]=O_k(1),\qquad \mathbb{E}\left[|\mathcal{N}|^k\right]=O_k(N^k|\mathds{T}|^k)\label{eq:lem_inc_4_erreur_1},
\end{align}
\item there exists $\epsilon>0$ such that for any $s\in[0,t]$
\begin{align}
\mathbb{E}\left[\mathds{1}_{|V_s|\geq N^{\omega/12}}\right]=O\left(e^{-N^{\epsilon}}\right) \label{eq:lem_inc_4_erreur_2}.
\end{align}
\end{itemize}
We only write how to deal with $I_1$, as the four other terms can be dealt with similarly.

Consider a smooth compactly supported function $f$ satisfying $f(v):=|v|^2$ for $|v|\leq N^{\frac{\omega}{12}}$ and $\|f\|_\infty+\|\nabla f\|_{W^{2,\infty}}=O\left(N^{\frac{\omega}{6}}\right)$. This way we may write
\begin{align*}
&\left||V_0|^2(|V_t|^2-|V_0|^2)-f(V_0)(f(V_t)-f(V_0))\right|\\
&\qquad=O\left( \mathcal{N}\frac{t}{N}\left(\left(N^{\frac{\omega}{6}}\right)^2+|V_0|^2(|V_t|+|V_0|)\right)\left(\mathds{1}_{|V_0|\geq N^{\omega/12}}+\mathds{1}_{|V_t|\geq N^{\omega/12}}\right)\right).
\end{align*}
Here, we use $|V_t-V_0|=O\left(\mathcal{N}\frac{t}{N}\right)$, $|f(V_0)(f(V_t)-f(V_0))|\leq \|f\|_\infty\|\nabla f\|_\infty|V_t-V_0|$, and $|V_t|^2-|V_0|^2=(V_t-V_0)\cdot (V_t+V_0)$. Note that, using \eqref{eq:lem_inc_4_erreur_1} and \eqref{eq:lem_inc_4_erreur_2}, this error actually yields
\begin{align*}
\mathbb{E}\left[\left||V_0|^2(|V_t|^2-|V_0|^2)-f(V_0)(f(V_t)-f(V_0))\right|\right]=O\left(\frac{t}{N^{1+\frac{2\omega}{3}}}\right)
\end{align*}
  For $v\leq N^{\omega/12}$, we have $\nabla f(v)=2v$ and $\text{Hess}f(v)=2I_d$, which yields
\begin{align*}
\mathbb{E}\left[f(V_0)(f(V_t)-f(V_0))\right]=&\frac{1}{N}\mathbb{E}\left[f(V_0)\int_0^t 2\nabla f(V_s)\cdot \Lambda(V_s)+\text{Hess}f(V_s):D(V_s)ds \right]\\
&+O\left(\left(\|f\|_\infty+\|\nabla f\|_{W^{2,\infty}}\right)^2\left(\frac{t}{N^{1+\omega}}+\frac{t\wedge N^{\gamma_r}}{N}\right)\right)\\
=&\frac{1}{N}\mathbb{E}\left[|V_0|^2\int_0^t \left(4V_s\cdot \Lambda(V_s)+2\text{Tr}(D(V_s))\right)\mathds{1}_{|V_0|, |V_s|\leq N^{\omega/12}}ds \right]\\
&+O\left(\frac{N^{\frac{\omega}{3}}}{N}\int_0^t\mathbb{E}\left[\mathds{1}_{|V_0|\geq N^{\omega/12}}+\mathds{1}_{|V_s|\geq N^{\omega/12}}\right]ds\right)+O\left(\frac{t}{N^{1+\frac{2\omega}{3}}}+\frac{t\wedge N^{\gamma_r}}{N^{1-\frac{\omega}{3}}}\right).
\end{align*}
Again, from \eqref{eq:lem_inc_4_erreur_2}, we may get rid of the indicator function in the time integral, and we also have 
\begin{align*}
\frac{N^{\frac{\omega}{3}}}{N}\int_0^t\mathbb{E}\left[\mathds{1}_{|V_0|\geq N^{\omega/12}}+\mathds{1}_{|V_s|\geq N^{\omega/12}}\right]ds=O\left(\frac{t}{N^{1+\frac{2\omega}{3}}}\right).
\end{align*}
Finally we get
\begin{align*}
\mathbb{E}\left[|V_0|^2(|V_t|^2-|V_0|^2)\right]=\frac{1}{N}\mathbb{E}\left[|V_0|^2\int_0^t \left(4V_s\cdot \Lambda(V_s)+2\text{Tr}(D(V_s))\right)ds \right]+O\left(\frac{t}{N^{1+\frac{2\omega}{3}}}+\frac{t\wedge N^{\gamma_r}}{N^{1-\frac{\omega}{3}}}\right).
\end{align*}
Reproducing the same argument (i.e. considering smooth compactly supported approximation of the various terms involved, and applying Proposition~\ref{prop:martingale_formulation}), for $I_2,\dots, I_5$ and combining the approximations, we conclude the proof. The case $f(v)=|v|^2$ would be dealt with similarly, using
\begin{align*}
\mathbb{E}\left[\left|V_t-V_0\right|^2\right]=\mathbb{E}\left[\left|V_t\right|^2+\left|V_0\right|^2-2V_t\cdot V_0\right]=-2\mathbb{E}\left[V_0\cdot \left(V_t-V_0\right)\right].
\end{align*}
\end{proof}

%
%
%
%

\subsection{Precise Grönwall's lemmas}

We present here several Grönwall-like estimates for second order systems. The first one is a quite usual estimate, in which we use a Grönwall-like argument on the norm of the solution.


\begin{lemma}\label{lem:Grönwall_precis_alpha}
Let $x:\mathbb{R}^+\mapsto \mathbb{R}^d$. Assume that there are two continuous functions $\textbf{a}:\mathbb{R}^+\mapsto \mathbb{R}^{d\times d}$ and $\textbf{b}:\mathbb{R}^+\mapsto \mathbb{R}^{d}$ satisfying
\begin{align*}
\exists C_A,a>0,\quad \forall t\geq0,\qquad \|\textbf{a}(t)\|\leq \frac{C_A}{N^a},
\end{align*}
 such that 
 \begin{align*}
x''(t)=\textbf{a}(t)x(t)+\textbf{b}(t)\quad\text{ for }\quad t\geq0.
\end{align*}
There exists $c>0$ depending only on $C_A$ such that for any $N\in\mathbb{N}$ and any $0\leq t\leq N^{\frac{a}{2}}$ we have
\begin{equation}
\label{eq:gron_formule_simple_cas_simple}
|x(t)|+t|x'(t)|\leq c t^2\sup_{s\in[0,t]}|\textbf{b}(s)|+c\left(|x(0)|^2+t^2|x'(0)|^2\right)^{1/2}.
\end{equation}
\end{lemma}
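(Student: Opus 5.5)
We argue by an energy-type Grönwall estimate on the first-order system satisfied by $(x,x')$. Set $\varepsilon:=C_A N^{-a}$, so that $\|\textbf{a}(t)\|\le \varepsilon$, and fix a horizon $t\le N^{a/2}$. Then $\varepsilon s^2\le C_A$ for all $s\le t$, and this is the only place the restriction on $t$ enters. Write $B(t):=\sup_{s\in[0,t]}|\textbf{b}(s)|$.

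The first step is the Duhamel representation. Integrating $x''=\textbf{a}x+\textbf{b}$ twice gives
\begin{align*}
x'(s)&=x'(0)+\int_0^s\big(\textbf{a}(u)x(u)+\textbf{b}(u)\big)\,du,\\
x(s)&=x(0)+s\,x'(0)+\int_0^s(s-u)\big(\textbf{a}(u)x(u)+\textbf{b}(u)\big)\,du.
\end{align*}
We introduce $M(s):=\sup_{u\le s}|x(u)|$, which is nondecreasing. The second identity yields
\[
|x(s)|\le |x(0)|+s|x'(0)|+\tfrac{s^2}{2}B(t)+\varepsilon\int_0^s (s-u)\,M(u)\,du .
\]

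The main point is to close this inequality without producing an exponential in $t$. A crude bound of $M(u)$ by $M(s)$ gives $M(s)\le A+\tfrac{\varepsilon s^2}{2}M(s)$, with $A:=|x(0)|+t|x'(0)|+\tfrac{t^2}{2}B(t)$. This closes only if $\varepsilon s^2/2<1$, that is, only if $C_A<2$, so it does not handle a general constant. Instead we apply a Grönwall-type comparison with the second-order kernel. Let $\phi$ solve $\phi''=\varepsilon\phi$ with $\phi(0)=A$ and $\phi'(0)=0$, namely $\phi(s)=A\cosh(\sqrt\varepsilon\,s)$. A standard comparison argument (iterate the Volterra inequality, or use monotonicity of the resolvent of the positive kernel $\varepsilon(s-u)$) gives
\[
M(s)\le A\cosh(\sqrt{\varepsilon}\,s)\le A\cosh(\sqrt{C_A}).
\]
The last inequality uses $\sqrt\varepsilon\, s\le \sqrt{C_A}$ for $s\le t\le N^{a/2}$. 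Hence $|x(s)|\le c_1A$ with $c_1:=\cosh\sqrt{C_A}$.

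For the derivative, we insert this bound into the first Duhamel identity:
\[
t|x'(t)|\le t|x'(0)|+t^2B(t)+\varepsilon t^2 c_1 A\le t|x'(0)|+t^2B(t)+C_A c_1 A .
\]
Combining the two estimates gives $|x(t)|+t|x'(t)|\le c\,\big(t^2B(t)+|x(0)|+t|x'(0)|\big)$, where $c$ depends only on $C_A$. Finally, $|x(0)|+t|x'(0)|\le\sqrt2\,\big(|x(0)|^2+t^2|x'(0)|^2\big)^{1/2}$, which yields \eqref{eq:gron_formule_simple_cas_simple}.
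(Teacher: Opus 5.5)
Your proof is correct, but it follows a different route from the paper's.

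\textbf{The paper's route.} It works with the weighted energy $|x(s)|^2+N^{2\epsilon}|x'(s)|^2$. Differentiating it and using $\|\textbf{a}\|\le C_A N^{-a}$ together with Young's inequality gives a differential inequality with rate $(2+C_A N^{2\epsilon-a})N^{-\epsilon}$. The classical Grönwall lemma then applies, and only at the end is the weight chosen as $N^{\epsilon}=t$. With $t\le N^{a/2}$ the exponent becomes $2+C_A$. This controls $|x|$ and $t|x'|$ simultaneously and produces the combination $(|x(0)|^2+t^2|x'(0)|^2)^{1/2}$ directly.

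\textbf{Your route.} You pass to the Volterra integral form and bound $M$ by comparison with the exact solution $A\cosh(\sqrt{\varepsilon}\,s)$ of the extremal equation $\phi''=\varepsilon\phi$. You then recover $x'$ from the first Duhamel identity. The iteration is legitimate: the kernel $\varepsilon(s-u)$ is nonnegative, $K^n1=\varepsilon^n s^{2n}/(2n)!$, and $M$ is bounded on $[0,t]$. The passage from the bound on $|x(s)|$ to the same bound on $M(s)$ uses that the right-hand side is nondecreasing in $s$ for $s\le t$; you should state this explicitly.

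\textbf{Comparison.} Your route gives the sharper constant $\cosh\sqrt{C_A}$, which matches exactly the $\cosh$ example in the paper's sharpness remark. It also needs no tuning of a weight. The paper's route avoids any comparison principle for integral inequalities and is a short differential argument.

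Your Neumann-series viewpoint is close in spirit to the Peano--Baker series the paper uses for Lemma~\ref{lem:gronw_opt_racine}. There, however, only integrated control on $\textbf{a}$ is available, so one cannot bound $|\textbf{a}(u)x(u)|$ pointwise as you do here.
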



\begin{remark}[Sharpness of the bound from Lemma~\ref{lem:Grönwall_precis_alpha}]
Consider the specific case of 
\begin{align*}
x''(t)=\frac{1}{N^a}x(t)+\frac{1}{N^b},
\end{align*}
with $x(0)=x'(0)=0$. We may explicitly compute the solution and get for $t\geq0$
\begin{align*}
x(t)=\frac{N^a}{N^b}\left(\frac{e^{\frac{t}{N^{a/2}}}+e^{-\frac{t}{N^{a/2}}}}{2}-1\right).
\end{align*}
In particular, for $t\ll N^{\frac{a}{2}}$, we obtain
 \begin{align*}
x(t)\simeq \frac{N^a}{N^b}\frac{t^2}{N^{a}}=\frac{t^2}{N^b},
\end{align*}
which is exactly the control we have from Lemma~\ref{lem:Grönwall_precis_alpha}. Likewise if $t$ is of order $N^{\frac{a}{2}}$, we obtain in both cases $x(t)\simeq \frac{N^a}{N^b}$. Finally, notice that one cannot expect a better control for $t\gg N^{\frac{a}{2}}$ because of the exponential term.
\end{remark}

The second lemma is a bit more challenging. We no longer want to use a pointwise control on the drift term (i.e. on $\|a(t)\|$ for all $t$), but rather an averaged control  (i.e.  on $\left\|\int_0^ta(s)ds\right\|$). This seems standard when considering Grönwall's inequalities. In dimension one, for a system of order one and dismissing the term $b$, this would (roughly) amount to considering $e^{-\int_0^t a(s)ds}x(t)$, derivating to observe a nonpositive term, and finally obtaining a control of $|x(t)|$ by $e^{\left|\int_0^t a(s)ds\right|}x(0)$. In the case of non constant matrix $a$, this is however no longer possible because $a(t)$ does not a priori commute with $\int_0^ta(s)ds$, which creates several issue. We thus rely on a explicit formulation of a solution via a Peano series.


\begin{lemma}\label{lem:gronw_opt_racine}
Let $T>0$ and consider $x:[0,T]\mapsto \mathbb{R}^d$. Assume that there are two continuous functions $\textbf{a}:[0,T]\mapsto \mathbb{R}^{d\times d}$ and $\textbf{b}:[0,T]\mapsto \mathbb{R}^{d}$ satisfying 
\begin{equation}
\exists C_A,a,\xi\geq0,\quad \forall t,s\in [0,T] \text{ s.t. } 0\leq t-s\leq N^\xi,\quad  \left\|\int_s^{t}\textbf{a}(u)du\right\|\leq C_A\frac{\sqrt{t-s}}{N^a}
\end{equation}
such that
\begin{align*}
x''(t)=\textbf{a}(t)x(t)+\textbf{b}(t)\quad\text{ for }\quad t\in [0,T].
\end{align*}
Assume also that $\xi\geq \frac{2a}{3}$. There exists $c$ an explicit constant depending only on $C_A$ (and independent of $N$ and $t$), such that for all $\epsilon\in[0, \frac{2a}{3}]$ and all $t\in [0,N^\epsilon\wedge T]$, we have the following estimate
\begin{align*}
\left(|x(t)|^2+N^{2\epsilon}|x'(t)|^2\right)^{\frac{1}{2}}\leq& c\left(|x(0)|^2+N^{2\epsilon}|x'(0)|^2\right)^{\frac{1}{2}}+cN^{2\epsilon}\sup_{s\in[0,t]}|\textbf{b}(s)|.
\end{align*}
In particular, for $x(0)=x'(0)=0$, we obtain for any $t\leq  N^{\frac{2a}{3}}\wedge T$
\begin{equation}\label{eq:gron_formule_simple_cas_complique}
|x(t)|\leq ct^2\sup_{s\in[0,t]}|\textbf{b}(s)|\quad\text{ and }\quad|x'(t)|\leq ct\sup_{s\in[0,t]}|\textbf{b}(s)|.
\end{equation}
\end{lemma}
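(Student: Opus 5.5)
We write the solution in Peano series form. Recast the equation as a first-order system for $Y(t)=(x(t),x'(t))$:
\begin{align*}
Y'=\mathbf{A}(t)Y+B(t),\qquad \mathbf{A}(t)=\begin{pmatrix}0&I\\ \mathbf{a}(t)&0\end{pmatrix},\qquad B=(0,\mathbf{b}).
\end{align*}
The resolvent $R(t,s)$ is the Peano--Dyson series $\sum_{n\ge0}\int_{s\le u_n\le\dots\le u_1\le t}\mathbf{A}(u_1)\cdots\mathbf{A}(u_n)\,du$, and Duhamel gives $Y(t)=R(t,0)Y(0)+\int_0^tR(t,s)B(s)\,ds$. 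It therefore suffices to bound the resolvent in the weighted norm $|(x,y)|_\epsilon=(|x|^2+N^{2\epsilon}|y|^2)^{1/2}$ uniformly for $0\le s\le t\le N^\epsilon$. Concretely, we aim for $\|R(t,s)\|_\epsilon\le c$. Plugging this into Duhamel gives the source contribution $c\,t\sup|\mathbf{b}|\,N^{\epsilon}\le cN^{2\epsilon}\sup|\mathbf{b}|$, since $B$ has size $N^\epsilon|\mathbf{b}|$ in the weighted norm. The special case \eqref{eq:gron_formule_simple_cas_complique} then follows by taking $\epsilon$ with $N^\epsilon=t$.

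To bound the series, we avoid a pointwise bound on $\mathbf{a}$ and use only its integrated control. Each product $\mathbf{A}(u_1)\cdots\mathbf{A}(u_n)$ alternates between the constant block $J=\begin{pmatrix}0&I\\0&0\end{pmatrix}$ and $\begin{pmatrix}0&0\\ \mathbf{a}&0\end{pmatrix}$, with nonzero contributions only from alternating words. Equivalently, we iterate the scalar-like identity $x(t)=x(0)+tx'(0)+\int_0^t(t-u)\mathbf{a}(u)x(u)\,du$. The key step is to integrate by parts in each $\mathbf{a}$ variable against the primitive $\mathcal{A}(u)=\int_s^u\mathbf{a}$. This transfers the time derivative onto the smooth kernel $(t-u)$ and onto $x(u)$, which in turn is controlled through $x'$. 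The result is a Volterra-type inequality for $m(t)=\sup_{[s,t]}|Y|_\epsilon$ whose coefficients involve $\|\mathcal{A}(u)\|\le C_A\sqrt{u-s}\,N^{-a}$ rather than $\|\mathbf{a}\|$.

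Concretely, $\int_s^t(t-u)\mathbf{a}(u)x(u)\,du=\int_s^t\mathcal{A}(u)\big(x(u)-(t-u)x'(u)\big)\,du$. Its size is at most $C_AN^{-a}\int_s^t\sqrt{u-s}\,(|x|+t|x'|)\,du\lesssim N^{-a}t^{3/2}\,m$. Since $t\le N^\epsilon\le N^{2a/3}$, the factor $N^{-a}t^{3/2}$ is $O(1)$. The same manipulation applied to $x'(t)=x'(0)+\int\mathbf{a}x$ gives $N^\epsilon|x'(t)|\lesssim N^\epsilon\|\mathcal{A}(t)\|\,|x(t)|+N^\epsilon\int\|\mathcal{A}\|\,|x'|$, which is again $O(N^{3\epsilon/2-a})\,m$. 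Thus one pass yields $m\le |Y(0)|_\epsilon+Cm$ with $C=O(1)$ but not necessarily small. To close the argument we will split $[0,t]$ into a bounded number of subintervals of length $\kappa N^{2a/3}$, with $\kappa$ small depending only on $C_A$. On each subinterval the constant is at most $1/2$, so $m\le 2(\text{initial value})$. The hypothesis $\xi\ge 2a/3$ guarantees that the integrated bound holds on each such window. Chaining boundedly many windows costs a factor $2^{O(1)}$, which is an explicit constant depending only on $C_A$.

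The main obstacle is the non-commutativity of $\mathbf{a}(t)$ with $\mathcal{A}(t)$. This is why we phrase the argument as an integration by parts in the integral equation, applied before taking norms, rather than as an exponential integrating factor. The point to check carefully is that the boundary terms produced at the window endpoints are controlled by the weighted norm of $Y$ at those endpoints. This holds because the primitive $\mathcal{A}$ is reset at the start of each window, so no term involving a pointwise bound on $\mathbf{a}$ ever appears.
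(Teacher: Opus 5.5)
Your argument is correct, but it takes a different route from the paper.

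\textbf{The paper's route.} The paper also uses $Y=(x,N^{\epsilon}x')$ and the Peano--Baker series, but it bounds the series term by term. In $I_{2k}(t,s)$ the relevant block is an iterated integral of $\mathbf{a}(t_{2k})\mathbf{a}(t_{2k-2})\cdots\mathbf{a}(t_2)$. Each even variable $t_{2j}$ only ranges over $[t_{2j-1},t_{2j+1}]$, so the even variables can be integrated first. Each gives a factor $\|\int_{t_{2j-1}}^{t_{2j+1}}\mathbf{a}\|\le C_AN^{-a}\sqrt{t_{2j+1}-t_{2j-1}}$. The remaining nested integrals of square roots are Beta integrals, so $|I_{2k}(t,s)B(s)|\lesssim C_BN^{\epsilon}A^{3k/2}/\Gamma(\frac{3}{2}k+1)$ with $A\simeq C_A^{2/3}N^{-2a/3}(t-s)$. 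The resulting Mittag-Leffler-type series is bounded by a constant times $e^{A}$ for all $t-s\le N^{\epsilon}\le N^{2a/3}$, with no smallness required.

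\textbf{Your route.} You never actually bound the series; the remark on alternating words is not used. Instead you integrate by parts against the reset primitive $\mathcal{A}$ directly in the Volterra equations for $x$ and $x'$. This is legitimate: $\mathcal{A}(u)$ always stays to the left of $x(u)$, and the boundary terms either vanish or are absorbed into $m$. You then close by absorption on $O(1/\kappa)$ windows, and the resolvent bound follows from the homogeneous estimate started at any $s$.

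The bookkeeping checks out. On a window of length $\ell\le\min(\kappa N^{2a/3},N^{\epsilon})$, the absorbed coefficient is $O(C_A\kappa^{3/2})$ in the $x$-equation. In the $N^{\epsilon}x'$-equation it is $O(C_A\kappa^{1/2})$, which uses $N^{\epsilon-a}\sqrt{\ell}\le\kappa^{1/2}$, i.e.\ $\epsilon\le 2a/3$. The per-window factor is more like $4\sqrt{2}$ than $2$, which is harmless.

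\textbf{Comparison.} Your approach is more elementary: integration by parts plus absorption, with no reordering of iterated integrals and no Gamma-function computations. The paper's approach avoids windowing and gives a better constant, roughly $\exp(O(C_A^{2/3}))$. Yours is $\exp(O(C_A^{2}))$, because $\kappa\sim C_A^{-2}$. Both constants are explicit and depend only on $C_A$, as the statement requires.
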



\begin{proof}[Proof of Lemma~\ref{lem:Grönwall_precis_alpha}]
Let us start by proving that for all $\epsilon>0$, we have the following estimate for all $t\geq0$
\begin{equation}
|x(t)|^2+N^{2\epsilon}|x'(t)|^2\leq \frac{N^{4\epsilon}\sup_{[0,t]}|\textbf{b}(s)|^2}{\left(2+\frac{C_A}{N^{a-2\epsilon}}\right)}\left(e^{\left(2+\frac{C_A}{N^{a-2\epsilon}}\right)\frac{t}{N^\epsilon}}-1\right)+\left(|x(0)|^2+N^{2\epsilon}|x'(0)|^2\right)e^{\left(2+\frac{C_A}{N^{a-2\epsilon}}\right)\frac{t}{N^\epsilon}}.\label{eq:gron_formule_generale_cas_simple}
\end{equation}
Compute, for $0\leq s \leq t$
\begin{align*}
\frac{d}{ds}\left(|x(s)|^2+N^{2\epsilon}|x'(s)|^2\right)=&2x(s)\cdot x'(s)+2N^{2\epsilon}x'(s)\cdot x''(s)\\
=&2x(s)\cdot x'(s)+2N^{2\epsilon}\textbf{a}(s)x(s)\cdot x'(s)+2N^{2\epsilon}x'(s)\cdot \textbf{b}(s)\\
\leq& \frac{1}{N^\epsilon}\left(1+\frac{N^{2\epsilon}C_A}{N^a}\right)|x(s)|\left(N^{\epsilon}|x'(s)|\right)+2\left(N^{\epsilon}|x'(s)|\right)\left(N^\epsilon \sup_{u\in[0,t]}|\textbf{b}(u)|\right)\\
\leq&\frac{1}{N^\epsilon}\left(2+\frac{N^{2\epsilon}C_A}{N^a}\right)(|x
(s)|^2+N^{2\epsilon}|x'(s)|^2)+N^{3\epsilon}\sup_{u\in[0,t]}|\textbf{b}(u)|^2.
\end{align*}
Therefore, Grönwall's lemma yields \eqref{eq:gron_formule_generale_cas_simple}.
Considering  $\epsilon=\frac{\ln(t)}{\ln(N)}$ for $t\leq N^{a/2}$ then yields
\begin{align*}
|x(t)|^2+t^2|x'(t)|^2\leq \sup_{[0,t]}|\textbf{b}(s)|^2\frac{t^4}{2+C_A}\left(e^{\left(2+C_A\right)}-1\right)+\left(|x(0)|^2+t^2|x'(0)|^2\right)e^{(2+C_A)}
\end{align*}
which in turn yields \eqref{eq:gron_formule_simple_cas_simple}.
\end{proof}


\begin{proof}[Proof of Lemma~\ref{lem:gronw_opt_racine}] 
Let us do the proof assuming that $x(0)=x'(0)=0$, the calculations being otherwise similar. Let $\epsilon\leq \frac{2a}{3}$ and consider $t\leq N^\epsilon$. We denote $C_B=\sup_{s\in[0,t]}|\textbf{b}(s)|$.
Writing $Y(t)=\left(x(t), N^{\epsilon}x'(t)\right)^T$, we have
\begin{equation}\label{eq:first_order_ode}
Y'(t)=A(t)Y(t)+B(t), \quad A(t)=\left(\begin{array}{cc}0 &  N^{-\epsilon}I_d\\ N^{\epsilon}\textbf{a}( t)&0\end{array}\right),\quad B(t)=\left(\begin{array}{c}0\\  N^{\epsilon}\textbf{b}(t)\end{array}\right).
\end{equation}
The explicit solution to \eqref{eq:first_order_ode} is given by the Peano-Baker series\footnote{Note that this series is well-defined, since $\|I_k(t,s)\|\leq \frac{(t-s)^k}{k!}\|A\|^k_\infty$ ($a$ is assume to be continuous, therefore $\sup_{[0,T]}\|a\|<\infty$).}
\begin{align*}
Y(t)=\Phi(t,0)Y(0)+\int_{0}^t\Phi(t,s)B(s)ds,
\end{align*}
where, for $N^{\epsilon}\geq t\geq s\geq0$
\begin{align*}
\Phi(t,s)=\sum_{k=0}^\infty I_k(t,s),\quad  I_0(t,s)=I_d,\quad I_k(t,s)=\int_s^t\int_s^{t_k}...\int_s^{t_2}A(t_k)...A(t_2)A(t_1)dt_1....dt_k.
\end{align*}
See for instance \cite{BS11}. One can directly compute
\begin{align*}
|I_0(t,s)B(s)|\leq C_B,\qquad \left|I_1(t,s)B(s)\right|=\left|(t-s)b(s)\right|\leq C_B(t-s).
\end{align*}
Given the form of the matrix $A$, notice that for $k\geq1$
\begin{align*}
I_{2k}(t,s)
=&\int_{s}^tdt_{2k}\int_{s}^{t_{2k}}dt_{2k-1}....\int_{s}^{t_2}dt_{1}\left(\begin{array}{cc} \textbf{a}(t_{2k-1})...\textbf{a}(t_1) &0\\0&\textbf{a}(t_{2k})\textbf{a}(t_{2k-2})...\textbf{a}(t_2) \end{array}\right),\\
I_{2k+1}(t,s)
=&\int_{s}^t\int_{s}^{t_{2k+1}}....\int_{s}^{t_2} \left(\begin{array}{cc} 0& N^{-\epsilon}\textbf{a}(t_{2k})...\textbf{a}(t_2) \\ N^{\epsilon }\textbf{a}(t_{2k+1})\textbf{a}(t_{2k-1})...\textbf{a}(t_1) &0\end{array}\right).
\end{align*}
i.e. the odd and even integrand have been separated.  Note that, given the form of the vector $B(s)$, only the right column of $I_k$ matters. We therefore calculate
\begin{align*}
&\int_{s}^t\int_{s}^{t_{2k}}....\int_{s}^{t_2}\textbf{a}(t_{2k})\textbf{a}(t_{2k-2})...\textbf{a}(t_2)\\
=& \int_{s}^t dt_{2k-1}\int_{s}^{t_{2k-1}}dt_{2k-3}...\int_s^{t_3}dt_1\left(\int_{t_{2k-1}}^tdt_{2k}\textbf{a}(t_{2k})\right)...\left(\int_{t_{1}}^{t_3}dt_{2}\textbf{a}(t_{2})\right),
\end{align*}
 i.e. we re-organise into the even integrals then the odd integrals. This way, since $0 \leq t-s\leq N^\epsilon \leq N^{\xi}$, we may bound
 \begin{align*}
&\left| I_{2k}(t,s) B(s)\right|\\
&\qquad\leq \int_{s}^t dt_{2k-1}\int_{s}^{t_{2k-1}}dt_{2k-3}...\int_s^{t_3}dt_1\left\|\int_{t_{2k-1}}^{t}dt_{2k}\textbf{a}(t_{2k})\right\|...\left\|\int_{t_{1}}^{t_3}dt_{2}\textbf{a}(t_{2})\right\|\left|B(s)\right|\\
&\qquad\leq C_B N^{\epsilon}C_A^kN^{-a k}\int_{s}^t dt_{2k-1}\int_{s}^{t_{2k-1}}dt_{2k-3}...\int_s^{t_3}dt_1\sqrt{t-t_{2k-1}}...\sqrt{t_3-t_{1}}\\
&\qquad= \frac{2}{3}C_B N^{\epsilon}C_A^kN^{-a k}\int_{s}^t dt_{2k-1}\int_{s}^{t_{2k-1}}dt_{2k-3}...\int_s^{t_3}dt_1\sqrt{t-t_{2k-1}}...\sqrt{t_5-t_3}(t_3-s)^{3/2}.
\end{align*}
Let $\Gamma$ be the gamma function. We now use the fact that, for $\omega>1$
\begin{align*}
\int_{s}^t \sqrt{t-u}\left(u-s\right)^\omega du=\frac{\Gamma\left(\frac{3}{2}\right)\Gamma\left(\omega+1\right)}{\Gamma\left(\omega+\frac{3}{2}+1\right)}(t-s)^{\omega+\frac{3}{2}},
\end{align*}
and, by computing recursively the multiple integrals, we
obtain 
\begin{align*}
\left| I_{2k}(t,s) B(s)\right|\leq&
\frac{2}{3}C_B N^{\epsilon}C_A^kN^{-a k}\Gamma\left(\frac{3}{2}\right)^{k-1}\frac{\Gamma\left(\frac{5}{2}\right)}{\Gamma\left(\frac{3}{2}k+1\right)}(t-s)^{\frac{3}{2}k}.
\end{align*}
Using $\Gamma\left(3/2\right)=\frac{\sqrt{\pi}}{2}$ and $\Gamma(5/2)=\frac{3\sqrt{\pi}}{4}$, we get
\begin{equation}\label{eq:I_t_2_k}
\left| I_{2k}(t,s) B(s)\right| \leq C_B N^{\epsilon}\frac{\left[C_A^{\frac{2}{3}}\left(\frac{\sqrt{\pi}}{2}\right)^{\frac{2}{3}}N^{-\frac{2a}{3}}(t-s)\right]^{\frac{3}{2}k}}{\Gamma\left(\frac{3}{2}k+1\right)}  .
\end{equation}
Noticing that
\begin{align*}
\left|I_{2k+1}(t,s) B(s)\right|\leq N^{-\epsilon}\int_s^t \left|I_{2k}(t_{2k+1},s) B(s)\right|dt_{2k+1},
\end{align*}
we then directly have
\begin{equation}\label{eq:I_t_2_k+1}
\left| I_{2k+1}(t,s) B(s)\right|\leq C_B\left(t-s\right)\frac{\left[\left(C_A\frac{\sqrt{\pi}}{2}\right)^{\frac{2}{3}}N^{-\frac{2}{3}a}(t-s)\right]^{\frac{3}{2}k}}{\Gamma\left(\frac{3}{2}k+2\right)}.
\end{equation}
Let us, for the sake of conciseness and only in the following calculations, denote ${A=\left(C_A\frac{\sqrt{\pi}}{2}\right)^{\frac{2}{3}}N^{-\frac{2a}{3}}(t-s)}$. From \eqref{eq:I_t_2_k}-\eqref{eq:I_t_2_k+1}, we  have
\begin{align*}
\left|\Phi(t,s)B(s)\right|\leq& C_B N^{\epsilon}\left(\sum_{k=0}^\infty \frac{A^{\frac{3}{2}k}}{\Gamma\left(\frac{3}{2}k+1\right)}+\frac{(t-s)}{N^{\epsilon}}\sum_{k=0}^\infty\frac{A^{\frac{3}{2}k}}{\Gamma\left(\frac{3}{2}k+2\right)} \right)\\
\leq&C_B N^{\epsilon}\left(\sum_{p=0}^\infty \frac{A^{3p}}{3p!}+A^{\frac{1}{2}}\sum_{p=0}^\infty \frac{A^{3p+1}}{\left(3p+1\right)!}\right.
\left.+\frac{(t-s)}{N^{\epsilon}}\sum_{p=0}^\infty\frac{A^{3p}}{(3p)!} +\frac{(t-s)}{N^{2\epsilon}}A^{\frac{1}{2}}\sum_{p=0}^\infty\frac{A^{3p+1}}{(3p+1)!} \right)\\
\leq&C_B N^{\epsilon}(1+A^{1/2})\left(1+\frac{(t-s)}{N^{\epsilon}}\right)e^A,
\end{align*}
(where we use $\Gamma\left(3p+\frac{5}{2}\right)>(3p+1)!$, $\Gamma(3p+1)=3p!$, $\Gamma(3p+2)=(3p+1)!>(3p)!$ and $\Gamma\left(3p+\frac{7}{2}\right)>(3p+1)!$). This way, using $\epsilon\leq \frac{2a}{3}$ and $t-s\leq N^{\epsilon}$, we obtain $A\leq \left(C_A\frac{\sqrt{\pi}}{2}\right)^{\frac{2}{3}}$ and thus for any $t,s\in[0,N^\epsilon]$
\begin{align*}
\left|\Phi(t,s)B(s)\right|\leq&cC_B N^{\epsilon},
\end{align*}
for some universal constant $c>0$ depending only on $C_A$.

For $t\leq N^\epsilon$ and for $Y(0)=0$, we get the upper bound
\begin{align*}
\left|Y(t)\right|\leq& \int_{0}^t|\Phi(t,s)B(s)|ds
\leq cC_BN^{2\epsilon}.
\end{align*}
For  $Y(0)\neq 0$, we similarly obtain that there exists some $c>0$ such that for all $t\leq N^{\epsilon}$,
\begin{align*}
\left|Y(t)\right|\leq&c\left(|x(0)|^2+N^{2\epsilon}|x'(0)|^2\right)^{\frac{1}{2}}+cC_BN^{2\epsilon}.
\end{align*}
This yields the first result. Assume now $t\leq N^{\frac{2\alpha}{3}}$ and define $\epsilon=\frac{\ln(t)}{\ln(N)}\leq\frac{2\alpha}{3}$ (i.e. $t=N^{\epsilon}$). We obtain, for $x(0)=x'(0)=0$
\begin{align*}
|x(t)|\leq& c \, C_Bt^2.
\end{align*}
Likewise to control $x'(t)$.
\end{proof}


\paragraph{Acknowledgments.} 
This work was carried out while P.L.B. was a postdoc at I.H.E.S. under the Huawei Young Talents Program.
We thank Frank Merle for useful discussions.

\bibliographystyle{plain}
\bibliography{bibliographie}

\begin{thebibliography}{10}

\bibitem{Ayi}
Nathalie Ayi.
\newblock From {Newton}'s law to the linear {Boltzmann} equation without
  cut-off.
\newblock {\em Commun. Math. Phys.}, 350(3):1219--1274, 2017.

\bibitem{BS11}
Michael Baake and Ulrike Schlaegel.
\newblock The {P}eano-{B}aker series.
\newblock {\em Proceedings of the Steklov Institute of Mathematics},
  275(1):155--159, 2011.

\bibitem{basile2015derivation}
Giada Basile, Alessia Nota, Federica Pezzotti, and Mario Pulvirenti.
\newblock Derivation of the {Fick}’s law for the {Lorentz} model in a low
  density regime.
\newblock {\em Communications in Mathematical Physics}, 336(3):1607--1636,
  2015.

\bibitem{BGS16}
Thierry Bodineau, Isabelle Gallagher, and Laure Saint-Raymond.
\newblock The {B}rownian motion as the limit of a deterministic system of
  hard-spheres.
\newblock {\em Invent. Math.}, 203(2):493--553, 2016.

\bibitem{BGS18}
Thierry Bodineau, Isabelle Gallagher, and Laure Saint-Raymond.
\newblock Derivation of an {O}rnstein-{U}hlenbeck process for a massive
  particle in a rarified gas of particles.
\newblock {\em Ann. Henri Poincar\'e}, 19(6):1647--1709, 2018.

\bibitem{DR01}
Laurent Desvillettes and Valeria Ricci.
\newblock A rigorous derivation of a linear kinetic equation of
  {F}okker-{P}lanck type in the limit of grazing collisions.
\newblock {\em J. Statist. Phys.}, 104(5-6):1173--1189, 2001.

\bibitem{DLLS13}
Matthew Dobson, Fr\'ed\'eric Legoll, Tony Leli\`evre, and Gabriel Stoltz.
\newblock Derivation of {L}angevin dynamics in a nonzero background flow field.
\newblock {\em ESAIM Math. Model. Numer. Anal.}, 47(6):1583--1626, 2013.

\bibitem{Due21}
Mitia Duerinckx.
\newblock On the size of chaos via {G}lauber calculus in the classical
  mean-field dynamics.
\newblock {\em Comm. Math. Phys.}, 382(1):613--653, 2021.

\bibitem{DLB25}
Mitia {Duerinckx} and Corentin {Le Bihan}.
\newblock {Lenard-Balescu thermalization: rigorous derivation from a toy
  model}.
\newblock {\em arXiv e-prints}, page arXiv:2511.10778, November 2025.

\bibitem{DS21}
Mitia Duerinckx and Laure Saint-Raymond.
\newblock Lenard-{B}alescu correction to mean-field theory.
\newblock {\em Probab. Math. Phys.}, 2(1):27--69, 2021.

\bibitem{DW23}
Mitia Duerinckx and Raphael Winter.
\newblock Well-posedness of the {L}enard-{B}alescu equation with smooth
  interactions.
\newblock {\em Arch. Ration. Mech. Anal.}, 247(4):Paper No. 71, 52, 2023.

\bibitem{Dup06}
Bertrand Duplantier.
\newblock Brownian motion, "{Diverse} and undulating".
\newblock In {\em Einstein, 1905--2005}, volume~47 of {\em Prog. Math. Phys.},
  pages 201--293. Birkh\"auser, Basel, 2006.
\newblock Translated from the French by Emily Parks.

\bibitem{DGL81}
Detlef D\"urr, Sheldon Goldstein, and Joel~L. Lebowitz.
\newblock A mechanical model of {Brownian} motion.
\newblock {\em Communications in Mathematical Physics}, 78:507--530, 1981.

\bibitem{DGL87}
Detlef D\"urr, Sheldon Goldstein, and Joel~L. Lebowitz.
\newblock Asymptotic motion of a classical particle in a random potential in
  two dimensions: {L}andau model.
\newblock {\em Comm. Math. Phys.}, 113(2):209--230, 1987.

\bibitem{EK86}
Stewart~N. Ethier and Thomas~G. Kurtz.
\newblock {\em Markov processes}.
\newblock Wiley Series in Probability and Mathematical Statistics: Probability
  and Mathematical Statistics. John Wiley \& Sons, Inc., New York, 1986.
\newblock Characterization and convergence.

\bibitem{fougeres2024derivation}
Florent Foug{\`e}res.
\newblock On the derivation of the linear {Boltzmann} equation from the
  nonideal {Rayleigh} gas: Adaptive pruning and improvement of the convergence
  rate.
\newblock {\em Journal of Statistical Physics}, 191(10):136, 2024.

\bibitem{Kal02}
Olav Kallenberg.
\newblock {\em Foundations of modern probability}.
\newblock Probability and its Applications (New York). Springer-Verlag, New
  York, second edition, 2002.

\bibitem{KP79}
Harry Kesten and George~C. Papanicolaou.
\newblock A limit theorem for turbulent diffusion.
\newblock {\em Comm. Math. Phys.}, 65(2):97--128, 1979.

\bibitem{KP80}
Harry Kesten and George~C. Papanicolaou.
\newblock A limit theorem for stochastic acceleration.
\newblock {\em Comm. Math. Phys.}, 78(1):19--63, 1980/81.

\bibitem{KL10}
Shigeo Kusuoka and Song Liang.
\newblock A classical mechanical model of {B}rownian motion with plural
  particles.
\newblock {\em Rev. Math. Phys.}, 22(7):733--838, 2010.

\bibitem{LeBihan24}
Corentin {Le Bihan}.
\newblock {Long time validity of the linearized Boltzmann uncut-off and the
  linearized Landau equations from the Newton Law}.
\newblock {\em arXiv e-prints}, page arXiv:2408.03597, August 2024.

\bibitem{LeBihan25}
Corentin {Le Bihan}.
\newblock {Around the quantum Lenard-Balescu equation}.
\newblock {\em arXiv e-prints}, page arXiv:2501.06544, January 2025.

\bibitem{LT20}
Christopher Lutsko and B{\'a}lint T{\'o}th.
\newblock Invariance principle for the random {Lorentz} gas—beyond the
  {Boltzmann-Grad} limit.
\newblock {\em Communications in Mathematical Physics}, 379(2):589--632, 2020.

\bibitem{MS24}
Karsten {Matthies} and Theodora {Syntaka}.
\newblock {Fractional diffusion as the limit of a short range potential
  Rayleigh gas}.
\newblock {\em arXiv e-prints}, page arXiv:2405.19025, May 2024.

\bibitem{NSV18}
Alessia Nota, Sergio Simonella, and Juan J.~L. Vel\'azquez.
\newblock On the theory of {L}orentz gases with long range interactions.
\newblock {\em Rev. Math. Phys.}, 30(3):1850007, 62, 2018.

\bibitem{NVW21}
Alessia Nota, Juan J.~L. Vel\'azquez, and Raphael Winter.
\newblock Interacting particle systems with long-range interactions: scaling
  limits and kinetic equations.
\newblock {\em Atti Accad. Naz. Lincei Rend. Lincei Mat. Appl.},
  32(2):335--377, 2021.

\bibitem{NVW22}
Alessia Nota, Juan J.~L. Vel\'azquez, and Raphael Winter.
\newblock Interacting particle systems with long-range interactions:
  approximation by tagged particles in random fields.
\newblock {\em Atti Accad. Naz. Lincei Rend. Lincei Mat. Appl.},
  33(2):439--506, 2022.

\bibitem{NWL19}
Alessia Nota, Raphael Winter, and Bertrand Lods.
\newblock Kinetic description of a {R}ayleigh gas with annihilation.
\newblock {\em J. Stat. Phys.}, 176(6):1434--1462, 2019.

\bibitem{Phi87}
John Phillips.
\newblock On the uniform continuity of operator functions and generalized
  powers-stormer inequalities.
\newblock {\em Technical Reports (Mathematics and Statistics)}, 1987.

\bibitem{PV03}
Fr\'ed\'eric Poupaud and Alexis Vasseur.
\newblock Classical and quantum transport in random media.
\newblock {\em J. Math. Pures Appl. (9)}, 82(6):711--748, 2003.

\bibitem{Spohn}
Herbert Spohn.
\newblock {\em Large scale dynamics of interacting particles}.
\newblock Springer Science, 2012.

\bibitem{VW18}
Juan J.~L. Vel\'azquez and Raphael Winter.
\newblock From a non-{M}arkovian system to the {L}andau equation.
\newblock {\em Comm. Math. Phys.}, 361(1):239--287, 2018.

\bibitem{WY71}
Shinzo Watanabe and Toshio Yamada.
\newblock On the uniqueness of solutions of stochastic differential equations.
  {II}.
\newblock {\em J. Math. Kyoto Univ.}, 11:553--563, 1971.

\end{thebibliography}

\end{document}